\DeclareFontFamily{U}{mathb}{\hyphenchar\font45}
\DeclareFontShape{U}{mathb}{m}{n}{
      <5> <6> <7> <8> <9> <10> gen * mathb
      <10.95> mathb10 <12> <14.4> <17.28> <20.74> <24.88> mathb12
      }{}
\DeclareSymbolFont{mathb}{U}{mathb}{m}{n}
\DeclareMathSymbol{\sqsubsetneq}{3}{mathb}{'210}
\newcommand{\showcomments}{false}
\newcommand{\aris}[1]%
{\ifthenelse{\equal{\showcomments}{true}}%
{{\color{red}{\small \textbf{Aristotelis says:} #1}}}{\xspace}}%
\newcommand{\shaun}[1]%
{\ifthenelse{\equal{\showcomments}{true}}%
{{\color{blue}{\small \textbf{Shaun:} #1}}}{\xspace}}%
\theoremstyle{plain}
\newtheorem{theorem}{Theorem}[section]
\newtheorem{corollary}[theorem]{Corollary}
\newtheorem{lemma}[theorem]{Lemma}
\newtheorem{claim}[theorem]{Claim}
\newtheorem{proposition}[theorem]{Proposition}
\newtheorem{problem}{Problem}
\theoremstyle{definition}
\newtheorem{definition}[theorem]{Definition}
\newtheorem{example}[theorem]{Example}
\newtheorem{remark}[theorem]{Remark}
\newcommand{\mcB}{\mathcal{B}}
\newcommand{\mcC}{\mathcal{C}}
\newcommand{\mcL}{\mathcal{L}}
\newcommand{\mcM}{\mathcal{M}}
\newcommand{\mcN}{\mathcal{N}}
\newcommand{\mcO}{\mathcal{O}}
\newcommand{\BPi}{\mathbf{\Pi}}
\newcommand{\BSigma}{\mathbf{\Sigma}}
\newcommand{\smf}{\smallfrown}
\newcommand{\PG}{\mathcal{PG}}
\DeclareMathOperator{\Aut}{\text{Aut}}
\DeclareMathOperator{\rk}{\mathrm{rk}}
\DeclareMathOperator{\Drk}{\mathrm{Drk}}
\DeclareMathOperator{\CBrk}{\mathrm{CBrk}}
\DeclareMathOperator{\Stab}{\mathrm{Stab}}
\newcommand{\Grk}{\textrm{Grk}}
\newcommand{\OPEN}{\mathrm{OPEN}}
\title[The class of {\large $\alpha$}-balanced Polish groups and their dynamics]{The class and dynamics of {\LARGE$\alpha$}-balanced Polish groups}
\author{Shaun Allison}
\address{Department of Mathematics, University of Toronto, Toronto, ON, M5S 2E4}
\email{shaunpallison@gmail.com}
\author{Aristotelis Panagiotopoulos}
\address{Kurt G\"odel Research Center, Faculty of Mathematics,   Universit\"at Wien, Kolingasse 14-16, 1090 Vienna, Austria}
\email{aristotelis.panagiotopoulos@gmail.com}
\thanks{
This research was supported by the
NSF Grant DMS-2154258: ``Dynamics Beyond Turbulence and Obstructions to Classification''
and Israel Science Foundation (ISF) grant no.  1832/19
.}
\begin{document}

\begin{abstract}
For each  ordinal $\alpha<\omega_1$, we introduce the class of $\alpha$-balanced Polish groups. These classes form a hierarchy which  stratifies the space between
the class of Polish groups admitting a two-side-invariant  metric (TSI) and the class of Polish groups admitting a complete left-invariant metric (CLI). We  establish various closure properties, provide connections to model theory, and  we develop a boundedness principle for CLI groups by showing that $\alpha$-balancedness is an initial segment of a regular coanalytic rank.  

In the spirit of Hjorth's turbulence theory we also  introduce ``generic $\alpha$-unbalancedness": a new dynamical condition for Polish $G$-spaces which serves as an obstruction to classification by actions of $\alpha$-balanced Polish groups. We use this to provide, for  each $\alpha<\omega_1$,  an action of an $\alpha$-balanced Polish group whose orbit equivalence relation is strongly generically ergodic against actions of any $\beta$-balanced Polish group with $\beta<\alpha$.
\end{abstract}

\maketitle

\section{Introduction}

A Polish group is a separable topological group $G$ which admits  a complete metric compatible with its topology. A metric $d$ on a Polish group $G$ is   {\bf left-invariant} if $d(gh,gh')=d(h,h')$ holds for all $g,h,h'\in G$ and it is {\bf two-side-invariant} if $d(gh,gh')=d(h,h')=d(hg,h'g)$ holds for all $g,h,h'\in G$. By Birkhoff-Kakutani \cite{birkhoff1936note,kakutani1936metrisation}, every Polish group admits a left-invariant metric that is compatible with its topology.  However, such metric cannot always be  taken to be  complete or two-side-invariant. This carves out two important classes of Polish groups with special properties. A Polish group is {\bf CLI} if it admits a  compatible metric  that is both complete and left-invariant, and it is {\bf TSI} if it admits a compatible metric  that is two-side-invariant. It turns out that every TSI group is CLI but there are several interesting examples of CLI groups which are not TSI.

The class of TSI groups can be thought of as a common generalization of the classes of Polish groups which are compact, discrete, or abelian. By a theorem of Klee, TSI groups are precisely those Polish groups  admitting  a conjugation invariant basis of open neighborhood of the identity \cite{Klee} --- a property of groups known as \emph{balanced} or \emph{small invariant neighborhood} (SIN).  Already in the realm of locally-compact groups, the TSI property has been extensively  studied in relation to  properties such us unimodularity and inner amenability \cite{TSI4,TSI5,TSI6,TSI7}. It is, however, in the realm of non-locally-compact Polish groups --- where the lack of Haar-measure 
prompts the search for additional structure --- that the TSI property has
played a central role. Namely, from  establishing new automatic continuity phenomena \cite{TSI2,TSI1,TSI3}, to extending the proof of the topological Vaught conjecture from  abelian \cite{sami} to all TSI Polish groups \cite{HSTSI}.

Similarly, the class of all CLI groups can be  thought of as a common generalization of  the classes of Polish groups which are locally compact, solvable, or TSI. By a theorem of Gao \cite{Gao1998},  later generalized to all metric structures \cite{yaacov2017metric}, the automorphism group $\mathrm{Aut}(M)$ of a countable structure $M$ is CLI   if and only if $M$ admits proper  $\mathcal{L}_{\omega_1\omega}$-embeddings into itself. 
The CLI property has also been used in generalizing various dynamical phenomena such as Glimm-Effros dichotomy and topological Vaught conjecture from the realm of locally-compact, nilpotent, or TSI groups \cite{glimm, HSTSI}, to the realm of all CLI groups \cite{Becker1}.  In contrast to TSI groups which are characterized by the SIN property, CLI groups do not admit a ``simple" characterization in terms of open neighborhoods of the identity, as they turn out to form a coanalytic but non-Borel collection of Polish groups \cite{Malicki2011}.

The main goal of this paper is to introduce for each countable ordinal $\alpha>0$ the class of \emph{$\alpha$-balanced Polish groups}: a new class of Polish groups  which interpolates between the classes of TSI and CLI groups  and which admits its own robust structure and dynamics.

\bigskip{}

\begin{center}
\begin{tikzpicture}[line cap=round,line join=round,>=triangle 45,x=1cm,y=1cm,scale=0.8]
\draw [rotate around={0:(0.5,0)},line width=1pt] (0.5,0) ellipse (2.75cm and 1.15cm);
\draw [rotate around={0:(-1,0)},line width=1pt] (-1,0) ellipse (1.25cm and 0.75cm);
\draw [rotate around={0:(-0.27,0)},line width=1pt,dashed] (-0.27,0) ellipse (2cm and 1cm);
\draw (-1,0) node {TSI};
\draw (4,0) node {CLI};
\draw (1.5,-2) node {$\alpha$-balanced};
\draw[dotted] (1.7,-1.8) -- (1.7,0) ;
\end{tikzpicture}
\end{center}

 In short,  $\alpha$-balanced  Polish groups are defined as follows, where the notation  $V\subseteq_1 G$  will stand throughout the paper for ``$V$ is an open neighborhood of the identity in $G$".

\begin{definition}\label{Def:alpha-balanced}
A Polish group $G$ is {\bf $\alpha$-balanced} if and only if $\rk(G)\leq \alpha$, where
\[\rk(G):=\sup \{ \rk(V,G)+1 \colon V\subseteq_1 G \},\]
 and for every ordinal $\beta$ and every $V,U\subseteq_1 G$  we  define inductively:

 \noindent  $(a)$  \, $\rk(V,U)=0$ if $U\subseteq V$;

\noindent  $(b)$ \, $\rk(V,U)\leq \beta$ if there is $W\subseteq_1 G$ so that for all $g\in U$ we have   $\rk(V, gWg^{-1}) < \beta$;

\noindent  $(c)$ \, $\rk(V,U)=\infty$ if there is no ordinal $\beta$ so that  $\rk(V,U)\leq \beta$;
\end{definition}

This paper consists of two parts, each  of which is summarized in the remainder of this introduction. Part \ref{PartI}  develops the  topological group theory of $\alpha$-balanced Polish groups  and  fleshes out  connections with  model theory and descriptive set theory.  In Part II this theory yields several consequences of  $\alpha$-balancedness  for  dynamics of Polish group actions. The main takeaway is that $\alpha$-balanced Polish groups form a class of Polish groups with structure robust enough to leave a trace on the orbit equivalence relations $E^G_X$ of the actions $G \curvearrowright X$ of its members $G$. This structure is exploited to identify strong ergodicity phenomena and for carving out a new ladder of complexity classes in the Borel reduction hierarchy.

\subsection{Part I. The class of $\alpha$-balanced Polish groups} It is easy to see that the only $1$-balanced group is the trivial group and that the class of  $2$-balanced groups is precisely the class of all TSI groups; see Section \ref{S:alpha_balanced}.
Our first main result is that the classes of $\alpha$-balanced Polish groups, for $1<\alpha<\omega_1$, completely stratify the space between TSI and CLI:

\begin{theorem}\label{thm:CLI}
For every Polish group $G$ we have that:
\[G \text{ is } \mathrm{CLI} \;  \iff \;  \rk(G)<\infty \;  \iff \; \rk(G)<\omega_1\]
Moreover, for every $\alpha<\omega_1$ there exists a Polish group $G$ with $\rk(G)=\alpha$.
\end{theorem}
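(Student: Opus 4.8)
The plan is to prove the two equivalences together with the boundedness statement, and then to construct the examples, using two elementary preliminaries. First, since ``$g_n^{-1}g_m\to e$ as $n,m\to\infty$'' is a purely topological (metric-free) condition, $G$ is CLI if and only if every such \emph{left-Cauchy} sequence converges; this is just the content of the definition of CLI via the left uniformity. Second, from Definition~\ref{Def:alpha-balanced} one reads off that $\rk(V,U)$ is decreasing in $V$ and increasing in $U$, and that $\rk(V,U)=\infty$ exactly when $U\not\subseteq V$ and for every $W\subseteq_1 G$ there is some $g\in U$ with $\rk(V,gWg^{-1})=\infty$; equivalently, $\{(V,U):\rk(V,U)=\infty\}$ is the largest relation closed under this rule. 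The implication $\rk(G)<\omega_1\Rightarrow\rk(G)<\infty$ is trivial, so the remaining content of the first part is the pair: $\rk(G)=\infty$ if and only if $G$ is not CLI, and $\rk(G)<\infty\Rightarrow\rk(G)<\omega_1$.

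For ``$\rk(G)=\infty$ iff $G$ not CLI'' I would argue both directions through the left uniformity. If $\rk(V,G)=\infty$ for some $V\subseteq_1 G$, run the dual description against the play that uses a fixed decreasing neighborhood basis $B_0\supseteq B_1\supseteq\cdots$ of the identity: this produces $g_0,g_1,\dots$ with $g_{n+1}\in g_nB_ng_n^{-1}$ and $g_nB_ng_n^{-1}\not\subseteq V$ for every $n$, and since the $B_n$ shrink one extracts from this data a left-Cauchy sequence with no limit, so $G$ is not CLI. Conversely, if $G$ is not CLI, fix a non-convergent left-Cauchy sequence $(g_n)$; it has no cluster point, so there is an open $V\ni e$ separating the tails of $(g_n)$, and one checks that a relation $R$ built from translates of tails of $(g_n)$ is closed under the dual rule with $(V,G)\in R$, giving $\rk(V,G)=\infty$. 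I expect this correspondence between ``$\rk=\infty$'' and incompleteness of the left uniformity to be the principal obstacle of Part~I; a clean way to package it is to verify that, on a Borel space of codes for Polish groups on which CLI-ness is $\Pi^1_1$ (Malicki), $\rk$ restricts to a $\Pi^1_1$-rank which takes the value $\infty$ precisely off the CLI locus --- which simultaneously delivers $\rk(G)<\infty\Rightarrow\rk(G)<\omega_1$, as $\Pi^1_1$-ranks are $\omega_1$-valued.

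Alternatively, $\rk(G)<\infty\Rightarrow\rk(G)<\omega_1$ can be obtained directly: by monotonicity the supremum defining $\rk(G)$ may be computed over a fixed countable neighborhood basis, and for fixed $V$ the recursion for $\rk(V,\cdot)$ can be run over a countable family of neighborhoods --- one may take the ``$W$''-moves from a fixed countable basis, and replace the quantifier $\forall g\in U$ by $\forall g\in D$ for a fixed countable dense $D\subseteq U$, using that the relevant conjugation conditions become open after shrinking $W$. Then each $\rk(V,G)$, when an ordinal, is the ordinal rank of a countable well-founded tree and hence below $\omega_1$, and a countable supremum of countable ordinals is countable.

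For the realization statement, $\alpha=1$ is witnessed by the trivial group and $\alpha=2$ by any nontrivial TSI group (note $\rk(G)\ge1$ always, and $\rk(G)\le2$ iff $G$ is SIN, i.e.\ TSI, by Klee's theorem). For finite $\alpha$ one iterates a wreath-type construction such as $H\mapsto H^{\mathbb Z}\rtimes\mathbb Z$ with the shift, which preserves CLI-ness (CLI being closed under products and extensions) and, on the groups in question, raises $\rk$ by $1$; for limit $\alpha$ one takes a countable product or inverse limit, using $\rk(\prod_nH_n)=\sup_n\rk(H_n)$. The delicate case --- and, I expect, the main obstacle of Part~II --- is realizing $\lambda+1$ with $\lambda$ a limit: a short argument shows that no group produced from the trivial group by iterating products and the above semidirect product has a neighborhood of limit rank, so none has rank $\lambda+1$, and one therefore needs a construction possessing a single basic neighborhood $V$ with $\rk(V,G)=\lambda$. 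I would obtain this as $G=\mathrm{Aut}(M_\alpha)$ for a countable structure $M_\alpha$ built from a well-founded tree whose $\dcl$- or $\acl$-geometry is arranged so that the conjugates of one basic neighborhood realize ranks cofinal in $\lambda$; CLI-ness of $\mathrm{Aut}(M_\alpha)$ is checked by Gao's criterion (no proper $\mathcal{L}_{\omega_1\omega}$-self-embedding), and the core is a lemma computing $\rk(\mathrm{Aut}(M_\alpha))$ in terms of the ordinal rank of the defining tree, yielding the value $\alpha$. Finally, each constructed group is CLI, so by the already-proven equivalence has $\rk$ below $\omega_1$, consistent with the computed value.
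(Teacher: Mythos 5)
Your two equivalences are in the same spirit as the paper's: the paper proves the more general statement that for a metrizable group $G$, $\widehat{G}^{\ell}=\widehat{G}^{s}$ iff $\rk(G)<\infty$ iff $\rk(G)<\omega_1$, and then specializes to Polish $G$ via the Baire-category observation that $\widehat{G}^s=G$. Your direction ``$\rk(G)<\infty\Rightarrow$ left-Cauchy forces right-Cauchy'' is exactly the paper's inductive Claim; your construction of a bad left-Cauchy sequence from $\rk(V,G)=\infty$ is also the paper's, though the paper fills in the real work you elide (``one extracts from this data a left-Cauchy sequence with no limit''): one must set $g_n=h_n\cdots h_0$ with carefully nested conjugates $h_{n+1}\in h_n\widetilde{W}_nh_n^{-1}$ satisfying both a shrinking condition to get left-Cauchyness and a lower bound $\rk(V,h_n\widetilde{W}_nh_n^{-1})=\infty$ to keep $g_{n+1}g_n^{-1}\notin\widetilde V$, and this juggling is the bulk of the argument. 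Your direct proof that $\rk(G)<\infty\Rightarrow\rk(G)<\omega_1$ via ``compute the recursion over a countable dense $Q$ and a countable conjugation-closed basis'' is the paper's Lemma on $\rk_{Q,\mathcal V}$ combined with its Lemma \ref{L:Main_metrizable}. One caveat: your ``clean packaging'' via an abstract $\Pi^1_1$-rank is circular as stated, since proving that $\rk$ is a $\Pi^1_1$-rank on $\mathrm{CLI}$ already presupposes it is $\omega_1$-valued there; the paper proves the countability directly and only afterwards (Section~\ref{S:Boundedness}) shows $\rk$ is a regular $\Pi^1_1$-rank.

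Where you genuinely diverge is the realization statement, and here you correctly isolate the sticking point: ordinals $\lambda+1$ with $\lambda$ limit are unreachable by iterating countable products and the $\mathbb Z$-jump, because the $\mathbb Z$-jump fixes limit ranks (Proposition~\ref{Prop:Z-jump}(2)) and products take suprema. You propose filling this gap with $\mathrm{Aut}(M_\alpha)$ for suitable countable structures, verified via Gao's criterion and a model-theoretic rank computation. This is viable --- it is essentially the scattered-linear-order route the paper develops in Section~\ref{S:Deissler}, where $\rk$ is identified with a Deissler-type rank and $\Aut(\alpha^*)$ is analyzed --- but the paper's primary construction in Theorem~\ref{T:All_Ordinals} is more elementary: for $\alpha=\lambda+1$, $\lambda$ limit, it forms the \emph{local direct product} $G_\alpha=\oplus^{\mathrm{loc}}_{\beta<\lambda}(G_\beta,O_\beta)$, where each $G_\beta$ has a distinguished open subgroup $O_\beta$ with $\rk(O_\beta,G_\beta)+1=\rk(G_\beta)$; the open subgroup $O_\alpha=\prod_\beta O_\beta$ then satisfies $\rk(O_\alpha,G_\alpha)=\lambda$. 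This stays entirely within topological group theory (no Scott sentences, no automorphism groups) and bundles naturally with the other cases, since it inductively maintains the pair $(G_\beta,O_\beta)$ needed by the $\mathbb Z$-jump step. Both routes work; the local-direct-product route is shorter and self-contained, while your route buys model-theoretic intuition and is what the paper ultimately uses to get the complete-coanalytic result and the actions in Part~II.
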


In analogy with Klee's characterization of TSI groups as Polish groups admitting a conjugation invariant basis of $V\subseteq_1 G$, Theorem \ref{thm:CLI} can be thought of as the simplest possible characterization of CLI groups in terms of open neighborhoods of the identity. It also provides an inductive procedure
for establishing properties of the class of  CLI groups.

Definition \ref{Def:alpha-balanced}  has been inspired by dynamical phenomena occurring in actions of  non-TSI groups \cite{AP2021}, as well as by  the ranks that Deissler  and Malicki developed in the context of  countable $\mathcal{L}$-structures \cite{Deissler1997}  and  Polish permutation groups  \cite{Malicki2011}, respectively.  In Section \ref{S:Deissler} we show that for automorphism groups $\mathrm{Aut}(M)$ of countable structures the rank from Definition \ref{Def:alpha-balanced}  corresponds to a slight modification of the Deissler rank. This allows us to import model-theoretic intuition into the study of $\alpha$-balanced Polish groups. In Section \ref{S:Malicki} we  show how a weakening of Definition \ref{Def:alpha-balanced}, which leads to the notion of \emph{weakly $\alpha$-balanced Polish groups}, relates to the rank Malicki developed for Polish  permutation groups.

One of the main results  from \cite{Malicki2011} is that the class of all CLI groups forms a coanalytic non-Borel set in the standard Borel space  $\mathcal{PG}$ of all Polish groups. The proof relied on exhibiting a collection of permutation group of unbounded \emph{Malicki rank}. A similar  collection of structures with unbounded \emph{Deissler rank} is used in \cite[Corollary 2.5]{Deissler1997}. 
Here we  provide a ``uniformly defined" such collection and use it to strengthen the above:
   
\begin{theorem}\label{T:CompleteCoanalytic}
    The class of CLI Polish groups forms a complete coanalytic set.
\end{theorem}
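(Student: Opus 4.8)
The plan is to exhibit a Borel map $\Phi \colon \mathrm{Tr} \to \PG$ from the standard Borel space of trees on $\omega$ (or, equivalently, from the space of countable linear orders / well-founded relations) into the space of Polish groups, with the property that $\Phi(T)$ is CLI if and only if $T$ is well-founded. Since $\{T : T \text{ is well-founded}\}$ is a complete coanalytic set, and since Theorem \ref{thm:CLI} already tells us that CLI is a coanalytic condition ($G$ is CLI iff $\rk(G)<\omega_1$), such a reduction immediately upgrades the coanalyticity to completeness.

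First I would set up the building blocks. Theorem \ref{thm:CLI} guarantees, for each $\alpha<\omega_1$, a Polish group $G_\alpha$ with $\rk(G_\alpha)=\alpha$; the key point I want is not merely their existence but that they can be produced \emph{uniformly}, i.e.\ as the output of a Borel assignment $\alpha \mapsto G_\alpha$ from (codes for) countable ordinals. Concretely, I expect the witnesses from the proof of Theorem \ref{thm:CLI} to be built by iterated wreath-type or semidirect-product constructions indexed along a well-order, so that the same recursion applied to an \emph{arbitrary} tree $T$ — not necessarily well-founded — still yields a Polish group $\Phi(T)$, and the construction is arranged so that $\rk(\Phi(T)) = \rho(T)$, the (ordinal or $\infty$) rank of $T$, via an analysis of how the $\rk(V,U)$ recursion in Definition \ref{Def:alpha-balanced} propagates through the layers of the construction. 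One clean way to organize this: index the construction by the nodes of $T$, putting a copy of a fixed non-trivial TSI (equivalently $2$-balanced) "gadget" at each node and conjugating/twisting the gadget at a node by the group assembled from its children, so that the conjugation witness $W$ in clause $(a)$ at level of a node must be found "below" it in $T$; an ill-founded branch then produces an infinite descent that forces $\rk = \infty$, while well-foundedness lets the ordinal rank be read off by transfinite induction on $T$.

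The main steps, in order: (1) recall/recast the witnesses of Theorem \ref{thm:CLI} as a single Borel-parametrized construction $\Phi$; (2) verify that $\Phi(T)$ is a Polish group for every tree $T$ (completeness of the ambient metric, separability, continuity of operations — routine for wreath products and inverse/direct limits, but one must check the topology is Polish when $T$ is infinite); (3) prove the rank computation $\rk(\Phi(T)) = \rho(T)$ when $T$ is well-founded, by transfinite induction matching the $\rk(V,U)$ recursion against the subtree structure; (4) prove that if $T$ has an infinite branch then $\Phi(T)$ is \emph{not} CLI, by exhibiting $V\subseteq_1 \Phi(T)$ with $\rk(V,\Phi(T))=\infty$ — equivalently, by directly producing a compatible left-invariant metric that is not complete, or a left-Cauchy sequence with no limit, coming from the infinite descent along the branch; (5) conclude via Theorem \ref{thm:CLI} and the completeness of the set of well-founded trees. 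Step (4) must use the "$\iff \rk(G)<\omega_1$" half of Theorem \ref{thm:CLI}: it suffices to show $\rk(\Phi(T))\ge\omega_1$ when $T$ is ill-founded, and here the infinite branch gives, for each countable $\alpha$, a subtree of rank $\ge\alpha$ sitting inside $T$, hence $\rk(\Phi(T))\ge\alpha$ for all $\alpha$.

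The hard part will be step (3)–(4): designing the construction so that the rank of the group is controlled \emph{tightly and uniformly} by the combinatorics of $T$ in both directions. The inequality $\rk(\Phi(T)) \le \rho(T)$ (well-founded case) tends to be delicate because one must show that \emph{every} $V\subseteq_1\Phi(T)$ has small rank, which means finding a single good conjugating neighborhood $W$ that works uniformly over a whole conjugacy-spread of group elements — the interplay between the product topology on an infinite wreath construction and clause $(a)$ is exactly where the bookkeeping is heaviest. I would expect to isolate a lemma of the form "$\rk$ of a (suitably topologized) wreath product $H \wreath_X K$, or of a semidirect product, is bounded in terms of $\rk(H)$ and $\rk(K)$ and the action," generalizing whatever lemma is used inside the proof of Theorem \ref{thm:CLI}, and then apply it along the well-founded recursion. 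Once that lemma is in hand, and once the ill-founded case is handled by producing an explicit non-convergent left-Cauchy sequence threading the infinite branch, the reduction and the appeal to $\Pi^1_1$-completeness of well-foundedness finish the proof.
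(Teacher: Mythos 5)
Your high-level strategy is the same as the paper's: exhibit a Borel map from a $\BPi^1_1$-complete set of ``well-founded'' objects into $\PG$ whose image lands in CLI exactly when the input is well-founded, and then invoke the already-established coanalyticity of CLI. However, the specific reduction you propose is genuinely different from the paper's, and more importantly it is left as a plan whose hard core you yourself flag as incomplete. You propose to build $\Phi(T)$ by iterating wreath/semidirect products along the nodes of a tree $T$ and then to verify $\rk(\Phi(T))=\rho(T)$ by transfinite induction on one side and by an explicit non-convergent left-Cauchy sequence on the other. Both directions here are nontrivial: the paper itself records (Problem~\ref{Problem_1}) that even the basic question of how $\rk$ behaves under group extensions is open to the authors, and while Proposition~\ref{Prop:Z-jump} does give tight control under a single $\mathbb{Z}$-jump, threading this through an infinite (possibly ill-founded) tree while keeping the topology Polish and the rank bounds tight is exactly the delicate part you postpone. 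Your alternative sketch for step (4) --- ``the infinite branch gives, for each countable $\alpha$, a subtree of rank $\ge\alpha$ sitting inside $T$, hence $\rk(\Phi(T))\ge\alpha$'' --- presupposes that $\Phi$ turns subtree inclusions into topological-group embeddings (so that Proposition~\ref{Proposition:Subgroup_Quotient} applies), which again must be designed in.

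The paper avoids all of this by taking the reduction to be $L\mapsto \Aut(L^*)$ on the space $\mathrm{LO}$ of countable linear orders, where $L^*$ is the linear order of finite-support functions $L\to\mathbb{Z}$ under the anti-lexicographic comparison at the largest disagreeing coordinate (Section~\ref{SS:Scattered_LOs}). The two directions are then handled by different, more structural tools than a direct rank computation. If $L$ is a well-order, Proposition~\ref{prop:wellfounded_iff_scattered} shows $L^*$ is scattered, and Theorem~\ref{thm:scattered_cli} bounds the Deissler rank by the strong Hausdorff rank, so $\Aut(L^*)$ is CLI via Proposition~\ref{Prop:RanksAreTheSame} and Theorem~\ref{thm:CLI}. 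If $L$ is ill-founded, the paper writes $L=I+J$ with $J$ having no minimal element, shows $J^*\cong\mathbb{Q}$, shows via Lemmas~\ref{lem:combining_auts}--\ref{lem:linear_order_involves} that $\Aut(L^*)$ involves $\Aut(\mathbb{Q})$ (hence $S_\infty$ by Proposition~\ref{prop:autq_involves_sinfty}), and concludes $\Aut(L^*)$ is not CLI since CLI is preserved under involvement and $S_\infty$ is not CLI. This route replaces your two rank computations with (a) a Deissler-rank estimate for scattered orders and (b) an involvement argument, both of which are self-contained in the paper, and it makes the Borelness of $L\mapsto\Aut(L^*)$ transparent. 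In short: right strategy, but the specific construction you propose is both different from the paper's and substantially underspecified at precisely the points that carry the weight of the proof.
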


Finally, in  Section \ref{S:Boundedness} we  show that the rank from Definition \ref{Def:alpha-balanced} is a regular $\BPi^1_1$-rank on the coanalytic subset CLI of  $\mathcal{PG}$.  One consequence is the  following boundedness principle:

\begin{corollary}\label{cor:Analytic>Bounded}
If  $\mathcal{A}$ is a class of CLI  Polish groups which admits analytic definition, then there exists a countable ordinal  
 $\alpha=\alpha(\mathcal{A})$ so that every  $G$ in $\mathcal{A}$ is $\alpha$-balanced.
\end{corollary}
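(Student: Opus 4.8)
The plan is to derive Corollary~\ref{cor:Analytic>Bounded} as an immediate consequence of the fact, established in Section~\ref{S:Boundedness}, that $\rk$ is a regular $\Pi^1_1$-rank on the coanalytic set $\mathrm{CLI}\subseteq \PG$, via the standard $\Pi^1_1$-boundedness theorem. So the first step is to recall the setup: by Theorem~\ref{thm:CLI}, $G\in\mathrm{CLI}$ if and only if $\rk(G)<\omega_1$, so $\rk$ takes genuinely ordinal values exactly on the coanalytic set $\mathrm{CLI}$, and the statement that this is a regular $\Pi^1_1$-rank means precisely that the relations $\{(G,H)\in \PG^2 : H\notin\mathrm{CLI} \text{ or } \rk(G)\leq \rk(H)\}$ and $\{(G,H)\in \PG^2 : H\notin\mathrm{CLI} \text{ or } \rk(G)< \rk(H)\}$ are both $\Pi^1_1$ (this is what I would invoke from the earlier section — I do not reprove it here).

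Next I would set up the reduction. Let $\mathcal{A}$ be a class of CLI Polish groups admitting an analytic definition, i.e.\ $\mathcal{A}$ is a $\BSigma^1_1$ subset of $\PG$ with $\mathcal{A}\subseteq\mathrm{CLI}$. I want to bound $\sup\{\rk(G) : G\in\mathcal{A}\}$ below $\omega_1$. The key point is that because $\rk$ is a $\Pi^1_1$-rank on $\mathrm{CLI}$ and $\mathcal{A}\subseteq\mathrm{CLI}$, the restriction of $\rk$ to $\mathcal{A}$ is a rank whose initial segments $\{G\in\mathcal{A} : \rk(G)\leq\xi\}$ are, for each $\xi<\omega_1$, obtained by intersecting the analytic set $\mathcal{A}$ with the set $\{G : \rk(G)\leq\xi\}$; the latter is Borel (indeed $\BSigma^1_1\cap\BPi^1_1$) by regularity of the rank together with the fact that there is at least one $H_\xi\in\mathrm{CLI}$ with $\rk(H_\xi)=\xi$, which Theorem~\ref{thm:CLI} supplies.

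Then the boundedness argument itself: apply $\Pi^1_1$-boundedness in the form ``an analytic subset of a $\Pi^1_1$-ranked coanalytic set has bounded rank''. Concretely, since $\mathcal{A}$ is analytic and $\mathcal{A}\subseteq\mathrm{CLI}$ with $\mathrm{CLI}$ coanalytic and $\rk$ a $\Pi^1_1$-rank on it, if $\sup\{\rk(G):G\in\mathcal{A}\}$ were $\omega_1$ then the cofinality argument would express $\mathrm{CLI}$ itself as $\bigcup_{G\in\mathcal{A}}\{H : \rk(H)\leq\rk(G)\}$, exhibiting $\mathrm{CLI}$ as an analytic (in fact a continuous/$\BSigma^1_1$-indexed union of Borel) set, contradicting that $\mathrm{CLI}$ is properly coanalytic — which is exactly the non-Borelness half of Theorem~\ref{T:CompleteCoanalytic} (or of Malicki's original result). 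Hence the supremum is some $\alpha=\alpha(\mathcal{A})<\omega_1$, and every $G\in\mathcal{A}$ satisfies $\rk(G)\leq\alpha$, i.e.\ is $\alpha$-balanced.

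I do not expect any serious obstacle here: the entire content of the corollary has been front-loaded into the assertion that $\rk$ is a regular $\Pi^1_1$-rank (Section~\ref{S:Boundedness}) and into Theorem~\ref{T:CompleteCoanalytic}. The only mild care needed is bookkeeping about which direction of Theorem~\ref{thm:CLI} is used where — specifically that $\rk(G)<\omega_1$ characterizes $\mathrm{CLI}$, so that ``bounded rank'' and ``analytically definable subclass of $\mathrm{CLI}$'' interact correctly — and making sure the statement of $\Pi^1_1$-boundedness is cited in the form that applies to ranks on coanalytic (not just $\Pi^1_1$-complete) sets. Once that is in place the proof is two lines: analytic subset of a $\Pi^1_1$-rank has bounded rank, and bounded rank is exactly $\alpha$-balancedness for the bounding $\alpha$.
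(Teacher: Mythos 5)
Your proposal is correct and is the argument the paper intends: Corollary~\ref{cor:Analytic>Bounded} is an immediate consequence of Theorem~\ref{T:RegularRank} via the Boundedness Theorem for $\Pi^1_1$-ranks, which is exactly how you set it up.

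One small imprecision is worth fixing in your cofinality sketch. Writing $\mathrm{CLI}$ as $\bigcup_{G\in\mathcal{A}}\{H:\rk(H)\le\rk(G)\}$ and calling it analytic ``because it is a $\BSigma^1_1$-indexed union of Borel sets'' is not the right justification --- an analytic union of Borel sets is not in general analytic. What makes the union $\BSigma^1_1$ is the $\Sigma^1_1$ companion relation that comes with any $\Pi^1_1$-rank: from the two coanalytic relations $\le_\varphi$ and $<_\varphi$ of (\ref{EqPHI1})--(\ref{EqPHI2}) one defines $H\le^{\Sigma}_\varphi G:\iff\neg(G<_\varphi H)$, which is $\BSigma^1_1$ and, whenever $G\in\mathrm{CLI}$, is equivalent to $H\in\mathrm{CLI}\wedge\rk(H)\le\rk(G)$. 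Then
\[
\mathrm{CLI}=\big\{H:\exists G\,(G\in\mathcal{A}\wedge H\le^{\Sigma}_\varphi G)\big\}
\]
is visibly $\BSigma^1_1$, contradicting that $\mathrm{CLI}$ is not Borel. Also note that the abstract Boundedness Theorem you cite at the start already closes the proof, and it does not require non-Borelness of the ambient $\BPi^1_1$ set; the detour through Theorem~\ref{T:CompleteCoanalytic}/Malicki is a valid alternative but not needed.
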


This can 
be used to
justify on abstract grounds why various classes of Polish groups, such as the class of locally compact Polish groups,  consist of $\alpha$-balanced groups for bounded $\alpha$.
 
\subsection{Part II: the dynamics of $\alpha$-balanced Polish groups}
One of the major ongoing research programs of invariant descriptive set theory seeks to identify the
intrinsic complexity of various classification problems by organizing them according to their relative complexity.
Such classification problems  can often be represented as  \emph{orbit equivalence relations} of a continuous action of a Polish group on a Polish space. 
This creates a deep link between the complexity of classification problems and topological dynamics.
Our inability to fully classify some mathematical objects up to some notion of equivalence using simple enough invariants can often be traced to dynamics.

Formally, a {\bf classification problem} is a pair $(X,E)$ where $X$ is a Polish space and $E$ is analytic equivalence relation on $X$.
We consider the classification problem $(X,E)$ to be of lower or equal complexity than $(Y,F)$, denoted by  $(X,E)\leq_B (Y,F)$ or simply $E\leq_B F$, if there exist a {\bf Borel reduction} from $E$ to $F$.
A Borel reduction is a Borel map $f\colon X\to Y$ so that $x \mathrel{E} y  \iff f(x) \mathrel{F} f(y)$ for all $x,y\in X$.
Given such $(X,E)$, one is interested in finding  the simplest type of invariants 
$Y/F$  needed for classifying $(X,E)$ in the above sense. Some classical complexity classes in the lower part of the Borel reduction hierarchy consist of  the \emph{concretely classifiable} and the \emph{classifiable by countable structures} classification problems. These correspond to  those $(X,E)$ which can be classified using real numbers or isomorphism types of countable structures as invariants, respectively.  From an dynamical point of view they are precisely those $(X,E)$ which are \emph{classifiable by compact groups} or \emph{classifiable by non-archimedean groups}, respectively, in the following formal sense.
A {\bf Polish $G$-space} is a Polish space $X$ together with a continuous action $G\curvearrowright X$. This induces a classification problem $(X,E^G_X)$, where  $E^G_X$ is the associated {\bf orbit equivalence relation} on $X$ given by 
$x \mathrel{E^G_X} x'$ if and only if $gx=x'$ for some $g \in G$. 
We say that {\bf $E$ is classifiable by $G$} if there is an orbit equivalence relation $E^G_Y$ such that $E \le_B E^G_Y$.
Given a class $\mathcal{C}$ of Polish groups, we say that $E$ is {\bf classifiable by a $\mathcal{C}$ group} iff there is a Polish group in $\mathcal{C}$ which classifies $E$.

\begin{figure}[ht!]    
\begin{tikzpicture}[scale=0.5]
\node (-1) at (10, 10.5) {};
\node (+1) at (20, 10.5) {};
\node (b1) at (15, 1.5) {};
\draw (b1) .. controls (11,7) .. (-1);
\draw (b1) .. controls (19,7) .. (+1);
\node (b'1) at (15, 1.6) {$\bullet$};
\node (b''1) at (15, 1) {compact};

\node (d''1) at (12.7, 10) {non};
\node (d'1) at (12.7, 9.4) {archimedean};
\node (d1) at (12.5, 8.5) {$\bullet$};
\draw (b1) .. controls (11,7.5) .. (d1);
\draw (b1) .. controls (15.5,8) .. (d1);

\node (e'1) at (18.5,4) {TSI};
\node (e1) at (16, 6) {$\bullet$};
\draw (b1) .. controls (17.3,5) .. (e1);
\draw (b1) .. controls (14.5,5) .. (e1);
\node (e'1) at (21, 6) { $\boldsymbol{\alpha}$-balanced};
\node (e1) at (17, 8) {$\bullet$};
\draw[dashed] (b1) .. controls (18.5,6.7) .. (e1);
\draw[dashed] (b1) .. controls (14,7) .. (e1);
\node (kk1) at (20.5, 8) {CLI};
\node (k1) at (17, 10) {$\bullet$};
\draw (b1) .. controls (20,8.7) .. (k1);
\draw (b1) .. controls (13.5,8) .. (k1);
\end{tikzpicture}
\end{figure}

This dynamical interpretation of the Borel reduction hierarchy underlies  many negative anti-classification results. 
 Indeed,  \emph{generic ergodicity} is a classical dynamical condition for  $G\curvearrowright X$, going all the way back to \cite{Mackey+, Glimm++, Effros+}, which prevents $E^G_X$ from being classifiable by compact Polish groups. Similarly,  Hjorth's celebrated  \emph{theory of turbulence} \cite{Hjorth2000} provides  a much ``wilder" dynamical condition, which precludes   $E^G_X$  even from being classifiable by non-archimedean Polish groups. More recently a dynamical obstruction to classification by CLI groups was developed \cite{LupiniPanagio2018} as an alternative to the forcing theoretic notion of pinnedness  \cite{Kanovei2008, LarsonZapletal2020}. Finally,  in the precursor \cite{AP2021} to this current paper, the authors introduced \emph{generic unbalancedness}:
a dynamical property for  Polish $G$-spaces  $G\curvearrowright X$ which   precludes   $E^G_X$ from being  classifiable by TSI groups. 

Our ultimate goal  in this paper is to show that  {\bf classification by $\alpha$-balanced groups} forms a strictly increasing hierarchy of complexity classes in the Borel reduction hierarchy. In doing so we first introduce the following dynamical condition which turns out to entail \emph{strong generic ergodicity} properties against actions of  $\alpha$-balanced groups.

\begin{definition}\label{Def:Warrow}\label{D:GenericUnbalancedness}
Let $G\curvearrowright X$ be a Polish $G$-space and let  $V\subseteq_1 G$.
We recursively define binary relations $x \leftrightsquigarrow^{\alpha}_V y$ on $X$ for any countable ordinal $\alpha$ as follows.
\begin{enumerate}
\item   $x \leftrightsquigarrow^{0}_V y$   iff both $x \in \overline{V \cdot y}$ and $y \in \overline{V \cdot x}$ hold.
  \item  $x\leftrightsquigarrow^{\alpha}_V y$  iff  for every open $W\subseteq_1 G$ and  $U\subseteq X$, with either $x\in U$ or $y\in U$,   there exist   $g^x,g^y\in V$ with $g^xx,g^yy\in U$ so that   for every $\beta<\alpha$, $g^x x \leftrightsquigarrow^{\beta}_W  g^y y$ holds . 
\end{enumerate}  
We say that  $G\curvearrowright X$  is {\bf generically $\alpha$-unbalanced}  for some $\alpha>0$,
if   for every comeager $C\subseteq X$ there is a comeager $D\subseteq C$ so that for all $x,y\in D$  there exists a  finite sequence $x_0,\ldots, x_n\in C$  with $x_0=x, x_n =y$, so that  for all $i<n$ and  $\beta<\alpha$ we have  $x_i\leftrightsquigarrow^{\beta}_G x_{i+1}$.
\end{definition}

Let $\mathcal{C}$ be a class of Polish groups. A classification problem $(X,E)$  is  {\bf  generically ergodic against $\mathcal{C}$ groups} if for every Polish $H$-space $Y$, with  $H\in\mathcal{C}$, and any Baire-measurable map  $f\colon X\to Y$ with $x \mathrel{E} y \implies f(x) \mathrel{E^H_Y} f(y)$ there exists a comeager $C\subseteq X$ with $f(x) \mathrel{E^H_Y} f(y)$ for all $x,y\in C$. The following are  the  main results of Part II of our paper.

\begin{theorem}\label{thm:main2}
Let $\alpha$ be a countable ordinal. If $G\curvearrowright X$ is generically $\alpha$-unbalanced   Polish $G$-space, then $E^G_X$  is   generically ergodic against $\alpha$-balanced Polish groups.
\end{theorem}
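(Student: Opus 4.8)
The plan is to adapt the template of turbulence-style anti-classification results to the finer hierarchy of $\alpha$-balanced groups, using Definition~\ref{Def:alpha-balanced} as the inductive engine. Fix an $\alpha$-balanced Polish group $H$, a Polish $H$-space $Y$, and a Baire-measurable $f\colon X\to Y$ with $xE^G_X y\implies f(x)E^H_Y f(y)$; we must produce a comeager $C\subseteq X$ on which $f$ is $E^H_Y$-invariant. First I would arrange that $f$ is continuous --- either by passing to the comeager set on which $f$ is relatively continuous and then absorbing it into the comeager set supplied by generic $\alpha$-unbalancedness (whose definition lets us thin an arbitrary comeager set), or by a Becker--Kechris style refinement of the topology on $X$; the bookkeeping needed to keep the relevant orbit data unchanged is routine. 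Then, by generic $\alpha$-unbalancedness, any fixed comeager $C_0$ contains a comeager $D$ in which every two points are joined by a finite $C_0$-chain $x_0,\dots,x_n$ with $x_i\leftrightsquigarrow^{\beta}_G x_{i+1}$ for all $i<n$ and all $\beta<\alpha$. Hence it suffices to prove the single-step assertion: if $x\leftrightsquigarrow^{\beta}_G y$ for every $\beta<\alpha$, then $f(x)=f(y)$; concatenating along a chain then makes $f$ constant on $D$, which is more than is required.

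I would deduce the single-step assertion from the following claim, proved by transfinite induction on $\beta$: \emph{for every ordinal $\beta$ and every $V\subseteq_1 H$ with $\rk(V,H)\le\beta$, if $x\leftrightsquigarrow^{\beta}_G y$ then $f(x)\in\overline{V\cdot f(y)}$.} The base case $\beta=0$ is immediate, since $\rk(V,H)\le 0$ forces $V=H$ while $x\leftrightsquigarrow^{0}_G y$ gives $x\in\overline{G\cdot y}$, so $f(x)\in\overline{f(G\cdot y)}\subseteq\overline{H\cdot f(y)}=\overline{V\cdot f(y)}$ by continuity of $f$ and the homomorphism property. Granting the claim, the endgame is short: since $H$ is $\alpha$-balanced we have $\rk(V,H)<\alpha$ for every $V\subseteq_1 H$, so if $x\leftrightsquigarrow^{\beta}_G y$ holds for all $\beta<\alpha$ then $f(x)\in\overline{V\cdot f(y)}$ for every $V\subseteq_1 H$. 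By continuity of the action $H\curvearrowright Y$ at $(1_H,f(y))$, every neighborhood of $f(y)$ contains $V\cdot f(y)$ for some $V\subseteq_1 H$, so regularity of $Y$ gives $\bigcap_{V\subseteq_1 H}\overline{V\cdot f(y)}=\{f(y)\}$, whence $f(x)=f(y)$.

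The inductive step is the heart of the matter. Assume $\rk(V,H)\le\beta$ with $\beta>0$, fix a witnessing $W_0\subseteq_1 H$ with $\rk(V,hW_0h^{-1})<\beta$ for all $h\in H$, and assume $x\leftrightsquigarrow^{\beta}_G y$. To see $f(x)\in\overline{V\cdot f(y)}$, fix a basic open $O\ni f(x)$, choose (by continuity of $f$) an open $P\ni x$ with $f(P)\subseteq O$, and feed $U:=P$ together with an as-yet-unspecified small $W'\subseteq_1 G$ into clause~(2) of Definition~\ref{Def:Warrow}; this returns $g^x,g^y\in G$ with $g^x x,g^y y\in P$ and $g^x x\leftrightsquigarrow^{\beta'}_{W'} g^y y$ for every $\beta'<\beta$. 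As $\leftrightsquigarrow^{\beta'}_{W'}\subseteq\leftrightsquigarrow^{\beta'}_G$, the induction hypothesis applies to the relocated pair for every neighborhood of rank $<\beta$, in particular for each conjugate $hW_0h^{-1}$; transferring the resulting closeness back through $f$ via continuity of $f$ and of the two actions, and using that $g^x x,g^y y$ were forced into the prescribed set $P$, one aims to extract a point of $V\cdot f(y)$ inside $O$.

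The one genuine difficulty is that $f$ is not equivariant, so ``$V$-closeness in $Y$'' does not transport along the $H$-action; the hypothesis $\rk(V,H)\le\beta$ is precisely what repairs this. I expect the right device is to replace $\overline{V\cdot y}$ throughout by an $H$-equivariant ``ball'' $B^{\beta}_V(y)$ assembled from the witnesses $W_0$ by a recursion mirroring Definition~\ref{Def:alpha-balanced}, with $\overline{W_0\cdot y}\subseteq B^{\beta}_V(y)\subseteq\overline{V\cdot y}$; equivariance makes the transfer along $g^x,g^y$ lossless, so the induction closes, while $\bigcap_{V,\beta}B^{\beta}_V(f(y))\subseteq\bigcap_V\overline{V\cdot f(y)}=\{f(y)\}$ leaves the endgame untouched. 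Constructing $B^{\beta}_V$ and verifying its equivariance, and checking that the clause ``$U$ is an open set containing $x$ or $y$'' in Definition~\ref{Def:Warrow} supplies enough somewhere-density to align the closures $\overline{f(\,\cdot\,)}$ with the balls $B^{\beta}_V(f(\,\cdot\,))$, is where I expect essentially all of the work to be; the remaining continuity and Baire-category bookkeeping is standard.
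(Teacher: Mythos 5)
Your proposal has a fundamental gap: the ``single-step assertion'' asks to prove $f(x)=f(y)$, which is strictly stronger than the theorem's conclusion $f(x)\mathrel{E^H_Y}f(y)$ and is false in general. Concretely, take $H=\mathbb{R}$ acting on $Y=\mathbb{T}$ by rotations, so that $E^H_Y$ is the full relation on $\mathbb{T}$. Then \emph{any} continuous $f\colon X\to\mathbb{T}$ is an $(E^G_X,E^H_Y)$-homomorphism; if $f$ is a continuous surjection (e.g.\ $f(x)=\sum_n x(a_n)2^{-n}\bmod 1$ for an enumeration $(a_n)$ of the underlying index set of a Bernoulli shift), then each fiber $f^{-1}(\{t\})$ is nowhere dense, so $f$ is not constant on any comeager set. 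Since $\mathbb{R}$ is TSI hence $2$-balanced, this kills your endgame already at $\alpha=2$. The chain/concatenation step, which reduces to the single-step assertion, therefore yields a false conclusion, and the issue is not in the bookkeeping around continuity of $f$.

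There is also a more technical confusion in the inductive step: the witness $W_0$ for $\rk(V,H)\le\beta$ controls $\rk(V, hW_0h^{-1})$, \emph{not} $\rk(hW_0h^{-1},H)$. By Proposition~\ref{prop:basic_properties_of_rk}(2), $\rk(hW_0h^{-1},H)=\rk(W_0,H)$, which is generically not smaller than $\beta$; so your induction hypothesis (stated with second argument fixed at $H$) never applies to the conjugates you want to feed it. The paper's Lemma~\ref{L:FinalAris} is precisely set up with both arguments of $\rk(V,U)$ varying so that the induction closes. Your proposed fix via $H$-equivariant balls $B^\beta_V(y)$ also cannot be made to work: if $B^\beta_V$ is $H$-equivariant and $B^\beta_V(y)\subseteq\overline{Vy}$, then $B^\beta_V(hy)=hB^\beta_V(y)\subseteq\overline{hVy}$, but $\overline{hVy}\neq\overline{Vhy}$ precisely when $H$ fails to be TSI, so the two constraints are incompatible for the groups at issue. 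The paper's argument is structured differently and each of its three pieces is doing work you cannot avoid: Theorem~\ref{Th:SguigglePushesForward} pushes the relations $\leftrightsquigarrow^\beta_G$ forward along $f$ (this is where Hjorth's orbit-continuity lemma is used, not merely ``arranging continuity of $f$''); Lemma~\ref{L:FinalAris} uses $\alpha$-balancedness of $H$ to obtain the much weaker relation $f(x)\sim^1_H f(y)$ rather than equality; and Theorem~\ref{Theorem:SimGenErg} is a transfinite change-of-topology on $Y$ (making suitable Vaught transforms open) that upgrades $\sim^1_H$-ergodicity across all compatible $H$-Polish topologies to $E^H_Y$-ergodicity. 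The last step is the essential substitute for your equality claim, and without it the argument cannot close.
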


\begin{corollary}\label{Cor:1}
Let $G\curvearrowright X$ be a generically $\alpha$-unbalanced   Polish $G$-space. If   $G\curvearrowright X$ has meager orbits, then  $(X,E^G_X)$ is not classifiable by actions of   $\alpha$-balanced Polish groups.
\end{corollary}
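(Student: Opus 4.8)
The plan is to obtain this as an immediate consequence of Theorem \ref{thm:main2} together with an elementary Baire category argument; essentially all of the genuine work has already been absorbed into that theorem. We may assume $X\neq\emptyset$, since otherwise the statement is vacuous. Arguing by contradiction, suppose $(X,E^G_X)$ is classifiable by an action of some $\alpha$-balanced Polish group. Unpacking the definition, this means there is an $\alpha$-balanced Polish group $H$, a Polish $H$-space $Y$, and a Borel reduction $f\colon X\to Y$ with $x E^G_X y \iff f(x) E^H_Y f(y)$ for all $x,y\in X$. In particular $f$ is Baire-measurable and satisfies $x E^G_X y \implies f(x) E^H_Y f(y)$, so it is exactly the kind of map to which generic ergodicity against $\alpha$-balanced groups applies.

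Next I would invoke Theorem \ref{thm:main2}: since $G\curvearrowright X$ is generically $\alpha$-unbalanced, $(X,E^G_X)$ is generically ergodic against $\alpha$-balanced Polish groups. Applied to the map $f$ and the Polish $H$-space $Y$, this produces a comeager set $C\subseteq X$ on which $f$ is constant up to $E^H_Y$, i.e.\ $f(x) E^H_Y f(y)$ for all $x,y\in C$. Because $f$ is a reduction, this upgrades to $x E^G_X y$ for all $x,y\in C$; fixing any $x_0\in C$ (which exists, as $C$ is comeager in the nonempty Polish space $X$), we conclude that $C$ is contained in the single orbit $[x_0]_{E^G_X}$.

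Finally I would derive the contradiction from the meagerness hypothesis: the orbit $[x_0]_{E^G_X}$ is meager by assumption, yet it contains the comeager set $C$ and is therefore itself comeager; hence $X=[x_0]_{E^G_X}\cup\bigl(X\setminus[x_0]_{E^G_X}\bigr)$ is a union of two meager sets, contradicting the Baire category theorem. I do not expect any real obstacle here. The only point requiring a line of justification is that ``classifiable by actions of $\alpha$-balanced Polish groups'' indeed furnishes a Baire-measurable, $E^G_X$-respecting map into a Polish $H$-space with $H$ an $\alpha$-balanced group, which is precisely the input shape demanded by the definition of generic ergodicity against $\alpha$-balanced groups; everything else is routine bookkeeping, and all the difficulty lies in Theorem \ref{thm:main2}.
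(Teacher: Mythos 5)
Your proof is correct and is precisely the standard argument the paper leaves implicit after Theorem \ref{thm:main2}: a Borel reduction to an orbit equivalence relation of an $\alpha$-balanced group is in particular a Baire-measurable homomorphism, generic ergodicity then collapses a comeager set into a single orbit, and that orbit being both comeager and meager violates the Baire category theorem. There is no gap, and no difference in approach to report.
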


The proof of Theorem \ref{thm:main2}  combines the main arguments from  Hjorth's turbulence theorem and  a transfinite change of topology argument  based on a variant of  Scott-Hjorth analysis of orbits, following the general lines of the proof of
\cite[Theorem 1.3]{AP2021}. 

In order to show that ``classification by $\alpha$-balanced groups" forms a strictly increasing hierarchy  of complexity classes, it suffices  to exhibit for each $\alpha<\omega_1$ an action of an $(\alpha+1)$-balanced Polish group satisfying Corollary \ref{Cor:1}. We show that the Bernoulli shift of the automorphism group a certain  $\alpha$-scattered linear order $\mathbb{Z}[\alpha]$ has these properties.

\begin{theorem}\label{T:mainBernoulli}
The Bernoulli shift  of $\mathrm{Aut}(\mathbb{Z}[\alpha])$ is generically $\alpha$-unbalanced, for all $\alpha<\omega_1$.
\end{theorem}
 
The proof of Theorem \ref{T:mainBernoulli} turns out to be surprisingly elaborate,  as a naive transfinite induction based on successor and limit stages does not seem to apply. Indeed, while some basic theory of the \emph{wreath-product  jump}   $(G\curvearrowright X) \mapsto \big((\mathbb{Z} \mathrm{Wr} G) \curvearrowright X^{\mathbb{Z}}\big)$ from \cite{AP2021} can be used to propagate  the induction from $\alpha$ to $\alpha+1$ in the statement of Theorem \ref{T:mainBernoulli}, one cannot simply thread these arguments together to deal with limit stages. It turns out that one needs instead to induct  on the length $m$ of the Cantor normal form of $\alpha$ 
\[
\alpha = \omega^{\lambda_m} + \omega^{\lambda_{m-1}}+\cdots  +\omega^{\lambda_1}, \quad \quad \lambda_m\geq \lambda_{m-1}\geq \ldots\geq \lambda_1, \quad \quad m\geq 1,\]
as the length $n$ of the path needed in witnessing generic $\alpha$-unbalancedness, according to  Definition \ref{Def:Warrow},  turns out to be  a function of $m$.  To deal with the ``atomic" cases  $\alpha=\omega^{\lambda}$ we introduce, similarly to the wreath-product jump, a  jump operation  for each ordinal of the form $\omega^{\lambda}$ and based on the fact that these ordinals are closed under Hessenberg addition $\oplus$ we show that these jumps admit  a uniform path-doubling ``fusion" procedure.

\subsection*{Acknowledgments}

We would like to thank the anonymous referee whose comments helped us improve the presentation. We  also thank L. Ding  for bringing to our attention a mistake in an earlier version of this manuscript, which has now been corrected.

\begin{LARGE}
\part{The class of $\alpha$-balanced Polish groups}\label{PartI}
\end{LARGE}

In Part I of this paper we develop the theory of $\alpha$-balanced Polish groups. Although we are primarily interested in Polish groups, some of the general theory we develop makes sense in the context of the more general classes of topological groups or  metrizable topological groups. In particular, Sections \ref{S:alpha_balanced} and  \ref{S:Closure} define the ``balanced rank"  assignment $G\mapsto \rk(G)$  and study its closure properties in the most general category of all topological groups. In Section \ref{S:Metrizable} we  prove  Theorem \ref{thm:CLI} by deriving it from a more general statement, which addresses the category of all metrizable topological groups.

Section \ref{S:Deissler} studies  $\alpha$-balancedness in the context of automorphism groups $\mathrm{Aut}(\mathcal{M})$ of countable structures $\mathcal{M}$. In particular, we show that $\rk(\mathrm{Aut}(\mathcal{M}))$ admits a  model-theoretic interpretation in terms of (a minor modification of) the Deissler rank  \cite{Deissler1997} of the structure $\mathcal{M}$. As an application of this model-theoretic viewpoint, we establish Theorem \ref{T:CompleteCoanalytic}. In Section \ref{S:Boundedness} we derive the ``boundedness principle" stated in Corollary \ref{cor:Analytic>Bounded}.  Finally, in Section \ref{S:Malicki} we discuss a natural weakening of Definition \ref{Def:alpha-balanced}, leading  to the class of \emph{weakly $\alpha$-balanced} Polish groups, and we connect this class to a rank that Malicki developed for Polish permutation groups in  \cite{Malicki2011}.

We now recall some definitions and establish some conventions that  will be used in the paper.

\smallskip{}

\subsection{Definitions and notation} A {\bf topological group} is a group $G$ together with a topology on $G$, rendering both the multiplication map $G\times G\to G$ and the inversion map $G\to G$  continuous. Recall that if $G$ is a topological group, then $V\subseteq_1 G$ is the notation we use for ``\emph{$V$ is an open neighborhood for the identity element $1_G$ of $G$}." 
If $N$ is a set, then 
 $\mathrm{Sym}(N)$ denotes the topological group of all permutations of $N$, endowed with the pointwise convergence topology, putting the discrete topology on $N$. A basic  $V\subseteq _1\mathrm{Sym}(N)$ is the stabilizer of finitely many points of $N$. A {\bf Polish permutation group} is a closed subgroup $P\leq \mathrm{Sym}(N)$ where $N$ is countable.
Finally, we will assume some familiarity with standard algebraic operations on ordinals. For example, the reader should be familiar with the fact that $\alpha+\beta = \beta+\alpha$ does not necessarily hold for two arbitrary ordinals $\alpha,\beta$.

\section{The $\alpha$-balanced topological groups}\label{S:alpha_balanced}

Let  $G$ be a topological group. For an ordinal $\beta$  and any $U,V\subseteq_1 G$ , recall that we define inductively:
\begin{itemize}
    \item $\rk(V,U)= 0$ if and only if $U\subseteq V$;
    \item  $\rk(V,U)\leq \beta$ if there is  $W\subseteq_1  G$ so that for any $g\in U$ we have $\rk(V,gW g^{-1})< \beta$;
    \item $\rk(V,U)= \infty$ if there is no ordinal $\beta$ for which $\rk(V,U)\leq \beta$.
\end{itemize}
When dealing with several topological groups, we will more verbosely write 
\[\rk(V,U;G)\]
to keep track of the ambient topological group in which the computation is  taking place.

\begin{definition}
Let $G$ be a topological group. We say that $G$ is $\alpha$-balanced if and only if
\[\rk(G):=\sup\{\rk(V,G)+1 \colon V\subseteq_1 G\}\leq \alpha.\]
\end{definition}

Trivially, there exist no $0$-balanced topological groups and  the only $1$-balanced topological group is the trivial  group consisting only of the identity element.
The class of all $2$-balanced topological groups coincides with the class of all topological $G$ which admit a basis of conjugation-invariant open neighborhoods of $1_G$. Indeed, $G$ is $2$-balanced if and only if for every $V\subseteq_1 G$ there exists some $W\subseteq_1 G$ so that $gWg^{-1}\subseteq V$. In particular, every open neighborhood $V$ of $1_G$ contains the conjugation-invariant open neighborhood of $1_G$ that is given by 
\[\widetilde{W}:=\bigcup_{g\in G}gWg^{-1}\]
As a consequence of the metrizability theorem of Klee \cite{Klee} we have that the class of all $2$-balanced Polish groups coincides with the class of all  TSI Polish groups. 
\begin{example} \label{Example:LocComp}
We spell out the property that $G$ is $3$-balanced: given any $V \subseteq _1G$ we have
\[\exists W \subseteq_1 G \quad \forall g\in G \quad \exists W'\subseteq_1 G \quad  \forall h \in gWg^{-1} \quad \big(h W' h^{-1} \subseteq V\big). \]
Any locally-compact topological group is $3$-balanced. Indeed, let $W\subseteq_1 G$ with $\overline{W}$ compact and notice that for all $V\subseteq_1 G$ and  $g\in G$ the set $K:=\overline{g W g^{-1}}$ is compact  and
\[F:=\{(h,v)\in K\times V \colon  h vh^{-1}\not\in V \}\]
is a closed subset of $K\times V$. Since $(h,1)\not\in F$ for all $h\in K$, by the ``tube lemma" as well as the continuity of the action by conjugacy, there exists some $W'\subseteq_1 G$ so that $hW'h^{-1}\subseteq V$ for all $h\in K$. 
Though we prove this more generally later, we can already see that the class of $3$-balanced Polish groups strictly contains the class of $2$-balanced Polish groups, since not every locally-compact Polish group is TSI (for example, take $\mathrm{SL}_2(\mathbb{R})$ \cite{Gao2008,Becker1}).
\end{example}

The next proposition  collects some basic properties that will be useful in what follows.

\begin{proposition}\label{prop:basic_properties_of_rk}
Let $G$ be a topological group and let $U,V,U',V'\subseteq_1 G$ and  $h\in G$. Then:
\begin{enumerate}
\item if  $V' \subseteq V$ and $U\subseteq U'$, then $\rk(V,U)\leq \rk(V',U')$;
\item $\rk(hVh^{-1}, hUh^{-1}) = \rk(V, U)$; and
\item $\rk(V \cap V', U) \leq \max\{\rk(V, U), \rk(V', U)\}$.
\end{enumerate}
\end{proposition}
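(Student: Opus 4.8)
The plan is to prove each of the three items by induction on the ordinal rank, using the inductive structure built into the definition of $\rk(V,U)$. For item $(1)$, I would show by induction on $\beta$ that $\rk(V',U')\leq\beta$ implies $\rk(V,U)\leq\beta$ whenever $V'\subseteq V$ and $U\subseteq U'$. The base case $\beta=0$ is immediate: $U'\subseteq V'$ gives $U\subseteq U'\subseteq V'\subseteq V$. For the successor/limit step, if $\rk(V',U')\leq\beta$ is witnessed by some $W\subseteq_1 G$ with $\rk(V',gWg^{-1})<\beta$ for all $g\in U'$, then for $g\in U\subseteq U'$ we have $\rk(V',gWg^{-1})<\beta$, and since $V'\subseteq V$ and $gWg^{-1}\subseteq gWg^{-1}$ (same set) the inductive hypothesis gives $\rk(V,gWg^{-1})<\beta$; hence the same $W$ witnesses $\rk(V,U)\leq\beta$. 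One subtlety: the statement bundles two monotonicities (shrinking the first coordinate, enlarging the second), and I want to make sure the inductive hypothesis is applied with the right instance — shrinking $V'$ to $V$ in the first slot, keeping the conjugate-neighborhood slot fixed.

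For item $(2)$, conjugation-invariance, I would again induct on $\beta$, showing $\rk(V,U)\leq\beta\iff\rk(hVh^{-1},hUh^{-1})\leq\beta$. The key observation is that for $g\in G$ the map $g\mapsto hgh^{-1}$ is a bijection of $G$ conjugating things compatibly: $h(gWg^{-1})h^{-1}=(hgh^{-1})(hWh^{-1})(hgh^{-1})^{-1}$. So if $W$ witnesses $\rk(V,U)\leq\beta$, then $hWh^{-1}$ witnesses $\rk(hVh^{-1},hUh^{-1})\leq\beta$: for $g'\in hUh^{-1}$ write $g'=hgh^{-1}$ with $g\in U$, and $\rk(hVh^{-1},g'(hWh^{-1})g'^{-1})=\rk(hVh^{-1},h(gWg^{-1})h^{-1})<\beta$ by the inductive hypothesis applied to $\rk(V,gWg^{-1})<\beta$. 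The converse is symmetric (replace $h$ by $h^{-1}$). The base case is just $U\subseteq V\iff hUh^{-1}\subseteq hVh^{-1}$.

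For item $(3)$, I would show by induction on $\beta$ that if $\rk(V,U)\leq\beta$ and $\rk(V',U)\leq\beta$ then $\rk(V\cap V',U)\leq\beta$; the claim about $\max$ follows since each of $\rk(V,U),\rk(V',U)$ is $\le\max\{\rk(V,U),\rk(V',U)\}$ and by item $(1)$ both can be taken $\le\beta$ for $\beta$ the max (and if either is $\infty$ the inequality is trivial). Base case: $U\subseteq V$ and $U\subseteq V'$ gives $U\subseteq V\cap V'$. Step: let $W$ witness $\rk(V,U)\leq\beta$ and $W'$ witness $\rk(V',U)\leq\beta$; I claim $W\cap W'$ witnesses $\rk(V\cap V',U)\leq\beta$. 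For $g\in U$, we have $g(W\cap W')g^{-1}=gWg^{-1}\cap gW'g^{-1}$, and $\rk(V,gWg^{-1})<\beta$ together with item $(1)$ gives $\rk(V,gWg^{-1}\cap gW'g^{-1})<\beta$; similarly $\rk(V',gWg^{-1}\cap gW'g^{-1})<\beta$; now the inductive hypothesis (at the ordinal $\gamma<\beta$ which is the max of these two, or rather: apply item $(1)$ to push both up to a common $\gamma<\beta$ and then the inductive hypothesis at $\gamma$) gives $\rk(V\cap V', g(W\cap W')g^{-1})<\beta$.

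The main obstacle I anticipate is purely bookkeeping: in items $(1)$ and $(3)$ the inductive hypothesis is not simply "at $\beta$" but must be invoked at ordinals strictly below $\beta$, and one must be careful that "$<\beta$" is preserved — this is fine because item $(1)$ lets us replace any ordinal bound by a larger one below $\beta$, so a common $\gamma<\beta$ can always be found to run the induction cleanly. There is no topological content here beyond the fact that finite intersections and conjugates of elements of $\subseteq_1 G$ are again in $\subseteq_1 G$; the entire argument is a transfinite induction on the definitional rank. I would state the strengthened "$\leq\beta$" versions as the induction hypotheses and then read off the displayed inequalities.
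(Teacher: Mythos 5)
Your proof is correct and follows essentially the same route as the paper's: all three items are established by a straightforward transfinite induction on the rank bound $\beta$, with the same choices of witnessing neighborhoods (the given $W$ for item (1), $hWh^{-1}$ for item (2), and $W\cap W'$ for item (3), the last using item (1) to shrink both $gWg^{-1}$ and $gW'g^{-1}$ to $g(W\cap W')g^{-1}$). The one slight inaccuracy is your remark about using item (1) to ``push both ranks up to a common $\gamma<\beta$'' — what you really need there is just the trivial monotonicity of the relation $\rk(\cdot,\cdot)\leq\alpha$ in $\alpha$, not item (1) — but this doesn't affect the validity of the argument.
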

\begin{proof}
(1) follows by induction on $\alpha := \rk(V', U')$.
If  $\rk(V', U')=0$ holds, then 
\[U \subseteq U' \subseteq V' \subseteq V\]
and thus   $\rk(V, U) = 0$  holds.
Let now $\alpha > 0$ and assume by inductive hypothesis that (1)  holds if   $\rk(V', U')<\alpha$. But if  $\rk(V', U')=\alpha$ holds, then there is $W\subseteq_1 G$ so that
\[\rk(V', gWg^{-1}) < \alpha  \text{  for all } g \in U'.\]
But then, by the induction hypothesis and the assumption $U\subseteq U'$, we have 
\[\rk(V, gWg^{-1}) < \alpha  \text{  for all } g \in U.\]
We therefore conclude  that $\rk(V, U) \le \alpha$ also holds. \smallskip{}
  
(2) follows by a similar induction on $\alpha := \rk(V, U)$. 
For $\alpha = 0$, simply observe that 
\[U \subseteq V \quad  \iff \quad hUh^{-1} \subseteq hVh^{-1}.\] 
Assume now that $\alpha>0$ and fix some $W\subseteq_1 G$ so that for every $g \in U$ we have that $\rk(V, gWg^{-1}) < \alpha$. 
By the induction hypothesis, for every $g \in U$ we have  that
\[\rk(hVh^{-1},hgWg^{-1}h^{-1}) < \alpha.\]
Hence,   $\widehat{W} = hWh^{-1}$ witnesses that for every $f=hgh^{-1} \in hUh^{-1}$ we have that
\[\rk(hVh^{-1}, f\widehat{W}f^{-1}) < \alpha,\]
and thus $\rk(hVh^{-1}, hUh^{-1}) \le \alpha$ also holds.\smallskip{}

(3) is proved by induction on $\alpha := \max\{\rk(V, U), \rk(V', U)\}$.
For $\alpha = 0$, we have
\[U \subseteq V \text{ and }  U\subseteq V' \quad \iff\quad U \subseteq V \cap V'.\]
Assume now that $\alpha>0$ and fix  $W, W'\subseteq_1 G$ such that for every $g \in U$ we have 
\[\rk(V, gWg^{-1})< \alpha \text{ and } \rk(V', gW'g^{-1}) < \alpha.\]
Taking $\widehat{W} := W \cap W'$ and invoking  (1),  for every $g \in U$  we have that
\[\rk(V, g\widehat{W}g^{-1}) \text{ and }  \rk(V', g\widehat{W}g^{-1}) < \alpha.\]
By the induction hypothesis $\rk(V \cap V', g\widehat{W}g^{-1}) < \alpha$ holds for all $g\in U$, and thus
\[\rk(V \cap V', U) \leq \alpha\]
as desired.
\end{proof}

We close this section with a lemma which implies that, when it comes to second-countable topological groups, $\rk(V,U)$ can be computed using a countable amount of data.

\begin{lemma}\label{L:rk*}
Let $Q$ be a dense subgroup of $G$ and let $\mathcal{V}$ be a local basis of open neighborhoods of $1_G$. 
Suppose $\rk_{Q,\mathcal{V}}(V,U)$ is defined the same way as $\rk(V,U)$ except that in the definition of $\rk_{Q,\mathcal{V}}(V,U)<\beta$ for $\beta>0$ we additionally require that $W\in\mathcal{V}$ and $g\in Q\cap U$.
Then for every $U,V\subseteq_1 G$ we have that $\rk(V, U) = \rk_{Q,\mathcal{V}}(V,U)$.
\end{lemma}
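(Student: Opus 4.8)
The plan is to prove the two inequalities $\rk(V,U) \le \rk_{Q,\mathcal{V}}(V,U)$ and $\rk_{Q,\mathcal{V}}(V,U) \le \rk(V,U)$ separately, both by transfinite induction on the respective ordinal ranks. One direction is essentially trivial: since the definition of $\rk_{Q,\mathcal{V}}$ only restricts the witnesses $W$ (to lie in $\mathcal{V}$) and the elements $g$ (to lie in $Q\cap U$), any witness configuration for $\rk_{Q,\mathcal{V}}(V,U)\le\beta$ is in particular a witness configuration for $\rk(V,U)\le\beta$. Hence $\rk(V,U)\le\rk_{Q,\mathcal{V}}(V,U)$ follows immediately by induction (the base case $U\subseteq V$ being identical in both definitions).

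The substantive direction is $\rk_{Q,\mathcal{V}}(V,U)\le\rk(V,U)$, which I would prove by induction on $\beta=\rk(V,U)$. The base case is again trivial. For the inductive step, suppose $\rk(V,U)\le\beta$ with $\beta>0$, witnessed by some open $W\subseteq_1 G$ such that $\rk(V,gWg^{-1})<\beta$ for all $g\in U$. I need to replace $W$ by some $W_0\in\mathcal{V}$ and reduce the quantifier over $g\in U$ to one over $g\in Q\cap U$. First, choose $W_0\in\mathcal{V}$ with $W_0\subseteq W$; by Proposition~\ref{prop:basic_properties_of_rk}(1), $\rk(V,gW_0g^{-1})\le\rk(V,gWg^{-1})<\beta$ for all $g\in U$, so $W_0$ also witnesses $\rk(V,U)\le\beta$. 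In particular, by the induction hypothesis applied to each pair $(V,gW_0g^{-1})$, we get $\rk_{Q,\mathcal{V}}(V,gW_0g^{-1})<\beta$ for all $g\in U$ — so it only remains to deal with the $g\in Q\cap U$ restriction.

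The key point for the density reduction is a continuity/openness argument: for a fixed open $W_0\subseteq_1 G$ and a fixed element $q$ near $g$, the conjugate $qW_0q^{-1}$ should contain, or be comparable to, a shrunken conjugate $gW_1g^{-1}$ for suitable $W_1\in\mathcal{V}$, uniformly in a neighborhood of $g$. Concretely, given $g\in U$, I would use continuity of the map $(x,w)\mapsto xwx^{-1}$ at $(g,1_G)$ together with the fact that conjugation by a fixed element is a homeomorphism: pick $W_1\in\mathcal{V}$ and an open $O\ni g$ (with $O\subseteq U$, using that $U$ is open) such that $qW_1q^{-1}\subseteq gW_0g^{-1}$ for all $q\in O$ — equivalently $W_1\subseteq (q^{-1}g)W_0(q^{-1}g)^{-1}$, which holds on a neighborhood of $g$ since the right-hand side tends to $W_0$ as $q\to g$. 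Since $Q$ is dense, pick $q\in Q\cap O\subseteq Q\cap U$. Then by monotonicity and the induction hypothesis, $\rk_{Q,\mathcal{V}}(V,qW_1q^{-1})\le\rk_{Q,\mathcal{V}}(V,gW_0g^{-1})<\beta$. Here one has to be slightly careful: the witness $W_1$ may depend on $g$, so to produce a single $W\in\mathcal{V}$ working for all $q\in Q\cap U$ I would instead argue the other way round — start the whole argument by covering $U$ by neighborhoods $O_g$ as above with associated $W_1^g\in\mathcal{V}$, but since $\mathcal{V}$ need not be closed under finite intersections and $U$ need not be Lindelöf in a helpful way, the cleanest fix is: fix $W_0\in\mathcal{V}$ first as the witness, then show directly that for \emph{every} $g\in U$ there is $q\in Q\cap U$ and $W_1^g\in\mathcal{V}$ with $qW_1^gq^{-1}\subseteq gW_0g^{-1}$...

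I expect the main obstacle to be exactly this last bookkeeping subtlety: the definition of $\rk_{Q,\mathcal{V}}(V,U)\le\beta$ demands a \emph{single} $W\in\mathcal{V}$ that works simultaneously for all $g\in Q\cap U$, whereas the naive argument produces a $W$ depending on $g$. The resolution is that we do not need to change $W$ at all: fix $W_0\in\mathcal{V}$ with $W_0\subseteq W$ as above, and show that $W_0$ itself witnesses $\rk_{Q,\mathcal{V}}(V,U)\le\beta$, i.e. that $\rk_{Q,\mathcal{V}}(V,qW_0q^{-1})<\beta$ for every $q\in Q\cap U$ (not just $q\in U$). But $q\in U\subseteq G$, so $\rk(V,qW_0q^{-1})<\beta$ already holds by the choice of $W_0$, and the induction hypothesis applied to the pair $(V,qW_0q^{-1})$ directly gives $\rk_{Q,\mathcal{V}}(V,qW_0q^{-1})\le\rk(V,qW_0q^{-1})<\beta$. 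So in fact no density argument is needed at all for the conjugation witness; density of $Q$ and the basis property of $\mathcal{V}$ are used only to guarantee that the restricted quantifiers ``$W\in\mathcal{V}$'' and ``$g\in Q\cap U$'' can be met — the former by shrinking $W$ inside $\mathcal{V}$, the latter being automatically satisfied since $Q\cap U\subseteq U$. I would double-check that this streamlined argument is correct and, if the definition truly requires the recursion to be computed only along $g\in Q\cap U$ in a way that weakens the hypothesis, supply the continuity argument sketched above as the genuine content; either way the transfinite induction closes and yields $\rk_{Q,\mathcal{V}}(V,U)=\rk(V,U)$ for all $U,V\subseteq_1 G$.
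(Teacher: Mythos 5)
You have interchanged the two directions. The easy direction is $\rk_{Q,\mathcal{V}}(V,U)\le\rk(V,U)$, which you in fact prove correctly (after several detours) in your final paragraph. The hard direction is $\rk(V,U)\le\rk_{Q,\mathcal{V}}(V,U)$, which your opening paragraph dismisses as trivial with an argument that does not work. The problem is that the element $g$ in the recursive clause is \emph{universally} quantified: restricting its range from $U$ to $Q\cap U$ makes the clause ``$\exists W\,\forall g\,\rk(V,gWg^{-1})<\beta$'' \emph{easier}, not harder, to satisfy. A witness $W\in\mathcal{V}$ for $\rk_{Q,\mathcal{V}}(V,U)\le\beta$ therefore only gives $\rk_{Q,\mathcal{V}}(V,gWg^{-1})<\beta$ for $g\in Q\cap U$, which falls short of the ``$\forall g\in U$'' needed to witness $\rk(V,U)\le\beta$. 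Your claim that ``any witness configuration for $\rk_{Q,\mathcal{V}}(V,U)\le\beta$ is in particular a witness configuration for $\rk(V,U)\le\beta$'' is false, and you never repair this direction.

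The density/continuity argument you sketch in the middle of the proposal and then discard as unnecessary is exactly what the missing direction requires. The paper's proof of $\rk(V,U)\le\rk_{Q,\mathcal{V}}(V,U)$ proceeds by induction on $\alpha=\rk_{Q,\mathcal{V}}(V,U)$: for $\alpha>0$ fix a witness $W\in\mathcal{V}$, so by the induction hypothesis $\rk(V,gWg^{-1})<\alpha$ for every $g\in Q\cap U$. Choose a symmetric $W_0\subseteq_1 G$ with $W_0^3\subseteq W$. Given any $h\in U$, density of $Q$ and openness of $U\cap hW_0^{-1}$ provide some $g\in Q\cap U\cap hW_0^{-1}$; then $h\in gW_0$, so $hW_0h^{-1}\subseteq (gW_0)W_0(gW_0)^{-1}\subseteq gWg^{-1}$, and Proposition~\ref{prop:basic_properties_of_rk}(1) gives $\rk(V,hW_0h^{-1})\le\rk(V,gWg^{-1})<\alpha$. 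Thus $W_0$ witnesses $\rk(V,U)\le\alpha$. Supplying an argument of this kind for the direction you labeled trivial is the genuine content of the lemma, and its absence is a gap in your proposal.
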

\begin{proof}
Let $U,V\subseteq_1 G$. The fact that $\rk_{Q,\mathcal{V}}(V,U)\leq \rk(V, U;G)$ holds follows from a routine induction argument using Proposition \ref{prop:basic_properties_of_rk} (1). For the reverse inequality, we assume that $\rk_{Q,\mathcal{V}}(V,U)=\alpha$ holds and we show that  $\rk(V, U)\leq\alpha$ holds by induction on $\alpha$. Since the two definitions agree for $\alpha = 0$, the base case is covered.
Now suppose $\alpha > 0$ and assume the claim is true below $\alpha$.
Let $W\in \mathcal{V}$ with $\rk_{Q,\mathcal{V}}(V,gWg^{-1})<\alpha$, and thus $\rk(V,gWg^{-1})<\alpha$ for all $g \in Q\cap U$.
Let $W_0\subseteq_1 G$ by symmetric with  $W_0^3 \subseteq W$ and $h\in U$ be arbitrary. Choosing any $g\in Q\cap U\cap hW^{-1}_0$ we have:
\[ \rk(V, hW_0h^{-1}) \le \rk(V, (gW_0)W_0(gW_0)^{-1}) \le \rk(V, gWg^{-1}) < \alpha\]
as desired, where both inequalities follow from Proposition \ref{prop:basic_properties_of_rk} (1).
\end{proof}

\begin{remark}
The term ``$\alpha$-balanced Polish group"  was  suggested
(but not defined) in our earlier work  \cite[Section 7]{AP2021}.
An attempt to define this term appears in \cite{DiZh22}, which is unfortunately a bit different from our definition  here. The authors of the latter paper take a more indirect approach in that the ``$\alpha$-balancedness" (in their sense) of a Polish group $G$ is inferred from the action of $c_0(G):=\{(g_n) \in G^{\omega} \colon g_n \to 1_G \}$ on $G^{\omega}$.
However, they do not succeed in producing CLI Polish groups which are not $\omega$-balanced in their sense.
\end{remark}

\section{Constructions and closure properties}\label{S:Closure}

In this section, we study how the rank assignment function $G\mapsto\rk(G)$ behaves in the context of various classical constructions in the category of topological groups. As an application, we  end this section by  deriving the following  theorem. 

\begin{theorem}\label{T:All_Ordinals}
For each ordinal $\alpha>0$, there exist a topological group $G$ with $\rk(G)=\alpha$.    
\end{theorem}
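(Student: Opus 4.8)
The plan is to build the required group $G$ with $\rk(G)=\alpha$ by a transfinite recursion on $\alpha$, using the basic closure properties established earlier in Section \ref{S:Closure} as the inductive tools. The base cases are immediate from the discussion in Section \ref{S:alpha_balanced}: the trivial group has rank $1$, and any nontrivial TSI group (e.g.\ $\mathbb{Z}$, or $\mathbb{R}$) has rank $2$; more generally a nontrivial locally compact group that is not TSI, such as $\mathrm{SL}_2(\mathbb{R})$, witnesses $\alpha = 3$, though for the uniform construction I would instead isolate a single ``rank-raising'' operation and iterate it.

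First I would prove that the balanced rank is \textbf{exactly additive} under the relevant group-theoretic operations. The two operations I expect to need are: (i) a ``successor'' operation that, given $G$ with $\rk(G)=\beta$, produces a group $G'$ with $\rk(G')=\beta+1$ — the natural candidate is a semidirect-product / wreath-type construction $\mathbb{Z}\wreath G$ acting so that conjugation by the shift generator ``spreads out'' a neighborhood basis, exactly the mechanism already flagged in the introduction's discussion of the wreath-product jump $(G\curvearrowright X)\mapsto((\mathbb{Z}\mathrm{Wr}G)\curvearrowright X^{\mathbb{Z}})$; and (ii) a ``supremum'' operation that, given a sequence $(G_n)$ with $\sup_n \rk(G_n) = \lambda$ a limit, produces a group $G$ with $\rk(G)=\lambda$ — here the candidate is a suitable (restricted or full) product $\prod_n G_n$ or a direct-sum-type group with the product topology, for which one shows $\rk(\prod_n G_n) = \sup_n \rk(G_n)$ using Proposition \ref{prop:basic_properties_of_rk}: the inequality $\geq$ is inherited from each coordinate via a projection/retraction argument, and $\leq$ follows because a basic neighborhood of the identity in the product constrains only finitely many coordinates, so a witnessing $W$ can be chosen coordinatewise. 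Lemma \ref{L:rk*} will be convenient here to reduce all rank computations to countable data (a countable dense subgroup and a countable neighborhood basis), which makes the coordinatewise bookkeeping clean.

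With these two lemmas in hand, the recursion is routine: for $\alpha=\beta+1$ a successor, take $G'$ from operation (i) applied to a group of rank $\beta$; for $\alpha$ a limit, fix an increasing sequence $\beta_n\to\alpha$ (possible since $\alpha<\omega_1$ is countable, but the statement is for all ordinals $\alpha$ — for uncountable $\alpha$ of countable cofinality the same works, and for regular uncountable $\alpha$ one needs a net-indexed product, so I would either restrict attention to the topological-group category where arbitrary products are allowed and use a cofinal net, or simply note the theorem as stated and supply the construction at the appropriate level of generality), pick $G_n$ of rank $\beta_n+1$ (or $\beta_n$) by induction, and let $G = \prod_n G_n$, which has rank $\sup_n(\beta_n+1)=\alpha$. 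The computation $\rk(G)\geq\alpha$ uses that each $G_n$ embeds as a direct factor with a continuous retraction, so ranks do not drop; $\rk(G)\leq\alpha$ uses the finite-support observation above.

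The main obstacle I anticipate is pinning down operation (i) so that the rank goes up by \emph{exactly} one — it is easy to see that wreath-type or semidirect constructions do not \emph{decrease} rank and fairly easy to get an upper bound of $\beta+1$, but verifying the lower bound $\rk(G')\geq\beta+1$ requires exhibiting a specific $V\subseteq_1 G'$ for which no single $W$ makes $\rk(V,gWg^{-1})$ uniformly drop below $\beta$ as $g$ ranges over the shift translates; this is precisely the ``conjugation spreads a neighborhood around'' phenomenon and should follow from choosing $V$ to be the stabilizer-type neighborhood coming from a single coordinate and using that the shift generator moves that coordinate. I would prove this lower bound directly by induction, tracking how $\rk(V,gWg^{-1})$ behaves as the conjugating element $g$ translates the ``active'' coordinate, and appealing to Proposition \ref{prop:basic_properties_of_rk}(1)--(3) to combine the finitely many coordinates touched by $W$. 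Once operation (i) is established, the rest is bookkeeping.
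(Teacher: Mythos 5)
Your overall strategy---transfinite recursion with a rank-raising jump for successors and a product for limits---is the right one, and your limit-stage argument via $\rk(\prod_i G_i) = \sup_i \rk(G_i)$ (Proposition \ref{Proposition_Product_Group}) matches the paper. However, there is a genuine gap at successor stages that you correctly flagged but did not resolve, and the resolution you sketch does not work.

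The problem is that the $\mathbb{Z}$-jump does \emph{not} raise the balanced rank by one in general: it raises it by one only when $\rk(G)$ is a successor ordinal, and leaves it unchanged when $\rk(G)$ is a limit. This is precisely the content of Proposition \ref{Prop:Z-jump}. Your proposed lower-bound witness---take $V\subseteq_1 \mathbb{Z}\mathrel{\mathrm{Wr}}G$ to be the stabilizer of a single coordinate---fails for the following reason. Any basic open $V\subseteq_1 G^{\mathbb{Z}}$ constrains only finitely many coordinates, so by Lemma \ref{Lemma_Product_Group} one has $\rk(V,G^{\mathbb{Z}})=\max_k\rk(V_k,G)$, which is \emph{strictly} below $\rk(G)$. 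By Lemma \ref{L:Z-jump}, $\rk(\{0\}\times V,\mathbb{Z}\mathrel{\mathrm{Wr}}G)=\rk(V,G^{\mathbb{Z}})+1$, so
\[
\rk(\mathbb{Z}\mathrel{\mathrm{Wr}}G)=\sup_V\bigl(\rk(V,G^{\mathbb{Z}})+2\bigr).
\]
When $\rk(G)=\lambda$ is a limit ordinal, every $\rk(V,G^{\mathbb{Z}})+2$ remains strictly below $\lambda$ and the supremum is $\lambda$, not $\lambda+1$. So applying the $\mathbb{Z}$-jump to a group of rank $\omega$ gives back rank $\omega$, and your recursion stalls at $\alpha=\omega+1$, $\omega^2+1$, etc.

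The paper closes this gap with a second construction that you did not anticipate: the \emph{local direct product} $\oplus^{\mathrm{loc}}_{\beta<\lambda}(G_\beta,O_\beta)$, where alongside each $G_\beta$ one recursively carries an open subgroup $O_\beta\leq G_\beta$ witnessing $\rk(G_\beta)=\rk(O_\beta,G_\beta;G_\beta)+1$ (the invariant in equation \eqref{Eq:Application}). By Proposition \ref{Prop:local_direct_product}, $\rk(O,\oplus^{\mathrm{loc}}_\beta(G_\beta,O_\beta))=\sup_\beta\rk(O_\beta,G_\beta;G_\beta)$, and the key point is that this supremum is taken \emph{with respect to a single fixed $V=O$}, unlike in the plain product, where the identity-neighborhood $V$ realizing a given $\rk(V,G)$ varies with the coordinate. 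Restricting the group to coordinatewise-small elements ties all the witnesses to one $V$, and that is what lets the rank jump past a limit. So the recursion has three productive steps rather than two: full product at limits $\alpha$, local direct product at $\alpha=\lambda+1$ for limit $\lambda$, and $\mathbb{Z}$-jump at $\alpha=\beta+1$ for successor $\beta$ (where Proposition \ref{Prop:Z-jump}(1) does apply). Your proposal is correct for the latter two types of stage but is missing the middle one entirely, along with the auxiliary invariant \eqref{Eq:Application} needed to make the local direct product land exactly where you want.
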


\subsection{Topological subgroups and quotients} An {\bf embedding} of topological groups is an  injective homomorphism $H\to G$ that is continuous and open on its image. An {\bf epimorphism} of topological groups is a surjective homomorphism $G\to H$ that is continuous.

\begin{proposition}\label{Proposition:Subgroup_Quotient}
Let $G$ and $H$ be topological groups. Then:
\begin{enumerate}
    \item if there exists an embedding $i\colon H\to G$, then $\rk(H) \le \rk(G)$; and
    \item if there exists an open epimorphism $q\colon G\to H$, then $\rk(H) \le \rk(G)$.
\end{enumerate}
\end{proposition}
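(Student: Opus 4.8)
\textbf{Proof plan for Proposition \ref{Proposition:Subgroup_Quotient}.}

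The plan is to prove both parts by reducing them to monotonicity statements about $\rk(V,U)$ under the relevant map, and then taking suprema over $V$. For part (1), suppose $i\colon H\to G$ is an embedding; identify $H$ with its image, so that $H$ is a subgroup of $G$ carrying the subspace topology, and every $V\subseteq_1 H$ is of the form $V'\cap H$ for some $V'\subseteq_1 G$, while conversely $V'\cap H\subseteq_1 H$ for every $V'\subseteq_1 G$. The key claim I would isolate is: for all $V',U'\subseteq_1 G$ one has $\rk(V'\cap H, U'\cap H; H)\le \rk(V',U';G)$. This is proved by induction on $\beta:=\rk(V',U';G)$. The base case $\beta=0$ is immediate since $U'\subseteq V'$ implies $U'\cap H\subseteq V'\cap H$. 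For the inductive step, given a witness $W'\subseteq_1 G$ for $\rk(V',U';G)\le\beta$, set $W:=W'\cap H\subseteq_1 H$; for $g\in U'\cap H$ one has $gWg^{-1}=g(W'\cap H)g^{-1}=(gW'g^{-1})\cap H$ (using that $H$ is a subgroup, so conjugation by $g\in H$ preserves $H$), and the induction hypothesis applied to $V'$ and $gW'g^{-1}$ gives $\rk(V'\cap H, gWg^{-1};H)<\beta$, as required. Taking suprema: for any $V\subseteq_1 H$ pick $V'\subseteq_1 G$ with $V'\cap H\subseteq V$; then $\rk(V,H;H)\le \rk(V'\cap H, H;H)\le \rk(V',G;G)<\rk(G)$ by Proposition \ref{prop:basic_properties_of_rk}(1) for the first inequality, the claim (with $U'=G$) for the second, and the definition of $\rk(G)$ for the last, so $\rk(H)\le\rk(G)$.

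For part (2), let $q\colon G\to H$ be an open continuous epimorphism. Here the natural correspondence runs the other way: for $V\subseteq_1 H$ the preimage $q^{-1}(V)\subseteq_1 G$ by continuity, and for $W\subseteq_1 G$ the image $q(W)\subseteq_1 H$ by openness, with $q(gWg^{-1})=q(g)q(W)q(g)^{-1}$ by multiplicativity. The claim to prove by induction on $\beta:=\rk(q^{-1}(V),q^{-1}(U);G)$ is: $\rk(V,U;H)\le\rk(q^{-1}(V),q^{-1}(U);G)$ for all $V,U\subseteq_1 H$. For $\beta=0$: $q^{-1}(U)\subseteq q^{-1}(V)$ implies $U\subseteq V$ since $q$ is onto. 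For the inductive step, let $W\subseteq_1 G$ witness $\rk(q^{-1}(V),q^{-1}(U);G)\le\beta$, and put $\overline W:=q(W)\subseteq_1 H$. Given $h\in U$, choose $g\in q^{-1}(U)$ with $q(g)=h$ (possible since $q$ is onto and $q^{-1}(U)$ is the full preimage); then $h\overline W h^{-1}=q(g)q(W)q(g)^{-1}=q(gWg^{-1})$, so $q^{-1}(h\overline W h^{-1})\supseteq gWg^{-1}$, and Proposition \ref{prop:basic_properties_of_rk}(1) together with the induction hypothesis gives $\rk(h\overline W h^{-1},h\overline W h^{-1};H)$... more precisely $\rk(V,h\overline Wh^{-1};H)\le\rk(q^{-1}(V),q^{-1}(h\overline Wh^{-1});G)\le \rk(q^{-1}(V),gWg^{-1};G)<\beta$. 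Hence $\overline W$ witnesses $\rk(V,U;H)\le\beta$. Finally, for any $V\subseteq_1 H$ we get $\rk(V,H;H)\le\rk(q^{-1}(V),q^{-1}(H);G)=\rk(q^{-1}(V),G;G)<\rk(G)$, so $\rk(H)\le\rk(G)$.

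I do not expect a serious obstacle here; both parts are bookkeeping once the right inductive claim about $\rk(V,U)$ is written down. The one point requiring a little care is the passage from an arbitrary $V\subseteq_1 H$ to one of the special form ($V'\cap H$ in part (1), handled by monotonicity; $q^{-1}(V)$ in part (2), which is automatic), and the verification that conjugation and preimage/image commute appropriately — the slightly delicate detail being that in part (2) one must choose the preimage point $g$ of $h$ inside $q^{-1}(U)$, which is legitimate precisely because $q^{-1}(U)$ is the \emph{full} preimage of $U$. Everything else is a direct transfinite induction mirroring the proofs of Proposition \ref{prop:basic_properties_of_rk}.
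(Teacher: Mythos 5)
Your part (1) is correct and essentially the argument the paper gives: once you identify $H$ with its image, $V'\cap H$ is exactly $i^{-1}(V')$, and the inductive claim and its proof match.

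Part (2), however, has a genuine gap in the inductive step. You fix the lift of $U$ to be the full preimage $q^{-1}(U)$ and try to prove $\rk(V,U;H)\le\rk(q^{-1}(V),q^{-1}(U);G)$ by induction on the right-hand side. After choosing $W\subseteq_1 G$ witnessing $\rk(q^{-1}(V),q^{-1}(U);G)\le\beta$, setting $\overline{W}:=q(W)$, and picking $g\in q^{-1}(U)$ with $q(g)=h$, you correctly note $q(gWg^{-1})=h\overline{W}h^{-1}$, hence $gWg^{-1}\subseteq q^{-1}(h\overline{W}h^{-1})$. But then the chain
\[
\rk(V,h\overline{W}h^{-1};H)\le\rk\bigl(q^{-1}(V),q^{-1}(h\overline{W}h^{-1});G\bigr)\le\rk\bigl(q^{-1}(V),gWg^{-1};G\bigr)<\beta
\]
fails twice. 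The middle inequality is backwards: by Proposition \ref{prop:basic_properties_of_rk}(1), $gWg^{-1}\subseteq q^{-1}(h\overline{W}h^{-1})$ gives $\rk(q^{-1}(V),gWg^{-1};G)\le\rk(q^{-1}(V),q^{-1}(h\overline{W}h^{-1});G)$, i.e.\ the opposite direction. And the first inequality is an application of your induction hypothesis to $(V,h\overline{W}h^{-1})$, which is only licensed once you already know $\rk(q^{-1}(V),q^{-1}(h\overline{W}h^{-1});G)<\beta$ — and that is precisely the thing you cannot conclude, since $q^{-1}(h\overline{W}h^{-1})$ may be strictly larger than $gWg^{-1}$ (it is $(\ker q)\,gWg^{-1}$ when $q$ has nontrivial kernel) and so its rank against $q^{-1}(V)$ has no reason to be small.

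The fix is to state the inductive claim for \emph{arbitrary} open lifts, as the paper does: for $V,U\subseteq_1 H$ and $\widetilde V,\widetilde U\subseteq_1 G$ with $q(\widetilde V)=V$ and $q(\widetilde U)=U$, one has $\rk(V,U;H)\le\rk(\widetilde V,\widetilde U;G)$. With this formulation the inductive step can invoke the hypothesis directly on $(V,h\overline{W}h^{-1})$ via the lifts $(\widetilde V,g_h\widetilde W g_h^{-1})$, which is a set you actually control. (Equivalently, you could keep $\widetilde V=q^{-1}(V)$ but weaken the hypothesis on the second coordinate to $q(\widetilde U)\supseteq U$; either reformulation is needed — the claim with $\widetilde U$ hard-coded as the full preimage is not inductively stable.)
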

\begin{proof}
For (1), since the homomorphism $i$ is open on its range, every open neighborhood of $1_H$ in $H$ is of the form $i^{-1}(O)$, for some $O\subseteq_1 G$. Hence, (1) follows from the next claim.     

\begin{claim}
For all $V,U\subseteq_1 G$ we have that $\rk(i^{-1}(V),i^{-1}(U);H)\leq \rk(V,U;G)$.    
\end{claim}
\begin{proof}[Proof of Claim]
Clearly $U\subseteq V$ implies  that  $i^{-1}(U) \subseteq i^{-1}(V)$ and therefore the claim holds if $\rk(V,U;G)\leq 0$. Assume now the claim holds for all  $V,U\subseteq_1 G$  with $\rk(V,U;G)< \alpha$ and let  $V,U\subseteq_1 G$, for which there is some $W\subseteq_1 G$ so that for all $g\in U$, $\rk(V,gWg^{-1};G)< \alpha$ holds. But then, for $\widetilde{W}:=i^{-1}(W)$ and any $h\in i^{-1}(U)$ we have  $g_h:=i(h)\in U$ and 
\[h \widetilde{W} h^{-1}= i^{-1}\big(g_h W h_h^{-1}\big)\]
Hence, by inductive hypothesis, for every $h\in i^{-1}(U)$ we have that
\[\rk(i^{-1}(V),h \widetilde{W} h^{-1};H)\leq \rk(V,g_hWg_h^{-1};G)< \alpha\]
as desired.
\end{proof}

Similarly, (2) follows from the next claim, since $q$ is continuous and surjective, 

\begin{claim}
If $V,U\subseteq_1 H$ and $\widetilde{V},\widetilde{U}\subseteq_1 G$ satisfy $q(\widetilde{V})=V$ and $q(\widetilde{U})=U$, then
\[\rk(V,U;H) \leq \rk(\widetilde{V},\widetilde{U};G).\]    
\end{claim}
\begin{proof}[Proof of Claim]
If $q(\widetilde{V})=V$ and $q(\widetilde{U})=U$ hold, then  $\widetilde{U}\subseteq \widetilde{V}$ implies  that $U\subseteq V$  and therefore the claim holds if $\rk(\widetilde{V},\widetilde{U};G)\leq 0$. Assume now the claim holds when $\rk(\widetilde{V},\widetilde{U};G)< \alpha$ and let  $V,U\subseteq_1 H$  and $\widetilde{V},\widetilde{U}\subseteq_1 G$, as in the claim,   for which there is some $\widetilde{W}\subseteq_1 G$ so that  for all $g\in \widetilde{U}$ we have that $\rk(\widetilde{V},g\widetilde{W}g^{-1};G)< \alpha$. 
Since $q$ is open, we have that $W:=q(\widetilde{W})$ is open. Since $U=q(\widetilde{U})$, for all $h\in U$ there is some $g_h\in \widetilde{U}$ so that $q(g_h)=h$. But then, for all $h\in U$ we have that:
\[ \rk(V,h W h^{-1};H)\leq  \rk(\widetilde{V},g_h\widetilde{W}g_h^{-1};G)< \alpha   \]
where the first inequality follows from the inductive step, since $q$ is a homomorphism.
\end{proof}
This completes the proof.
\end{proof}

Let now $H$ be a closed normal subgroup of a topological group $G$ and let $G/H$ be the quotient group endowed with the quotient topology. Applying Proposition \ref{Proposition_Product_Group} to the inclusion map $i\colon H\to G$ and the quotient map $q\colon G\to G/H$ we see that  the property of being $\alpha$-balanced is closed under passing to topological subgroups or quotients.

A natural question is whether there is a general formula which bounds $\rk(G)$  in terms of   $\rk(H)$ and $\rk(G / H)$. More precisely consider the following problem:

\begin{problem}\label{Problem_1}
Let $H$ be a closed normal subgroup of a topological group $G$. Is it true that 
\[\rk(G) \le \sup\big\{\rk(V,H;H)+ \rk(G/H) \colon V\subseteq_1 H\big\}?\]
\end{problem}

At this point it is unclear to the authors whether this problem has a positive answer even in the special case when the associated short exact sequence of topological groups  {\bf topologically splits}, or even when it (fully) {\bf splits}. That is, even when  there is a continuous function (continuous homomorphism, respectively) $s\colon G/H \to G$, so that $q \circ s(\hat{g})= \hat{g}$, for all $\hat{g}\in G/H$.

\begin{figure}[ht!]
    \centering
   \begin{tikzcd}
    1\arrow{r} & H\arrow{r}{i} & G\arrow{r}{q}  & G/H\arrow{r} 
    \arrow[shift left=1.2ex, dashed]{l}{s} & 1
\end{tikzcd}
\end{figure}

That being said, from \cite[Theorem 2.2.11]{Gao2008} and  Theorem \ref{thm:CLI} above, we have that $\rk(G) < \infty$ holds if $G,H$ are as in Problem  \ref{Problem_1} and  both $\rk(G/H) < \infty$, $\rk(H) < \infty$ hold.

\subsection{Products of topological groups} Let $(G_i)_{i\in I}$ be a family of topological groups indexed by some set $I$ and let $G:=\prod_i G_i$ the product group with the product topology. In particular, a basis of open neighborhoods of $1_G$ in $G$ consists of sets of the form $V=\prod_i V_i$  with $V_i\subseteq_1 G_i$ for all $i\in I$ and $V_i=G_i$ for all but finitely many $i\in I$.

\begin{proposition}\label{Proposition_Product_Group}
If     $G:=\prod_i G_i$, then we have that $\rk(G)=\sup\big\{\rk(G_i)\colon i\in I\big\}$.
\end{proposition}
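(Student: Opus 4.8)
The plan is to prove the two inequalities separately. For the inequality $\rk(G)\geq\sup_i\rk(G_i)$, I would use Proposition \ref{Proposition:Subgroup_Quotient}: for each fixed $i_0\in I$, the coordinate projection $q_{i_0}\colon G\to G_{i_0}$ is an open continuous epimorphism (a basic open set $\prod_i V_i$ maps onto $V_{i_0}$, which is open), so part (2) of that proposition gives $\rk(G_{i_0})\leq\rk(G)$. Taking the supremum over $i_0$ yields one direction. (Alternatively the inclusion of $G_{i_0}$ as a direct factor is an embedding, so part (1) works just as well.)

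For the reverse inequality $\rk(G)\leq\sup_i\rk(G_i)=:\kappa$, I would first reduce, via Lemma \ref{L:rk*} or just directly, to basic open neighborhoods of $1_G$ of the form $V=\prod_i V_i$ and $U=\prod_i U_i$ with $V_i,U_i\subseteq_1 G_i$ and $V_i=U_i=G_i$ for all $i$ outside a finite set $F$. The key claim I would isolate and prove by induction on $\beta$ is:
\[
\text{if } \rk(V_i,U_i;G_i)\leq\beta \text{ for every } i\in F, \text{ then } \rk\big(\textstyle\prod_i V_i,\prod_i U_i;G\big)\leq\beta.
\]
The base case $\beta=0$ is immediate since $U_i\subseteq V_i$ for all $i\in F$ (and trivially for $i\notin F$) is equivalent to $U\subseteq V$. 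For the inductive step, for each $i\in F$ pick $W_i\subseteq_1 G_i$ witnessing $\rk(V_i,U_i;G_i)\leq\beta$, i.e.\ $\rk(V_i,g_iW_ig_i^{-1};G_i)<\beta$ for all $g_i\in U_i$; set $W_i=G_i$ for $i\notin F$ and $W:=\prod_i W_i\subseteq_1 G$. Then for any $g=(g_i)\in U$ we have $gWg^{-1}=\prod_i g_iW_ig_i^{-1}$, and for $i\notin F$ the factor is $G_i$ with $\rk(G_i,G_i;G_i)=0<\beta$; so by the inductive hypothesis applied to the neighborhoods $\prod_i V_i$ and $\prod_i g_iW_ig_i^{-1}$ (still basic, with the same finite support $F$) we get $\rk(V,gWg^{-1};G)<\beta$, whence $\rk(V,U;G)\leq\beta$. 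Applying the claim with $\beta=\max_{i\in F}\rk(V_i,U_i;G_i)$ — which is finite-indexed, hence attained, and bounded by $\kappa$ since each $\rk(V_i,U_i;G_i)+1\leq\rk(G_i)\leq\kappa$ — gives $\rk(V,U;G)\leq\kappa$, and since $V$ ranges over a basis, $\rk(G)\leq\kappa$.

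The main point to be careful about is the reduction to basic neighborhoods with a common finite support: one must check that as one descends through the inductive definition of $\rk$, the conjugates $gWg^{-1}$ remain basic open sets supported on the same finite set $F$ (this uses that conjugation in a product is coordinatewise), and that arbitrary $U,V\subseteq_1 G$ can be shrunk/enlarged to basic ones using Proposition \ref{prop:basic_properties_of_rk}(1) without changing the conclusion — shrinking $V$ to a basic subset and enlarging $U$ to a basic superset only decreases $\rk$, which is the safe direction for the upper bound. I expect this bookkeeping to be the only real obstacle; the transfinite induction itself is routine. One should also note the degenerate case $F=\emptyset$ (then $V=U=G$ and the rank is $0$) and the case where some $G_i$ is trivial, both of which are harmless.
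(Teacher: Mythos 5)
Your proof is correct and essentially mirrors the paper's Lemma \ref{Lemma_Product_Group}, which proves $\rk(V,U;G)=\sup_i\rk(V_i,U_i;G_i)$ for basic $V,U$ by a two-sided transfinite induction of which your key claim is the $\le$ half; citing Proposition \ref{Proposition:Subgroup_Quotient} for the $\ge$ half instead of reproving it inline is a harmless simplification. One small slip in the bookkeeping remark: by Proposition \ref{prop:basic_properties_of_rk}(1), $\rk$ is anti-monotone in its first argument, so replacing $V$ by a basic $V'\subseteq V$ \emph{increases} $\rk(V,G)$ rather than decreasing it; this is nonetheless the safe direction, since bounding $\rk(V',G)$ a fortiori bounds $\rk(V,G)\le\rk(V',G)$, so your conclusion $\rk(G)\le\kappa$ stands.
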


The proof is a direct consequence of the following lemma.

\begin{lemma}\label{Lemma_Product_Group}
Let $G:=\prod_i G_i$ and let basic $V,U\subseteq_1 G$ with $V:=\prod_i V_i$, $U:=\prod_i U_i$. Then,     
\[ \rk(V,U;G)=\max\big\{\rk(V_i,U_i;G_i)\colon i\in I\big\}.\]
\end{lemma}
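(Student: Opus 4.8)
The statement reduces to a two-sided inequality, and each side admits a transfinite induction. For both directions I would freely use Lemma~\ref{L:rk*} in the form appropriate to product groups: a basis of $1_G$ is given by the basic open boxes $\prod_i W_i$, and it suffices to witness/control the rank using only such boxes. Write $\mu := \sup_i \rk(V_i,U_i;G_i)$; the equality of the supremum with the maximum is immediate once we know it is attained, and in fact it is attained because $V,U$ being basic means $V_i=U_i=G_i$ (hence $\rk(V_i,U_i;G_i)=0$) for all but finitely many $i$, so the supremum ranges over a finite set together with $0$'s.

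\textbf{The inequality $\rk(V,U;G)\le \mu$.} I would prove, by induction on $\mu$, that $\rk(V,U;G)\le \mu$ whenever $V=\prod_i V_i$, $U=\prod_i U_i$ are basic and $\rk(V_i,U_i;G_i)\le\mu$ for all $i$. For $\mu=0$ this is just the observation that $U_i\subseteq V_i$ for all $i$ iff $U\subseteq V$. For $\mu>0$, for each $i$ pick $W_i\subseteq_1 G_i$ witnessing $\rk(V_i,gW_ig^{-1};G_i)<\mu$ for all $g\in U_i$ (taking $W_i=G_i$ where $\rk(V_i,U_i;G_i)=0$, which is harmless), and set $W:=\prod_i W_i$, which is a basic open neighborhood of $1_G$ since only finitely many $W_i$ differ from $G_i$. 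Given $g=(g_i)\in U$, the conjugate $gWg^{-1}=\prod_i g_iW_ig_i^{-1}$ is again a basic box with $\rk(V_i, g_iW_ig_i^{-1};G_i)<\mu$ coordinatewise, so by the induction hypothesis (applied at each ordinal $<\mu$ realized, using Proposition~\ref{prop:basic_properties_of_rk}(1) to compare) $\rk(V,gWg^{-1};G)<\mu$. Hence $\rk(V,U;G)\le\mu$.

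\textbf{The inequality $\rk(V,U;G)\ge \rk(V_j,U_j;G_j)$ for each $j$.} Here I would use the projection epimorphism $\pi_j\colon G\to G_j$, which is an open continuous surjection, together with Proposition~\ref{Proposition:Subgroup_Quotient}(2) — but that only gives $\rk(G_j)\le\rk(G)$ at the level of groups, not the sharper statement about the specific pair $V,U$. So instead I would prove directly, by induction on $\beta:=\rk(V,U;G)$, that $\rk(V_j,U_j;G_j)\le\beta$, using that $\pi_j$ maps the basic box $V$ onto $V_j$, maps $U$ onto $U_j$, maps any basic $W\subseteq_1 G$ onto a basic $\pi_j(W)\subseteq_1 G_j$, and commutes with conjugation; this is essentially the ``quotient claim'' already proved inside Proposition~\ref{Proposition:Subgroup_Quotient}, specialized to $q=\pi_j$ and to basic neighborhoods (note $\pi_j(\widetilde V)=V_j$, $\pi_j(\widetilde U)=U_j$ hold for the basic boxes, so the hypothesis of that claim is met). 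Combining the two inequalities over all $j$ gives $\rk(V,U;G)=\mu$, and the max/sup coincidence follows from the finite-support remark above.

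\textbf{Main obstacle.} The only real subtlety is bookkeeping in the $\mu>0$ step of the first inequality: one must check that the single box $W=\prod_i W_i$ is a legitimate basic neighborhood (finiteness of support), and that when passing from ``$\rk(V_i,g_iW_ig_i^{-1};G_i)<\mu$ for each $i$'' to ``$\rk(V,gWg^{-1};G)<\mu$'' one may apply the induction hypothesis — this is fine because $\rk(V,gWg^{-1};G)<\mu$ is exactly the statement being inducted on at the strictly smaller value $\sup_i \rk(V_i,g_iW_ig_i^{-1};G_i)$, which is $<\mu$ since it is a finite sup of ordinals each $<\mu$. Everything else is a routine unwinding of Definition and Proposition~\ref{prop:basic_properties_of_rk}.
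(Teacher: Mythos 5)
Your overall strategy coincides with the paper's: prove the two inequalities separately, by induction, exploiting the finite support of the basic boxes. The lower bound $\rk(V_j,U_j;G_j)\le \rk(V,U;G)$ via the projection $\pi_j$ is fine and matches the paper's argument (which lifts $g_i\in U_i$ to $g\in U$ and pushes the witness $W$ down coordinatewise).

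There is, however, a genuine slip in the upper-bound step. You write ``taking $W_i=G_i$ where $\rk(V_i,U_i;G_i)=0$, which is harmless.'' This is not harmless in general: if $\rk(V_i,U_i;G_i)=0$ (i.e.\ $U_i\subseteq V_i$) but $V_i\ne G_i$, then taking $W_i=G_i$ forces you to verify $\rk(V_i,g_iG_ig_i^{-1};G_i)=\rk(V_i,G_i;G_i)<\mu$, and the quantity $\rk(V_i,G_i;G_i)$ is unrelated to $\rk(V_i,U_i;G_i)$ (by Proposition~\ref{prop:basic_properties_of_rk}(1) it can only be larger) and can exceed $\mu$. The correct criterion, which is what the paper uses, is to take $W_i=G_i$ precisely where $V_i=G_i$: there $\rk(V_i,g_iW_ig_i^{-1};G_i)=\rk(G_i,G_i;G_i)=0$ automatically, and the set $\{i:V_i\ne G_i\}$ is finite because $V$ is basic, which is what gives you a basic box $W=\prod_iW_i$. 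For the finitely many $i$ with $V_i\ne G_i$ you should pick a genuine witness $W_i$, which exists since $\rk(V_i,U_i;G_i)\le\mu$. With this one-word correction (``$\rk(V_i,U_i;G_i)=0$'' $\rightsquigarrow$ ``$V_i=G_i$'') the argument matches the paper's.
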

We omit the proof of this lemma as it is a consequence of the more general Lemma \ref{L:local_direct_product}. Indeed, by setting $O_i:= G_i$ in Lemma \ref{L:local_direct_product} we get  $\rk(V,U;G)=\sup\big\{\rk(V_i,U_i;G_i)\colon i\in I\big\}$.
To see that  the supremum is realized, notice that  $V_i=G_i$ for all but finitely many $i\in I$.

\begin{remark}\label{Remark:Polish_product}
Notice that if $I$ is countable and  $G_i$ is Polish for all $i\in I$, then 
so is $\prod_i G_i$.
\end{remark}

\subsection{Local direct products of topological group pairs}

Let $(G_i,O_i)_{i\in I}$ be a family of topological groups $G_i$ together with an open subgroup $O_i\leq G_i$, indexed by some set $I$. For any finite $A\subseteq I$ consider the topological group
\[H_A:=\big(\prod_{i\in A} G_i\big)\times \big(\prod_{i\in I\setminus A} O_i\big) \]
endowed with the product topology. For every finite $A,B\subseteq I$ with $A\subseteq B$ the inclusion
\[i^A_B\colon H_A\to H_B\]
is a continuous and open group homomorphism. As a consequence the direct limit
\[\oplus^{\mathrm{loc}}_{i}(G_i,O_i) := \mathrm{colim}\big(H_A, i^A_B \big)  \]
is a topological group if endowed with the colimit topology. Recall that this is the finest topology which makes all the inclusion maps $i_{\infty}^A\colon H_A\to \oplus^{\mathrm{loc}}_{i}(G_i,O_i)$ continuous. We call 
$\oplus^{\mathrm{loc}}_{i}(G_i,O_i)$ the  {\bf local direct product} of the family $(G_i,O_i)_{i\in I}$.

It is not difficult to see that $\oplus^{\mathrm{loc}}_{i}(G_i,O_i)$ admits the following more concrete description:

\[\oplus^{\mathrm{loc}}_{i}(G_i,O_i):=\{(g_i)_i\in\prod_i G_i \colon g_i\in O_i \text{ for all but finitely many } i\in I \}\]
and the topology on $\oplus^{\mathrm{loc}}_{i}(G_i,O_i)$ is the coarsest \emph{group topology} which refines the product topology (inherited by the inclusion $\oplus^{\mathrm{loc}}_{i}(G_i,O_i)\subseteq \prod_i G_i$) by declaring $O$ open, where
\[O:=\prod_i O_i\]

\begin{remark}\label{Remark:Polish_local_direct}
Notice that if $I$ is countable and  $G_i$ is Polish for all $i\in I$, then $\oplus^{\mathrm{loc}}_{i}(G_i,O_i)$ is also Polish. To see this, consider the countable discrete collection of cosets 
\[N= \oplus^{\mathrm{loc}}_{i}(G_i,O_i)/O\]
and notice that $\oplus^{\mathrm{loc}}_{i}(G_i,O_i)$ can be identified with a closed subset of the Polish space 
\[\big(\prod_{i\in I}G_i\big) \times N.\]
\end{remark}

\begin{proposition}\label{Prop:local_direct_product}
If $(G_i,O_i)_{i\in I}$ is a family of pairs $O_i\leq G_i$ of topological groups with $O_i$ open in $G_i$ and $O:=\prod_i O_i$, then we have that:
\[\rk(O, \oplus^{\mathrm{loc}}_{i}(G_i,O_i);\oplus^{\mathrm{loc}}_{i}(G_i,O_i))= \mathrm{sup} \{\rk(O_i,G_i;G_i) \colon i \in I\}.\]
\end{proposition}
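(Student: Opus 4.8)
The plan is to reduce the statement to the local structure of $G := \oplus^{\mathrm{loc}}_{i}(G_i,O_i)$ near the identity and then run a transfinite induction that mirrors the one used in Lemma \ref{Lemma_Product_Group}. The key observation is that $O = \prod_i O_i$ is an \emph{open subgroup} of $G$, so there is a basis of open neighborhoods of $1_G$ consisting of sets of the form $V = \prod_i V_i$ with $V_i \subseteq_1 O_i$ (equivalently $V_i\subseteq_1 G_i$) for all $i$ and $V_i = O_i$ for all but finitely many $i$; such sets are all contained in $O$. Since conjugation plays a role in the definition of $\rk$, I first want a lemma saying that conjugation by an arbitrary $g = (g_i)_i \in G$ sends a basic $W = \prod_i W_i \subseteq O$ to $gWg^{-1} = \prod_i g_iW_ig_i^{-1}$, which is again basic (here I use that $g_i \in O_i$ for all but finitely many $i$, and $O_i$ open, so $g_iW_ig_i^{-1}$ is still an open neighborhood of $1_{G_i}$ contained in $O_i$ whenever $W_i = O_i$ and $g_i\in O_i$). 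This is exactly the point where the "local" nature of the product is used, and it is what makes the computation behave like a genuine product computation even though $G$ is strictly larger than $\prod_i O_i$.

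The heart of the argument is then the following claim, proved by induction on $\alpha$: for basic $V = \prod_i V_i$ and $U = \prod_i U_i$ with all $V_i, U_i \subseteq_1 O_i$,
\[
\rk(V,U;G) = \sup\{\rk(V_i,U_i;G_i)\colon i\in I\}.
\]
Since the right-hand side is a finite maximum (all but finitely many terms are $\rk(O_i,O_i;G_i)=0$), the two displayed equalities in the proposition follow by taking $V = U = O$, i.e. $V_i = U_i = O_i$, so that $\rk(O,O;G)$ — wait, that gives $0$; rather I apply the claim with $V = O$ and $U = G$. Here one must be slightly careful: $G$ itself is not a basic open set of the form $\prod_i U_i$ with $U_i \subseteq_1 O_i$. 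To handle $\rk(O, G; G)$, I invoke Proposition \ref{prop:basic_properties_of_rk}(1): $\rk(O,G;G) = \sup\{\rk(O, gWg^{-1};G)+\text{(no)}\ldots\}$ — more directly, $\rk(O,G;G) \le \beta$ iff there is $W\subseteq_1 G$ with $\rk(O,gWg^{-1};G)<\beta$ for all $g\in G$; by the conjugation lemma and monotonicity I may take $W = O$, reducing to computing $\sup_{g\in G}\rk(O, gOg^{-1};G) = \sup_{g\in G}\rk(O, \prod_i g_iO_ig_i^{-1};G)$, and each $g_iO_ig_i^{-1}$ is $\subseteq_1 O_i$, so the claim applies and yields $\sup_i \sup_{g_i}\rk(O_i, g_iO_ig_i^{-1};G_i) = \sup_i \rk(O_i,G_i;G_i)$, where the last equality is again the $W=O_i$ reduction inside each $G_i$. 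Matching this against the definition of $\rk(O,G;G)$ gives the claimed value.

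For the claim itself, the inductive step runs exactly as in Lemma \ref{Lemma_Product_Group}: for the inequality $\le$, given a witness $W$ for $\rk(V,U;G)\le\alpha$, use Proposition \ref{prop:basic_properties_of_rk}(1) to shrink $W$ to a basic $\prod_i W_i \subseteq O$, extend any $g_i\in U_i$ to $g = (g_j)_j \in U$ with $g_j = 1$ for $j\ne i$, and apply the conjugation lemma plus the induction hypothesis to read off $\rk(V_i, g_iW_ig_i^{-1};G_i) \le \rk(V, gWg^{-1};G) < \alpha$; for $\ge$, patch together witnesses $W_a\subseteq_1 O_a$ over the finite set $A$ of coordinates where $V_i\ne O_i$, set $W = \prod_i W_i$ with $W_i = O_i$ off $A$, and verify coordinatewise (the off-$A$ coordinates contribute rank $0$ since there $V_i = O_i \supseteq$ everything in $O_i$). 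I expect the main obstacle to be purely bookkeeping: making sure the conjugation lemma is stated and used correctly so that "basic open $\subseteq O$" is preserved under all the conjugations and intersections that appear, and handling the fact that $G$ itself is not basic by the $W = O$ reduction above. None of the steps is deep; the content is that the colimit/local-product topology is coarse enough that $\rk$ near the identity only sees the finitely-supported part, exactly as with ordinary products.
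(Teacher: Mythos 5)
Your plan (prove a coordinatewise lemma for basic opens, then bridge to the statement about $\rk(O,G;G)$) matches the paper's, which proves Lemma~\ref{L:local_direct_product} and declares that the proposition follows. But the bridge you supply is where the argument breaks, and it breaks in several places that compound.

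First, your basis is too small. You restrict to basic $V=\prod_i V_i$ with $V_i\subseteq_1 O_i$; these sets are all $\subseteq O$. But conjugation by $g=(g_i)_i\in G$ does not preserve this: for the finitely many $i$ with $g_i\notin O_i$, $g_iW_ig_i^{-1}$ need \emph{not} be contained in $O_i$ (only normalizing $g_i$ preserve $O_i$). Your parenthetical ``$g_iO_ig_i^{-1}$ is $\subseteq_1 O_i$'' is simply false. So $gWg^{-1}$ may fall outside your basis, and your inductive claim cannot be applied to it. The paper works with $\prod_i V_i$ where $V_i\subseteq_1 G_i$ and $V_i=O_i$ for all but finitely many $i$; this family is closed under conjugation by any $g\in G$ (because $g_i\in O_i$ for all but finitely many $i$), which is exactly the point you need.

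Second, the ``$W=O$ reduction'' has the direction of monotonicity backwards, twice. In $\rk(O,G)\le\beta\iff\exists W\,\forall g\in G:\rk(O,gWg^{-1})<\beta$, Proposition~\ref{prop:basic_properties_of_rk}(1) lets you \emph{shrink} a witness $W$ (if $W$ works then so does any $W'\subseteq W$), not enlarge it to $O$, which is the largest basic open inside $O$ and hence the hardest candidate. Taking $W=O$ only yields $\rk(O,G)\le\sup_{g}\rk(O,gOg^{-1})+1$ (or $\le\sup$ if the sup is not attained); it does not give equality, and it does not give the reverse inequality $\rk(O,G)\ge\sup_i\rk(O_i,G_i;G_i)$ at all --- that needs a separate argument, e.g.\ the forward direction of the lemma applied to $gWg^{-1}$, or the coordinate embedding $G_j\hookrightarrow G$ and Proposition~\ref{Proposition:Subgroup_Quotient}(1). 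The same reversal recurs one level down in ``$\sup_{g_i}\rk(O_i,g_iO_ig_i^{-1};G_i)=\rk(O_i,G_i;G_i)$'': again only $\le$ is clear, and the other direction via $W_i=O_i$ produces an extra $+1$ when the supremum is attained. So the chain of equalities you write is a chain of assertions that are, at best, one-sided bounds with potential off-by-ones, and the argument as written establishes neither $\le$ nor $\ge$.
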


Notice that, by the concrete description of $\oplus^{\mathrm{loc}}_{i}(G_i,O_i)$ above, we see that sets of the following form constitute a basis of the open neighborhoods of $1$ in $\oplus^{\mathrm{loc}}_{i}(G_i,O_i)$:
\[V:=\prod_iV_i \subseteq \prod_iG_i\]
where $V_i=O_i$ for all but finitely many $i\in I$. Below, we refer to such sets as ``basic open". As a consequence, Proposition \ref{Prop:local_direct_product} follows directly from the next lemma.

\begin{lemma}\label{L:local_direct_product}
For all basic open $V,U\subseteq_1 \oplus^{\mathrm{loc}}_{i}(G_i,O_i)$ we have that:
\[\rk(V,U;\oplus^{\mathrm{loc}}_{i}(G_i,O_i)) = \sup \{\rk(V_i,U_i;G_i) \colon i \in I\}. \]
\end{lemma}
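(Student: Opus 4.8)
The plan is to mimic the proof of Lemma \ref{Lemma_Product_Group}, carefully accounting for the finer topology on the local direct product. As in that proof, the first observation is that for basic open $V=\prod_i V_i$ and $U=\prod_i U_i$ (so $V_i=O_i$ and $U_i=O_i$ for all but finitely many $i$), the supremum on the right-hand side is actually attained: for all but finitely many $i$ we have $V_i=O_i$, and whenever $O_i\subseteq U_i$ the rank $\rk(V_i,U_i;G_i)=\rk(O_i,U_i;G_i)$; but in fact a cleaner route is to note that for cofinitely many $i$ both $V_i=U_i=O_i$, giving $\rk(V_i,U_i;G_i)=0$, so the sup is a max over a finite set together with the value $0$. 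Hence it suffices to prove the displayed equality with ``$\sup$'' read as an honest supremum, and the base case $\rk=0$ is immediate since $U\subseteq V \iff U_i\subseteq V_i$ for all $i$.

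For the inductive step, fix $\alpha>0$. For the inequality $\rk(V,U;\oplus^{\mathrm{loc}})\le\alpha \Rightarrow \rk(V_i,U_i;G_i)\le\alpha$ for each $i$: given a witness $W\subseteq_1\oplus^{\mathrm{loc}}$ with $\rk(V,gWg^{-1};\oplus^{\mathrm{loc}})<\alpha$ for all $g\in U$, use Proposition \ref{prop:basic_properties_of_rk}(1) to shrink $W$ to a basic open $\prod_i W_i$; then for $g_i\in U_i$ extend to $g\in U$ (possible since $U=\prod U_i$ and cofinitely many coordinates of $g$ may be taken in $O_i$), and apply the inductive hypothesis to $\rk(V,gWg^{-1};\oplus^{\mathrm{loc}})<\alpha$ together with the fact that $gWg^{-1}$ is again basic open with $i$-th coordinate $g_iW_ig_i^{-1}$, to conclude $\rk(V_i,g_iW_ig_i^{-1};G_i)<\alpha$, so $W_i$ witnesses $\rk(V_i,U_i;G_i)\le\alpha$. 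Conversely, suppose $\rk(V_i,U_i;G_i)\le\alpha$ for all $i$. Let $A\subseteq I$ be finite with $V_i=O_i$ for $i\notin A$; for $i\in A$ pick $W_i\subseteq_1 G_i$ witnessing $\rk(V_i,g_iW_ig_i^{-1};G_i)<\alpha$ for all $g_i\in U_i$, and for $i\notin A$ set $W_i=O_i$. Then $W:=\prod_i W_i$ is basic open in $\oplus^{\mathrm{loc}}$, $gWg^{-1}=\prod_i g_iW_ig_i^{-1}$ is basic open for each $g\in U$, and for $i\notin A$ we have $V_i=O_i\supseteq g_iW_ig_i^{-1}$ only if $g_i$ normalizes... — here one must instead argue that for $i\notin A$, $\rk(V_i,g_iW_ig_i^{-1};G_i)=\rk(O_i,g_iO_ig_i^{-1};G_i)$, and since $U_i=O_i$ for cofinitely many $i$ as well (after possibly enlarging $A$), $g_i\in O_i$ so $g_iO_ig_i^{-1}=O_i\subseteq O_i=V_i$, giving rank $0$. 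Thus for every $g\in U$, $\rk(V_i,g_iW_ig_i^{-1};G_i)<\alpha$ for all $i$, so by the inductive hypothesis $\rk(V,gWg^{-1};\oplus^{\mathrm{loc}})<\alpha$, whence $\rk(V,U;\oplus^{\mathrm{loc}})\le\alpha$.

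The main subtlety — and the step I expect to require the most care — is the interaction between the colimit topology and the computation of $\rk$: I must be certain that a \emph{basic open} $W$ in $\oplus^{\mathrm{loc}}$ suffices as a witness at each inductive stage (so that conjugates $gWg^{-1}$ stay basic open and factor coordinatewise), and that Proposition \ref{prop:basic_properties_of_rk}(1) legitimately lets me replace an arbitrary $W\subseteq_1\oplus^{\mathrm{loc}}$ by such a basic one. Since a basis of neighborhoods of $1$ consists precisely of the basic open sets $\prod_i V_i$ with $V_i=O_i$ cofinitely, and $\rk$ is monotone in its second argument under shrinking $W$, this goes through; but one must also handle the bookkeeping that when we extend $g_i\in U_i$ to $g\in U$, or restrict attention to $i\notin A$, the relevant coordinate groups satisfy $U_i=V_i=O_i$, so the ``trivial'' coordinates genuinely contribute rank $0$ rather than something uncontrolled. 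Once these points are pinned down, everything is a routine transfinite induction parallel to Lemma \ref{Lemma_Product_Group}.
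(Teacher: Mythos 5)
Your proof is correct and takes essentially the same route as the paper's: reduce to basic open $W$ via Proposition \ref{prop:basic_properties_of_rk}(1), lift elements of $U_i$ to $U$ in the forward direction, and in the reverse direction enlarge the finite exceptional set $A$ so that $V_i=U_i=O_i$ outside $A$, so the non-$A$ coordinates contribute rank $0$ because $g_i\in O_i$ forces $g_iO_ig_i^{-1}=O_i=V_i$. The paper states this last point slightly more directly (taking $A$ with $V_i=U_i=O_i$ for $i\notin A$ from the outset), but the argument is the same.
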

\begin{proof}
Clearly $U\subseteq V$ holds if and only if $U_i\subseteq V_i$ holds for all $i\in I$.    

Assume now that $\rk(V,U;\oplus^{\mathrm{loc}}_{i}(G_i,O_i))\leq \alpha$ for some $\alpha>0$. By Proposition \ref{prop:basic_properties_of_rk}(1) we may choose a basic $W\subseteq_1 \oplus^{\mathrm{loc}}_{i}(G_i,O_i)$ so that for all $g\in U$ we have that 
\[\rk(V,gW g^{-1};\oplus^{\mathrm{loc}}_{i}(G_i,O_i))<\alpha.\]
Since every $h\in U_i$ lifts to some $g\in U$ with $g_i=h$, by inductive assumption we have 
\[\rk(V_i,U_i,G_i)\leq \alpha \]
for all $i\in I$. Hence, it follows that $\sup \{\rk(V_i,U_i;G_i) \colon i \in I\}\leq \alpha$ also holds.

Conversely, assume that for some $\alpha>0$ and all $i\in I$ we have that  $\rk(V_i,U_i;G_i)\leq \alpha$ holds. Let $A\subseteq I$ be a finite set so that $V_i=U_i=O_i$ for all $i\in I\setminus A$ and consider the basic open  $W\subseteq_1 \oplus^{\mathrm{loc}}_{i}(G_i,O_i)$ with: $W_i=O_i$, if $i\not\in A$; and with $W_i$ chosen so that for all $h\in U_i$ we have   $\rk(V_i,h W_i h^{-1};G_i)<\alpha$, if $i\in A$.  It follows, that for all $g\in U$ we have 
\[\rk(V,gW g^{-1};\oplus^{\mathrm{loc}}_{i}(G_i,O_i) )<\alpha\]
Indeed, let $g=(g_i)_i\in U$ and notice that by the choice if $A$ we have  $g_i W_i g_i^{-1}=O_i=V_i$ for all $i\not\in A$. Hence, $\rk(V_i,g_i W_i g_i^{-1};G_i)=0$ if $i\not\in I$, and by inductive assumption we have
\begin{eqnarray*}
    \rk(V,gWg^{-1};\oplus^{\mathrm{loc}}_{i}(G_i,O_i))&=&\mathrm{sup} \{\rk(V_i,g_i W_i g_i^{-1};G_i) \colon i\in I\}\\
    &=& \max  \{\rk(V_i,g_i W_i g_i^{-1};G_i) \colon i\in I\}<\alpha.
\end{eqnarray*}
\end{proof}

\subsection{Wreath products and $\mathbb{Z}$--jumps of Polish groups} Let   $P\leq \mathrm{Sym}(I)$ be a group of permutations of a set $I$. Given any group $G$, the  {\bf unrestricted wreath product} of $G$ by $P$ is the following semidirect product:

\[P \mathrel{\mathrm{Wr}} G \; := \; P\ltimes\prod_{i\in I}  G \; = \; P\ltimes G^I.\]
More concretely, $P \mathrel{\mathrm{Wr}} G$ consists of all pairs $(p,(g_i)_i)$ with $p\in P$ and $(g_i)_i\in G^I$, and the group multiplication in $P \mathrel{\mathrm{Wr}} G$ is given by the formula:
\[(q,(h_i)_i) \; \cdot \; (p,(g_i)_i)=\big(qp, (h_{p(i)} g_{i})_i\big). \]

In what follows we will refer to  $P\mathrel{\mathrm{Wr}} G$, simply, as the 
 {\bf wreath product} of $G$ by $P$. The reader is warned that this  term is often  used in the literature for the variant of the above construction, where one uses direct sum $\oplus_{i\in I}$ in place of the full product $\prod_{i\in I}$.

If $P$ and $G$ are topological groups and $P\curvearrowright I$ is continuous, then the product topology on the topological space $P\times\prod_{a}G$ renders $P \mathrel{\mathrm{Wr}} G$ a topological group. In particular

\begin{remark}\label{Remark:Wreath_Polish}
If $I$ is countable and both $P,G$ are Polish groups then so is  $P \mathrel{\mathrm{Wr}} G$.    
\end{remark}

The {\bf $\mathbb{Z}$-jump} of a topological group $G$ is the topological group  $\mathbb{Z} \mathrel{\mathrm{Wr}} G$. Here we identify $\mathbb{Z}$ with the countable discrete permutation group $\mathbb{Z}\leq \mathrm{Sym}(\mathbb{Z})$ given by $p(k)=p+k$ for all $p,k\in \mathbb{Z}$. The following solves a special case of Problem \ref{Problem_1}.

\begin{proposition}\label{Prop:Z-jump}
Let $G$ be a topological group and let $\rk(G)=\alpha$. Then,
\begin{enumerate}
    \item if $\alpha$ is a successor ordinal then  $\rk(\mathbb{Z} \mathrel{\mathrm{Wr}} G)=\alpha+1$;
    \item if $\alpha$ is a limit ordinal then  $\rk(\mathbb{Z} \mathrel{\mathrm{Wr}} G)=\alpha$;
\end{enumerate}
\end{proposition}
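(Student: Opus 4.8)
The plan is to compute $\rk(\mathbb{Z}\wreath G)$ by analyzing the basic open neighborhoods of the identity in $\mathbb{Z}\wreath G = \mathbb{Z}\ltimes G^{\mathbb{Z}}$. Since $\mathbb{Z}$ is discrete, a basis of open neighborhoods of $1$ is given by sets of the form $V = \{0\}\times\prod_{k\in\mathbb{Z}}V_k$ where $V_k\subseteq_1 G$ and $V_k = G$ for all but finitely many $k$. The first reduction is to understand conjugation: conjugating the element $(p,(g_i)_i)$ shifts the $G^{\mathbb{Z}}$-coordinates around by $p$ and also twists them by the $G$-coordinates. The key structural observation is that the $\mathbb{Z}$-action is \emph{free and cofinite-support}, so that for a basic $V$ with $V_k = G$ outside a finite window, conjugating by $(n,\vec h)$ for large $|n|$ translates the window of $V$ far away; this is what forces the rank to go up by (at most) one, and it is essentially the mechanism already present in the $\mathbb{Z}$-jump theory of \cite{AP2021}.

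The concrete steps I would carry out: (i) \textbf{Upper bound.} Show $\rk(\mathbb{Z}\wreath G)\leq \alpha+1$ always, and $\leq\alpha$ when $\alpha$ is a limit. Given basic $V$, supported on a window $[-N,N]$, one wants to find $W\subseteq_1 \mathbb{Z}\wreath G$ so that for every $g$ in a given basic $U$, $\rk(V, gWg^{-1}) < \alpha+1$ (resp.\ $<\alpha$). Take $W = \{0\}\times\prod_k W_k$ where $W_k$ is chosen so that $\rk(V_k, hW_kh^{-1}; G) < \alpha$ for all $h\in U_k$ (possible since $\rk(G)=\alpha$ means $\rk(V_k,U_k;G)<\alpha$ when $U_k$ is small enough; one shrinks $U$ first, which is legitimate since we get to choose $W$ after $U$ is given but we may invoke Proposition~\ref{prop:basic_properties_of_rk}(1) to replace $U$ by a smaller basic neighborhood — actually the rank only gets larger on smaller $U$, so we must be careful: we need the bound for the \emph{given} $U$). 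The right way is: for the given basic $U$ supported on window $[-M,M]$, and the given $V$ on $[-N,N]$, note that for $g = (p,\vec h)\in U$ we have $p = 0$ and $h_k\in U_k$. Then conjugation by $g$ acts coordinatewise (since $p=0$), so $\rk(V, gWg^{-1}) = \sup_k \rk(V_k, h_k W_k h_k^{-1}; G)$ by the product-style argument of Lemma~\ref{Lemma_Product_Group}. Choosing $W_k$ witnessing $\rk(V_k, U_k; G) = \rk(V_k,U_k;G)$ (which is $<\alpha$, hence $\leq$ some $\beta_k < \alpha$) gives $\rk(V,gWg^{-1}) \leq \max_k\beta_k$; if $\alpha$ is a limit this max is $<\alpha$, giving $\rk(V,\mathbb{Z}\wreath G)<\alpha$ after one more step accounting for the $\mathbb{Z}$-coordinate — and here is where the translation trick enters: to handle elements $g$ with nonzero $\mathbb{Z}$-part we instead first pass to $W$ small in the $G^{\mathbb{Z}}$ part and exploit that $gWg^{-1}$ has its nontrivial coordinates translated, landing inside $\prod_k G = O$ intersected appropriately. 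So one actually proves $\rk(V, \mathbb{Z}\wreath G) \leq \rk(O, \mathbb{Z}\wreath G) + (\text{something} < \alpha)$ and then bounds $\rk(O,\mathbb{Z}\wreath G)$.

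(ii) \textbf{The role of $O = \prod_k G = G^{\mathbb{Z}}$.} The subgroup $O$ is open in $\mathbb{Z}\wreath G$ and normal, with quotient $\mathbb{Z}$ discrete. I would show $\rk(V, O; \mathbb{Z}\wreath G) = \sup_k\rk(V_k, G; G) = \rk(G) = \alpha$ when $V$ is supported on a finite window and $V_k\subsetneq G$ somewhere, using essentially Lemma~\ref{Lemma_Product_Group} (conjugation inside $O$ is coordinatewise since the $\mathbb{Z}$-parts are trivial) — wait, but $O\subseteq \mathbb{Z}\wreath G$ and we conjugate by elements of $O$ which have trivial $\mathbb{Z}$-coordinate, so indeed $\rk(V,O;\mathbb{Z}\wreath G)$ reduces to the product computation and equals $\alpha$ if $\alpha$ is attained, or is $<\alpha$ with sup $=\alpha$ if not. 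Then for a general basic $U$ (with possibly nonzero $\mathbb{Z}$-coordinates appearing, though actually $U$ basic forces $\mathbb{Z}$-coordinate $=0$; nonzero $\mathbb{Z}$ elements appear only as the $g$ ranging over $U$ — no wait, $U\subseteq_1$ means $U$ is a neighborhood of $1$, so $U\subseteq O$ automatically since $O$ is open and any small enough basic neighborhood sits in $O$). This is the crucial simplification: \textbf{every basic $U\subseteq_1\mathbb{Z}\wreath G$ satisfies $U\subseteq O$}, so all conjugators $g\in U$ have trivial $\mathbb{Z}$-part and act coordinatewise. Hence $\rk(V,U;\mathbb{Z}\wreath G) = \sup_k\rk(V_k,U_k;G) = \rk(V\cap O', U\cap O'; O)$-style, reducing everything to $G^{\mathbb{Z}}$, whose rank is $\alpha$ by Proposition~\ref{Proposition_Product_Group}. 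So $\rk(\mathbb{Z}\wreath G) = \sup\{\rk(V,\mathbb{Z}\wreath G)+1\}$ where $\rk(V,\mathbb{Z}\wreath G)$ involves an extra quantifier over \emph{all} open $W$ (not just basic), and the translation by nonzero $n\in\mathbb{Z}$ inside $W$ lets us shrink the effective window — this is exactly what produces the "$+1$" in the successor case versus "$+0$" in the limit case. The genuinely delicate point, and the one I'd expect to be the main obstacle, is pinning down precisely why a successor $\alpha = \gamma+1$ gives $\rk(\mathbb{Z}\wreath G) = \gamma + 2$ rather than $\gamma+1$: one must exhibit a specific $V$ (e.g.\ supported on a single coordinate $0$ with $V_0$ witnessing $\rk(V_0, G; G) = \gamma$ exactly) and show that no open $W$ can bring $\rk(V, gWg^{-1})$ strictly below $\gamma+1$ for all $g\in\mathbb{Z}\wreath G$, because for each fixed finite window of $W$ there is a conjugator (a translation) that lines up the surviving coordinate of $V$ with a coordinate where $W$ is all of $G$, leaving rank exactly $\gamma$ uncontrolled and forcing $\geq\gamma+1$; summing the $+1$ from the outer supremum gives $\gamma+2 = \alpha+1$. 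For the limit case, the sup over the finite windows of $W$ can always be pushed below $\alpha$ because $\alpha$ is not attained as any single $\rk(V_k,U_k;G)$, so one extra $W$-step suffices and no further increment is needed.
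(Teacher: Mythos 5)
You have assembled the right raw ingredients --- the open normal subgroup $O := \{0\}\times G^{\mathbb{Z}}$, the coordinatewise product computation (Lemma~\ref{Lemma_Product_Group}), and the translation trick --- but the roles you assign to them are partly scrambled, and the upper bound as you sketch it does not work.

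The core problem is in step (i). You invoke ``the translation trick'' to prove the \emph{upper} bound on $\rk(V,\mathbb{Z}\wreath G)$, arguing that shifting a basic $W$ far away ``lands inside $O$ intersected appropriately.'' But shifting $W$ far away does not \emph{lower} $\rk(V,gWg^{-1})$; on the contrary, once the nontrivial windows of $V$ and $gWg^{-1}$ are disjoint, the product formula gives $\rk(V,gWg^{-1})=\sup\{\rk(V_k,G;G)\colon V_k\neq G\}$, i.e.\ the rank reverts to its \emph{largest} possible value. That is exactly why translation is the mechanism of the \emph{lower} bound and is useless for the upper bound. Your displayed target inequality $\rk(V,\mathbb{Z}\wreath G)\le\rk(O,\mathbb{Z}\wreath G)+(\text{something}<\alpha)$ also has the arguments transposed: $\rk(O,\mathbb{Z}\wreath G)\le 1$ trivially, and it is not what controls the answer. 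What the normality of $O$ actually buys is the clean estimate
\[\rk\big(\{0\}\times V,\,\mathbb{Z}\wreath G\big)\;\le\;\rk\big(\{0\}\times V,\,O\big)+1\;=\;\rk\big(V,G^{\mathbb{Z}};G^{\mathbb{Z}}\big)+1,\]
obtained by taking $W:=O$ as the witness: since $gOg^{-1}=O$ for \emph{every} $g\in\mathbb{Z}\wreath G$ (including those with nonzero $\mathbb{Z}$-part), the quantifier over $g$ collapses, and you never need to think about translation at all. The translation trick then enters only in the opposite direction, to show this inequality is an equality: any basic $W$ has finite support, so a large enough shift $g=(\ell,(g_k)_k)$ makes the nontrivial coordinates of $g(\{0\}\times W)g^{-1}$ disjoint from those of $V$, forcing $\rk(\{0\}\times V,\,g(\{0\}\times W)g^{-1})=\rk(V,G^{\mathbb{Z}};G^{\mathbb{Z}})$ and contradicting any supposed strict improvement. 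This is precisely the content of the paper's Lemma~\ref{L:Z-jump}.

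Once you have the exact identity $\rk(\{0\}\times V,\mathbb{Z}\wreath G)=\rk(V,G^{\mathbb{Z}};G^{\mathbb{Z}})+1$, the successor/limit dichotomy has nothing to do with translation; it is pure ordinal arithmetic of suprema. Since $\rk(G^{\mathbb{Z}})=\rk(G)=\alpha$ by Proposition~\ref{Proposition_Product_Group}, the quantity $\rk(V,G^{\mathbb{Z}})+1$ ranges with supremum $\alpha$ over basic $V$, so $\rk(\mathbb{Z}\wreath G)=\sup_V\big(\rk(V,G^{\mathbb{Z}})+2\big)$. When $\alpha=\gamma+1$ this supremum is attained and equals $\gamma+2=\alpha+1$; when $\alpha$ is a limit, $\beta+2<\alpha$ for every $\beta<\alpha$, so the supremum stays $\alpha$ (and $\rk(\mathbb{Z}\wreath G)\ge\alpha$ already follows from the open embedding $G^{\mathbb{Z}}\hookrightarrow\mathbb{Z}\wreath G$ and Proposition~\ref{Proposition:Subgroup_Quotient}). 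Your claim that ``the translation by nonzero $n\in\mathbb{Z}$\dots is exactly what produces the `$+1$' in the successor case versus `$+0$' in the limit case'' therefore misattributes the dichotomy; the translation trick contributes the same ``$+1$'' in both cases, and the difference is only whether that ``$+1$'' survives the outer supremum.
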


Recall that a basic open $V\subseteq_1 G^{\mathbb{Z}}$  is of the form  $V=\prod_{k\in\mathbb{Z}} V_k$, with: $V_k\subseteq_1 G$, for all $k\in\mathbb{Z}$; and $V_k=G$, for all but finitely many $k\in\mathbb{Z}$. For any such $V$, consider the set:
\[\{0\}\times V \subseteq \mathbb{Z} \times G^{\mathbb{Z}} \]
Notice that  sets of this form constitute a basis open neighborhoods of $1$ in $\mathbb{Z} \mathrel{\mathrm{Wr}} G$. Hence, the proof of Proposition \ref{Prop:Z-jump} is a direct consequence of the following lemma. 

\begin{lemma}\label{L:Z-jump}
Let $V\subseteq_1 G^{\mathbb{Z}}$ be a basic open neighborhood of $1$ in $G^{\mathbb{Z}}$. Then we have that: 
\[\rk( \{0\} \times V,\, \mathbb{Z}\mathrel{\mathrm{Wr}}G; \,\mathbb{Z}\mathrel{\mathrm{Wr}}G)= \rk(V,G^{\mathbb{Z}};G^{\mathbb{Z}})+1.\]
\end{lemma}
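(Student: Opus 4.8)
The plan is to prove the two inequalities $\rk(\{0\}\times V,\, \mathbb{Z}\mathrel{\mathrm{Wr}}G;\,\mathbb{Z}\mathrel{\mathrm{Wr}}G) \leq \rk(V,G^{\mathbb{Z}};G^{\mathbb{Z}})+1$ and $\rk(\{0\}\times V,\, \mathbb{Z}\mathrel{\mathrm{Wr}}G;\,\mathbb{Z}\mathrel{\mathrm{Wr}}G) \geq \rk(V,G^{\mathbb{Z}};G^{\mathbb{Z}})+1$ separately, each by transfinite induction, after isolating the right general claim about which $\rk$-values of pairs inside $\mathbb{Z}\mathrel{\mathrm{Wr}}G$ can be controlled by $\rk$-values of pairs inside $G^{\mathbb{Z}}$. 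Throughout I will use Proposition \ref{prop:basic_properties_of_rk} (monotonicity, conjugation-invariance, and behavior under intersection) and Lemma \ref{L:rk*} to reduce to basic open sets and to a dense subgroup; in particular I may always assume that witnessing neighborhoods $W\subseteq_1 \mathbb{Z}\mathrel{\mathrm{Wr}}G$ have the form $\{0\}\times W'$ for a basic open $W'\subseteq_1 G^{\mathbb{Z}}$, and that conjugating elements are taken from a dense subgroup of $\mathbb{Z}\mathrel{\mathrm{Wr}}G$, say one where the $G^{\mathbb{Z}}$-coordinate has finite support and lies in a fixed dense subgroup of $G$.

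For the upper bound, the key observation is the ``$+1$'' comes from the single extra layer needed to absorb the shift action of $\mathbb{Z}$. Concretely: suppose $\rk(V,G^{\mathbb{Z}};G^{\mathbb{Z}})=\beta$. I claim $\rk(\{0\}\times V,\,\{0\}\times U;\,\mathbb{Z}\mathrel{\mathrm{Wr}}G)\leq\beta$ for every basic open $U\subseteq_1 G^{\mathbb{Z}}$, proved by induction on $\beta$ simultaneously with a statement about conjugates; this is the heart of the matter because conjugation in $\mathbb{Z}\mathrel{\mathrm{Wr}}G$ by an element $(p,(h_i)_i)$ both shifts coordinates by $p$ and twists by the $h_i$, so I must check that $\rk(V,\,\sigma(gW'g^{-1});G^{\mathbb{Z}})<\beta$ where $\sigma$ is a coordinate shift — here Proposition \ref{prop:basic_properties_of_rk}(2) applied componentwise (or directly, since a shift of $G^{\mathbb{Z}}$ is conjugation by a permutation that is itself available inside a larger ambient group, or more simply since $V,G^{\mathbb{Z}}$ are shift-invariant enough) handles the shift, and the twist is an honest conjugation in $G^{\mathbb{Z}}$. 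Once this claim is in hand, given arbitrary $(p,(g_i)_i)\in\mathbb{Z}\mathrel{\mathrm{Wr}}G$ I take $W=\{0\}\times G^{\mathbb{Z}}$ (the whole fiber) — wait, more carefully: I take $W$ so that conjugating $W$ by $(p,(g_i)_i)$ lands inside $\{0\}\times(\text{something basic open})$, which is automatic since $\{0\}\times G^{\mathbb{Z}}$ is normalized by the $\mathbb{Z}$-part up to shift; then $\rk(\{0\}\times V,\,(p,(g_i))W(p,(g_i))^{-1};\mathbb{Z}\mathrel{\mathrm{Wr}}G)\leq\beta$ by the claim, giving $\rk(\{0\}\times V,\,\mathbb{Z}\mathrel{\mathrm{Wr}}G;\mathbb{Z}\mathrel{\mathrm{Wr}}G)\leq\beta+1$.

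For the lower bound I must show no ordinal smaller than $\beta+1$ works, i.e. $\rk(\{0\}\times V,\,\mathbb{Z}\mathrel{\mathrm{Wr}}G;\mathbb{Z}\mathrel{\mathrm{Wr}}G)\not\leq\beta$ when $\rk(V,G^{\mathbb{Z}};G^{\mathbb{Z}})=\beta$. Suppose toward a contradiction it were $\leq\beta$, witnessed by some $W\subseteq_1\mathbb{Z}\mathrel{\mathrm{Wr}}G$, which I may take to be $\{0\}\times W'$ with $W'$ basic open. For the conjugating element I use a pure shift $z=(1,(1_G)_i)\in\mathbb{Z}\mathrel{\mathrm{Wr}}G$ and its powers: $z^n(\{0\}\times W')z^{-n} = \{0\}\times\sigma^n(W')$ where $\sigma^n$ shifts coordinates. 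The point is that as $n$ ranges over $\mathbb{Z}$, the supports of $\sigma^n(W')$ move off to infinity, so the infinite intersection — or rather, the fact that for the rank to drop I would need $\rk(V,\,\{0\}\times\sigma^n(W');\mathbb{Z}\mathrel{\mathrm{Wr}}G)<\beta$ for all $n$ — forces, after projecting back down via a dual ``lower bound'' claim $\rk(V,\,\text{anything};G^{\mathbb{Z}})\leq \rk(\{0\}\times V,\,\{0\}\times(\text{same});\mathbb{Z}\mathrel{\mathrm{Wr}}G)$, that $\rk(V,\sigma^n(W');G^{\mathbb{Z}})<\beta$ for all $n$; but since $V$ is supported on a finite set $A$ and for large $|n|$ the set $\sigma^n(W')$ is ``full'' on $A$ (equal to $G$ on those coordinates), this would give $\rk(V,G^{\mathbb{Z}};G^{\mathbb{Z}})<\beta$ via Proposition \ref{prop:basic_properties_of_rk}(1), contradicting $\rk(V,G^{\mathbb{Z}};G^{\mathbb{Z}})=\beta$. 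The delicate point, and the main obstacle, is precisely this passage between ranks computed in $\mathbb{Z}\mathrel{\mathrm{Wr}}G$ and ranks computed in $G^{\mathbb{Z}}$ when the conjugating elements genuinely use the $\mathbb{Z}$-part: I expect to need an auxiliary two-sided lemma of the form ``for basic open $V,U\subseteq_1 G^{\mathbb{Z}}$ and any finite set of shifts, $\rk(\{0\}\times V,\{0\}\times U;\mathbb{Z}\mathrel{\mathrm{Wr}}G)$ and $\rk(V,U;G^{\mathbb{Z}})$ differ by at most the shift-contribution'', and to set up the induction so that the shift absorption happens exactly once. I also need to double-check the successor-vs-limit distinction from Proposition \ref{Prop:Z-jump}, but that is deferred to the derivation of that proposition from this lemma: the lemma as stated gives $\rk(V,G^{\mathbb{Z}};G^{\mathbb{Z}})+1$ uniformly, and the $\sup$ over $V$ in the definition of $\rk(\mathbb{Z}\mathrel{\mathrm{Wr}}G)$ is where the $\alpha$ vs $\alpha+1$ dichotomy will emerge, not here.
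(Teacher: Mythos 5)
Your proposal follows essentially the same route as the paper: for the upper bound, take the open normal subgroup $\{0\}\times G^{\mathbb{Z}}$ itself as the witnessing $W$ (all its conjugates coincide with it, and $\rk(\{0\}\times V,\{0\}\times G^{\mathbb{Z}};\mathbb{Z}\mathrel{\mathrm{Wr}}G)=\rk(V,G^{\mathbb{Z}};G^{\mathbb{Z}})$); for the lower bound, conjugate a putative basic witness $\{0\}\times W'$ by a large pure shift to push the support of $W'$ off that of $V$.

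One step would not work as written. You invoke Proposition \ref{prop:basic_properties_of_rk}(1) to pass from $\rk(V,\sigma^n(W');G^{\mathbb{Z}})<\beta$ to $\rk(V,G^{\mathbb{Z}};G^{\mathbb{Z}})<\beta$. But monotonicity in the second argument together with $\sigma^n(W')\subseteq G^{\mathbb{Z}}$ gives only $\rk(V,\sigma^n(W'))\leq\rk(V,G^{\mathbb{Z}})$, the wrong direction, so no contradiction results. What is actually needed is the coordinatewise formula $\rk(V,U;G^{\mathbb{Z}})=\max_k\rk(V_k,U_k;G)$ of Lemma \ref{Lemma_Product_Group}: once $\sigma^n(W')_k=G$ on the finite support of $V$ (and $V_k=G$ elsewhere contributes rank $0$), the two maxima agree, giving $\rk(V,\sigma^n(W');G^{\mathbb{Z}})=\rk(V,G^{\mathbb{Z}};G^{\mathbb{Z}})=\beta$ and hence the desired contradiction; this is precisely the paper's appeal to Proposition \ref{Proposition_Product_Group}. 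Separately, the ``auxiliary two-sided shift-contribution lemma'' you anticipate reduces to the cleaner fact that for $V,U\subseteq_1 G^{\mathbb{Z}}$ one has $\rk(\{0\}\times V,\{0\}\times U;\mathbb{Z}\mathrel{\mathrm{Wr}}G)=\rk(V,U;G^{\mathbb{Z}})$: since $\{0\}\times G^{\mathbb{Z}}$ is an open subgroup of $\mathbb{Z}\mathrel{\mathrm{Wr}}G$, the witnessing $W$ in the recursion may always be shrunk to lie in it, after which the entire computation takes place inside $\{0\}\times G^{\mathbb{Z}}\cong G^{\mathbb{Z}}$, so there is no accumulated ``shift contribution''; the shift appears exactly once, in the outermost conjugation, which is where the $+1$ comes from.
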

\begin{proof}
Since $W:=\{0\}\times G^{\mathbb{Z}}$ is a normal subgroup of $\mathbb{Z}\mathrel{\mathrm{Wr}}G$, for all $g\in \mathbb{Z}\mathrel{\mathrm{Wr}}G$ we have 
\[\rk( \{0\} \times V, gWg^{-1}; \,\mathbb{Z}\mathrel{\mathrm{Wr}}G)=\rk( \{0\} \times V, \{0\}\times G^{\mathbb{Z}}; \,\mathbb{Z}\mathrel{\mathrm{Wr}}G)= \rk(V,G^{\mathbb{Z}};G^{\mathbb{Z}}) \]
from which it follows that $\rk( \{0\} \times V,\, \mathbb{Z}\mathrel{\mathrm{Wr}}G; \,\mathbb{Z}\mathrel{\mathrm{Wr}}G)\leq \rk(V,G^{\mathbb{Z}};G^{\mathbb{Z}})+1$.

Conversely, assume towards contradiction that   $\rk( \{0\} \times V,\, \mathbb{Z}\mathrel{\mathrm{Wr}}G; \,\mathbb{Z}\mathrel{\mathrm{Wr}}G)\leq\beta$ holds, where $\beta:= \rk(V,G^{\mathbb{Z}};G^{\mathbb{Z}})$.  By Proposition \ref{prop:basic_properties_of_rk}(1) we may find a basic $W=\prod_kW_k
\subseteq_1 G^{\mathbb{Z}}$ so that for every $g\in \mathbb{Z}\mathrel{\mathrm{Wr}}G$ we have that
\[\rk( \{0\} \times V,\, g \cdot \big(\{0\} \times W\big)\cdot g^{-1}; \,\mathbb{Z}\mathrel{\mathrm{Wr}}G)<\beta\]
Since the sets $A:=\{k\in \mathbb{Z}\colon V_k \neq G\}$, $B:=\{k\in \mathbb{Z}\colon W_k \neq G\}$ are finite, there exists $\ell_*\in\mathbb{Z}$ so that $(\ell_*+B)\cap A=\emptyset$. Let now $g\in \mathbb{Z}\mathrel{\mathrm{Wr}} G$ be  of  the form $g=(\ell_*,(g_k)_k)$ for any choice of $(g_k)_k\in G^{\mathbb{Z}}$ ---take, for example, $g_k:=1_G$ for all $k\in\mathbb{Z}$. By the choice of $\ell_*$, we have:
\[ g \cdot \big(\{0\} \times W\big)\cdot g^{-1}= \{0\} \times \widetilde{W}, \]
for some basic $ \widetilde{W}=\prod_k \widetilde{W}_k
\subseteq_1 G^{\mathbb{Z}}$ with the property $V_k\neq G \implies   \widetilde{W}_k=G$ for all $k\in\mathbb{Z}$.   We  have thus arrived at the following contradiction:
\[\beta=\rk(V,G^{\mathbb{Z}};G^{\mathbb{Z}})= \rk(V,\widetilde{W};G^{\mathbb{Z}})=\rk( \{0\} \times V, \{0\} \times \widetilde{W}; \,\mathbb{Z}\mathrel{\mathrm{Wr}}G)<\beta \]
To see why the second equality holds, notice that $\rk(V,G^{\mathbb{Z}};G^{\mathbb{Z}})\geq  \rk(V,\widetilde{W};G^{\mathbb{Z}})$ follows from Proposition \ref{prop:basic_properties_of_rk} (1). On the other hand, by Lemma \ref{Lemma_Product_Group} there exist some $k_{*}\in\mathbb{Z}$ with 
\[\rk(V,G^{\mathbb{Z}};G^{\mathbb{Z}})=\rk(V_{k_{*}},G;G),\]
and since $V_k\neq G$ implies  $ \widetilde{W}_k=G$, by another use of Lemma \ref{Lemma_Product_Group} we also have that:  
\[\rk(V,G^{\mathbb{Z}};G^{\mathbb{Z}})=\rk(V_{k_{*}},G;G) = \rk(V_{k_{*}},\widetilde{W}_{k_{*}};G) \leq \rk(V,\widetilde{W};G^{\mathbb{Z}}).\] 
\end{proof}

\subsection{An application}

Theorem \ref{T:All_Ordinals}, which stated that for every ordinal there is a topological group with that rank, is a simple consequence  of the above constructions:

\begin{proof}[Proof of Theorem \ref{T:All_Ordinals}]
For each $\alpha>0$, we define a topological group with $\rk(G_{\alpha})=\alpha$. In the process, we will also need specify an   open subgroup  $O_{\alpha}\leq G_{\alpha}$, which satisfies
\begin{equation}
\rk(G_{\alpha})=\rk(O_{\alpha},G_{\alpha},G_{\alpha})+1,\label{Eq:Application}    
\end{equation}

whenever $\alpha$ is a successor ordinal.

If $\alpha=1$,  then simply set $G_{\alpha}:=O_{\alpha}:=\{1\}$ be the trivial group.

If $\alpha$ is a limit ordinal, then let  $G_{\alpha}=\prod_{\beta<\alpha} G_{\beta}$. By Proposition \ref{Proposition_Product_Group} we have that 
\[\rk(G_{\alpha})=\sup\{ \rk(G_{\beta}) \colon \beta<\alpha \}=\sup\{ \beta \colon \beta<\alpha \}=\alpha,\]
where the middle equality follows by inductive assumption. Since $\alpha$ is not a successor ordinal, we can arbitrarily choose $O_{\alpha}$ to be, say, equal to $G_{\alpha}$.

If $\alpha$ is a successor ordinal then we distinguish between the following two cases. 

If $\alpha=\lambda+1$ for a limit ordinal $\lambda$ then set $G_{\alpha}:=\oplus^{\mathrm{loc}}_{\beta<\lambda}(G_{\beta},O_\beta)$ and $O_{\alpha}:=\prod_{\beta<\lambda}O_{\beta}$. By Proposition \ref{Prop:local_direct_product},  inductive assumption and the cofinality of  successor ordinals $\beta<\lambda$  in $\lambda$, 
\[\rk(O_{\alpha}, G_{\alpha};G_{\alpha})=\sup\{\rk(O_{\beta},G_{\beta},G_{\beta}) \colon \beta<\alpha \}=\lambda \]
It follows that $\rk(G_{\alpha})\geq \lambda+1 =\alpha$. The fact that  $\rk(G_{\alpha})\leq \alpha$ also holds, and hence so does $\rk(G_{\alpha})=\alpha$, is an elaboration on Lemma \ref{L:local_direct_product} and the fact the tightness of choice of $O_{\beta}\leq G_{\beta}$ with $\beta<\lambda$, coming from (\ref{Eq:Application}) above.

If $\alpha=\beta+1$ for some successor ordinal, then let $G_{\beta},O_{\beta}$ satisfying (\ref{Eq:Application}) above and let
\[G_{\alpha}:= \mathbb{Z}\mathrel{\mathrm{Wr}}G_{\beta}\quad \text{ and } \quad O_{\alpha}:= \big(O_{\beta}\big)^{\mathbb{Z}}\subseteq \big(G_{\beta}\big)^{\mathbb{Z}} \]
But then, by Proposition \ref{Prop:Z-jump} and Lemma \ref{L:Z-jump} it follows  that
\[\alpha=\beta+1=\rk(G_{\alpha})= \rk(O_{\alpha}, G_{\alpha}; G_{\alpha})+1\]
as desired.
\end{proof}

Notice that the extension procedures  used in the proof of Theorem \ref{T:All_Ordinals} generate topological groups with  non-trivial closed normal subgroups. A natural question\footnote{We thank the anonymous referee for raising this question} is whether, for any given $\alpha$, there exists a topologically simple topological group $G$ with $\rk(G)=\alpha$.

\begin{problem}
For every ordinal $\alpha$, show that there exists some  topological group $G$, with $\rk(G)=\alpha$, which has no  non-trivial closed normal subgroups. 
\end{problem}

\section{Stratifying the class of all CLI groups}\label{S:Metrizable}

In this section, we formulate and prove a version of Theorem \ref{thm:CLI} ---which stated that the CLI Polish groups are precisely those Polish groups with (countable) ordinal rank--- for
the more general class of metrizable topological groups. We then  specialize this more general statement to the class of Polish groups and recover Theorem \ref{thm:CLI}.

A {\bf metrizable} group is a topological group $G$ which admits a metric $d$ that is compatible with its topology. By Birkhoff-Kakutani \cite{birkhoff1936note,kakutani1936metrisation} a topological group is metrizable if and only if its topology is Hausdorff and first-countable. Moreover, every such group admits a metric $d_{\ell}$ that is both compatible with the topology and {\bf left-invariant}:
\[\forall f,g,h\in G \quad d_{\ell}(fg,fh)=d_{\ell}(g,h). \]
Setting $d_r(g,h):=d_{\ell}(g^{-1},h^{-1})$ we see that $G$ also  admits a {\bf right-invariant} metric $d_r$ compatible with the topology. Combining  the two metrics we get the {\bf strong metric}:
\[d_s(g,h):=d_{\ell}(g,h)+d_r(g,h)\]
While $d_s$ is neither left-invariant nor right-invariant, it has the following important property: the completion $\widehat{G}^s$ of $G$ with respect to $d_s$ carries a natural topological group structure which extends the group structure of $G$. In contrast, the completions $\widehat{G}^{\ell}$, $\widehat{G}^{r}$ of $G$  with respect to $d_{\ell}$ and $d_r$, respectively, fail to carry a topological group structure that extends that of $G$. That is,  unless  $\widehat{G}^{\ell}=\widehat{G}^{s}=\widehat{G}^{r}$ holds. 
As it turns out, $\rk(G)$   measures the difficulty of the task of confirming whether $\widehat{G}^{\ell}=\widehat{G}^{s}$ holds. Before we make this precise, we recall that the definitions of  $\widehat{G}^{\ell}$,  $\widehat{G}^{r}$ and $\widehat{G}^{s}$ do not depend on the choice of $d_{\ell}$. Indeed, these completions can be directly defined using the canonical left, right, and two-side  \emph{uniform structures} that any topological group carries \cite{roelcke}. Since we will restrict the discussion to the realm of metrizable groups, it suffices to work with sequences.

Let $G$ be a metrizable group. A sequence  $(g_n)_n$ in $G$ is  \textbf{left-Cauchy} if for every $V\subseteq_1 G$ there exists $n_0\in\mathbb{N}$ so that  for all $n \ge n_0$ we have that $g_n \in g_{n_0}V$, and it is  \textbf{right-Cauchy} if for every $V\subseteq_1 G$ there exists $n_0\in\mathbb{N}$ so that  for all $n \ge n_0$ we have that $g_n \in Vg_{n_0}$. It is finally \textbf{strongly Cauchy},  if it is both left-Cauchy and right-Cauchy. It is not difficult to see that $(g_n)_n$ is left-Cauchy if and only if it is Cauchy to some (equivalently, any) left-invariant metric $d_{\ell}$ on $G$. The same relation exists between right-Cauchy (respectively, strongly Cauchy) sequences and Cauchy sequences with respect to $d_r$ (respectively, $d_s$). In particular, this provides a ``metric-free" definition of the completions $\widehat{G}^{\ell}$,  $\widehat{G}^{r}$ and $\widehat{G}^{s}$.

\begin{theorem}\label{Th:Main_metrizable}
If $G$ is a metrizable topological group, then we have that:
\[\widehat{G}^{\ell}=\widehat{G}^{s} \;  \iff \;  \rk(G)<\infty \;  \iff \; \rk(G)<\omega_1\]
\end{theorem}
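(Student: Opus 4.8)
### Proof plan for Theorem \ref{Th:Main_metrizable}

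\textbf{Overall strategy.} The plan is to prove the cycle of implications
\[
\rk(G)<\omega_1 \;\implies\; \rk(G)<\infty \;\implies\; \widehat{G}^{\ell}=\widehat{G}^{s} \;\implies\; \rk(G)<\omega_1.
\]
The first implication is trivial. The substance is in the last two. The middle implication ($\rk(G)<\infty \Rightarrow \widehat{G}^{\ell}=\widehat{G}^{s}$) should be proved by showing that a left-Cauchy sequence is automatically right-Cauchy whenever the rank is an ordinal; this will be an induction on $\rk(V,G)$ for each target neighborhood $V$. The last implication ($\widehat{G}^{\ell}=\widehat{G}^{s} \Rightarrow \rk(G)<\omega_1$) should be proved contrapositively: if $\rk(G)=\infty$, then some pair $V,U\subseteq_1 G$ has $\rk(V,U)=\infty$, and from the failure of the rank to be defined one extracts (via a tree/ill-foundedness argument, using Lemma \ref{L:rk*} to keep everything countable) a sequence that is left-Cauchy but not right-Cauchy, so $\widehat{G}^{\ell}\neq \widehat{G}^{s}$.

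\textbf{The middle implication in detail.} Suppose $\rk(G)<\infty$ and let $(g_n)_n$ be left-Cauchy; I must show it is right-Cauchy, i.e.\ for each $V\subseteq_1 G$ there is $n_0$ with $g_n\in Vg_{n_0}$ for all $n\geq n_0$. The key lemma I would isolate is: \emph{for all $V,U\subseteq_1 G$ with $\rk(V,U)<\infty$, if $(g_n)_n$ is left-Cauchy and $g_n^{-1}g_m\in U$ for all large $n,m$ (more precisely, the ``left-tail'' of the sequence lives in a coset translate governed by $U$), then the sequence is ``$V$-right-Cauchy''}, meaning $g_ng_m^{-1}\in V$ for all large $n,m$. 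This is proved by induction on $\beta=\rk(V,U)$. For $\beta=0$ we have $U\subseteq V$ and the claim is essentially immediate from left-Cauchyness translated appropriately. For the inductive step, pick $W\subseteq_1 G$ witnessing $\rk(V,U)\leq\beta$, so $\rk(V,gWg^{-1})<\beta$ for all $g\in U$; then use left-Cauchyness to find $n_0$ past which consecutive terms differ (on the left) by an element of a small enough neighborhood, plug the relevant group element in as $g\in U$, and apply the induction hypothesis to the conjugated neighborhood $gWg^{-1}$. Applying this with $U=G$ and $\beta=\rk(V,G)$ (finite by hypothesis) gives right-Cauchyness for every $V$, hence $(g_n)_n$ is strongly Cauchy. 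Since $\widehat{G}^{\ell}$ is, as a set, the left-Cauchy sequences modulo the appropriate equivalence and likewise for $\widehat{G}^{s}$, this shows every element of $\widehat{G}^{\ell}$ is represented by a strongly Cauchy sequence, i.e.\ $\widehat{G}^{\ell}=\widehat{G}^{s}$.

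\textbf{The hard implication: $\widehat{G}^{\ell}=\widehat{G}^{s} \Rightarrow \rk(G)<\omega_1$.} I would argue the contrapositive. Assume $\rk(G)=\infty$; then $\sup\{\rk(V,G)+1 : V\subseteq_1 G\}$ fails to be an ordinal, so there is $V\subseteq_1 G$ with $\rk(V,G)=\infty$, hence $\rk(V,U)=\infty$ for $U=G$. Fix a countable basis $\mathcal{V}$ at $1_G$ and a countable dense $Q\leq G$, so that by Lemma \ref{L:rk*} the rank may be computed using only $W\in\mathcal{V}$ and $g\in Q$. The relation ``$\rk(V,U)=\infty$'' is then the ill-foundedness of an explicit countably-branching tree: from $\rk(V,U)=\infty$ and \emph{any} candidate $W\in\mathcal{V}$, there is $g\in Q\cap U$ with $\rk(V,gWg^{-1})=\infty$. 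Iterating with a fixed enumeration $(W_k)$ of $\mathcal{V}$, I build a sequence $g_0,g_1,\dots\in Q$ and neighborhoods $U=U_0\supseteq$-in-the-conjugated-sense $U_1,U_2,\dots$ with $U_{k+1}=g_kW_kg_k^{-1}$ (suitably nested), each still of infinite rank, so that the partial products $h_n := g_{n-1}g_{n-2}\cdots g_0$ (or the appropriate accumulation) form a left-Cauchy sequence — left-Cauchyness is forced because at stage $k$ we may shrink into $W_k$ and the $W_k$ exhaust a basis — while the construction guarantees the sequence is \emph{not} right-Cauchy, because each newly chosen $g_k$ lies outside a fixed $Vh_k$ (this is exactly what ``$U_k\not\subseteq V$'' at every stage buys us; if it were ever contained in $V$ the rank would be $0$, not $\infty$). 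Then $(h_n)$ witnesses $\widehat{G}^{\ell}\neq\widehat{G}^{s}$.

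\textbf{Main obstacle.} The delicate point is the bookkeeping in the last implication: one must interleave (i) keeping each neighborhood in the chain of infinite rank, (ii) arranging genuine left-Cauchyness of the constructed sequence — which requires shrinking into every basic neighborhood $W_k$ cofinally, so the enumeration of $\mathcal{V}$ must be revisited infinitely often — and (iii) simultaneously sabotaging right-Cauchyness, exploiting that $\rk(V,U_k)=\infty$ forbids $U_k\subseteq V$ so that some group element realizing membership in $U_k$ fails to land in $V$ after the appropriate right-translation. Making (i)–(iii) mutually consistent is the technical heart of the argument; once the sequence is built, concluding $\widehat{G}^{\ell}\neq\widehat{G}^{s}$ is immediate. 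The specialization to Polish groups (to recover Theorem \ref{thm:CLI}) is then routine: a Polish group is CLI iff $d_{\ell}$ can be taken complete iff $\widehat{G}^{\ell}=G=\widehat{G}^{s}$, and completeness of $d_s$ is automatic since $\widehat{G}^s$ is Polish and $G$ is $G_\delta$ in it.
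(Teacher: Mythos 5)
Your plan proves the cycle $A\Rightarrow B\Rightarrow C\Rightarrow A$ where $A$ is $\rk(G)<\omega_1$, $B$ is $\rk(G)<\infty$, and $C$ is $\widehat{G}^{\ell}=\widehat{G}^{s}$. The implications $A\Rightarrow B$ and $B\Rightarrow C$ are fine and essentially match the paper (the inductive claim for $B\Rightarrow C$ should be stated with the \emph{right}-coset hypothesis $g_n\in Ug_{m_0}$, not $g_n^{-1}g_m\in U$, which is the left-Cauchy form — a careful writeup would force you to fix this). The problem is the third arrow. The contrapositive of $C\Rightarrow A$ is $\rk(G)\geq\omega_1 \Rightarrow \widehat{G}^{\ell}\neq\widehat{G}^{s}$, and this hypothesis allows $\rk(G)$ to be an \emph{ordinal} with $\omega_1\leq\rk(G)<\infty$, not only $\rk(G)=\infty$. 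Your construction, however, starts from ``$\rk(V,U)=\infty$, so for any $W\in\mathcal{V}$ there is $g\in Q\cap U$ with $\rk(V,gWg^{-1})=\infty$'' — and this iterability is exactly what fails when $\rk(V,U)$ is a large ordinal rather than $\infty$ (then there genuinely is a $W$ that drops \emph{every} conjugate's rank below $\rk(V,U)$). So what you have actually established is $A\Rightarrow B$, $B\Rightarrow C$, and $C\Rightarrow B$; the cycle does not close, and $B\Rightarrow A$ is missing.

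The missing piece is precisely the paper's Lemma~\ref{L:Main_metrizable}: if $\rk(V,U)<\infty$ then $\rk(V,U)<\omega_1$ for metrizable $G$. This is not subsumed by ``keeping everything countable via Lemma~\ref{L:rk*}'': the countable-data reduction tells you how to compute $\rk_{Q,\mathcal{V}}$, but turning that into the bound $<\omega_1$ requires an extra argument. The paper's argument is a descending-ordinal contradiction: if some $\beta<\alpha:=\rk(V,U)$ were realized by no $O\in\mathcal{V}$ as $\rk(V,O)=\beta$, one could produce an infinite strictly decreasing sequence of ordinals staying above $\beta$, which is impossible; hence every $\beta<\alpha$ is realized by some $O$ in the countable family $\mathcal{V}$, giving $\alpha\leq|\mathcal{V}|<\omega_1$. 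You should either include this lemma as an explicit step $B\Rightarrow A$, or recast your ill-foundedness intuition into a genuine game-tree correspondence (as the paper does later for a different purpose in Section~\ref{S:Boundedness}), in which case $\rk(V,U)<\infty$ would correspond to a well-founded countably branching tree whose rank is automatically $<\omega_1$. Either way the step needs to be made explicit; as written your contrapositive silently equates ``not $<\omega_1$'' with ``$=\infty$''.
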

\begin{proof}
To establish the first equivalence, it suffices to show that the following two properties of $G$ are equivalent:
\begin{enumerate}
    \item every left-Cauchy sequence in $G$ is also right-Cauchy;
    \item there is some ordinal $\alpha$ so that for every  $V\subseteq_1 G$,  $\rk(V, G) \le \alpha$ holds.
\end{enumerate}
\medskip

\noindent We first prove $(2)\implies (1)$. Assume that  $\rk(V, G) \le \alpha$ for every  $V\subseteq_1 G$ and let $(g_n)_n$ be a left-Cauchy sequence in $G$. Fix also some  $V\subseteq_1 G$. By applying the next claim to $U=G$, we get some $n_0\in\mathbb{N}$ so that for all $n \geq n_0$ we have that $g_n\in Vg_{n_0}$, as desired.

\begin{claim}
Suppose that for some  $U, V\subseteq_1 G$ and  $m_0\in\mathbb{N}$ we have $\rk(V, U)\leq \alpha$ and $g_n \in Ug_{m_0}$, for all $n\geq m_0$. Then, there exists  $n_0\in\mathbb{N}$ so that $g_n \in Vg_{n_0}$, for all $n\geq n_0$. 
\end{claim}
\begin{proof}[Proof of Claim]
If $\alpha = 0$ then $U\subseteq V$ and hence the claim follows for $n_0:=m_0$.

Assume now that  $\alpha > 0$ and that the claim holds below $\alpha$.
Let $W\subseteq_1 G$ so that for all $g \in U$, we have $\rk(V, gWg^{-1}) < \alpha$.
Since $(g_n)_n$ is left-Cauchy, there exists some $m_1\in\mathbb{N}$ so that $g_n \in g_{m_1}(g_{m_0}^{-1}Wg_{m_0})$, for all $n\geq m_1$. Assuming without loss of generality that $m_1\geq m_0$, we get   $g\in U$ so that  $g_{m_1} = gg_{m_0}$.
But then, for all $n\geq m_1$ we have that
\[ g_n \in g_{m_1}(g_{m_0}^{-1}Wg_{m_0}) = g_{m_1}(g_{m_0}^{-1}W)g_{m_0} = gg_{m_0}(g_{m_0}^{-1}W)g^{-1}g_{m_1} = (gWg^{-1})g_{m_1}.\]
Since  $\rk(V, gWg^{-1}) < \alpha$ holds, by the inductive hypothesis we get $n_0\in \mathbb{N}$ so that for every $n \ge n_0$ we have $g_n \in Vg_{n_0}$ as desired.
\end{proof}

\noindent Next we prove $(1)\implies (2)$. Assume that there is no ordinal $\alpha$ as in (2). Since $G$ is a set, this implies that  there exists some $V\subseteq_1 G$ in $G$ so that $\rk(V,G)=\infty$. We will  define a left-Cauchy sequence $(g_n)_n$ which is not right-Cauchy.
In fact, fix any $\widetilde{V}\subseteq_1 G$ with  $\widetilde{V}^3 \subseteq V$. We will prove the following stronger claim:

\begin{claim}
There exists a left-Cauchy  $(g_n)_n$ in $G$ so that for all $ n\in\mathbb{N}$, $ g_{n+1} \not\in \widetilde{V}g_n$ holds.
\end{claim}
\begin{proof}
  Fix $G=: W_{-1}\supseteq W_0\supseteq W_1\supseteq \cdots$ a basis of neighborhoods around $1_G$ in $G$ so that $W^{-1}_n=W_n$ and $W^2_n\subseteq W_{n-1}$ for all $n\in\mathbb{N}$. 
First we recursively define some sequence $(h_n)_n$ in $G$ together with open neighborhoods  $\widetilde{W_n}$ of $1_G$  in $G$, so that:
\begin{enumerate}
    \item  $h_n\in h_{n-1} \widetilde{W}_{n-1} h^{-1}_{n-1}$ for all $n\in\mathbb{N}$;
    \item$\rk(V,h_n\widetilde{W}_nh^{-1}_n)=\infty$ for all $n\in\mathbb{N}$;
    \item $h_0\ldots h_{n-1}\widetilde{W}_nh_{n-1}^{-1}\ldots h_0^{-1} \subseteq W_n$; and
    \item $\widetilde{W}_n \subseteq \widetilde{V}$.
\end{enumerate}

Suppose now that we have successfully defined such a sequence, and for each $n\in\mathbb{N}$ set
\[g_n:=h_n\ldots h_0.\]
By (1) above we have that 
\[ g_{n+1}^{-1}g_n = h_0^{-1}\ldots h_n^{-1}h_{n+1}h_n\ldots h_0 \in h_0^{-1}\ldots h_{n-1}^{-1} \widetilde{W}_n h_{n-1}\ldots h_0 \]
which is contained in $W_n$ by (3). 
Thus $(g_n)_n$ is left-Cauchy. 
On the other hand,  we also have that for every $n\in\mathbb{N}$, $g_{n+1}g_n^{-1} = h_{n+1} \not\in \widetilde{V}$ holds. Indeed,  otherwise we would have 
\[h_{n+1}\widetilde{W}_{n+1}h_{n+1}^{-1} \subseteq \widetilde{V}^{3} \subseteq V\]
by (4) and the choice of $\widetilde{V}$, which would contradict (2).

We finish by constructing $(h_n)_n$.
Since $\rk(V,W_{-1})=\infty$, we may choose $h_0$ satisfying
\[\rk(V,h_{0}W_{0}h_{0}^{-1})=\infty,\]
and set $\widetilde{W}_0 := W_0 \cap \widetilde{V}$. Next,
assume for the inductive step that $h_0, \ldots , h_n$ and $\widetilde{W}_0, \ldots , \widetilde{W}_n$ have been defined so that they satisfy  properties (1)--(4) above.
Choose $\widetilde{W}_{n+1}$ to be any open neighborhood of $1_G$  that is small enough so that both  $h_0\ldots h_n\widetilde{W}_{n+1}h_n^{-1}\ldots h_0^{-1} \subseteq W_n$ and $\widetilde{W}_{n+1} \subseteq \widetilde{V}$ hold. By property (2) above we can  find some $h_{n+1} \in h_n \widetilde{W}_n h_n^{-1}$ so that 
\[\rk(V, h_{n+1}\widetilde{W}_{n+1} h_{n+1}^{-1}) = \infty.\]
It's easy to check that properties (1)--(4)  are  satisfied for $h_{n+1}$ and $W_{n+1}$ as well.  
\end{proof}
This concludes the proof of the first equivalence in the statement of Theorem \ref{Th:Main_metrizable}.  The second and last equivalence is a direct consequence of Lemma \ref{L:Main_metrizable} below. 
\end{proof}

\begin{lemma}\label{L:Main_metrizable}
If  $V,U\subseteq_1 G$ for metrizable $G$, then $\rk(V, U) < \infty$ implies $\rk(V, U) < \omega_1$.
\end{lemma}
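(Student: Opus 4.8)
The plan is to prove this by a standard reflection/tree-rank argument, exploiting the fact that for metrizable (equivalently, first-countable) $G$ the rank $\rk(V,U)$ can be computed using only countably much data. First I would fix a countable dense subgroup $Q\le G$ and a countable basis $\mathcal{V}$ of open neighborhoods of $1_G$, which exist by separability/first-countability, and invoke Lemma \ref{L:rk*} so that $\rk(V,U)=\rk_{Q,\mathcal{V}}(V,U)$ for all $V,U\subseteq_1 G$. The point is that the ``witness data'' appearing in the recursive clause of $\rk_{Q,\mathcal{V}}$ — the open set $W\in\mathcal{V}$ and the conjugating elements $g\in Q\cap U$ — now range over countable sets.

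The core step is to set up a well-founded-tree characterization of the statement $\rk_{Q,\mathcal{V}}(V,U)<\infty$. Fix $V\in\mathcal{V}$. I would build a tree $T=T_V$ whose nodes are finite sequences recording a ``play'' against the rank: a node at an even level is a pair $(W,U)$ with $W\in\mathcal{V}$ and $U$ of the form $gWg^{-1}$ for an appropriate $g\in Q$, and passing to a child corresponds to the adversary picking some $g\in Q\cap U$ and then the rank-player picking a new $W'\in\mathcal{V}$; a branch is declared terminal (a leaf) once the current $U$ satisfies $U\subseteq V$. Because $Q$ and $\mathcal{V}$ are countable, $T$ is a countable tree. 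The defining recursion of $\rk_{Q,\mathcal{V}}$ is exactly the ordinal-rank recursion on this tree: one shows by a routine simultaneous induction that $\rk_{Q,\mathcal{V}}(V,gWg^{-1})\le\beta$ for all relevant $g$ holds if and only if the corresponding subtree has rank $\le\beta$, and hence $\rk_{Q,\mathcal{V}}(V,U)<\infty$ iff the relevant part of $T$ is well-founded. A well-founded countable tree has rank $<\omega_1$, so $\rk_{Q,\mathcal{V}}(V,U)<\omega_1$, and therefore $\rk(V,U)<\omega_1$ by Lemma \ref{L:rk*}.

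The main obstacle — and the part requiring care rather than cleverness — is getting the bookkeeping of the tree exactly right so that its ordinal rank matches $\rk_{Q,\mathcal{V}}$. The subtlety is that in clause (a) the ``$<\beta$'' must hold simultaneously for \emph{all} $g\in U$, so the node whose children correspond to the various choices of $g$ and subsequent $W'$ must be a single node taking the supremum over those children; one must be careful that the alternation (universal over $g\in Q\cap U$, then existential over $W'\in\mathcal{V}$) is encoded as alternating levels and that ``$\rk\le\beta$'' corresponds to ``rank of the subtree $\le\beta$'' under the convention that a node with no children (a leaf, i.e. $U\subseteq V$) has rank $0$. Once this correspondence is verified by induction on $\beta$ in one direction and on the tree rank in the other, the conclusion is immediate. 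An alternative, essentially equivalent, route would be a direct transfinite-induction proof: show by induction on $\alpha$ that ``$\rk(V,U)=\alpha$'' is, after restricting to $Q$ and $\mathcal{V}$, an arithmetical-in-a-countable-set statement, and that the class of ordinals attainable as such ranks over a fixed countable structure is bounded below $\omega_1$ by a Löwenheim–Skolem / Mostowski-collapse argument; but the tree formulation is cleaner and I would present that one.
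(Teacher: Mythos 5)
Your opening step — fixing a countable dense $Q\le G$ and a countable basis $\mathcal{V}$ closed under conjugation by $Q$, then invoking Lemma~\ref{L:rk*} to make everything happen on countable data — is exactly how the paper also begins. The problem is with the tree you then build.

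As described, your tree $T$ has at each node $U$ children indexed by \emph{all} pairs $(g,W')$ with $g\in Q\cap U$ and $W'\in\mathcal{V}$. That tree is essentially always ill-founded, even when $\rk(V,U)<\infty$: the rank-player can stall forever by always choosing $W'=G$, since $gGg^{-1}=G$ and the state never reaches $U\subseteq V$ unless $V=G$. So ``the relevant part of $T$ is well-founded'' presupposes a pruning that is in fact the entire content of the argument. More to the point, the recursion for $\rk_{Q,\mathcal{V}}$ has a $\min$/$\sup$ shape — existential over $W'$, universal over $g$ — whereas tree rank takes a $\sup$ over children at \emph{every} level; encoding the alternation ``as alternating levels'' does not change this, so the claim that ``the defining recursion of $\rk_{Q,\mathcal{V}}$ is exactly the ordinal-rank recursion on this tree'' is not correct as stated. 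For instance with $\mathcal{V}=\{W_1,W_2\}$, $Q\cap U=\{g_1,g_2\}$ and $\rk(V,g_iW_jg_i^{-1})=a_{ij}$, one has $\rk(V,U)=\min_j\max_i a_{ij}+1$, while your tree's rank would involve $\max_{i,j}a_{ij}$, which can be strictly larger or even $\infty$.

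A correct version of your idea would prune to a \emph{strategy} subtree: at each node $U$ fix, by choice and the hypothesis $\rk(V,U)<\infty$, a single witness $W_U\in\mathcal{V}$ with $\rk(V,gW_Ug^{-1})<\rk(V,U)$ for all $g\in Q\cap U$, and take the children to be $gW_Ug^{-1}$ for $g\in Q\cap U$. This tree is well-founded (ranks strictly decrease along branches) and countable; verifying that its rank equals $\rk(V,U)$ uses the observation that for each $\beta<\rk(V,U)$, specializing ``$\rk(V,U)\not\le\beta$'' to $W=W_U$ yields some $g$ with $\rk(V,gW_Ug^{-1})\ge\beta$. The paper instead proves directly that every $\beta<\alpha:=\rk(V,U)$ is realized as $\rk(V,O)$ for some $O\in\mathcal{V}$, by contradiction via a strictly descending ordinal sequence, and then uses countability of $\mathcal{V}$; it is essentially the same combinatorial observation, packaged without the tree bookkeeping. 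As written, though, your sketch leaves that key step — resolving the $\min$/$\sup$ alternation — as a genuine gap rather than ``routine bookkeeping''.
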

\begin{proof}[Proof of Lemma \ref{L:Main_metrizable}]
Let $\alpha := \rk(V, U)$ be an ordinal and fix a countable dense subgroup $Q\leq G$ and a countable basis $\mathcal{V}$ of open neighborhoods of $1_G$ so that $gOg^{-1}\in \mathcal{V}$ for all $O\in\mathcal{V}$ and $g\in \mathcal{Q}$. Since $\mathcal{V}$ is countable it suffices to prove the following claim.

\begin{claim}
For every $\beta  < \alpha$ there exists some $O\in\mathcal{V}$  such that $\rk(V, O) = \beta$.
\end{claim}
\begin{proof}[Proof of Claim]
Assume that this fails for some $\beta <\alpha$.  But then we can define a sequence $(O_n, \alpha_n)_{n\in\mathbb{N}}$,  where $O_n\in\mathcal{V}$ and   $\alpha_n$ is an ordinal with $\alpha_n>\beta$, so that $\rk(V, O_n) = \alpha_n$ and  $\alpha_{n+1} < \alpha_n$ hold. This would clearly contradicts the well-foundedness of ordinals.

The sequence  $(O_n, \alpha_n)_{n\in\mathbb{N}}$ is defined recursively. Set $\alpha_0 = \alpha$ and $O_0=U$. Assume now that  $O_n$ and $\alpha_n$ have been defined with $\alpha_n > \beta$. By inductive assumption we have that $\rk(V, O_n) = \alpha_n$. By Lemma \ref{L:rk*} we can fix some $W\in \mathcal{V}$ so that for every $g \in \mathcal{Q}\cap O_n$, $\rk(V, gWg^{-1}) < \alpha_n$.
But then there must exist $g \in \mathcal{Q}\cap O_n$ with $\rk(V, gWg^{-1}) > \beta$.  Indeed, otherwise we would have    $\rk(V, gWg^{-1}) \leq \beta$ for all $g\in \mathcal{Q}$, and since by the choice of $\beta$,  $gWg^{-1}\in\mathcal{V}$ implies  $\rk(V, gWg^{-1})\neq \beta$, we get the following  contradiction:  
\[\alpha_n=  \mathrm{rk}(V,O_n) = \mathrm{rk}_{Q,\mathcal{V}}(V,O_n)\leq\beta. \]
Hence, we may simply set $O_{n+1}:=gWg^{-1}$ and $\alpha_{n+1}:=\rk(V, O_{n+1})$,  for any such  set $gWg^{-1}\in\mathcal{V}$ which satisfies  $\rk(V, gWg^{-1})>\beta$. 
\end{proof}

This completes the proof.
 \end{proof}

We can now put everything together and derive Theorem \ref{thm:CLI} from the introduction.

\begin{proof}[Proof of Theorem  \ref{thm:CLI}]
Recall that for any Polish group $G$ we have that $\widehat{G}^s=G$. The reason is that $\widehat{G}^s$ is a Polish group and $G$ is a dense $G_{\delta}$ subgroup of $G$. Hence, by the Baire-category theorem, the only coset of $G$ in $\widehat{G}^s$ is $G$ itself.  As a consequence of this observation, the first statement of  Theorem \ref{thm:CLI} follows directly from Theorem \ref{Th:Main_metrizable}.
The second statement of  Theorem \ref{thm:CLI} follows from the proof of Theorem \ref{T:All_Ordinals} and the Remarks \ref{Remark:Wreath_Polish}, \ref{Remark:Polish_local_direct}, \ref{Remark:Polish_product}.  Another family of examples of Polish groups $G_{\alpha}$ with $\rk(G_{\alpha})=\alpha$ for all (successor) ordinals $\alpha<\omega_1$ will be  given in Section \ref{S:Deissler}. 
\end{proof}

\section{Automorphism groups of countable structures} \label{S:Deissler}

In this section we consider $\alpha$-balancedness in the context of  automorphism groups $\mathrm{Aut}(\mathcal{M})$ of countable structures $\mathcal{M}$. In particular, we show that  $\rk(\mathrm{Aut}(\mathcal{M}))$ corresponds to a minor modification of a rank that Deissler developed for countable $\mathcal{L}$-structures. Loosely speaking,  $\rk(\mathrm{Aut}(\mathcal{M}))=\alpha<\infty$ holds if  and only if every attempt to build some $\mcL_{\infty, \omega}$-elementary embedding $\mathcal{M}\to \mathcal{M}$  results in a surjective map and, moreover, the proof that surjectivity is unavoidable requires strategies which are ``$\alpha$-complex".
This connection allows us to import model-theoretic intuition into the study of $\alpha$-balanced groups. We use this  to generate a ``uniformly defined" family of CLI Polish groups with unbounded balanced rank and derive Theorem \ref{T:CompleteCoanalytic}. This family consists of 
 automorphism groups of certain scattered linear orders and it will  play an important role in Part II of the paper.

We start by recalling the classical correspondence between non-archimedean Polish groups and countable model-theoretic structures. A Polish group $G$ is \textbf{non-archimedean}, if it admits a  basis of  neighborhoods of the identity consisting of open \emph{subgroups}. The prototypical non-archimedean Polish group is the group $S_{\infty}:=\mathrm{Sym}(\mathbb{N})$, of all permutations of $\mathbb{N}$, endowed  with the pointwise-convergence topology. In fact, up to isomorphism,  non-archimedean Polish group are precisely the closed subgroups of $S_{\infty}$.

The following proposition justifies, among other things, the use of basic countable model theory to understand the non-archimedean Polish groups. 
By a countable $\mathcal{L}$-structure we mean an $\mathcal{L}$-structure $\mathcal{M}$ so that both the language $\mathcal{L}$ and the domain $M:=\mathrm{dom}(\mathcal{M})$ are countable. We endow  the group $\Aut(\mcM)$ of all automorphisms of $\mcM$ with the pointwise-convergence topology. For a proof of the following classical result,  see \cite[1.5.1]{BeckerKechris1996}.

\begin{proposition}\label{prop:na}
Given a Polish group $G$, the following are equivalent:
\begin{enumerate}
    \item $G$ is non-archimedean;
    \item $G$ is isomorphic to a closed subgroup of $S_\infty$;
    \item $G$ is isomorphic to $\Aut(\mcM)$ for a countable  $\mcL$-structure $\mcM$ in some countable $\mcL$;
    \item $G$ has a compatible left-invariant ultrametric.
\end{enumerate}
\end{proposition}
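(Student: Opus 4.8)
The plan is to prove the cycle of implications $(4)\Rightarrow(1)\Rightarrow(2)\Rightarrow(3)\Rightarrow(4)$, since each arrow is either a standard encoding or an essentially formal observation. First I would handle $(4)\Rightarrow(1)$: if $d$ is a compatible left-invariant ultrametric on $G$, then each ball $B(1_G,\varepsilon)=\{g : d(1_G,g)<\varepsilon\}$ is an open neighborhood of the identity, and the ultrametric inequality $d(1_G,gh)\le\max\{d(1_G,g),d(g,gh)\}=\max\{d(1_G,g),d(1_G,h)\}$ together with $d(1_G,g^{-1})=d(g,1_G)=d(1_G,g)$ shows these balls are subgroups; thus $G$ has a neighborhood basis at $1_G$ consisting of open subgroups, i.e. $G$ is non-archimedean.

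Next, $(1)\Rightarrow(2)$ is the classical Becker--Kechris argument: fix a countable neighborhood basis $(V_n)$ of $1_G$ by open subgroups; each $V_n$ has countable (hence at most countably infinite) index since $G$ is separable, so $G$ acts by left translation on the countable set $N:=\bigsqcup_n G/V_n$, giving a continuous homomorphism $G\to\mathrm{Sym}(N)$ that is injective (as $\bigcap_n V_n=\{1_G\}$) and a topological embedding (the point stabilizers of the action refine the $V_n$), and its image is closed because $G$ is Polish, hence the image is a closed subgroup of $S_\infty$ after identifying $N$ with $\mathbb{N}$. For $(2)\Rightarrow(3)$, given a closed subgroup $H\le S_\infty=\mathrm{Sym}(\mathbb{N})$, build the canonical structure $\mathcal{M}$ on domain $\mathbb{N}$ in the language with, for each $n\ge1$ and each $H$-orbit $O$ on $\mathbb{N}^n$, an $n$-ary relation symbol interpreted as $O$; then $\mathrm{Aut}(\mathcal{M})$ is by construction the set of permutations of $\mathbb{N}$ preserving all $H$-orbits on all finite powers, which is exactly the closure of $H$ in $S_\infty$, namely $H$ itself since $H$ is closed. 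Finally, $(3)\Rightarrow(4)$: for a countable structure $\mathcal{M}$ with domain $M=\{m_k : k\in\mathbb{N}\}$, set $d(\sigma,\tau):=2^{-k}$ where $k$ is least with $\sigma(m_k)\ne\tau(m_k)$ (and $d(\sigma,\tau)=0$ if $\sigma=\tau$); this is a compatible ultrametric, and it is left-invariant because $\sigma(m_k)\ne\tau(m_k)$ iff $(\rho\sigma)(m_k)\ne(\rho\tau)(m_k)$ for any $\rho\in\mathrm{Aut}(\mathcal{M})$.

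None of the steps presents a genuine obstacle — the result is classical and the excerpt explicitly cites \cite[1.5.1]{BeckerKechris1996} for it — so I would keep the proof brief and refer to that source for the routine verifications. The only point requiring a little care is the topological-embedding claim in $(1)\Rightarrow(2)$: one must check that the subspace topology on the image coincides with the topology of $G$, which follows because the preimages of basic open sets of $\mathrm{Sym}(N)$ (stabilizers of finite tuples of cosets) are exactly finite intersections of the $V_n$'s and their translates, hence form a neighborhood basis at $1_G$. Everything else is bookkeeping, so in the write-up I would state the four implications, give the ultrametric formulas explicitly for $(3)\Leftrightarrow(4)$, sketch the orbit-structure construction for $(2)\Rightarrow(3)$, and cite the reference for the standard embedding in $(1)\Rightarrow(2)$.
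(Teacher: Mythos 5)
The paper does not supply its own proof; it simply cites \cite[1.5.1]{BeckerKechris1996}, and your cycle $(4)\Rightarrow(1)\Rightarrow(2)\Rightarrow(3)\Rightarrow(4)$ is exactly the classical argument that reference gives, with each implication carried out correctly. One cosmetic slip: in the topological-embedding check for $(1)\Rightarrow(2)$, the stabilizer of a coset $gV_n\in G/V_n$ under left translation is the \emph{conjugate} $gV_ng^{-1}$ rather than a translate of $V_n$, but since each $V_n$ itself arises as such a stabilizer (take $g=1_G$) the conclusion that the subspace topology on the image agrees with that of $G$ is unaffected.
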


Recall that an ultrametric is metric which satisfies the following strengthening of the triangle inequality: for all $x,y,z$ we have $d(x,y)\leq \max\{d(x,z),d(z,y)\}$. For example,  a natural left-invariant ultrametric $d_\ell$ on $S_\infty$ is given  by
\[d_\ell(g, h) := \max \{1/2^n \mid n \in \omega, \; g(n) \neq h(n)\}.\]
This metric is not complete, and thus $S_\infty$ is not CLI.
To see this, view $S_\infty$ as a $G_\delta$ subset of $\mathbb{N}^\mathbb{N}$ equipped with the Polish topology of pointwise convergence.
Then the injections in $\mathbb{N}^\mathbb{N}$ are precisely the limits of $d_\ell$-Cauchy sequences in $S_\infty$.

With this discussion in mind, we will speak only of automorphism groups $\Aut(\mcM)$ of countable structures $\mcM$ for the remainder of this section, with the understanding that this discussion applies to the entire class of non-archimedean Polish groups.

We now state a result of Gao, which characterizes the CLI Polish automorphism groups in terms of model theoretic concepts.
We will follow with a review of all the relevant model-theoretic definitions.

\begin{theorem}[Gao, \cite{Gao1998}]\label{thm:gao_characterization}
For a countable $\mcL$-structure $\mcM$, the following are equivalent:
\begin{enumerate}
    \item $\Aut(\mcM)$ is CLI;
    \item there is no non-surjective $\mcL_{\infty, \omega}$-elementary embedding of $\mcM$ into $\mcM$;
    \item there is no non-trivial $\mcL_{\infty, \omega}$-elementary substructure of $\mcM$;
    \item there is no uncountable model with the same $\mcL_{\infty, \omega}$-theory as $\mcM$.
\end{enumerate}
\end{theorem}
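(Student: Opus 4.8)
The statement to prove is Gao's characterization (Theorem \ref{thm:gao_characterization}), establishing the equivalence of three conditions for a countable $\mcL$-structure $\mcM$: (1) $\Aut(\mcM)$ is CLI; (2) there is no nontrivial $\mcL_{\omega_1,\omega}$-elementary self-embedding of $\mcM$; (3) there is no uncountable model with the same $\mcL_{\infty,\omega}$-theory as $\mcM$.

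\medskip

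The plan is to prove the cycle $(1)\Rightarrow(2)\Rightarrow(3)\Rightarrow(1)$, or perhaps more naturally to isolate the key dynamical bridge first. For $(1)\Rightarrow(2)$, I would argue by contrapositive: suppose $f\colon\mcM\to\mcM$ is a nontrivial $\mcL_{\omega_1,\omega}$-elementary embedding that is not onto. Using a back-and-forth / Scott-analysis argument, the image $f(\mcM)$ is an $\mcL_{\omega_1,\omega}$-elementary substructure, and one can build a sequence $g_n\in\Aut(\mcM)$ approximating $f$ on larger and larger finite pieces (enumerate $M=\{m_k\}$ and at stage $n$ choose $g_n$ agreeing with $f$ on $m_0,\dots,m_n$, using homogeneity-type consequences of $\mcL_{\omega_1,\omega}$-elementarity to extend partial maps). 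This sequence is left-Cauchy in the standard left-invariant ultrametric $d_\ell$ but converges (in $\omega^\omega$) to the injection $f$, which is not a permutation since $f$ is not surjective; hence $d_\ell$ is not complete and, since completeness of the left-invariant metric is a property independent of the choice of such metric, $\Aut(\mcM)$ is not CLI. The main subtlety here is ensuring the approximating automorphisms can always be found — this is exactly where $\mcL_{\omega_1,\omega}$-elementarity (as opposed to mere elementarity) is needed, via the fact that $\mcM$ realizing only the "right" types over finite tuples forces the required extensions.

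\medskip

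For $(2)\Rightarrow(3)$, I would again use the contrapositive and a compactness-free model-construction argument: if $\mcN$ is an uncountable model with the same $\mcL_{\infty,\omega}$-theory as $\mcM$, then by a Löwenheim–Skolem argument for $\mcL_{\infty,\omega}$ (or by the equivalence of $\mcL_{\infty,\omega}$-equivalence with the existence of a back-and-forth system), one extracts from the uncountability of $\mcN$ a proper elementary chain, ultimately yielding a nontrivial $\mcL_{\omega_1,\omega}$-elementary self-embedding of $\mcM$. Concretely, $\mcN\equiv_{\infty,\omega}\mcM$ and $|\mcN|>\aleph_0=|\mcM|$ means the Scott height argument produces countable $\mcL_{\omega_1,\omega}$-elementary substructures $\mcM\prec\mcM'\prec\mcN$ with $\mcM\cong\mcM'$ but the isomorphism not onto, giving the desired self-embedding. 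For $(3)\Rightarrow(1)$, the cleanest route is via the characterization in Theorem \ref{Th:Main_metrizable} / the definition of CLI: if $\Aut(\mcM)$ is not CLI, then by the failure of completeness of $d_\ell$ there is a $d_\ell$-Cauchy sequence $(g_n)$ converging to a non-surjective injection $f\colon M\to M$; one then shows $f$ is $\mcL_{\infty,\omega}$-elementary (its graph generates a back-and-forth system because each $g_n$ is an automorphism, hence preserves all $\mcL_{\infty,\omega}$-formulas on the relevant finite tuples), and then one builds an uncountable model $\mcN$ by taking a "limit along $f$" — the direct limit of $\mcM\xrightarrow{f}\mcM\xrightarrow{f}\mcM\xrightarrow{f}\cdots$ over length $\omega_1$ — which is uncountable precisely because $f$ is not onto, and $\mcL_{\infty,\omega}$-equivalent to $\mcM$ because each transition map is $\mcL_{\infty,\omega}$-elementary and $\mcL_{\infty,\omega}$-equivalence is preserved under such directed limits.

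\medskip

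The main obstacle, I expect, will be the directed-limit/back-and-forth bookkeeping in $(3)\Rightarrow(1)$: one must verify carefully that $\mcL_{\infty,\omega}$-equivalence is genuinely preserved when passing to a direct limit of length $\omega_1$ along $\mcL_{\infty,\omega}$-elementary maps, and that the resulting structure is honestly uncountable (i.e., the maps $f$ being non-surjective at each stage accumulates to new elements cofinally). A secondary delicate point, in $(1)\Rightarrow(2)$, is the construction of the approximating automorphisms $g_n$: one needs that $\mcM$ has enough homogeneity so that a finite partial $\mcL_{\omega_1,\omega}$-elementary map extending appropriately lifts to a full automorphism — this holds because $\Aut(\mcM)$ acts with dense orbits on the space of realizations of each complete $\mcL_{\omega_1,\omega}$-type, a standard consequence of the Ryll-Nardzewski-style analysis. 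Since all three of these ingredients (non-completeness $\Leftrightarrow$ non-surjective Cauchy limits; $\mcL_{\infty,\omega}$-elementarity of automorphism-limits; directed-limit constructions of large models) are classical, I would cite the relevant background from Gao's original paper and Barwise's work on $\mcL_{\infty,\omega}$ where a full treatment would otherwise be lengthy, and present the proof as a guided assembly of these pieces rather than reproving each from scratch.
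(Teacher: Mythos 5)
The paper does not prove this theorem: it is stated and attributed to Gao \cite{Gao1998}, and used as a black box to drive the Deissler--rank correspondence in the rest of Section~\ref{S:Deissler}. So there is no in-paper proof to compare against. Your sketch is a reasonable reconstruction of Gao's argument: $(1)\Leftrightarrow(2)$ via identifying the left-completion of $\Aut(\mcM)$ (inside $\omega^\omega$) with the monoid of $\mcL_{\omega_1,\omega}$-elementary self-embeddings, and $(2)\Leftrightarrow(3)$ via elementary-chain constructions. Your $(1)\Rightarrow(2)$ is essentially clean: the finite restrictions of a non-surjective $\mcL_{\omega_1,\omega}$-elementary $f$ extend to automorphisms precisely because source and image tuples share a Scott sentence, so the approximating $g_n$ form a left-Cauchy sequence with no limit in $\Aut(\mcM)$.

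The step you should be more careful with is $(3)\Rightarrow(1)$, where you write ``the direct limit of $\mcM\xrightarrow{f}\mcM\xrightarrow{f}\cdots$ over length $\omega_1$.'' Iterating a fixed $f$ for $\omega$ steps yields a countable direct limit; to continue past $\omega$ you cannot simply keep composing $f$, because the map $f$ is between two copies of $\mcM$, not between the limit $\mcM_\omega$ and itself. The construction that works is an elementary chain $(\mcM_\alpha)_{\alpha<\omega_1}$ of countable structures where at each successor you use (a copy of) $f$ and at each countable limit stage $\lambda$ you take unions. The crucial point --- and the real technical content of the theorem --- is that the union $\mcM_\lambda$ is again isomorphic to $\mcM$, so that the iteration can continue: this uses the Tarski--Vaught lemma for $\mcL_{\omega_1,\omega}$-elementary chains (true, but requiring a transfinite induction on formula rank) to conclude $\mcM_0\prec_{\omega_1,\omega}\mcM_\lambda$, hence $\mcM_\lambda$ satisfies the Scott sentence of $\mcM$, hence (being countable for $\lambda<\omega_1$) $\mcM_\lambda\cong\mcM$. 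The final union $\mcM_{\omega_1}$ is then uncountable and still satisfies the Scott sentence, giving $(3)$'s failure. A parallel remark applies to your $(2)\Rightarrow(3)$: for $\mcM_0\subsetneq\mcM_1\prec\mcN$ to yield an $\mcL_{\omega_1,\omega}$-elementary proper self-embedding, the L\"owenheim--Skolem fragment must be chosen to contain the Scott formulas, not just the Scott sentence. You flag both of these as ``delicate points''; that is accurate, but they are the entire load-bearing part of the proof rather than side bookkeeping, and a complete write-up would need to spell out the Tarski--Vaught step and the limit-stage isomorphism argument explicitly.
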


For any language $\mcL$, the $\mcL_{\infty, \omega}$ formulas are those that are built up from atomic formulas using symbols in $\mcL$ allowing finite quantification, and disjunction and conjunctions of any collection of $\mcL_{\infty, \omega}$-formulas on a common finite variable context.
If  disjunctions and conjunctions are further restricted to  be of countable size then we get the $\mcL_{\omega_1, \omega}$-formulas.
Given an $\mcL$-structure $\mcM$, formula $\varphi(\bar{x})$, and $\bar{a} \in M^{|x|}$, we write $\mcM \models \varphi(\bar{a})$ iff $\varphi(\bar{a})$ holds in $\mcM$.
Scott's theorem says that if $\mcM$ is a countable $\mcL$-structure, then for any tuple $\bar{a}$ in $\mcM$ there is a $\mcL_{\omega_1, \omega}$-sentence $\varphi(\bar{x})$ such that $\mcM \models \varphi(\bar{a})$ holds and for any other tuple $\bar{b}$, if $\mcM \models \varphi(\bar{b})$ holds, then there is an automorphism $g$ of $\mcM$ such that $g(\bar{a}) = \bar{b}$.
We call $\varphi$ a \emph{Scott sentence} of $\bar{a}$. For a review of the theory of Scott sentences see \cite[Chapter 12]{Gao2008}, though only the knowledge that they exist is necessary for the following discussion.

An $\mcL_{\infty, \omega}$-elementary embedding of an $\mcL$-structure $\mcM$ into an $\mcL$-structure $\mcN$ is an injection $f : M \rightarrow N$ satisfying that for any tuple $\bar{a}$ from $M$ and any $\mcL_{\infty, \omega}$-formula $\varphi(\bar{x})$ we have $\mcM \models \varphi(\bar{a})$ iff $\mcN \models \varphi(f(\bar{a}))$.
An $\mcL_{\infty, \omega}$-elementary substructure of an $\mcL$-structure $\mcM$ is a substructure $\mcN \le \mcM$ satisfying that for any tuple $\bar{a}$ from $N$ and any $\mcL_{\infty, \omega}$-formula $\varphi(\bar{x})$ we have $\mcM \models \varphi(\bar{a})$ iff $\mcN \models \varphi(\bar{a})$.
The $\mcL_{\infty, \omega}$-theory of an $\mcL$-structure is the set of $\mcL_{\infty, \omega}$ sentences (formulas with no free variables) that hold in the structure.

\subsection{The Deissler rank}

The following rank function, which we call \emph{Deissler rank}, is due to Deissler in \cite{Deissler1997}, except that here we will use tuples $\bar{a},\bar{b}$ instead of  points $a,b$ in the recursion. 
Let  $\mcM$ be a countable $\mcL$-structure and let $\bar{a}$ and $\bar{b}$ be tuples from $\mcM$.
Write $\Drk(\bar{a}, \bar{b}) \le 0$ iff $\bar{a}$ is definable from $\bar{b}$ in the sense that there is some $\mcL_{\infty, \omega}$-formula $\varphi(\bar{x}, \bar{y})$ such that $\mcM \models \varphi(\bar{a}, \bar{b})$ and moreover there is no other tuple $\bar{c}$ such that $\mcM \models \varphi(\bar{c}, \bar{b})$.
For $\alpha > 0$ we recursively define $\Drk(\bar{a}, \bar{b}) \le \alpha$ to mean that there is a tuple $\bar{c}$ and some $\mcL_{\infty, \omega}$-formula $\varphi(\bar{x}, \bar{y})$ such that $\mcM \models \varphi(\bar{c}, \bar{b})$ and for every $\bar{c}'$, if $\mcM \models \varphi(\bar{c}', \bar{b})$ then we have $\Drk(\bar{a}, \bar{b}\bar{c}') < \alpha$.
If $\Drk(\bar{a}, \bar{b}) \le \alpha$ does not hold for any $\alpha$, then we write $\Drk(\bar{a}, \bar{b}) = \infty$. 

By Scott's theorem, since we are working with countable structures, we can give an equivalent definition in terms of the automorphisms of $\mcM$ that makes the connection to our rank on CLI Polish groups more transparent.
In particular, in terms of the natural action $\Aut(\mcM) \curvearrowright M^{<\omega}$ on the tuples of $\mcM$, we can say $\Drk(\bar{a}, \bar{b}) \le 0$ iff $\Stab(\bar{b}) \cdot \bar{a} = \{\bar{a}\}$.
And for $\alpha > 0$ we can say $\Drk(\bar{a}, \bar{b}) \le \alpha$ if and only if there is a tuple $\bar{c}$ such that for every $\bar{d} \in \Stab(\bar{b}) \cdot \bar{c}$, we have $\Drk(\bar{a}, \bar{b}\bar{d}) < \alpha$.
Here, $\Stab(\bar{b})$, the stabilizer of $\bar{b}$, denotes the clopen subgroup of $\Aut(\mcM)$ consisting of automorphisms $\pi$ such that $\pi \cdot \bar{b} = \bar{b}$.
The equivalence of this definition with the previous one is a trivial application of Scott's theorem, and from now on we will use this second, more natural definition.

The following proposition establishes a direct correspondence between our rank from Definition \ref{Def:alpha-balanced} and the Deissler rank, in the context of automorphism groups of countable structures.

\begin{proposition}\label{Prop:RanksAreTheSame}
Let $\mcM$ be a countable $\mathcal{L}$-structure. Then, for any $\bar{a}, \bar{b} \in M^{<\omega}$ we have
\[\Drk(\bar{a}, \bar{b}) = \rk\big(\Stab(\bar{a}), \Stab(\bar{b}); \mathrm{Aut}(\mathcal{M})\big).\]
\end{proposition}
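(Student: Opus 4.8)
The plan is to prove the two inequalities $\Drk(\bar{a},\bar{b}) \leq \rk(V_{\bar{a}},V_{\bar{b}})$ and $\rk(V_{\bar{a}},V_{\bar{b}}) \leq \Drk(\bar{a},\bar{b})$ by simultaneous induction on the relevant ordinal, after first establishing the base case. The key dictionary is the standard Galois correspondence: for a countable structure $\mcM$, a tuple $\bar{c}$ is the \emph{unique} realization of some $\mcL_{\infty,\omega}$-formula $\varphi(\bar{x},\bar{b})$ over $\bar{b}$ if and only if $\bar{c}$ is fixed by every automorphism fixing $\bar{b}$ pointwise, i.e. $\bar{c} \in \dcl(\bar{b})$ in the automorphism-group sense; and more generally the set of realizations of $\varphi(\bar{x},\bar{b})$, when $\varphi$ is a conjunction of a Scott-type formula isolating the orbit of $\bar{c}$ over $\bar{b}$, is exactly the $\Aut(\mcM)_{\bar{b}}$-orbit of $\bar{c}$. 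So the base case $\Drk(\bar{a},\bar{b}) \leq 0 \iff \bar a\in\dcl(\bar b) \iff$ every automorphism fixing $\bar b$ fixes $\bar a$ $\iff V_{\bar b}\subseteq V_{\bar a}$ $\iff \rk(V_{\bar a},V_{\bar b})=0$, using that $V_{\bar b}\subseteq V_{\bar a}$ precisely when $\bar a$ is in the pointwise closure of $\dcl(\bar b)$, which for a single tuple is just $\bar a\in\dcl(\bar b)$.

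For the inductive step, I would translate the two recursive clauses into each other. Suppose $\Drk(\bar a,\bar b)\leq\alpha$ via a formula $\varphi(\bar x,\bar b)$ and a witnessing tuple $\bar c$ with $\Drk(\bar a,\bar b\bar c')<\alpha$ for every realization $\bar c'$ of $\varphi(\bar{x},\bar{b})$. Replacing $\varphi$ by its conjunction with a Scott sentence of $\bar c$ over $\bar b$, we may assume the realization set of $\varphi(\bar x,\bar b)$ is exactly the orbit $V_{\bar b}\cdot\bar c$. Now take $W:=V_{\bar b\bar c}=V_{\bar c}\cap V_{\bar b}$, an open subgroup witnessing $\rk(V_{\bar a},V_{\bar b})\leq\alpha$: for any $g\in V_{\bar b}$, the conjugate $gWg^{-1}=V_{g\bar c}\cap V_{\bar b}=V_{\bar b(g\bar c)}$, and $g\bar c$ is a realization of $\varphi(\bar x,\bar b)$, so by induction $\rk(V_{\bar a},V_{\bar b(g\bar c)})=\Drk(\bar a,\bar b(g\bar c))<\alpha$. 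This shows $\rk(V_{\bar a},V_{\bar b})\leq\alpha$. Conversely, given $W\subseteq_1\Aut(\mcM)$ witnessing $\rk(V_{\bar a},V_{\bar b})\leq\alpha$, shrink $W$ (using Proposition \ref{prop:basic_properties_of_rk}(1)) to a basic open subgroup $V_{\bar c}$ for some finite tuple $\bar c$; then for each $g\in V_{\bar b}$ we have $\rk(V_{\bar a},g V_{\bar c}g^{-1}\cap V_{\bar b})<\alpha$, but $gV_{\bar c}g^{-1}\cap V_{\bar b}=V_{\bar b(g\bar c)}$, so $\Drk(\bar a,\bar b(g\bar c))<\alpha$ by induction. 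Taking $\varphi(\bar x,\bar b)$ to be a Scott formula isolating the orbit of $\bar c$ over $\bar b$, its realizations are exactly the tuples $g\bar c$ with $g\in V_{\bar b}$, so $\varphi$ and the witness $\bar c$ verify $\Drk(\bar a,\bar b)\leq\alpha$.

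The main obstacle I anticipate is the careful handling of the interface between \emph{formulas} (the $\Drk$ side) and \emph{orbits} (the $\rk$ side): a priori the realization set of an arbitrary $\mcL_{\infty,\omega}$-formula $\varphi(\bar x,\bar b)$ need not be a single $V_{\bar b}$-orbit, only a $V_{\bar b}$-invariant set, and on the other side an arbitrary $W\subseteq_1\Aut(\mcM)$ need not be a point stabilizer. Both gaps are bridged by the same two facts — that point stabilizers form a neighborhood basis (non-archimedeanness, Proposition in the excerpt), so $W$ can be shrunk to some $V_{\bar c}$ monotonically in $\rk$; and that orbits over $\bar b$ in a countable structure are $\mcL_{\omega_1,\omega}$-definable over $\bar b$ via Scott sentences, so passing from $\varphi$ to (a conjunction with) a Scott formula only shrinks the realization set to the orbit of the chosen witness, which can only decrease the ordinal needed. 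Once one is disciplined about always conjoining with the appropriate Scott formula and always shrinking $W$ to a stabilizer, the induction goes through cleanly; I would state these two reductions as a preliminary remark before starting the induction so the inductive step reads smoothly.
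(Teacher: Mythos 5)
Your proof is correct and takes essentially the same approach as the paper: prove by induction on $\alpha$ that $\Drk(\bar{a},\bar{b})\le\alpha$ iff $\rk(V_{\bar{a}},V_{\bar{b}})\le\alpha$, using Scott sentences to pass from the orbit of a witness tuple to a definable set, and using the non-archimedean basis of point stabilizers to shrink an arbitrary $W$ to some $V_{\bar{c}}$. One minor simplification you could make: in the forward direction of the inductive step, conjoining $\varphi$ with a Scott formula is unnecessary, since the Deissler hypothesis already guarantees $\Drk(\bar{a},\bar{b}\bar{d})<\alpha$ for \emph{every} realization $\bar{d}$ of the original $\varphi$, and for each $g\in V_{\bar{b}}$ the tuple $g\bar{c}$ is such a realization; the Scott formula is only genuinely needed in the reverse direction, exactly as in the paper.
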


\begin{proof}
    This is a straightforward induction on $\alpha$ that for every $\bar{a}$ and $\bar{b}$, $\Drk(\bar{a}, \bar{b}) \le \alpha$  holds if and only if $\rk(\Stab(\bar{a}), \Stab(\bar{b})) \le \alpha$ holds. The key observation is that if $\bar{b}$ and $\bar{c}$ are tuples with $W := \Stab(\bar{b}\bar{c})$ and $g \in \Stab(\bar{b})$ then $gWg^{-1}$ is equal to $\Stab(\bar{b}\bar{d})$ for some tuple $\bar{d} \in \Stab(\bar{b})$, namely $\bar{d} = g \cdot \bar{c}$.
\end{proof}

In the context of Proposition \ref{Prop:RanksAreTheSame}, the first part of Theorem \ref{thm:CLI} can be seen as an extension of Deissler's theorem below (and its consequent corollary) to the realm of all Polish groups.

\begin{theorem}[Deissler, \cite{Deissler1997}]\label{thm:deissler_characterization}
For any $\mcL$-structure $\mcM$, the following are equivalent:
\begin{enumerate}
    \item there is no nontrivial $\mcL_{\infty, \omega}$-elementary substructure of $\mcM$;
    \item $\Drk(a, \emptyset) < \infty$ for every $a \in M$; and
    \item $\Drk(a, \emptyset) < \omega_1$ for every $a \in M$.
\end{enumerate}
\end{theorem}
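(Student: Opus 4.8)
\textbf{Proof plan for Theorem \ref{thm:deissler_characterization}.}
The plan is to derive this statement from the results already developed in the excerpt rather than reproving it from scratch. The key observation is that Deissler's theorem is, via Proposition \ref{Prop:RanksAreTheSame}, essentially a restatement of Theorem \ref{thm:CLI} (or more precisely Theorem \ref{Th:Main_metrizable}) combined with Gao's characterization, Theorem \ref{thm:gao_characterization}. So I would organize the argument as a cycle of implications that routes through these.

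First I would handle the equivalence of (2) and (3), which is the purely rank-theoretic part. By Proposition \ref{Prop:RanksAreTheSame}, $\Drk(a,\emptyset) = \rk(V_a, V_\emptyset) = \rk(V_a, \Aut(\mcM))$ where $V_\emptyset = \Aut(\mcM)$. Since $\Aut(\mcM)$ is a Polish group, hence metrizable, Lemma \ref{L:Main_metrizable} applies directly: $\rk(V_a, \Aut(\mcM)) < \infty$ implies $\rk(V_a,\Aut(\mcM)) < \omega_1$. The reverse implication (3)$\Rightarrow$(2) is trivial. This gives (2)$\iff$(3) immediately, and in fact shows more: the supremum of $\Drk(a,\emptyset)$ over $a \in M$, when finite, is a countable ordinal, since $M$ is countable. (One subtlety: Deissler's rank as originally defined uses $\mcL_{\infty,\omega}$-formulas while the Scott-sentence machinery invoked in Proposition \ref{Prop:RanksAreTheSame} uses $\mcL_{\omega_1,\omega}$; I would note that for a \emph{countable} structure these agree for the purposes of the rank, which is implicit in the proof of that proposition.)

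Next I would handle (1)$\iff$(2). The link is that ``$\Drk(a,\emptyset) < \infty$ for all $a \in M$'' should be equivalent to ``$\rk(\Aut(\mcM)) < \infty$'', i.e.\ $\Aut(\mcM)$ being CLI, which by Theorem \ref{thm:gao_characterization} is equivalent to (1). Concretely: $\rk(\Aut(\mcM)) = \sup\{\rk(V,\Aut(\mcM))+1 : V \subseteq_1 \Aut(\mcM)\}$, and since the pointwise stabilizers $V_{\bar a}$ of finite tuples form a neighborhood basis of the identity, and by Proposition \ref{prop:basic_properties_of_rk}(1) the rank $\rk(V,\Aut(\mcM))$ only increases as $V$ shrinks, this supremum is realized (or approached) along the $V_{\bar a}$'s. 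Hence $\rk(\Aut(\mcM)) < \infty$ iff $\rk(V_{\bar a}, \Aut(\mcM)) < \infty$ for every finite tuple $\bar a$, iff (by Proposition \ref{Prop:RanksAreTheSame}, using tuples) $\Drk(\bar a, \emptyset) < \infty$ for every finite tuple $\bar a$. Finally I would observe that $\Drk(\bar a, \emptyset) < \infty$ for all tuples $\bar a$ is equivalent to $\Drk(a, \emptyset) < \infty$ for all single elements $a$: one direction is trivial, and for the other one shows by induction on tuple length that $\Drk(a_1\cdots a_n, \emptyset)$ is bounded in terms of the $\Drk(a_i, \emptyset)$ — this is the kind of subadditivity that follows from the definition by successively pinning down the coordinates. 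Then Theorem \ref{Th:Main_metrizable} gives $\rk(\Aut(\mcM)) < \infty \iff \widehat{\Aut(\mcM)}^\ell = \widehat{\Aut(\mcM)}^s \iff \Aut(\mcM)$ is CLI, and Theorem \ref{thm:gao_characterization} converts CLI-ness into the nonexistence of a nontrivial $\mcL_{\omega_1,\omega}$-elementary embedding; a further standard argument (also in \cite{Gao1998, Deissler1997}) upgrades this to $\mcL_{\infty,\omega}$-elementary \emph{substructures} as in (1).

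The main obstacle I anticipate is the bookkeeping around tuples versus single elements and around $\mcL_{\infty,\omega}$ versus $\mcL_{\omega_1,\omega}$: the cleanest statement of Proposition \ref{Prop:RanksAreTheSame} is for tuples, so I must be careful that the single-element formulation in (2)--(3) of the theorem really does control the whole group rank. The subadditivity of $\Drk$ over tuple concatenation is the crux here, and I expect it to be a short induction but one that needs to be stated explicitly. Everything else is a matter of citing Lemma \ref{L:Main_metrizable}, Proposition \ref{Prop:RanksAreTheSame}, Theorem \ref{Th:Main_metrizable}, and Theorem \ref{thm:gao_characterization} and threading them together.
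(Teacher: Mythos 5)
The paper does not prove this theorem: it is attributed to Deissler and stated without proof, precisely so that the authors can point out that their Theorem~\ref{thm:CLI} (via Proposition~\ref{Prop:RanksAreTheSame}) \emph{extends} it to all Polish groups. Your attempt to derive Deissler's result from the paper's newer machinery is therefore a genuinely different route, and as a consistency check it is worthwhile. The $(2)\iff(3)$ piece via Proposition~\ref{Prop:RanksAreTheSame} and Lemma~\ref{L:Main_metrizable} works, and the reduction from finite tuples to single elements is cleanest through Proposition~\ref{prop:basic_properties_of_rk}(3): since $V_{\bar a}=\bigcap_i V_{a_i}$, one has $\rk(V_{\bar a},G)\le\max_i\rk(V_{a_i},G)$, hence $\Drk(\bar a,\emptyset)\le\max_i\Drk(a_i,\emptyset)$; this replaces your vaguer appeal to ``successively pinning down the coordinates.''

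There is a genuine gap in how you reach (1). Theorem~\ref{thm:gao_characterization} is phrased in terms of nontrivial $\mcL_{\omega_1,\omega}$-elementary \emph{embeddings} of $\mcM$ into itself, whereas Deissler's (1) is about $\mcL_{\infty,\omega}$-elementary \emph{substructures}. You acknowledge this mismatch but defer it to ``a further standard argument.'' That argument is not negligible: one direction uses Karp's theorem (two $\mcL_{\infty,\omega}$-equivalent countable structures are isomorphic) to convert a proper $\mcL_{\infty,\omega}$-elementary substructure into a non-surjective $\mcL_{\omega_1,\omega}$-elementary self-embedding, and the other direction needs the Scott analysis to promote the $\mcL_{\omega_1,\omega}$-elementarity of an embedding to $\mcL_{\infty,\omega}$-elementarity of its image. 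Neither of these tools is developed in the excerpt, so as written your cycle of implications does not close. Separately, your whole route presupposes that $\mcM$ is countable (Proposition~\ref{Prop:RanksAreTheSame}, Theorem~\ref{thm:gao_characterization}, and the fact that $\Aut(\mcM)$ is Polish all require it), while the theorem is stated ``for any $\mcL$-structure''; this is almost certainly loose phrasing on the paper's part since $\Drk$ is only defined there for countable $\mcL$-structures, but it is worth flagging.
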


By Proposition \ref{Prop:RanksAreTheSame} and Theorem \ref{thm:gao_characterization} we also have the following corollary for countable $\mathcal{L}$-structures $\mathcal{M}$:

\begin{corollary}\label{Cor:Rev1}
 $\Aut(\mcM)$ is CLI if and only if $\Drk(a, \emptyset) < \omega_1$ for every $a \in M$.
\end{corollary}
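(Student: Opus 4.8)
The plan is to assemble the corollary from three ingredients already available: Theorem \ref{thm:CLI}, which characterizes CLI as having countable balanced rank; Proposition \ref{Prop:RanksAreTheSame}, which identifies $\rk(V_{\bar a},V_{\bar b})$ with $\Drk(\bar a,\bar b)$; and the elementary monotonicity properties of $\rk$ from Proposition \ref{prop:basic_properties_of_rk}. Concretely, by Theorem \ref{thm:CLI} we have that $\Aut(\mcM)$ is CLI if and only if $\rk(\Aut(\mcM))<\omega_1$, so the task is to translate the condition $\rk(\Aut(\mcM))<\omega_1$ into the stated condition on the Deissler rank of single points. (One could alternatively route this through Gao's Theorem \ref{thm:gao_characterization} and Deissler's Theorem \ref{thm:deissler_characterization}, but going through $\rk$ sidesteps having to compare $\mcL_{\omega_1,\omega}$-elementary embeddings with $\mcL_{\infty,\omega}$-elementary substructures.)

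For the rank computation I would first recall that the pointwise stabilizers $V_{\bar a}$, with $\bar a$ ranging over finite tuples from $M$, form a basis of open neighborhoods of the identity in $\Aut(\mcM)$, and that $V_{\emptyset}=\Aut(\mcM)$. Using Proposition \ref{prop:basic_properties_of_rk}(1), the supremum in the definition of $\rk(\Aut(\mcM))$ may be taken along this basis, and since there are only countably many tuples $\bar a$ this supremum is an ordinal, giving
\[\rk(\Aut(\mcM))=\sup\{\rk(V_{\bar a},V_{\emptyset})+1 \colon \bar a\in M^{<\omega}\}.\]
By Proposition \ref{Prop:RanksAreTheSame} each summand equals $\Drk(\bar a,\emptyset)+1$. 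Hence $\rk(\Aut(\mcM))<\omega_1$ holds if and only if $\Drk(\bar a,\emptyset)<\omega_1$ for every finite tuple $\bar a$ (again using that a countable supremum of countable ordinals is countable), so at this point we have the corollary with ``every $a\in M$'' replaced by ``every finite tuple $\bar a$''.

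The remaining step is to pass from finite tuples to singletons. Here I would note that for $\bar a=(a_1,\dots,a_n)$ the pointwise stabilizer satisfies $V_{\bar a}=V_{a_1}\cap\cdots\cap V_{a_n}$, so iterating Proposition \ref{prop:basic_properties_of_rk}(3) yields $\rk(V_{\bar a},V_{\emptyset})\le\max_{i\le n}\rk(V_{a_i},V_{\emptyset})$; translating both sides back through Proposition \ref{Prop:RanksAreTheSame} gives $\Drk(\bar a,\emptyset)\le\max_{i\le n}\Drk(a_i,\emptyset)$. Therefore $\Drk(\bar a,\emptyset)<\omega_1$ for all finite tuples exactly when $\Drk(a,\emptyset)<\omega_1$ for all $a\in M$, which combined with the previous paragraph finishes the proof. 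I do not anticipate a genuine obstacle; the only place calling for a little care is precisely this tuple-to-singleton reduction, which is exactly where the good behavior of $\rk$ under finite intersections (Proposition \ref{prop:basic_properties_of_rk}(3)) is used, everything else being bookkeeping on top of Theorem \ref{thm:CLI} and Proposition \ref{Prop:RanksAreTheSame}.
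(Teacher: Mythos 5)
Your proof is correct, but it takes a genuinely different route from the one the paper signals. The paper attributes the corollary to Gao's characterization (Theorem \ref{thm:gao_characterization}) together with Proposition \ref{Prop:RanksAreTheSame}; since no argument is spelled out there, this must tacitly invoke Deissler's Theorem \ref{thm:deissler_characterization} and a bridge between $\mcL_{\omega_1,\omega}$-elementary self-embeddings and $\mcL_{\infty,\omega}$-elementary substructures (the point being that over a fixed countable structure the two logics have the same expressive power). You bypass the model theory entirely: Theorem \ref{thm:CLI} plus the rank dictionary of Proposition \ref{Prop:RanksAreTheSame} reduce the statement to a claim about $\Drk$ of finite tuples, and your tuple-to-singleton reduction --- iterating Proposition \ref{prop:basic_properties_of_rk}(3) on $V_{\bar{a}}=V_{a_1}\cap\cdots\cap V_{a_n}$ to get $\Drk(\bar{a},\emptyset)\le\max_i\Drk(a_i,\emptyset)$ --- supplies directly the very content that Deissler's theorem would otherwise contribute. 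Your route is more internal and self-contained, and it nicely illustrates the translation dividend of Proposition \ref{Prop:RanksAreTheSame}: a model-theoretic monotonicity fact about the Deissler rank of tuples is extracted from a group-topological lemma about intersections of identity neighborhoods. What the paper's citation buys instead is the explicit exhibition of the corollary as the meeting point of the classical Gao and Deissler characterizations, which is the rhetorical point of that remark.
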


\subsection{Scattered linear orders and CLI Polish groups.}

We first remind the reader of the basic theory of scattered linear orders, and establish some notation.
Given a linear order $L$, recursively define equivalence relations $H_\alpha$ on $L$ for each ordinal $\alpha$, where each $H_\alpha$-class is an interval.
Let $H_0$ be equality and define $H_\alpha = \bigcup_{\beta < \alpha} H_\beta$ for limit $\alpha$.
Define $H_{\alpha+1}$ by saying $\ell \mathrel{F_{\alpha+1}} \ell'$ iff only finitely-many distinct $H_\alpha$ classes lie between $[\ell]_{H_\alpha}$ and $[\ell']_{H_\alpha}$.
We  call these equivalence relations $H_\alpha$ the \textbf{strong Hausdorff derivatives} of $L$.

There must be some least ordinal $\alpha(L)$ such that $H_\alpha = H_{\alpha(L)}$ for every $\alpha \ge \alpha(L)$. 
We call this the \textbf{strong Hausdorff rank} of $L$. 
When $L$ is countable, $\alpha(L)$ is easily seen to be countable.

Recall that a linear order is called \textbf{scattered} if and only if it does not have a suborder isomorphic to $\mathbb{Q}$.
If $H_{\alpha(L)}$ only has one class, then $L$ is scattered.
Indeed, if $H_{\alpha(L)}$ has more than one class, then it must have infinitely-many classes, and if we let $I$ be a suborder of $L$ taking one point out of each $H_{\alpha(L)}$-class, it must be dense, and thus must have a suborder isomorphic to $\mathbb{Q}$.
On the other hand, if $I$ is a suborder of $L$ isomorphic to $\mathbb{Q}$, then for any $\ell < \ell'$ in $I$, an easy argument shows that $\neg (\ell \mathrel{H_{\alpha(L)}} \ell')$.
In particular, there must be more than one $H_{\alpha(L)}$--class.

We use the adjective ``strong" here in the development of the Hausdorff rank and derivative as the usual successor case of the derivative relates $\ell$ and $\ell'$ at stage $\alpha + 1$ iff the quotient linear order of $H_\alpha$ classes that lie between $[\ell]_{H_\alpha}$ and $[\ell']_{H_\alpha}$ is a \emph{well-order} (as opposed to being finite in our definition).
In Hausdorff's original paper \cite{Hausdorff1908}, he showed a linear order has ordinal Hausdorff rank if and only if it is scattered (for a more modern reference on on this, see  \cite{Rosenstein1982}).
Thus our formulation of Hausdorff rank, which is more suitable for our purposes, does not exactly align with Hausdorff's original formulation but still characterizes the scattered linear orders.

\begin{lemma}\label{lem:scattered_cli}
If $L$ is scattered with Hausdorff rank $\le \alpha$ then $\mathrm{Aut}(L)$ is $(\alpha+1)$-balanced.
\end{lemma}
\begin{proof}
By Proposition \ref{Prop:RanksAreTheSame} we may compute $\rk(\mathrm{Aut}(L))$ using the Deissler rank.

We show that for any ordinal $\beta$, if $a \mathrel{H_{1 + \beta}} b$ holds, then we have $\Drk(a, b) \le \beta$.
We proceed  by induction on $\beta$. 
For $\beta = 0$, if $a \mathrel{H_1} b$ then $a$ is finitely-many steps away from $b$ and just definable from $b$, and so $\rk(a, b) \le 0$. 
Let now $\beta > 0$ and assume the claim is true below $\beta$.
For the case of limit $\beta$, the result is immediate by the definition of $H_\beta$.
Thus assume $\beta = \gamma + 1$ and suppose $a \mathrel{H_{1 + \gamma+1}} b$.
Observe that for every automorphism $\pi \in \Aut(L)$ fixing $b$, we have $\pi(a) \mathrel{H_{1 + \gamma}} a$.
By  induction hypothesis  $\Drk(a, \pi(a)) \le 1 + \gamma$, for any such $\pi$.
Thus by the definition of Deissler rank we have $\Drk(a, b) \le 1 + \beta$ as desired.

Now taking into consideration that $L$ is scattered and thus $H_\alpha$ consists of a single class, we observe finally that $\Drk(a, \emptyset)$ is at most $\alpha(L)+1$ for every $a$, as desired.
\end{proof}

As a direct consequence of this lemma and Corollary \ref{Cor:Rev1}, we get the following:

\begin{theorem}\label{thm:scattered_cli}
    If $L$ is scattered then $\Aut(L)$ is CLI.
\end{theorem}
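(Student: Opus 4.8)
The plan is to run an induction on the strong Hausdorff rank $\gamma = \alpha(L)$ of $L$ (a countable ordinal, since $L$ is countable), proving the formally stronger statement that for every $\ell \in L$ the point stabilizer $V_\ell := \{g \in \Aut(L) : g(\ell) = \ell\}$ is CLI. This suffices, because $V_\ell$ is an \emph{open} subgroup of $\Aut(L)$ (it is a basic identity neighborhood in the permutation topology) and a Polish group with a CLI open subgroup is itself CLI: given a complete left-invariant metric $d_0 \leq 1$ on the open subgroup, extend it to the whole group by declaring any two elements lying in distinct left cosets of the subgroup to be at distance $2$. Along the way I will use freely that CLI is preserved under closed subgroups and countable products — which follow from Proposition \ref{Proposition:Subgroup_Quotient}, Proposition \ref{Proposition_Product_Group} and Theorem \ref{thm:CLI}, since a supremum of ordinals is an ordinal — together with the elementary facts that a suborder of a scattered order is scattered and that, for a \emph{convex} suborder $C \subseteq L$, the derivative $H_\beta$ computed inside $C$ agrees with $H_\beta^L$ restricted to $C$; in particular each $H_\beta^L$-class $C$ is on its own a scattered order with $\alpha(C) \leq \beta$.

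The base case $\gamma = 0$ is trivial: then $|L| \leq 1$ and $\Aut(L) = \{1\}$. Suppose $\gamma = \delta+1$ is a successor. Then $H_{\delta+1}^L$ is the full relation, so in the quotient order $M := L/H_\delta^L$ any two points have only finitely many points between them; hence $M$ is isomorphic to a suborder of $\mathbb{Z}$, so $\Aut(M)$ is trivial or isomorphic to $\mathbb{Z}$, and in either case every point stabilizer of $\Aut(M)$ is trivial. Consequently, if $g \in V_\ell$ then the automorphism $\bar g$ of $M$ induced by $g$ fixes the $H_\delta^L$-class of $\ell$, hence $\bar g = \mathrm{id}_M$, so $g$ fixes every $H_\delta^L$-class of $L$ setwise. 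Writing $C_0$ for the $H_\delta^L$-class of $\ell$, this gives a topological isomorphism $V_\ell \cong \mathrm{Stab}_{\Aut(C_0)}(\ell) \times \prod_{C \neq C_0} \Aut(C)$, a product over the (countably many) remaining $H_\delta^L$-classes. Each such $C$ is a convex suborder which is a single $H_\delta^L$-class, hence scattered with $\alpha(C) \leq \delta < \gamma$; by the induction hypothesis $\Aut(C)$ is CLI, and then so is its closed subgroup $\mathrm{Stab}_{\Aut(C_0)}(\ell)$ and the displayed countable product, so $V_\ell$ is CLI.

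Suppose now that $\gamma$ is a limit ordinal, and fix $\ell \in L$. Since $\gamma$ is a countable limit we may pick $\delta_0 < \delta_1 < \cdots$ cofinal in $\gamma$; as $H_\gamma^L = \bigcup_n H_{\delta_n}^L$ is the full relation, $L = \bigcup_n [\ell]_{\delta_{n}}$, where $[\ell]_{\delta_{n}}$ is the $H_{\delta_n}^L$-class of $\ell$. Each $[\ell]_{\delta_{n}}$ is a scattered convex suborder with $\alpha([\ell]_{\delta_{n}}) \leq \delta_n < \gamma$, so by the induction hypothesis $\Aut([\ell]_{\delta_{n}})$ — and hence its closed subgroup $V_\ell^{(n)} := \mathrm{Stab}_{\Aut([\ell]_{\delta_{n}})}(\ell)$ — is CLI. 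Restriction yields an injective continuous homomorphism $\Phi : V_\ell \to \prod_n V_\ell^{(n)}$, $g \mapsto (g|_{[\ell]_{\delta_{n}}})_n$ (injective because $\bigcup_n [\ell]_{\delta_{n}} = L$), whose image is the inverse limit $\varprojlim_n V_\ell^{(n)}$, a closed subgroup of $\prod_n V_\ell^{(n)}$. Either by the open mapping theorem for Polish groups, or by pulling complete left-invariant metrics back along the restriction maps, $\Phi$ identifies $V_\ell$ with this closed subgroup of the countable product $\prod_n V_\ell^{(n)}$ of CLI groups; hence $V_\ell$ is CLI, which completes the induction.

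The only genuine obstacle is the limit case. In the successor case one simply strips off the top Hausdorff level by quotienting, but when $\gamma$ is a limit the quotient $L/H_\delta^L$ again has strong Hausdorff rank $\gamma$ for every $\delta < \gamma$, so there is nothing to quotient by. The fix is to instead exhaust $L$ from inside by the convex pieces $[\ell]_{\delta_{n}}$ of strictly smaller rank and realize $V_\ell$ as an inverse limit of their stabilizers; this is precisely why the induction must be carried on the stabilizers $V_\ell$ rather than on $\Aut(L)$ directly (the full group $\Aut(L)$ is not an inverse limit of groups of smaller Hausdorff complexity, whereas $V_\ell$ is), and why the reduction ``CLI open subgroup $\Rightarrow$ CLI'' is used in place of an extension-theorem argument, the subgroup $V_\ell$ being in general non-normal.
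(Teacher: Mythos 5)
The proof is correct, and it takes a genuinely different route from the paper's. The paper's proof is purely Deissler-rank–theoretic: it shows by a short induction on $\alpha$ that $a \mathrel{H_\alpha} b$ implies $\Drk(a,b) \le \alpha$ (the key step, for $\alpha=\beta+1$, being that an automorphism fixing $b$ can move $a$ only within its $H_\beta$-class), and then reads off CLI from Proposition \ref{Prop:RanksAreTheSame} and Theorem \ref{thm:CLI}. Because the paper inducts on the Hausdorff \emph{relation} $H_\alpha$ rather than on the Hausdorff \emph{rank} $\alpha(L)$, its limit step is vacuous, and the argument terminates with the explicit bound $\Drk(a,\emptyset)\le\alpha(L)+1$. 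You instead induct on $\alpha(L)$ and work directly with the topological group: in the successor case $V_\ell$ is split as a countable product of automorphism groups of the $H_\delta$-classes — via essentially the same key observation as the paper, that an automorphism fixing $\ell$ fixes every $H_\delta$-class setwise because point stabilizers of $\Aut(L/H_\delta)$ are trivial — while the limit case requires realizing $V_\ell$ as an inverse limit of stabilizers over an exhausting sequence of convex pieces of strictly smaller rank, invoking the open mapping theorem and the closure of CLI under closed subgroups and countable products. This route is self-contained and avoids the Deissler-rank machinery entirely, which is a real methodological difference; the price is that the limit case is no longer trivial, the auxiliary ``CLI open subgroup $\Rightarrow$ CLI'' and ``$H^C_\beta = H^L_\beta\restriction C$ for convex $C$'' lemmas must be supplied, and the quantitative bound $\rk(\Aut(L)) \le \alpha(L)+2$ that falls immediately out of the paper's argument is no longer visible without further bookkeeping.
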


The converse is not true, as it is possible to have a non-scattered linear order that is \emph{rigid} (meaning that the only automorphism is the trivial one).
For example, fix an enumeration $q_n$ of $\mathbb{Q}$ in ordertype $\omega$ and consider the linear order consisting of pairs $(q_n, i)$ where $0 \le i \le n$ where we declare $(q_n, i) < (q_m, j)$ iff either $q_n < q_m$ or $q_n = q_m$ and $i < j$.
This is easily seen to be non-scattered as the suborder of pairs $(q_n, 0)$ is isomorphic to $\mathbb{Q}$.
However, we will see  next  that for certain nice $L$, the converse is indeed true.

\subsection{Homogeneous scattered linear orders}\label{SS:Scattered_LOs}
Given a linear order $(L, <)$, let $\mathbb{Z}[L]$ be the set of all functions $a : L \rightarrow \mathbb{Z}$ so that $a(\ell) = 0$ for all but finitely-many $\ell \in L$. 
We endow $\mathbb{Z}[L]$ with the ``backwards-lexicographic" order  $\prec^*_L$. Namely, $a_1 \prec^*_L a_2$ iff $a_1 \neq a_2$ and for the $L$-\emph{greatest} $\ell \in L$ with $a_1(\ell) \neq a_2(\ell)$, we have $a_1(\ell) < a_2(\ell)$. 
Then $( \mathbb{Z}[L], \prec^*_L )$ is easily seen to be a linear order, which we simply denote by $\mathbb{Z}[L]$. 
When $L$ is countable, so is $\mathbb{Z}[L]$.
For example, we have that $\mathbb{Z}[1]$ is a $\mathbb{Z}$--line and $\mathbb{Z}[2]$ is ``the $\mathbb{Z}$--line of $\mathbb{Z}$--lines".

We caution that the order we are considering is \emph{not} the usual lexicographic ordering.
This subtle fact only becomes truly apparent for infinite $L$.
For example, when given the usual lexicographic ordering, $\mathbb{Z}[\omega]$ is a dense linear order with a left endpoint, but with the  ordering $\prec^*_{\omega}$ that we have endowed it with, it is a scattered linear order with strong Hausdorff rank $\omega$.
Expanding on this, we observe the following:

\begin{proposition}\label{prop:wellfounded_iff_scattered}
A  linear order $L$ is a  well-order if and only if  $\mathbb{Z}[L]$ is scattered. 
\end{proposition}

\begin{proof}
We start with the forward  direction.
Without loss of generality assume $L = \alpha$ for some ordinal $\alpha$. 
For $\beta < \alpha$, define the equivalence relation $F_\beta$ on $\mathbb{Z}[L]$ where $a_1 \mathrel{F_\beta} a_2$ iff $a_1(\gamma) = a_2(\gamma)$ for every $\gamma \ge \beta$, and $F_\alpha = L \times L$. 
It is easy to check that these are precisely the Hausdorff derivatives of $\mathbb{Z}[L]$ and thus $\mathbb{Z}[L]$ is scattered of rank $\alpha$.

Now we proceed to the reverse direction, which we prove by showing that if $L$ is not  a well-order, then $\mathbb{Z}[L]$ has dense suborder.
First we show that if $L$ has no minimal element, then $\mathbb{Z}[L]$ is dense. 
To this end, suppose $a \prec^*_L b$ in $\mathbb{Z}[L]$. 
Let $\ell \in L$ be minimal such that $a(\ell) \neq 0$ or $b(\ell) \neq 0$. 
By the assumption there is  $\hat{\ell} \in L$  with $\hat{\ell} < \ell$. 
Define $c \in \mathbb{Z}[L]$ by
\[ c(\ell') = \begin{cases}
a(\ell') & \ell' \neq \hat{\ell} \\
1 & \ell' = \hat{\ell}
\end{cases}.
\]
It is easy to check that $a \prec^*_L c \prec^*_L b$, as desired.

Second we observe that if $I$ is a suborder of $L$ then there is a suborder of $\mathbb{Z}[L]$ isomorphic to $\mathbb{Z}[I]$.
Indeed, the map $a \mapsto \hat{a}$ where
\[\hat{a}(\ell) = \begin{cases}
    a(\ell) & \ell \in I\\
    0 & \ell \not\in I
\end{cases}
\]
is an embedding of $\mathbb{Z}[I]$ into $\mathbb{Z}[L]$.
Of course, every ill-founded order has a suborder with no minimal element, and thus we are done.
\end{proof}

The  following theorem is the main result of this section.

\begin{theorem}\label{thm:scattered_iff_cli}
For any linear order $L$, the following are equivalent:
\begin{enumerate}
    \item $L$ is a well-order;
    \item $\mathbb{Z}[L]$ is scattered;
    \item $\Aut(\mathbb{Z}[L])$ is CLI;
\end{enumerate}
\end{theorem}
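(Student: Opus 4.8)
The plan is to prove the four implications $(1)\Rightarrow(2)\Rightarrow(3)\Rightarrow(4)\Rightarrow(1)$, with the last one done contrapositively. The implication $(2)\Rightarrow(3)$ is immediate from Theorem \ref{thm:scattered_cli}, and $(3)\Rightarrow(4)$ is also essentially free: if $\Aut(L^*)$ involved $S_\infty$ (i.e.\ had a closed subgroup with a continuous homomorphism onto $S_\infty$, or a closed subgroup isomorphic to $S_\infty$ in the relevant sense), then since $S_\infty$ is not CLI and CLI passes to closed subgroups and to Hausdorff quotients --- as recorded in Proposition \ref{Proposition:Subgroup_Quotient} and the surrounding discussion via the rank --- $\Aut(L^*)$ could not be CLI. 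So the two substantive implications are $(1)\Rightarrow(2)$ and $(4)\Rightarrow(1)$.

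For $(1)\Rightarrow(2)$, assume $L$ is a well-order; I want to show $L^*$ has no suborder isomorphic to $\mathbb{Q}$, or better, directly bound its strong Hausdorff rank. The natural approach is induction on the well-order $L$: write $L$ as an ordinal and analyze $L^*$ in terms of the $L^*$ for initial segments. Concretely, for the $L$-greatest coordinate (when it exists, i.e.\ $L$ a successor ordinal $\lambda+1$) an element of $L^*$ is a pair $(x\restriction\lambda, x(\lambda))$ and $\prec^*_{L}$ orders these first by $x(\lambda)\in\mathbb{Z}$ and then by $\prec^*_\lambda$; so $L^*$ is a $\mathbb{Z}$-indexed sum of copies of $\lambda^*$, hence scattered with strong Hausdorff rank one more than that of $\lambda^*$. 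For limit $\lambda$ one writes $L^*$ as an increasing union of the $\mu^*$ for $\mu<\lambda$, with the subtlety that the ordering is by the \emph{largest} differing coordinate, so these form a coherent chain of initial-segment-respecting pieces; a direct computation shows the strong Hausdorff derivatives stabilize at some countable ordinal. The cleanest packaging is probably: show $\alpha(L^*)\le \mathrm{otp}(L)$ (or some similar explicit bound) by transfinite induction, using the recursive definition of $H_\alpha$ directly rather than going through $\mathbb{Q}$.

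For $(4)\Rightarrow(1)$, argue contrapositively: suppose $L$ is not a well-order, so $L$ has an infinite strictly descending sequence $\ell_0 > \ell_1 > \ell_2 > \cdots$. The goal is to exhibit inside $\Aut(L^*)$ a copy of (something that surjects onto or embeds) $S_\infty$ --- intuitively, the descending sequence gives ``independent'' $\mathbb{Z}$-translation directions at each coordinate $\ell_n$, and one can permute these directions. More precisely, for each $n$ let $e_n\in L^*$ be the indicator function of $\ell_n$; translations $x\mapsto x+k e_n$ are automorphisms of $L^*$ because changing coordinate $\ell_n$ by an integer never changes which coordinate is the greatest point of difference between two elements \emph{below} $\ell_{n-1}$ --- here the descending condition is exactly what makes these translations commute appropriately and makes a permutation of the index set $\{\ell_n : n\in\omega\}$ induce an automorphism of $L^*$. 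This should give a continuous action of $\mathrm{Sym}(\omega)\ltimes \mathbb{Z}^{(\omega)}$, or at least a closed subgroup of $\Aut(L^*)$ admitting $S_\infty$ as a quotient (or $S_\infty$ itself as a subquotient), witnessing that $\Aut(L^*)$ involves $S_\infty$.

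I expect the main obstacle to be $(4)\Rightarrow(1)$, specifically verifying carefully that a permutation of a descending sequence of coordinates really does extend to an automorphism of $(L^*,\prec^*_L)$ --- one has to check that the ``greatest point of difference'' comparison is preserved, and the nonlinearity of how finite supports interact with an order-reversing-ish rearrangement of coordinates is where the bookkeeping lives. The right formulation is likely to restrict attention to the sub-linear-order of $L^*$ supported on $\{\ell_n\}$ (which is order-isomorphic to $\mathbb{Z}^{*}_{\text{rev}}$, essentially $(\omega^{\mathrm{op}})^*$), show $\mathrm{Sym}(\omega)$ acts on it by order automorphisms, and then argue any such automorphism of a suborder sitting as a ``$\prec^*$-interval-respecting'' piece extends, or alternatively build the $S_\infty$-action on $L^*$ directly. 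A secondary nuisance is pinning down the exact meaning of ``involves $S_\infty$'' used in the paper and checking it is both implied by the existence of this action and strong enough to contradict CLI; I would state it as: $G$ involves $S_\infty$ iff some closed subgroup of $G$ has a continuous surjective homomorphism onto $S_\infty$, and cite the non-CLI-ness of $S_\infty$ together with the closure properties from Section \ref{S:Closure}.
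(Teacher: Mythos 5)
Your overall plan and the implications $(2)\Rightarrow(3)\Rightarrow(4)$ match the paper, and your $(1)\Rightarrow(2)$ idea (bounding the strong Hausdorff rank of $L^*$ when $L$ is a well-order) is correct and close in spirit to the paper's Proposition \ref{prop:wellfounded_iff_scattered}, which simply verifies directly that the relations ``agree on all coordinates $\geq\beta$'' are the Hausdorff derivatives of $L^*$.

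The gap is in $(4)\Rightarrow(1)$. Your central mechanism --- permuting the coordinates $\ell_n$ of a descending sequence $\ell_0>\ell_1>\cdots$ --- does \emph{not} produce automorphisms of $L^*$, and the worry you flag is fatal rather than a bookkeeping nuisance. Concretely, with $\ell_0>\ell_1$, take $x$ with $x(\ell_0)=1,x(\ell_1)=0$ and $y$ with $y(\ell_0)=0,y(\ell_1)=5$; the $L$-greatest differing coordinate is $\ell_0$, so $y\prec^*_L x$. Transposing the two coordinates sends these to $x'=(0,1)$ and $y'=(5,0)$, and now $x'\prec^*_L y'$. So even a single transposition reverses order. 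The fallback you sketch (``show $\mathrm{Sym}(\omega)$ acts on $(\omega^{\mathrm{op}})^*$ by order automorphisms'') is the same false claim. The observation you make in passing --- that $(\omega^{\mathrm{op}})^*$ is order-isomorphic to $\mathbb{Q}$ --- is the correct one, but the $S_\infty$ action then has to come not from coordinate permutations but from the fact that $\Aut(\mathbb{Q})$ involves $S_\infty$ (the paper's Proposition \ref{prop:autq_involves_sinfty}, via a partition of $\mathbb{Q}$ into countably many dense pieces). There is also a second issue with your extension step: a suborder of $L^*$ supported on $\{\ell_n\}$ is not a $\prec^*_L$-interval, and an order-automorphism of a suborder does not in general extend. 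What the paper does instead is decompose $L=I+J$ where $J:=\{x\in L:\exists n\, x\geq\ell_n\}$ is a \emph{final segment} with no minimal element. This matters because the coordinatewise definition of $\prec^*_L$ lets one combine an arbitrary automorphism of $J^*$ (acting on the $J$-coordinates) with the identity on $I$-coordinates to get an automorphism of $L^*$ (Lemma \ref{lem:combining_auts}, then Lemma \ref{lem:linear_order_involves} to get a continuous surjection onto $\Aut(J^*)$). Since $J$ has no minimal element, $J^*\cong\mathbb{Q}$, so $\Aut(L^*)$ involves $\Aut(\mathbb{Q})$ and hence $S_\infty$. So the route you should take is: abandon coordinate permutations entirely; pass to a final segment $J$ of $L$ with no least element; recognize $J^*\cong\mathbb{Q}$; prove $\Aut(\mathbb{Q})$ involves $S_\infty$ via dense pieces; and extend via the $I+J$ coordinate splitting.
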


The equivalence of $(1)$ and $(2)$ is Proposition \ref{prop:wellfounded_iff_scattered}.
The fact that $(2)$ implies $(3)$ follows from Theorem \ref{thm:scattered_cli}. So we are left to prove that $(3)$ implies $(1)$.
Instead of proving this directly, we will first introduce  an additional clause $(4)$ and show that $(3)\rightarrow (4) \rightarrow (1)$.

Clause $(4)$ will be used in Section \ref{SS:PI11Rank} for  establishing that the class of CLI Polish groups is $\BPi^1_1$-complete.
In short, the natural strategy there would be to show that the map $L \mapsto \Aut(\mathbb{Z}[L])$ is Borel.
One would  attempt to prove that $L \mapsto \Aut(\mathbb{Z}[L])$ is Borel by viewing it  as the composition of  $L \mapsto \mathbb{Z}[L]$ and $\mathbb{Z}[L] \mapsto \Aut(\mathbb{Z}[L])$, and then by showing that both of these maps are Borel when viewed as functions between the appropriate Polish spaces. While the first map is easily seen to be Borel,  the second map need not be Borel. Clause (4) considers an expansion $M_L$ of  $\mathbb{Z}[L]$, with additional structure which ensures that $\mathrm{Aut}(M_L)$ is sufficiently easy to compute, resulting in a Borel assignment $L \mapsto \Aut(M_L)$.

To describe this additional structure,  let $(L, <)$ be an arbitrary linear order. For each $\ell \in L$ and $z \in \mathbb{Z}$, let $E_{(\ell,z)}$ be the binary relation on $\mathbb{Z}[L]$ where $E_{(\ell,z)}(a, b)$ holds iff $b(\ell) = a(\ell) + z$ and $a(\ell') = b(\ell')$ for all $\ell'$ with  $\ell' > \ell$.
Observe that for all $a \neq b$ in $\mathbb{Z}[L]$ there is exactly one  $(\ell, z)$ for which $E_{(\ell,z)}(a, b)$ holds. Let now $\mcL_L$ be the language consisting of a binary relation $\prec^*$ and $E_{(\ell,z)}$ for each $z \in \mathbb{Z}$ and $\ell \in L$.
We set $M_L$  to be the $\mcL_L$-structure on domain $\mathbb{Z}[L]$ equipped with the relations  $E_{(\ell,z)}$, in addition to $\prec^*_L$.

\begin{proposition}\label{prop:scattered_iff_cli}
If $L$ is a linear order, we can add the following clause to Theorem \ref{thm:scattered_iff_cli}:
\begin{enumerate}
    \item[(4)] $M_L$ is CLI
\end{enumerate}
\end{proposition}

The fact that $(3)$ implies $(4)$ follows from the fact that any closed subgroup of a CLI Polish group is CLI, see \cite[Theorem 2.2.11]{Gao2008}. Indeed, notice that $\mathrm{Aut}(M_L)$ is closed  subgroup of  $\mathrm{Aut}(\mathbb{Z}[L])$. We also observe that the inclusion $\mathrm{Aut}(M_L)\subseteq \mathrm{Aut}(\mathbb{Z}[L])$ is strict in general (e.g. when $L$ is not a well-order), but this is not going to be  important below.

Before we turn to the final implication  $(4) \rightarrow (1)$ we first establish the pertinent property of $M_L$, which lead us to consider it in the first place.

Let $\bar{a}=(a_0,\ldots,a_{n})$ and $\bar{b}=(b_0,\ldots,b_{n})$ be tuples of the same length in some $\mathcal{L}$-structure $M$. We say that $\bar{a}$ and $\bar{b}$ have the same quantifier-free type, if the assignment $a_i\mapsto b_i$ induces an isomorphism between the substructures $\langle \bar{a} \rangle_M$ and $\langle \bar{b} \rangle_M$ of $M$. We say that $M$ is {\bf ultrahomogeneous} if whenever  $\bar{a}$ and $\bar{b}$ in $M$ have the same quantifier-free type, there exists an automorphism $\varphi\in \mathrm{Aut}(M)$ satisfying $\varphi(a_i)=b_i$ for all $i\leq n$.

\begin{lemma}\label{L:rev1}
$M_L$ is ultrahomogeneous.
\end{lemma}
\begin{proof}
We proceed by induction on the length of the tuples $\bar{a},\bar{b}$ in $M_L$. Let $n\in\mathbb{N}$ and fix tuples $(a_0,\ldots,a_n)$ and $(b_0,\ldots,b_n)$ in  $M_L$ of the same quantifier-free type. We will find some $\varphi\in\mathrm{Aut}(M_L)$, with  $\varphi(a_i)=b_i$ for all $i\leq n$. By our inductive hypothesis,  there exists  $\psi\in\mathrm{Aut}(M_L)$ with $\psi(a_i)=b_i$ for all $i < n$. Hence, the proof reduces to the following claim, by plugging $\bar{c}:=(b_0,\ldots,b_{n-1})$, $a:=\psi(a_n)$, and $b:=b_n$.

\begin{claim}
Let $a,b, \bar{c}$ in $M_L$ so that $\bar{c}a$ and $\bar{c}b$ have the same quantifier-free type. Then, there exists some $\varphi\in\mathrm{Aut}(M_L)$ with  $\varphi(a)=b$, so that $\varphi(c)=c$ for all entries $c$ in $\bar{c}$.
\end{claim}

To prove the claim, we proceed by induction on the size of the finite set
\[F:=\{\ell\in L \colon a(\ell)\neq b(\ell)\}.\]
If $F=\emptyset$, then  simply  take $\varphi:=\mathrm{id}$. Otherwise, let $\ell^*:=\min(F)$ and  $t:= (b(\ell^*)-a(\ell^*))\in\mathbb{Z}$. 
Consider the automorphism $\psi^{*}\in\mathrm{Aut}(M_L)$,  where for all $d\in \mathbb{Z}[L]$  and $\ell\in L$ we have
\[
\psi^*(d)(\ell)=
\begin{cases}
d(\ell)+ t  & \text{ if } \ell=\ell^* \text{ and for all }  \ell' >\ell  \text{ one has }  d(\ell')=a(\ell');\\
d(\ell) & \text{ otherwise}.\\
\end{cases}
\]
The fact $\psi^{*}$ is an automorphism readily follows by taking  $d,d'\in \mathbb{Z}[L]$ and considering three cases, depending on whether both, one, or none of $d,d'$ agree with $a$ above $\ell^*$.

Notice also that $\psi^*(c)=c$ holds for all  entries $c$ of $\bar{c}$. Indeed, if $\psi^*(c)\neq c$ holds for some $c$ in $\bar{c}$, then  
$c(\ell^*)=a(\ell^*)$ holds. But then, setting $z:=c(\ell^*)-a(\ell^*)$, we have
\[M_L\models E_{(\ell^*,z)}(c,a) \quad \text{and} \quad M_L\models  \neg \, E_{(\ell^*,z)}(c,b), \]
where the second statement follows from   $b(\ell^*) \neq a(\ell^*)$. This 
 contradicts the fact that $\bar{c}a$ and $\bar{c}b$ have the same quantifier-free type. 

Setting $a':=\psi^*(a)$, we have  that   $\bar{c}a'$ and $\bar{c}b$ still have the same quantifier-free type, and 
the size of the set $\{\ell\in L \colon \psi^*(a)(\ell)\neq b(\ell)\}$ is strictly smaller than the size of $F$. By inductive hypothesis, there exists $\psi\in\mathrm{Aut}(M_L)$ with  $\psi(\psi^*(a))=b$, and $\varphi(c)=c$ for all entries $c$ in $\bar{c}$.  Hence, the claim follows by simply setting $\varphi:=\psi\circ \psi^*$.

This completes the proof.
\end{proof}

We now proceed with the proof of the  implication  $\neg (1) \rightarrow \neg(4)$. 

\begin{proposition}\label{P:Rev2}
   If $L$ is not a well-order then $\Aut(M_L)$ is not CLI.
\end{proposition}

For the proof we will need the next lemma, which, in conjunction with Corollary \ref{Cor:Rev1},   provides a handy criterion for establishing when an automorphism group is not CLI.

\begin{lemma}\label{L:RevHandy}
  Let $\mcM$ be any $\mcL$-structure and let $a \in M$. If $\Drk(a, \bar{b}) > 0$ holds for every  tuple $\bar{b}$ in $M$ with $a \not\in \bar{b}$, then $\Drk(a, \emptyset) = \infty$.
\end{lemma}
\begin{proof}
We will show that whenever  $\Drk(a, \bar{b}) \le \alpha$ holds for some $\bar{b}$ in $M$ with $a \not\in \bar{b}$, then
$\Drk(a, \bar{b}\bar{e}) \le 0$ holds for some  $\bar{e}$ in $M$ with $a \not\in \bar{e}$. We will use transfinite induction on $\alpha$. 

If $\alpha = 0$, then we can take $\bar{d}$ to be the empty tuple. Assume now that  $\alpha > 0$.
Fix some  $\bar{c}$ in $M$, so that $\Drk(a, \bar{b}\bar{d}) < \alpha$ holds for every $\bar{d} \in \Stab(\bar{b}) \cdot \bar{c}$.

If there is no  $\bar{d}$  in $\Stab(\bar{b}) \cdot \bar{c}$ with $a \not\in \bar{d}$, then we can take $\bar{e}$ to be $\bar{c}$ with $a$ removed, in which case we get $\Drk(a, \bar{b}\bar{e}) \le 0$.
Otherwise, choose $\bar{d}$ to be any element of $\Stab(\bar{b}) \cdot \bar{c}$ in which $a \not\in \bar{d}$.
Since $\Drk(a, \bar{b}\bar{d}) < \alpha$, by induction there is some  $\bar{d}'$ with $a \not\in \bar{d}'$ and $\Drk(a, \bar{b}\bar{d}\bar{d}') \le 0$. In which case, we can take $\bar{e} = \bar{d}\bar{d}'$.
\end{proof}

\begin{proof}[Proof of Proposition \ref{P:Rev2}] 
Since $L$ is not well-ordered,  we can decompose it as $L = I + J$ where $J$ has no minimum element.
Notice that  $\Aut(M_L)$ \emph{involves} $\Aut(M_J)$, meaning that there is a closed subgroup $H$ of $\Aut(M_L)$ and a continuous surjective homomorphism of $H$ onto $\Aut(M_J)$. Indeed, Let $H^J_L$ be the closed subgroup of $\Aut(M_L)$ consisting of automorphisms $\pi$ satisfying $\pi(a)(\ell) = a(\ell)$ for every $\ell \not\in J$. It readily follows that the map $\pi \mapsto \pi_J$ from $H^J_L$ to $\Aut(M_J)$ given by
    \[ \pi_J(a) = \pi(\tilde{a}) \upharpoonright J,  \quad \text{ where } \quad  \tilde{a}(\ell) = \begin{cases}
        0 & \ell \in I \\
        a(\ell) & \ell \in J,
    \end{cases}\]
is a continuous surjective homomorphism.
Since any Polish group involved in a CLI group is CLI (see \cite[Theorem 2.2.11]{Gao2008}) it suffices to show that $\Aut(M_J)$ is not CLI. 
That is, it suffices to prove Proposition \ref{P:Rev2} under the stronger assumption that $L$ has no minimum.

Assume that $L$ has no minimum and let $a\in M_L$ be the constant zero map: $a(\ell)=0$ for all $\ell\in L$. By the previous lemma, and since $M_L$ is ultrahomogeneous, it suffices to show that for any  $\bar{b}$ in $M_L$ with $a\not\in\bar{b}$, there is $a'\neq a$ so that $ \bar{b}a$ and  $\bar{b}a'$ have the same quantifier-free type.  
But since $L$ has no minimum we can always pick some $\ell^*\in L$ with
\[\ell^* <\min \{\ell \in L \colon b_i(\ell)\neq 0 \text{ for some } b_i \text{ in } \bar{b} \} \]
and let $a'\in M_L$ be defined by:   $a'(\ell)=1$, if $\ell=\ell^*$; and $a'(\ell)=0$, if $\ell\neq \ell^*$.
\end{proof}

\begin{remark}
For a more direct but perhaps less informative proof of Proposition \ref{P:Rev2}, one can use a  strictly decreasing sequence $\ell_0> \ell_1 >\cdots > \ell_n > \cdots$ in $L$ to explicitly  define 
an $\mathcal{L}_{\infty,\omega}$-elementary embedding $j\colon M_L\to M_L$ which is not surjective, and then appeal to Theorem 5.2.(2). For example, consider the map  $j\colon M_L\to M_L$ given by $j(a)=b$, where:
\begin{itemize}
\item $b(\ell)=a(\ell)+1$, if $\ell=\ell_n$ for some $n\in\mathbb{N}$ and  for all $m<n$ we have $a(\ell_m)=-1$;
\item $b(\ell)=a(\ell)$, otherwise.
\end{itemize}
Clearly $j$ is not surjective, since the constant zero  map $L \to \mathbb{Z}$ is not in the range of $j$. The reader may check that $j$ is an $\mathcal{L}_{\infty,\omega}$-elementary embedding by expressing $j$ as a point-wise limit of a sequence of automorphisms of $M_L$, as in the proof of \cite[Theorem 3.3]{Gao1998}.
\end{remark}

We now have a complete proof of Theorem \ref{thm:scattered_iff_cli} and Proposition \ref{prop:scattered_iff_cli}.
The main application of these  is Theorem \ref{T:CompleteCoanalytic}, whose proof appears in Section \ref{SS:PI11Rank}.

\subsection{The class of CLI Polish groups is $\BPi^1_1$-complete}\label{SS:PI11Rank}

In \cite{Malicki2011}, Malicki showed that the set of CLI Polish groups in the standard Borel space of all Polish groups is $\BPi^1_1$ but not $\BSigma^1_1$. We may now prove Theorem \ref{T:CompleteCoanalytic}, which   strengthens Malicki's result to the following: 
\begin{center}
 \emph{CLI Polish groups form a $\BPi^1_1$-complete set in the space of all Polish groups.}
\end{center}

In order to make this statement precise recall that, for any Polish group $G$, the collection $\mathcal{SG}(G)$ of all closed subgroups of $G$ is a Borel subset of the space $\mathcal{F}(G)$ of all closed subsets of $G$, endowed with the  Effros Borel structure; see \cite[Section 12.C]{Kechris1995}. As a consequence $\mathcal{SG}(G)$ is a standard Borel space; see e.g.  \cite[Proposition 1]{Malicki2011}. Let now $(\mathbb{U},d)$ be the universal Urysohn metric space \cite{Ury} and let $G_{\mathbb{U}}:=\mathrm{Iso}(\mathbb{U},d)$ be its isometry group. Recall that  $G_{\mathbb{U}}$ is a universal  Polish group: a Polish group with the property that every other Polish group  embeds in  $G_{\mathbb{U}}$; see \cite[Section 2.5]{Gao2008}. As in \cite{Malicki2011},  we take
\[\mathcal{PG}:=\mathcal{SG}(G_{\mathbb{U}})\]
to be  {\bf space of all Polish groups}. One may  endow  $\mathcal{PG}$ with a Wijsman topology  which is Polish (see \cite{Beer})  and  which induces the exact same Borel structure on  $\mathcal{PG}$ (see \cite{Wij}), but in what follows it will suffice to consider $\mathcal{PG}$  as a standard Borel space.

Let  $\mathrm{CLI}$ be the subset of $\mathcal{PG}$ consisting of all Polish groups which are CLI.  The fact that $\mathrm{CLI}$ is coanalytic is established in \cite{Malicki2011}, and can be also derived directly, from the fact that  the existence of a sequence $(g_n)_n$ in $G$ which is left-Cauchy but not right-Cauchy is an analytic condition. We are left with establishing completeness of the coanalytic set $\mathrm{CLI}$.

Let $\mathrm{LO}$ be the Polish space of all linear orders $L:=(\mathbb{N},<_L)$ on domain $\mathbb{N}$
\[\mathrm{LO} \subseteq \{0,1\}^{\mathbb{N}\times\mathbb{N}}\]
 which we identify with a closed subset  of the space of all binary relations on $\mathbb{N}$, and recall that the set $\mathrm{WO}\subseteq\mathrm{LO}$  of all wellorderings is complete coanalytic set \cite[Section 32.B]{Kechris1995}.

 By Proposition \ref{prop:scattered_iff_cli}, it suffices to show that the assignment $L\mapsto \mathrm{Aut}(M_L)$, from Section \ref{SS:Scattered_LOs}, can be realized as a Borel map from 
$\mathrm{LO}$  to  $\mathcal{PG}$. Moreover, since groups of the form  $\mathrm{Aut}(M_L)$ embed as closed subgroups of $S_{\infty}$, and since any embedding $S_{\infty}$ as a closed subgroup of $G_{\mathbb{U}}$  induces a Borel injection from $\mathcal{SG}(S_{\infty})$ to $\mathcal{PG}$, it suffices to realize
$L\mapsto \mathrm{Aut}(M_L)$ as a Borel map  from 
$\mathrm{LO}$ to  $\mathcal{SG}(S_{\infty})$.

Recall from  Section \ref{SS:Scattered_LOs}, that $M_L$ is a certain countable structure associated to $L$. By construction, since $\mathrm{dom}(L)=\mathbb{N}$ for all $L\in\mathrm{LO}$, it is natural to view every $M_L$ as a certain structure whose domain is the countable set $\mathbb{F}$  of all maps $a\colon \mathbb{N}\to \mathbb{Z}$ with $a(\ell)=0$ for all but finite $\ell\in\mathbb{N}$. 
We will realize $L\mapsto \mathrm{Aut}(M_L)$  as the composition of  the maps:
\[L \mapsto M_L  \quad \text{ and } \quad M_L\mapsto \Aut(M_L)\]
where $M_L$ takes values in a Polish space $\mathrm{Str}(\mathbb{F})$ of certain structures on domain $\mathbb{F}$ and $\Aut(M_L)$ takes values in $\mathcal{SG}(S_{\infty})$ under the identification $S_{\infty}=\mathrm{Sym}(\mathbb{F})$. In general, the map  $M\mapsto \Aut(M)$ from $\mathrm{Str}(\mathbb{F})$ to $\mathcal{SG}(\mathrm{Sym}(\mathbb{F}))$ is not Borel; see \cite[Theorem 7.1.2]{BeckerKechris1996}. The fact that the composition of the two maps above remains Borel relies on the ultrahomogeneity of each $M_L$. We may now proceed to the details.

Let  $\mathrm{Str}(\mathbb{F})$ be  the Polish space of all structures on domain $\mathbb{F}$, in the language which  consists of the binary symbols $\{\prec^*\}\cup\{E_{(\ell,z)}\colon \ell\in\mathbb{N}, z\in\mathbb{Z}\}$; see Section \ref{SS:Scattered_LOs}. That is:
\[\mathrm{Str}(\mathbb{F}):=\{0,1\}^{\mathbb{F}\times\mathbb{F}} \times \prod_{\ell\in\mathbb{N}, z\in\mathbb{Z}}\{0,1\}^{\mathbb{F}\times\mathbb{F}}\]

\begin{claim}\label{C:rev2}
The assignment $L\mapsto M_L$    from
$\mathrm{LO}$ to  $\mathrm{Str}(\mathbb{F})$ is continuous.
\end{claim}
\begin{proof}
Let $a,b\in\mathbb{F}$ and  $(\ell^*,z)\in \mathbb{N}\times\mathbb{Z}$. Let $\widetilde{A}$  and $\widetilde{B}$ be the preimages
 of the basic clopen
\[A_{ab}:=\{M\in \mathrm{Str}(\mathbb{F})\colon a\prec_M^*b\} \quad \text{ and } \quad B^{\ell,z}_{ab}:=\{M\in \mathrm{Str}(\mathbb{F})\colon aR^M_{(\ell^*,z)}b\}\]
subsets of $\mathrm{Str}(\mathbb{F})$. Then  $\widetilde{A}$ and $\widetilde{B}$ are clopen subsets of $\mathrm{LO}$. Indeed, consider the finite sets
\[F_a:=\{\ell\in \mathbb{N} \colon a(\ell)<_{\mathbb{Z}} b(\ell) \} \quad \text{ and } \quad F_b:=\{\ell\in \mathbb{N} \colon b(\ell)<_{\mathbb{Z}} a(\ell) \} \]
and notice that
\[\widetilde{A}=\bigcup_{\ell \in F_a} \bigcap_{k \in F_b} \{ <_L \text{ in } \mathrm{LO} \colon k<_L \ell  \}\]
Similarly,   $\widetilde{B}$ is either empty, if $a(\ell^*)+z\neq b(\ell^*)$; or it collects all $<_L$ in $\mathrm{LO}$ for which  the basic clopen condition $\ell<_L\ell^*$ holds for all $\ell \in F_a\cup F_b \setminus\{\ell^*\}$.
\end{proof}

Next, let $\mathrm{Uh}(\mathbb{F})$ be the collection of all ultrahomogeneous structures in  $\mathrm{Str}(\mathbb{F})$.
This turns out to be a Borel subset of  $\mathrm{Str}(\mathbb{F})$, as ultrahomogeneity is characterized by the extension property 
\cite[Lemma 7.1.4.(b)]{Hodges} ---a $G_{\delta}$ condition.
However, this will not be used here.

\begin{claim}\label{C:rev3}
The assignment $M\mapsto \mathrm{Aut}(M)$  from  $\mathrm{Uh}(\mathbb{F})$ to $\mathcal{SG}(\mathrm{Sym}(\mathbb{F}))$ is Borel.
\end{claim}
\begin{proof}
The $\sigma$-algebra of Borel subsets of $\mathcal{F}(\mathrm{Sym}(\mathbb{F}))$ is generated by sets of the form 
\[N(U):=\{ F \in \mathcal{F}(\mathrm{Sym}(\mathbb{F})) \colon F \cap U\neq \emptyset\},\]
where  $U$ ranges over all basic open sets of $\mathrm{Sym}(\mathbb{F})$. 
Consider now a basic open
\[U_{\bar{a}\bar{b}}:=\big\{f \in \mathrm{Sym}(\mathbb{F}) \colon f(a_i)=b_i  \text{ for all } i\leq n  \big\}\]
where $\bar{a}=(a_0,\ldots,a_n)$ and $\bar{b}=(b_0,\ldots,b_n)$ are tuples in $\mathbb{F}$.
But, by ultrahomogeity, the structures $M\in\mathrm{Uh}(\mathbb{F})$ with $\mathrm{Aut}(M)\in N(U_{\bar{a}\bar{b}})$ are precisely those $M\in\mathrm{Uh}(\mathbb{F})$ for which the  $\bar{a}$ and $\bar{b}$ have the same quantifier-free type, which is a closed condition.   
\end{proof}

We may now conclude with the proof of Theorem \ref{T:CompleteCoanalytic}.

\begin{proof}[Proof of Theorem \ref{T:CompleteCoanalytic}]
    By Lemma \ref{L:rev1} and the last two claims, $L \mapsto M_L$ and $M_L\mapsto \Aut(M_L)$ compose to a Borel map $f\colon \mathrm{LO}\to \mathcal{PG}$.  
 By Proposition \ref{prop:scattered_iff_cli},  we have 
\[L\in\mathrm{WO} \iff f(L)\in\mathrm{CLI}.\] 
It follows that $\mathrm{CLI}$ is complete coanalytic, since so is $\mathrm{WO}$; see \cite[Section 32.B]{Kechris1995}.
\end{proof}

\section{A boundedness principle for analytic classes of CLI groups}\label{S:Boundedness}
Let  $\mathcal{PG}$ be the standard Borel space of all Polish groups and  $\mathrm{CLI}\subseteq \mathcal{PG}$ be the complete coanalytic subset of all CLI  groups in $\mathcal{PG}$; see Section \ref{SS:PI11Rank}.
The general structure theory of coanalytic sets yields a function \[\varphi  \colon \mathrm{CLI} \to \omega_1\]
which constitutes a {\bf regular $\BPi^1_1$-rank on  $\mathrm{CLI}$}. That is, a function as above, which if we  naturally extend to $\varphi  \colon \mathcal{PG} \to \omega_1\cup \{\infty\}$, setting $\varphi(\mathcal{PG}\setminus    \mathrm{CLI} )=\{\infty\}$, and let for $H,G\in\mathcal{PG}$:
\begin{eqnarray}
H\leq_{\varphi} G \; &\iff & \;   \varphi(H)\leq \varphi(G)  \text{ and }   \varphi(H)<\omega_1 \label{EqPHI1} \\
H<_{\varphi} G \; &\iff & \;  \varphi(H)<\varphi(G),\label{EqPHI2} 
\end{eqnarray}
we have that both   $\leq_\varphi,<_\varphi$  are coanalytic  subsets of  $\mathcal{PG} \times  \mathcal{PG}$; see \cite[Section 34.B,C]{Kechris1995}.

Of course, this abstract ranking need not be ``natural" in any sense. A pertinent question is whether $\mathrm{CLI}$ admits a regular $\BPi^1_1$-rank  which reflects the structural properties of the class $\mathrm{CLI}$. To  quote \cite[page 270]{Kechris1995}:
   \emph{In many concrete situations, however, it is important to be able to find a ``natural"  $\BPi^1_1$-rank on a given $\BPi^1_1$ set which reflect the particular structure of this set.} The following is the main result of this section.

\begin{theorem}\label{T:RegularRank}
The assignment $G \mapsto \rk(G)$ is a regular $\BPi^1_1$-rank on $\mathrm{CLI}$.
\end{theorem}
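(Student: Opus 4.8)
The plan is to verify the two conditions in the definition of a regular $\Pi^1_1$-rank on $\mathrm{CLI}$ (see \cite[Section 34.B]{Kechris1995}): first that $G\mapsto\rk(G)$ maps $\mathrm{CLI}$ into $\omega_1$ — which is exactly the content of Theorem \ref{thm:CLI} already established — and second, the crucial boundedness/definability condition, namely that the two relations
\[
\leq_{\rk}^* \;=\; \{(H,G) : H\in\mathrm{CLI},\ \rk(H)\le\rk(G)\}
\qquad\text{and}\qquad
<_{\rk}^* \;=\; \{(H,G) : H\in\mathrm{CLI},\ \rk(H)<\rk(G)\}
\]
are both coanalytic as subsets of $\mathcal{PG}\times\mathcal{PG}$, where by convention $\rk(G)=\infty$ if $G\notin\mathrm{CLI}$. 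Equivalently, by taking complements, one must show that $\{(H,G): \rk(H)>\rk(G)\}$ and $\{(H,G): \rk(H)\ge\rk(G)\}$ (with the convention above) are analytic.

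The key technical tool will be Lemma \ref{L:rk*}: for a second-countable group, $\rk(V,U;G)$ can be computed using only countably much data, namely a fixed countable dense subgroup $Q\le G$ and a fixed countable basis $\mathcal{V}$ of identity neighborhoods closed under $Q$-conjugation. So I would first set up, uniformly in the code for $G\in\mathcal{PG}$, a Borel selection of such a $Q=Q(G)$ and $\mathcal{V}=\mathcal{V}(G)$ — this is the standard kind of Borel-uniformization bookkeeping available in the Effros/Wijsman setting (one can, e.g., Borel-select a countable dense sequence in $G$ and generate the basis from the ambient metric on $G_{\mathbb{U}}$). Then $\rk_{Q,\mathcal{V}}(V,U;G)$ becomes the rank of an explicitly defined \emph{countable} binary relation (a ``tree-like" well-founded relation) $R_G$ on the countable set of pairs from $\mathcal{V}$: roughly, $(V,U)\mathrel{R_G}(V,gWg^{-1})$ whenever $W\in\mathcal{V}$, $g\in Q\cap U$, and $U\not\subseteq V$. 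The point of Lemma \ref{L:rk*} is that $\rk_{Q,\mathcal{V}}(V,U;G)=\rk(V,U;G)$, and hence $\rk(G)=\sup_{V\in\mathcal{V}}\big(\rk_{Q,\mathcal{V}}(V,G;G)+1\big)$; moreover $G\in\mathrm{CLI}$ iff this relation $R_G$ (restricted to the relevant pairs) is well-founded, again by Theorem \ref{thm:CLI}.

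Once this is in place, the comparison of ranks reduces to a comparison of ordinal ranks of Borel-coded relations, which is exactly the situation covered by the theory of $\Pi^1_1$-ranks derived from well-founded relations (``$\Pi^1_1$ on $\Pi^1_1$'' arguments, cf. \cite[Section 34.B,C, Theorem 34.4]{Kechris1995}). Concretely: the map $G\mapsto R_G$ is Borel from $\mathcal{PG}$ into the space of countable binary relations on $\omega$; the rank of a well-founded relation is a regular $\Pi^1_1$-rank on the $\Pi^1_1$ set $\mathrm{WF}$ of well-founded relations; hence pulling back along the Borel map $G\mapsto R_G$ gives that $G\mapsto\rk(G)$ (which equals one plus a sup of these ordinal ranks) is a regular $\Pi^1_1$-rank, and in particular $\leq_{\rk}^*$ and $<_{\rk}^*$ inherit coanalyticity. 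The only mild wrinkle is the ``$\sup_{V\in\mathcal V}(\cdot+1)$" outer operation: one handles this by passing to a single amalgamated relation $R_G^{\mathrm{tot}}$ whose rank is exactly $\rk(G)$ — e.g. add a fresh top node below which one attaches, for each $V\in\mathcal{V}$, the subtree computing $\rk_{Q,\mathcal V}(V,G;G)$, so that $\rk(R_G^{\mathrm{tot}})=\rk(G)$ — and then apply the general theory to $G\mapsto R_G^{\mathrm{tot}}$.

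The main obstacle I anticipate is purely one of uniformity/bookkeeping: making the selections $G\mapsto Q(G),\mathcal{V}(G)$, and thence $G\mapsto R_G^{\mathrm{tot}}$, genuinely Borel as functions on $\mathcal{PG}$, including verifying that the atomic relations ``$g\in Q\cap U$", ``$U\subseteq V$", ``$W\in\mathcal V$ and $gWg^{-1}=W'$" are Borel in the code for $G$ (they are, since membership, inclusion, and conjugation of basic open sets are all Borel in the Effros structure, using the ambient metric $d_{\mathbb U}$). No genuinely new idea is needed beyond Lemma \ref{L:rk*} and Theorem \ref{thm:CLI}; the content is in the careful Borel coding, after which Theorem \ref{T:RegularRank} follows from the general theory of $\Pi^1_1$-ranks.
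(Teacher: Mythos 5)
Your high-level strategy---reduce via Lemma \ref{L:rk*} to a countably-coded recursion, push it Borel-uniformly into a known regular $\Pi^1_1$-rank, and pull back---is the same one the paper uses, so the overall shape is right. But there is a genuine gap in the encoding step that would sink the argument as written.

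The clause defining $\rk_{Q,\mathcal{V}}(V,U)\le\beta$ has the alternating form $\exists W\in\mathcal{V}\ \forall g\in Q\cap U$: one first \emph{chooses} a good $W$, and then must survive \emph{every} $g$. Consequently
\[
\rk_{Q,\mathcal{V}}(V,U)\;=\;\min_{W\in\mathcal{V}}\ \sup_{g\in Q\cap U}\bigl(\rk_{Q,\mathcal{V}}(V,gWg^{-1})+1\bigr),
\]
a min--max. The binary relation you propose, with $(V,gWg^{-1})$ declared an $R_G$-predecessor of $(V,U)$ for \emph{every} $W\in\mathcal{V}$ and $g\in Q\cap U$, has ordinary well-founded-relation rank
$\sup_{W}\sup_{g}\bigl(\cdot+1\bigr)$, which erases the existential on $W$. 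This does not just compute a different ordinal; it typically makes $R_G$ ill-founded even when $G$ is CLI (for instance, $W$ close to $U$ produces a predecessor of essentially the same size, and $W=U$ literally gives $(V,U)$ as its own predecessor whenever $U\not\subseteq V$, so $\rk_{R_G}(V,U)=\infty$). So the map $G\mapsto R_G^{\mathrm{tot}}$ you describe does not land in $\mathrm{WF}$ on $\mathrm{CLI}$, and the pullback argument collapses.

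The fix is to keep the two-quantifier structure explicit, which is exactly what the paper does: it codes $\rk(V,U;G)$ as the \emph{game rank} of an open game on a countable alphabet, in which Player~I plays the $\exists W$ moves and Player~II plays the $\forall g$ moves, and shows $\Grk(\mcC_{U,G})=\rk(U,G)$. Game ranks of open games (min over I's strategies of a tree rank) are also a regular $\Pi^1_1$-rank on the $\Pi^1_1$ set ``Player~I wins,'' so the pullback argument then goes through. Equivalently, one could amalgamate the $W$-choice into the positions (use nodes $(V,U,W)$ alternating with $(V,U')$) to get a well-founded \emph{tree of strategies}, but some device recording the $\exists W$ move is unavoidable. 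One further difference worth noting: you propose Borel-selecting a countable dense subgroup $Q(G)\le G$ and a basis $\mathcal{V}(G)$ per group, which is doable via Kuratowski--Ryll-Nardzewski but adds bookkeeping; the paper instead fixes once and for all a countable dense $G^0_{\mathbb{U}}\le G_{\mathbb{U}}$ and a conjugation-closed basis $\mcB$ for $G_{\mathbb{U}}$, and lets Player~II conjugate by $g\in (G\cap V_n)V_n\cap G^0_{\mathbb{U}}$ (elements of the fixed dense subgroup that are merely $V_n$-close to $G$), which sidesteps any selection theorem and makes the Borelness of $G\mapsto\mcC_G$ nearly immediate.
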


Our strategy is to associate each group $G \in \PG$ with an open game where Player I has a winning strategy if and only if $G$ is CLI.
This map can be performed in a Borel fashion, and moreover the set of open games in which Player I has a winning strategy has a regular $\BPi^1_1$-rank which happens to correspond to $\rk$ on $\mathrm{CLI}$.

\subsection{Review of infinite games}

We begin with a brief review of infinite games and  strategies.
Let $X$ be a countable discrete set, and let $A \subseteq X^\omega$ be any subset.
A \textbf{game on $X$ with payoff set $A$} is a combinatorial game between two players, Player I and Player II, in which both players alternate in selecting elements of $X$ in order to construct an infinite sequence or ``run" $(a_i)_{i \in \mathbb{N}}$.
Player I wins if the run is in $A$, and Player II wins otherwise.

A winning strategy for I is a function $\sigma: X^{<\omega} \rightarrow X$ such that for any sequence $a_1, a_3, \ldots $ if $a_0, a_2, \ldots $ is recursively defined by $a_{2n} = \sigma(a_0, \ldots , a_{2n-1})$, then $(a_i)_{i \in \mathbb{N}} \in A$.
Conversely, a winning strategy for II is a function $\sigma: X^{<\omega} \rightarrow X$ such that for any sequence $a_0, a_2, \ldots $ if $a_1, a_3, \ldots $ is recursively defined by $a_{2n+1} = \sigma(a_0, \ldots , a_{2n})$, then $(a_i)_{i \in \mathbb{N}} \not\in A$.
We say $A$ is \textbf{determined} if and only if either Player I or Player II has a winning strategy.
We let $\sigma * \langle a_1, a_3, \ldots  \rangle$ denote the run as constructed when $\sigma$ is a winning strategy for I, and $\sigma * \langle a_0, a_2, \ldots  \rangle$ denote the run as constructed when $\sigma$ is a winning strategy for II.

The study of determined sets is a deep and interesting theory which includes the study of the axiom of determinacy (an axiom stating that every set is determined, which happens to be incompatible with the axiom of choice under the ordinary rules of mathematics), as well as a celebrated result of Martin that every Borel set is determined when viewed as a subset of the product topological space $X^{\mathbb{N}}$ with the discrete topology on $X$.
A more basic fact, however, is the result of Gale-Stewart that open sets (and thus also closed sets) are determined.
The proof of this is straightforward, but we will give a refinement of this result which strongly connects with our refinement of the theory of CLI Polish groups.

We will want to consider the standard Borel space $\OPEN_X$ of all open games on $X$. Associating each game with its (open) payoff, we simply set $\OPEN_X:= \mcO(X^\omega)$ to be
the space of open subsets of $X^\omega$. The standard Borel structure on 
$\mcO(X^\omega)$ is the pull-back of the Borel structure on the Effros Borel space 
$\mathcal{F}(X^\omega)$ of closed subsets on $X^\omega$, under the bijection $\mcO(X^\omega)\to \mathcal{F}(X^\omega)$ of taking complements $A \mapsto X^\omega \setminus A$.

Let $\OPEN^I_X$ be the set of all open games in which Player I has a winning strategy, and let $\OPEN^{II}_X$ be the set of open games in which Player II has a winning strategy.
By the Gale-Stewart result, $\OPEN_X = \OPEN^I_X \bigsqcup \OPEN^{II}_X$, and it's straightforward to calculate that $\OPEN^I_X$ is a coanalytic set and $\OPEN^{II}_X$ is an analytic set.

Given an open game $A \in \OPEN_X$ and a partial run $\bar{x} = x_0 \dots x_n \in X^n$ of even length, we define the \textbf{game rank of $A$ and $\bar{x}$}, denoted as follows.
We say $\Grk(A, \bar{x}) \le 0$ iff $N_{\bar{x}} := \{\bar{a} \in X^\omega \mid \bar{x} \sqsubseteq \bar{a}\}$ is a subset of $A$.
More generally, for $\alpha > 0$, we say $\Grk(A, \bar{x}) \le \alpha$ iff there is some $x_{n+1} \in X$ such that for every $x_{n+2} \in X$, we have $\Grk(A, \bar{x}x_{n+1}x_{n+2}) < \alpha$.
We write $\Grk(A, \bar{x}) = \alpha$ iff $\alpha$ is the least ordinal in which $\Grk(A, \bar{x}) \le \alpha$ holds, or $\Grk(A, \bar{x}) = \infty$ iff there is no such $\alpha$.
We define $\Grk(A)$ to be $\Grk(A, \langle \rangle)$.
It's straightforward to check that $\Grk(A) < \infty$ iff $\Grk(A) < \omega_1$ iff Player I has a winning strategy for $A$.

The fact that the map $A \mapsto \Grk(A)$ is a regular $\BPi^1_1$-rank on $\OPEN^I_X$ can be proved by an appeal to \cite[34.18]{Kechris1995}, but for completeness we sketch an argument here.
Given two open games $A, B \in \OPEN_X$, we consider two games 
$G^{\le}_{A, B}$ and $G^{<}_{A, B}$ on $X \times X$.
Both games have the same format but have different payoff sets.
First, Player I makes a move in game $A$ as Player I.
Next, Player II makes a move in game $A$ as Player II, \emph{and} makes a move in game $B$ as Player I.
Then, Player I makes a move in game $A$ as Player I, \emph{and} makes a move in game $B$ as Player II.
The game continues like this to produce a sequence $(a_i, b_i)_{i \in \mathbb{N}}$ where $(a_i)_{i \in \mathbb{N}}$ is a run of game $A$ and $(b_{i+1})_{i \in \mathbb{N}}$ is a run of game $B$.
Since Player I does not make a move in game $B$ in their first move, we treat $b_0$ as a ``dummy" move which is discarded.
We define payoff sets:
\[ G^{\le}_{A, B} := \{(a_i, b_i)_{i \in \mathbb{N}} \mid \forall n \in \mathbb{N}, N_{b_1 \dots b_n} \subseteq B \rightarrow N_{a_0 \dots a_{n-1}} \subseteq A\}\]
and
\[ G^{<}_{A, B} := \{(a_i, b_i)_{i \in \mathbb{N}} \mid \exists n \in \mathbb{N}, N_{a_0 \dots a_{n-1}} \sqsubseteq A \; \text{and} \; N_{b_1 \dots b_n} \not\sqsubseteq B\}.\]
Note again that $b_0$ is ignored in both cases because in the very first move of the game, Player I does not make a move in game $B$.
\begin{lemma}
    The games $G^{\le}_{A, B}$ and $G^{<}_{A, B}$ are open, and moreover:
    \begin{enumerate}
        \item Player I has a winning strategy in $G^{\le}_{A, B}$ if and only if $\Grk(A) \le \Grk(B)$; and
        \item Player I has a winning strategy in $G^{<}_{A, B}$ if and only if $\Grk(A) < \Grk(B)$. 
    \end{enumerate}
\end{lemma}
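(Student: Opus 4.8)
The plan has two halves. For the openness claim, I would unwind the two payoff sets directly. Since $A$ and $B$ are open, for any finite string $\bar s$ the property ``$N_{\bar s}\subseteq A$'' (resp. ``$N_{\bar s}\subseteq B$'') depends only on $\bar s$, is clopen as a condition on runs, and is monotone under extension of $\bar s$. Using this monotonicity, a run lies in $G^{\le}_{A,B}$ exactly when there is some $n$ with $N_{(b_1,\dots,b_n)}\subseteq B$, $N_{(a_0,\dots,a_n)}\subseteq A$, and $N_{(b_1,\dots,b_m)}\subseteq B\Rightarrow N_{(a_0,\dots,a_m)}\subseteq A$ for every $m<n$; this is a countable union of basic clopen sets, so $G^{\le}_{A,B}$ is open, and $G^{<}_{A,B}$ is manifestly of the same shape. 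This also makes $(A,B)\mapsto G^{\le}_{A,B}$ and $(A,B)\mapsto G^{<}_{A,B}$ Borel maps into $\OPEN_X$, so that ``Player I wins $G^{\le}_{A,B}$'' and ``Player I wins $G^{<}_{A,B}$'' are $\BPi^1_1$ conditions on $(A,B)$.

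For the two equivalences I would run the standard unfolding argument for rank-comparison games, using transfinite induction on $\Grk(B)$ when $\Grk(B)<\omega_1$ and handling $\Grk(B)=\infty$ separately. The combined game is set up so that a position records a partial run $\bar a$ of game $A$ and a partial run $\bar b$ of game $B$, and the quantity to track along a play is the pair $(\Grk(A,\bar a),\Grk(B,\bar b))$. For the backward direction of clause (1), given $\Grk(A)\le\Grk(B)<\omega_1$, I would have Player I combine a strategy that strictly decreases $\Grk(A,\bar a)$ at each of her moves in the $A$-coordinate (available because $\Grk(A)<\omega_1$) with a strategy in her $B$-coordinate moves that prevents $\Grk(B,\bar b)$ from decreasing faster than the definition of $\Grk$ forces, maintaining the invariant $\Grk(A,\bar a)\le\Grk(B,\bar b)$; the one extra move Player I is granted in game $A$ before game $B$ starts is exactly what makes the invariant hold at the outset, and it drives the run into $G^{\le}_{A,B}$. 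For the forward direction, I would run a given winning strategy against the most stubborn opponent in the $B$-coordinate and read off, from the resulting play, both that $\Grk(B)<\omega_1$ and that $\Grk(A)\le\Grk(B)$; and if $\Grk(B)=\infty$ I would invoke Gale-Stewart determinacy of the open game $B$, together with the equivalence $\Grk(\cdot)<\omega_1\iff$ ``Player I wins'' proved earlier in this section, to conclude that Player I has no winning strategy, which is what the right-hand side predicts. Clause (2) is the same argument with every ``$\le$'' replaced by ``$<$'', the strictness being produced by the same one-move offset and read off through the payoff of $G^{<}_{A,B}$.

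The step I expect to be the main obstacle is precisely this offset bookkeeping: the indices must be arranged so that Player I winning $G^{\le}_{A,B}$ captures exactly ``$\Grk(A)\le\Grk(B)<\omega_1$'' while Player I winning $G^{<}_{A,B}$ captures exactly ``$\Grk(A)<\Grk(B)$'' \textemdash{} rather than the two games capturing the same relation \textemdash{} and this has to be compatible with the limit stages of the induction and with the $\Grk=\infty$ case. Once the offset is pinned down, the remaining verifications are the routine ones behind the paper's remark that the lemma is ``straightforward,'' and could alternatively be subsumed under the general machinery of \cite[34.18]{Kechris1995}.
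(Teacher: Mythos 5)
Your openness argument is correct: the conditions ``$N_{(a_0,\dots,a_n)}\subseteq A$'' and ``$N_{(b_1,\dots,b_n)}\subseteq B$'' depend only on finitely many coordinates and are monotone under extension, so each payoff set rewrites as a countable union of basic clopen sets in $(X\times X)^\omega$, and the two maps into $\OPEN_{X\times X}$ are visibly Borel. That part is sound.

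The remainder of the proposal, however, is a plan that defers precisely the step where the difficulty lies, and I don't think it can be carried out on the lemma as literally stated. You flag the ``offset bookkeeping'' as the main obstacle and then propose to trust it; but if one reads $(a_i)_{i\le n}\sqsubseteq A$ as $N_{(a_0,\dots,a_n)}\subseteq A$ and $(b_{i+1})_{i\le n-1}\sqsubseteq B$ as $N_{(b_1,\dots,b_n)}\subseteq B$ (the only reading consistent with the paper's use of $N_{\bar x}$ in defining $\Grk$), the stated equivalences fail. Take $X=\{0,1\}$ and $A=B=\{x\in X^\omega : x_0=0\}$, so $\Grk(A)=\Grk(B)=1$. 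In $G^{<}_{A,B}$, Player I opens with $a_0=0$; then already at $n=0$ we have $N_{(a_0)}\subseteq A$ and (the $b$-string being empty) $N_{\langle\rangle}\not\subseteq B$, so the payoff is met regardless of all later moves, and Player I wins even though $\Grk(A)\not<\Grk(B)$ --- contradicting clause (2). Symmetrically, in $G^{\le}_{A,B}$ Player II (who controls the Player-I moves of game $B$) can play $b_1=1$, forcing $(b_{i+1})_i\notin B$; the first conjunct of the payoff then fails and Player I loses, even though $\Grk(A)\le\Grk(B)<\omega_1$ --- contradicting clause (1). So before any unfolding argument can be run, the payoff sets and/or the inequalities in the lemma need to be corrected; in particular, the invariant you propose to maintain, $\Grk(A,\bar a)\le\Grk(B,\bar b)$, is not even directly meaningful mid-game because the $a$- and $b$-strings at a given position of the combined game have opposite parities and $\Grk$ is only defined at even-length positions. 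The paper omits the proof and calls the lemma straightforward, so you are not deviating from a written argument; but the proposal's assertion that ``once the offset is pinned down, the remaining verifications are routine'' conceals the fact that pinning down the offset is exactly where the lemma, as written, breaks, and a proof attempt has to confront that rather than postpone it.
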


\begin{proof}
The set $G^{<}_{A, B}$ is evidently open, but for $G^{\le}_{A, B}$ we must see that this can be written as a union over $n$ of open sets 
\[\{(a_i, b_i)_{i \in \mathbb{N}} \mid \forall k \le n, N_{b_1 \dots b_n} \subseteq B \rightarrow N_{a_0 \dots a_{n-1}} \subseteq A\}.\]

For (1), let's consider the forward direction.
Suppose Player I has a winning strategy for $G^{\le}_{A, B}$.
We prove that $\Grk(A) \le \Grk(B)$ by proving the following stronger claim.

\begin{claim}
Suppose $\bar{x} = (a_0, b_0) \dots (a_n, b_n)$ is an odd-length run of the game $G^{\le}_{A, B}$ played according to Player I's winning strategy.
Then we claim that 
\[\Grk(A, a_0 \dots a_{n-1}) \le \Grk(B, b_1 \dots b_n).\]
\end{claim}

We prove by transfinite induction on $\alpha$, simultaneously for all such $\bar{x}$, that if $\Grk(B, b_1 \dots b_n) \le \alpha$ then $\Grk(A, a_0 \dots a_{n-1}) \le \alpha$.

For the base case, assume $\Grk(B, b_1 \dots b_n) \le 0$.
This means that $N_{b_1 \dots b_n} \subseteq B$.
Since Player I's strategy in $G^{\le}_{A, B}$ is winning, we can extend $\bar{x}$ to a sequence $(a_i, b_i)_{i \in \mathbb{N}}$ which lands in the payoff set for $G^{<}_{A, B}$.
By the definition of this payoff set, we have $N_{a_0 \dots a_{n-1}} \subseteq A$ and thus $\Grk(A, a_0 \dots a_{n-1}) \le 0$.

Now suppose $\alpha > 0$, and $\Grk(B, b_1 \dots b_n) \le \alpha$, and assume the claim holds below $\alpha$.
We claim $\Grk(A, a_0 \dots a_{n-1}) \le \alpha$.
By its recursive definition, we just need to show that for an arbitrary $a_{n+1}$,
$\Grk(A, a_0 \dots a_n a_{n+1}) < \alpha$.
Choose some $b_{n+1}$ such that for every $b_{n+2}$ we have $\Grk(B, b_1 \dots b_{n+1} b_{n+2}) \le \alpha$.
Suppose that we have Player II make the move $(a_{n+1}, b_{n+1})$ next in the game $G^{<}_{A, B}$ and $(a_{n+2}, b_{n+2})$ is Player I's response according to their winning strategy.
In particular, we have $\Grk(B, b_1 \dots b_{n+2}) \le \alpha$ and so by the induction hypothesis we also have $\Grk(A, a_0 \dots a_{n+1}) \le \alpha$ as desired. This ends the proof of the claim and thus the forward direction of (1).

For the reverse direction suppose $\Grk(A) \le \Grk(B)$.
We will construct a winning strategy for $G^{\le}_{A, B}$.
For the first move, since $\Grk(A) \le \Grk(B)$ we may select some $a_0$ so that for every $b_1$ and $a_1$ there is $b_2$ with $\Grk(A, a_0a_1) \le \Grk(B, b_1b_2)$.
We have Player I play $(a_0, b_0)$ where $b_0$ is the ignored dummy move, and suppose $(a_1, b_1)$ is Player II's response.

In general, suppose $\bar{x} = (a_0, b_0) \dots (a_n, b_n)$ is an even-length run of the game $G^{\le}_{A, B}$ and so far Player I has been able to maintain for some $b_{n+1}$ that
\[\Grk(A, a_0 \dots a_k) \le \Grk(B, b_1 \dots b_{k+1})\]
for all even $1 \le k \le n$.
Since $\Grk(A, a_0 \dots a_n) \le \Grk(B, b_1 \dots b_{n+1})$ we may find some $a_{n+1}$ such that for every $b_{n+1}$ and $a_{n+2}$ there is $b_{n+2}$ such that
\[\Grk(A, a_0 \dots a_{n+1}) \le \Grk(B, b_1 \dots b_{n+2}).\]
Have Player I play $(a_{n+1}, b_{n+1})$ and suppose Player II's response is $(a_{n+2}, b_{n+2})$.
Then fix $b_{n+3}$ such that
\[\Grk(A, a_0 \dots a_{n+2}) \le \Grk(B, b_1 \dots b_{n+3}).\]
Thus Player I of $G^{\le}_{A, B}$ can always maintain the property above and by definition of $G^{\le}_{A, B}$ will be a winning strategy.

Proceeding to (2), by determinacy of open games, it suffices to show that Player II has a winning strategy in $G^{<}_{A, B}$ if and only if $\Grk(B) \le \Grk(A)$.
This argument proceeds in essentially the same way as in the proof of (1).
\end{proof}

Given that $\OPEN^I_{X}$ is coanalytic, we finish by observing that the maps $(A, B) \mapsto G^{\le}_{A, B}$ and $(A, B) \mapsto G^{<}_{A, B}$ are Borel as maps from $\OPEN_X \times \OPEN_X$ to $\OPEN_{X \times X}$.
We conclude that $\le_{\varphi}$ and $<_{\varphi}$ are coanalytic, where $\varphi : \text{OPEN}_X \rightarrow \omega_1 \cup \{\infty\}$ is the map $A \mapsto \Grk(A, \langle \rangle)$. 

\subsection{The CLI game}
Next we define the CLI game and connect it to our rank notion.

Let $G^0_{\mathbb{U}}$ be a countable dense subgroup of $G_{\mathbb{U}}$.
Assume $G^0_{\mathbb{U}}$ includes the identity.
Let $\mcB$ be a countable local basis of the identity of $G_{\mathbb{U}}$ of symmetric \emph{regular} open sets which is closed under conjugation by elements of $G^0_{\mathbb{U}}$.
Assume $\mcB$ includes the whole group $G_{\mathbb{U}}$. 
Let $G \in \PG$, in which case $G$ is a closed subgroup of $G_{\mathbb{U}}$.
For any $V \in \mcB$, the \emph{CLI game for $V$ and $G$} is the game played on $\mcB$ as follows:
\begin{itemize}
    \item In the first round, Player I chooses some $U_0 \in \mcB$;
    \item In the second round, Player II chooses some conjugate $W_0 := gU_0g^{-1}$ where $g \in G^0_{\mathbb{U}}$;
    \item In round $2n+1$, Player I chooses some $U_{n} \in \mcB$;
    \item In round $2n+2$, Player II chooses some  $W_n := gU_ng^{-1}$ where $g \in (G \cap W_{n-1})U_{n}  \cap G^0_{\mathbb{U}}$.
\end{itemize}

Player I wins if for some $n$, we have $W_n \cap G \subseteq V$ (we call this ``the win condition'').
Otherwise, Player II wins.
This is easily an open game, which we denote by $\mcC_{V, G}$.

\begin{proposition}
    For every $G \in \PG$ and $V \in \mathcal{B}$, we have $\mathrm{Grk}(\mcC_{V, G}) = \rk(V, G)$.
\end{proposition}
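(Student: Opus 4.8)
The plan is to show the two inequalities $\mathrm{Grk}(\mcC_{U,G}) \le \rk(U,G)$ and $\rk(U,G) \le \mathrm{Grk}(\mcC_{U,G})$ simultaneously by a single transfinite induction on ordinals, proving the refined statement that for every partial run $\bar{p} = (V_0, W_0, \ldots, V_{n-1}, W_{n-1})$ of even length in $\mcC_{U,G}$ that has not yet been won by Player~I (i.e.\ $W_i \cap G \not\subseteq U$ for all $i < n$), one has
\[
\mathrm{Grk}(\mcC_{U,G}, \bar{p}) = \rk(U, W_{n-1}\cap G; G),
\]
with the convention $W_{-1} = G_{\mathbb{U}}$ so that $W_{-1}\cap G = G$ and the $n=0$ case recovers the proposition. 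The reason to carry the partial-run parameter is that the recursion defining $\mathrm{Grk}$ peels off one Player~I move (a choice of $V_n \subseteq W_{n-1}$) followed by one Player~II move (a choice of conjugate $W_n = gV_ng^{-1}$ with $g \in (G\cap V_n)V_n \cap G^0_{\mathbb{U}}$), and this matches, almost exactly, the recursion defining $\rk(U, \cdot\,;G)$: there one picks $W' \subseteq_1 G$ and then quantifies over conjugates $g W' g^{-1}$ for $g$ ranging over the current set.

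First I would handle the base case. We have $\mathrm{Grk}(\mcC_{U,G}, \bar{p}) \le 0$ iff every extension of $\bar{p}$ is already a win for Player~I, which (since no future round can undo a win, and a win is witnessed by some $W_i \cap G \subseteq U$) happens iff $W_{n-1} \cap G \subseteq U$; and $\rk(U, W_{n-1}\cap G; G) = 0$ iff $W_{n-1}\cap G \subseteq U$ by definition of $\rk$. So the two ranks agree on whether they are $\le 0$. For the successor/limit step, suppose the claimed equality holds for all strictly shorter "remaining rank". For the direction $\mathrm{Grk} \le \rk$: given $\rk(U, W_{n-1}\cap G;G) \le \alpha$ with $\alpha>0$, fix the witnessing $W' \subseteq_1 G$; using that $\mcB$ is a basis and closed under the relevant conjugations, and using Lemma~\ref{L:rk*} to restrict the conjugating elements to the dense subgroup $G^0_{\mathbb{U}}$ and the basic neighborhoods to $\mcB$, pick $V_n \in \mcB$ with $V_n \cap G \subseteq W'$ (shrinking as needed); then for every legal Player~II response $W_n = gV_ng^{-1}$, $g\in (G\cap V_n)V_n\cap G^0_{\mathbb{U}}$, one checks $W_n\cap G \subseteq g(W')g^{-1}\cap G$ lies in a set of $\rk$-value $<\alpha$, and the inductive hypothesis applied to the extended run gives $\mathrm{Grk}(\mcC_{U,G}, \bar{p}\,V_n W_n) < \alpha$ (after discarding any $W_n$ that already wins, which only helps). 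Hence $\mathrm{Grk}(\mcC_{U,G},\bar{p}) \le \alpha$. The reverse direction is symmetric: a Player~I move $V_n \in \mcB$ with small game rank for all responses is turned into a witnessing $W' := V_n$ (or $V_n \cap$ a fixed open set) for the $\rk$ recursion, again invoking Lemma~\ref{L:rk*} to see that quantifying over $g \in (G\cap V_n)V_n \cap G^0_{\mathbb{U}}$ suffices to compute $\rk$.

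The main obstacle I anticipate is the bookkeeping around the precise shape of the neighborhoods: the CLI game is played with the regular-open basis $\mcB$ of $G_{\mathbb{U}}$ while $\rk(U,G)$ is computed intrinsically inside $G$ with open neighborhoods of $1_G$ of the form $O \cap G$, and the conjugating elements in the game come from $(G\cap V_n)V_n \cap G^0_{\mathbb{U}}$ rather than from $G$ itself. Reconciling these is exactly the content of Lemma~\ref{L:rk*} (with $Q = G^0_{\mathbb{U}} \cap G$-translates and $\mathcal{V} = \{O\cap G : O \in \mcB\}$), together with the elementary fact that conjugation $g W' g^{-1} \cap G = g(W'\cap G)g^{-1}$ for $g \in G$ and the $W_0^3 \subseteq W$ trick used there to replace an arbitrary conjugating element of $G$ by one lying in a dense subgroup and in the correct coset; the role of "$V_n \subseteq W_{n-1}$" in the game rules is the monotonicity afforded by Proposition~\ref{prop:basic_properties_of_rk}(1). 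Once these compatibility points are pinned down, the induction itself is routine, and setting $\bar{p}$ to be the empty run yields $\mathrm{Grk}(\mcC_{U,G}) = \rk(U, G; G) = \rk(U,G)$ as stated.
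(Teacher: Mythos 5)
Your proposal follows the same route as the paper: a simultaneous induction over partial runs and ordinals, matching the base case ($W_{n-1}\cap G\subseteq U$ iff the rank is zero) and translating in the inductive step between game moves in $\mcB$ and $G^0_{\mathbb{U}}$ and $\rk$-witnesses inside $G$ via shrinking and density. One small point to tighten in your forward direction: requiring only $V_n\cap G\subseteq W'$ is not quite enough, since the conjugating element $g\in (G\cap V_n)V_n$ need not lie in $G$; writing $g=vw$ with $v\in G\cap V_n$, $w\in V_n$ gives $gV_ng^{-1}\cap G = v(wV_nw^{-1}\cap G)v^{-1}\subseteq v(V_n^3\cap G)v^{-1}$, so you need $V_n^3\cap G\subseteq W'$ to conclude --- this is exactly the $(V'')^3\subseteq V'$ choice the paper makes, and it is the precise form of the cubing trick you already anticipate in your obstacles paragraph.
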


\begin{proof}
    We prove by induction on $\alpha$ simultaneously for any partial even-length run $\bar{x}$ that, if $W$ is Player II's last move in $\bar{x}$ (or $W = G_{\mathbb{U}}$ if $\bar{x} = \langle \rangle$), we have
    \[ \Grk(\mcC_{V, G}, \bar{x}) \le \alpha \quad \text{iff} \quad \rk(V, W \cap G) \le \alpha.\]
    Assume $\alpha = 0$.
    The backwards direction is immediate.
    Indeed, if $\rk(V, W \cap G) \le 0$ and thus $W \cap G \subseteq V$, then that means Player I has satisfied the explicit win condition of the CLI Game for $V$.
    
    The forward direction is slightly subtle.
    Assume $\rk(V, W \cap G) \le 0$ fails, thus $W \cap G \not\subseteq V$.
    Of course, this means that Player I has not satisfied the win condition at this stage, but we need to show that $\mathcal{N}_{\bar{x}}$ is not contained in the payoff set.
    This can be done by observing Player I can always avoid the win condition by simply choosing to repeatedly play the entirety of $G_{\mathbb{U}}$ on their turn for the rest of the game.

    Now let $\alpha > 0$ and assume the claim is true below $\alpha$.

    We start with the forward direction.
    Suppose $\Grk(\mcC_{V, G}, \bar{x}) \le \alpha$ where $W$ is Player II's last move.
    Then there is some $U \in \mathcal{B}$ such that for every $g \in (G \cap W)U \cap G^0_{\mathbb{U}}$ we have $\Grk(\mcC_{V,  G}, \bar{x} \langle U, gUg^{-1} \rangle) < \alpha$.
    Choose symmetric $U' \in \mathcal{B}$ such that $U' \subseteq W$ and $(U')^3 \subseteq U$.
    It suffices to show that for any $g \in W \cap G$ we have $\rk(V, gU'g^{-1}) < \alpha$.
    Fix any such $g \in W \cap G$, and choose $\epsilon \in U'$ so that $g\epsilon \in G^0_{\mathbb{U}}$. 
    Then we have that $\Grk(\mcC_{V,  G}, \bar{x} \langle U, g\epsilon U\epsilon^{-1}g^{-1} \rangle) < \alpha$.
    Then we have by the induction hypothesis that $\rk(V, g\epsilon U\epsilon^{-1}g^{-1} \cap G) < \alpha$.
    Then by monotonicity (Lemma \ref{prop:basic_properties_of_rk}.(1)) we have that $\rk(V, g U' g^{-1} \cap G) < \alpha$.

    We proceed to the reverse direction.
    Fix some $U \subseteq_1 G$ such that for every $g \in W \cap G$ we have $\rk(V, gUg^{-1}) < \alpha$.
    Choose symmetric $U' \in \mathcal{B}$ with $U' \subseteq W$ such that $(U')^3 \cap G \subseteq U$.
    Fix any $g \in (G \cap W)U' \cap G^0_{\mathbb{U}}$.
    We wish to show that $\Grk(\mcC_{V, G }, \bar{x} \smf \langle U', gU'g^{-1} \rangle) < \alpha$.
    By the induction hypothesis, it suffices to show $\rk(V, gU'g^{-1} \cap G) < \alpha$.
    Indeed, write $g = hu$ for $h \in G \cap W$ and $u \in U'$.
    Then we have $\rk(V, hUh^{-1}) < \alpha$.
    Observe that
    \[gU'g^{-1} \cap G = huU'u^{-1}h^{-1} \cap G \subseteq hUh^{-1}\]
    so by monotonicity we have $\rk(V, gU'g^{-1} \cap G) < \alpha$ as desired.
\end{proof}

Now we define the CLI game for $G$ as follows.
In round 1, Player I does not make a move.
In round 2, Player II chooses some $V \in \mcB$.
Then Player I and Player II begin playing $\mcC_{V, G}$ to determine the winner.
Let $\mcC_G$ denote the CLI game for $G$.
The following is immediate:

\begin{proposition}
    $\mathrm{Grk} (\mcC_G) = \rk(G)$.
\end{proposition}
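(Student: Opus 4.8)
The plan is to compute $\Grk(\mcC_G)$ by unwinding the definition of the game $\mcC_G$ one move at a time and then invoking the proposition just established, namely $\Grk(\mcC_{U,G}) = \rk(U,G)$ for every $U \in \mcB$. First I would record the structure of $\mcC_G$: Player I's opening move is a dummy (discarded) move, Player II then selects some $U \in \mcB$, and from that point on the run \emph{and} the winning condition are literally those of $\mcC_{U,G}$. Hence, for every legal length-two initial segment $\bar x = \langle *, U\rangle$ of a run of $\mcC_G$ (with $*$ the dummy move and $U \in \mcB$), the subtree of $\mcC_G$ consisting of runs extending $\bar x$, together with the restriction of the payoff set to that subtree, is order-isomorphic to the game tree of $\mcC_{U,G}$ with its payoff set. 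Since $\Grk(A,\bar x)$ depends only on this subtree-with-payoff, we get $\Grk(\mcC_G,\langle *, U\rangle) = \Grk(\mcC_{U,G}) = \rk(U,G)$ for every $U \in \mcB$.

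Next I would apply the recursive definition of $\Grk$ at the empty run. Because Player I's first move $*$ is a dummy, the clause ``there exists a move $x_1$ such that for every move $x_2$ \dots'' degenerates to ``for every $U \in \mcB$ \dots'', so for every ordinal $\alpha > 0$ we have
\[\Grk(\mcC_G) = \Grk(\mcC_G,\langle\rangle) \le \alpha \iff \forall U \in \mcB\;\; \Grk(\mcC_G,\langle *, U\rangle) < \alpha \iff \forall U \in \mcB \;\; \rk(U,G) < \alpha,\]
using the previous paragraph for the last equivalence. In other words $\Grk(\mcC_G) = \sup\{\rk(U,G)+1 : U \in \mcB\}$, with the convention that this is $\infty$ if some term is $\infty$. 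It remains to identify this supremum with $\rk(G) = \sup\{\rk(V,G;G)+1 : V \subseteq_1 G\}$. Since the restrictions $U \cap G$ of members of $\mcB$ form a neighborhood basis of $1_G$ in $G$ (as noted in the proof of the preceding proposition), one inequality is immediate because each such $U\cap G$ is itself an open neighborhood of $1_G$; for the reverse, given $V \subseteq_1 G$ pick $U \in \mcB$ with $U \cap G \subseteq V$ and invoke monotonicity of the rank, Proposition \ref{prop:basic_properties_of_rk}(1), to get $\rk(V,G;G) \le \rk(U\cap G,G;G) = \rk(U,G)$. Combining the two displays gives $\Grk(\mcC_G) = \rk(G)$, including the $\infty$ and the countable cases (the latter since $\mcB$ is countable, in line with Theorem \ref{thm:CLI}).

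The argument is essentially bookkeeping once the proposition $\Grk(\mcC_{U,G}) = \rk(U,G)$ is in hand, so I do not expect a substantive obstacle; the two points that need a little care are (a) correctly accounting for the dummy first move so that the game-rank recursion at the root collapses to a quantifier over $U \in \mcB$ rather than introducing an extra ``$+1$'', and (b) the passage from the specific basis $\mcB$ to an arbitrary $V \subseteq_1 G$ occurring in the definition of $\rk(G)$, which is exactly where Proposition \ref{prop:basic_properties_of_rk}(1) is used. In particular Lemma \ref{L:rk*} is not needed at this stage.
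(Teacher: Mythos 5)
Your argument is correct and is exactly the bookkeeping the paper has in mind: the paper states this proposition without proof ("the following is immediate"), relying on the preceding proposition $\Grk(\mcC_{U,G}) = \rk(U,G)$ plus the fact that the restrictions of $\mcB$ to $G$ form a neighborhood basis of $1_G$. Your handling of the dummy first move and the passage from the basis $\mcB$ to arbitrary $V \subseteq_1 G$ via Proposition \ref{prop:basic_properties_of_rk}(1) supplies precisely the details the authors left implicit.
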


What remains is to show that the map $G \mapsto \mcC_G$ is Borel as a function from $\PG$ to $\OPEN_{\mcB}$.
The standard Borel space on $\OPEN_{\mcB} = \mathcal{O}(\mathcal{B}^{\omega})$ is generated by sets $\{C \in \mathcal{O}(\mathcal{B}^{\omega}) \mid A \not\subseteq C\}$ where $A$ ranges over basic open subsets of $\mcB^\omega$; see \cite[Section 12.C]{Kechris1995}.
Suppose $A$ is the basic open subset of all runs that start with
\[ \langle *, V, U_0, W_0, \ldots , U_n, W_n \rangle.\]
We want to show that the set of $G \in \PG$ such that $A \not\subseteq \mcC_G$ is Borel.
This is precisely the set of $G$ for which there is no extension
\[  U_{n+1}, W_{n+1}, \ldots , U_m, W_m\]
which enters the payoff set.
This is a countable universal statement, so it is enough to check that given the run
\[ \langle *, V, U_0, W_0, \ldots , U_m, W_m \rangle \]
the following sets of $G$ are Borel:
\begin{enumerate}
    \item The set of $G$ in which this run achieves Player I's win condition;
    \item The set of $G$ in which this run breaks the rules for Player I;
    \item The set of $G$ in which this run breaks the rules for Player II.
\end{enumerate}

For the first, we are looking for the set of $G$ in which $W_m \cap G \subseteq V$.
By the regularity of $V$, this is precisely the complement of the set of $G$ in which $G \cap (W_m \cap \overline{V}^c) \neq \emptyset$, which is Borel in Effros Borel structure.
The second set is always either all of $\PG$ or $\emptyset$, as the rules for Player I do not depend on $G$.
For the third, we are looking for the set of $G$ in which there is some $0 \le k < m$ such that $W_k$ was not a valid move.
Observe $W_k$ is not a valid move exactly when for every $g \in G^0_{\mathbb{U}}$ if $g \in (G \cap W_{k-1}) U_k$ then $W_k \neq gU_kg^{-1}$.
It's enough to check that for any $U_k \in \mathcal{B}$ and $g \in G_0$, the set of $G \in \PG$ with $g \in (G \cap W_{k-1}) U_k$ is Borel.
Indeed, this is equivalent to saying $G \cap (gU_k^{-1} \cap W_{k-1}) \neq \emptyset$ which is Borel in the Effros Borel structure.
This concludes our proof that the map $G \mapsto \rk(G)$ is a regular $\BPi^1_1$-rank on $\mathrm{CLI}$.

\section{Weakly $\alpha$-balanced groups}\label{S:Malicki}

In proving that CLI groups form a class of Polish groups that is not Borel, Malicki developed  a rank for Polish permutation groups  similar to Deissler's rank. 
Here, we show that Malicki's rank  generalizes to a rank for all topological groups. 
This leads to another stratification of CLI Polish groups into the classes of  \emph{weakly $\alpha$-balanced Polish groups}. 
Here, we only briefly discuss these connections, as the theory of weakly $\alpha$-balanced Polish groups follows exactly the same lines as the theory of $\alpha$-balanced Polish groups.

Let $G$ be a topological group and $U, V \subseteq_1 G$. Consider the rank function $\rk^*$ we would get if we modify the base case of $\rk(-,-)$  by declaring $\rk^*(V, U) = 0$ if and only if $U \subseteq FV$ for some finite $F \subseteq G$.
We leave the other clauses unchanged, so $\rk^*(V,U)\leq \beta$ if there is $W\subseteq_1 G$ so that if $g\in U$, then $\rk^*(V,gW g^{-1})< \beta$, and $\rk^*(V,U)= \infty$ if there is no ordinal $\beta$, for which $\rk^*(V,U)\leq \beta$.
We say that $G$ is \textbf{weakly $\alpha$-balanced} if and only if
\[\rk^*(G):=\sup\{\rk^*(V,G)+1 \colon V\subseteq_1 G\}\leq \alpha.\]

\begin{example}\label{E:Lamp}
  Consider the   full lamplighter group $L:= \mathbb{Z} \mathrel{\mathrm{Wr}} (\mathbb{Z}/2\mathbb{Z})= \mathbb{Z}\ltimes \prod_{k\in\mathbb{Z}}  \mathbb{Z}/2\mathbb{Z}$. By Proposition \ref{Prop:Z-jump} it follows that  $\rk(L)=3$, and it is easy to check that $\rk^{*}(L)=2$.
\end{example}

The  difference between $\rk$ and $\rk^*$ (see  Example \ref{E:Lamp}) turns out to be rather marginal:

\begin{proposition}
For any Polish group $G$, we have $\rk(G) \le \rk^*(G) \le \rk(G) + 1$, and moreover $\rk(G) = \rk^*(G)$ whenever either is infinite.  
\end{proposition}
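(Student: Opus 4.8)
The inequality $\rk^*(V,U)\le\rk(V,U)$ for all $U,V\subseteq_1 G$ follows by a routine induction on $\alpha=\rk(V,U)$: the base case uses that $U\subseteq V$ implies $U\subseteq FV$ with $F=\{1_G\}$, and the inductive step is immediate since the recursive clause defining ``$\le\beta$'' is identical for the two ranks (the same witness $W$ works). Passing to suprema over $V\subseteq_1 G$ gives $\rk^*(G)\le\rk(G)$, which is one half of the claimed chain. The substantive direction is $\rk(V,U)\le\rk^*(V,U)+1$, again by induction on $\alpha=\rk^*(V,U)$. The key point is the base case $\alpha=0$: suppose $U\subseteq FV$ for some finite $F=\{f_1,\dots,f_k\}\subseteq G$. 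We must show $\rk(V,U)\le 1$, i.e. we must produce $W\subseteq_1 G$ with $\rk(V,gWg^{-1})\le 0$ for every $g\in U$, that is with $gWg^{-1}\subseteq V$ for every $g\in U$. This is where we use that $G$ is Polish: here I would invoke the standard fact (e.g. via Birkhoff--Kakutani together with the Baire category argument that a dense $G_\del$ subgroup is everything, or more directly the fact that for Polish $G$ every left-Cauchy-to-the-strong-metric statement localizes) that for each fixed $f\in G$ the conjugation map is continuous, so there is $W_f\subseteq_1 G$ with $f W_f f^{-1}\subseteq V$; but this only handles a single group element, not the coset $fV$.

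To handle the whole coset $fV$ at once, I would argue as follows. Since $U\subseteq FV$, it suffices to find $W$ such that $gWg^{-1}\subseteq V$ for all $g\in f_iV$, $i\le k$; intersecting finitely many such $W$'s via Proposition~\ref{prop:basic_properties_of_rk}(1) reduces this to the case $F=\{f\}$, i.e. $U\subseteq fV$. Shrinking $V$ is not allowed (it would change the target), so instead fix symmetric $V_0\subseteq_1 G$ with $V_0^3\subseteq V$ and, using continuity of $g\mapsto g^{-1}$ and of multiplication, fix symmetric $W\subseteq_1 G$ with $f^{-1}W f\subseteq V_0$ and $W\subseteq V_0$. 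Then for $g=fv$ with $v\in V_0$ we compute
\[
gWg^{-1}=fvWv^{-1}f^{-1}\subseteq f V_0 W V_0 f^{-1}\subseteq f V_0^3 f^{-1},
\]
which is not obviously inside $V$. The cleaner route, and the one I would actually pursue, avoids absorbing the coset into a single conjugator: instead of requiring $gWg^{-1}\subseteq V$ directly, note $\rk^*(V,U)=0$ with $U\subseteq fV$ means $\rk(V,U)\le\rk(V,fV)+\text{(something)}$; but $\rk(V,fV)=\rk(f^{-1}Vf,V)$ by Proposition~\ref{prop:basic_properties_of_rk}(2), and $\rk(f^{-1}Vf,V)\le 1$ since choosing $W'\subseteq_1 G$ with $gW'g^{-1}\subseteq f^{-1}Vf$ for $g$ near $1_G$ is possible by continuity, uniformly on a neighborhood $V'\subseteq_1 G$ of the identity (this uses that we only need it on the single open set $V$, pulled back). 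Then apply Proposition~\ref{prop:basic_properties_of_rk}(1) and (3) to combine the finitely many cosets. Working this out carefully gives $\rk(V,U)\le\rk^*(V,U)+1$ at the base level; the inductive step then goes through verbatim because the successor clause is the same for both ranks and the inductive hypothesis supplies $\rk(V,gWg^{-1})\le\rk^*(V,gWg^{-1})+1<\alpha+1$ whenever $\rk^*(V,gWg^{-1})<\alpha$. Taking suprema yields $\rk(G)\le\rk^*(G)+1$.

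\textbf{Main obstacle.} The delicate point is the base case, reconciling ``$U\subseteq FV$'' (a covering by finitely many left-translates of a \emph{fixed} open set) with the conjugation-based clause ``$\rk(V,gWg^{-1})\le 0$'', which demands $gWg^{-1}\subseteq V$ for \emph{all} $g$ in a coset of $V$. Conjugation by elements of a coset $fV$ is uniformly continuous only after allowing a controlled enlargement of the target, so one cannot literally land inside $V$; the fix is to spend one extra level of rank, which is exactly the ``$+1$'' in the statement, and to organize the bookkeeping using Proposition~\ref{prop:basic_properties_of_rk}(1)--(3) together with the translation identity (2) so that the $\rk^*=0$ hypothesis is converted into $\rk\le 1$.

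\textbf{The infinite case.} For the final clause, if $\rk^*(G)$ is infinite then $\rk(G)\ge\rk^*(G)$ forces $\rk(G)$ infinite, and then $\rk(G)\le\rk^*(G)+1=\rk^*(G)$ (addition of $1$ on the right of an infinite ordinal... but here the ranks are ordinals or $\infty$, so ``infinite'' should be read as $=\infty$): if $\rk^*(G)=\infty$ then $\rk(G)=\infty$ by the first inequality, and conversely if $\rk(G)=\infty$ then $\rk^*(G)=\infty$ since $\rk^*(G)<\infty$ would give $\rk(G)\le\rk^*(G)+1<\infty$. Hence $\rk(G)=\rk^*(G)$ whenever either is infinite, completing the proof.
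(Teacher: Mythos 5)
Your first half ($\rk^*(V,U)\le\rk(V,U)$, hence $\rk^*(G)\le\rk(G)$) is fine and matches the paper, but the substantive direction breaks at exactly the point you flag as delicate, and your proposed fix does not work. The pair-level inequality you induct on, $\rk(V,U)\le\rk^*(V,U)+1$, is false, and so in particular is your base case. Your justification rests on the claim that one can choose $W'\subseteq_1 G$ with $vW'v^{-1}\subseteq f^{-1}Vf$ for \emph{all} $v\in V$ ``by continuity, uniformly on a neighborhood'': uniformity of conjugation over a fixed open set $V$ into a prescribed neighborhood is precisely the balancedness/SIN-type property whose failure the rank $\rk$ is designed to measure, and it fails in any non-TSI group. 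Concretely, take $G=\mathrm{SL}_2(\mathbb{R})$, $k\neq 1$ unipotent, and $V:=G\setminus\{k\}$. Then $V\cup hV=G$ for any $h\neq 1$, so $\rk^*(V,G)=0$; but the conjugacy class of $k$ accumulates at the identity, so no $W\subseteq_1 G$ satisfies $gWg^{-1}\subseteq V$ for all $g\in G$, whence $\rk(V,G)\ge 2>\rk^*(V,G)+1$. So ``spending one extra level of rank'' cannot rescue the inequality with the same $V$ on both sides. (A smaller slip: Proposition \ref{prop:basic_properties_of_rk}(2) conjugates \emph{both} arguments, so it gives $\rk(V,fV)=\rk(f^{-1}Vf,Vf)$, not $\rk(f^{-1}Vf,V)$.)

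The paper's proof gets around this by relaxing the target set rather than raising the rank: it proves, for symmetric $U,V\subseteq_1 G$, that $\rk(V^3,U)\le 1+\rk^*(V,U)$. In the base case $U\subseteq FV$ one takes $W:=\bigcap_{g\in F}gVg^{-1}$; for $u\in U$, writing $u^{-1}\in gV$ with $g\in F$ gives $ug\in V$ and $uWu^{-1}\subseteq (ug)V(ug)^{-1}\subseteq V^3$, so $\rk(V^3,U)\le 1$, and the inductive step is routine. The enlargement $V\mapsto V^3$ costs nothing for the group rank: given $V$, first shrink to $V_0$ with $V_0^3\subseteq V$ and bound $\rk(V,G)\le\rk(V_0^3,G)\le 1+\rk^*(V_0,G)$, then take suprema to get $\rk(G)\le 1+\rk^*(G)$. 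Note also the left addition: for an infinite ordinal value $1+\rk^*(G)=\rk^*(G)$, which is exactly what yields $\rk(G)=\rk^*(G)$ whenever either rank is an infinite ordinal; your right-handed bound $\rk(G)\le\rk^*(G)+1$ only gives the ``$=\infty$'' reading of the ``moreover'' clause and misses the infinite-ordinal case. (For what it is worth, both you and the paper actually establish $\rk^*(G)\le\rk(G)\le\rk^*(G)+1$; the orientation displayed in the proposition is evidently a typo, cf.\ Example \ref{E:Lamp} where $\rk^*=2<3=\rk$.)
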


\begin{proof}
Fix any $U, V \subseteq_1 G$.
A straightforward induction shows that $\rk^*(V, U) \le \rk(V, U)$.
On the other hand, we show that $\rk(V^3, U) \le 1 + \rk^*(V, U)$.
We prove this simultaneously for every $U$ and $V$.
For the base case, suppose $\rk^*(V, U) = 0$.
Fix some finite $F \subseteq G$ such that $U \subseteq FV$.
Now we can define $W \subseteq_1 G$ to be the set 
\[ W := \bigcap_{g \in F} gVg^{-1}.\]
We show that for any $u \in U$, we have $uWu^{-1} \subseteq V^3$.
To that end, fix any $u \in U$, in which case $u^{-1} \in U$ by symmetry of $U$, and thus there is some $g \in G$ with $g^{-1}u^{-1} \in V$.
We also have $ug \in V$ by symmetry of $V$.
Then $uWu^{-1} \subseteq ugVg^{-1}u^{-1} \subseteq V^3$.

The induction step is completely routine.
Suppose $\alpha > 0$ and $\rk(V^3, U) \le 1 + \rk^*(V, U)$ for every $U$ and $V$ with $\rk^*(V, U) < \alpha$.
Now suppose $\rk^*(V, U) \le \alpha$.
We may then fix some $W \subseteq_1 G$ with $\rk^*(V, uWu^{-1}) < \alpha$ for every $u \in U$.
By the induction hypothesis we have $\rk(V^3, uWu^{-1}) \le 1 + \rk^*(V, uWu^{-1}) < 1 + \alpha$ for every $u \in U$.
Thus $\rk(V^3, U) \le \alpha$ as desired.
The proposition follows.
\end{proof}

This weaker rank easily corresponds to a weaker variation of the Deissler rank, where the base case is modified to replace definable closure with algebraic closure.
Let $\mcL$ be a language and $\mcM$ an $\mcL$-structure.
For every pair of tuples $\bar{a}, \bar{b} \in M^{<\omega}$ we define
$\Drk^*(\bar{a}, \bar{b}) = 0$ iff there is some $\mcL_{\infty, \omega}$-formula $\varphi(\bar{x}, \bar{y})$ such that $\varphi^{\mcM}(\bar{a}, \bar{b})$ holds and moreover there are only finitely many $\bar{c} \in M^{<\omega}$ such that $\varphi^{\mcM}(\bar{c}, \bar{b})$ holds.
For general $\alpha$, we define $\Drk^*(\bar{a}, \bar{b})$ inductively the exact same way as $\Drk(\bar{a}, \bar{b})$ but replacing every instance of $\Drk$ with $\Drk^*$.
By essentially the same argument as Proposition \ref{Prop:RanksAreTheSame}, we have $\Drk^*(\bar{a}, \bar{b}) = \rk^*(V_{\bar{a}}, V_{\bar{b}})$.

\subsection{Malicki's orbit tree}

Let $P$ be a closed subgroup of $\mathrm{Sym}(\mathbb{N}_{+})$, where $\mathbb{N}_{+} := \{1, 2, \ldots \}$.
For any $n \ge 0$, let $[n] = \{1, \ldots , n\}$.
For any $\bar{a} \in (\mathbb{N}_{+})^{< \omega}$, and any $n \ge 0$, let $\mcO^n_{\bar{a}}$ be the orbit of $\bar{a}$ under the stabilizer $P_{[n]}$ of $[n]$ in $P$, acting with the diagonal action on $(\mathbb{N}_{+})^{|\bar{a}|}$.

In \cite[Section 3]{Malicki2011} Malicki defines the  \textbf{orbit tree $T_P$ of $P$}  to be the tree on all infinite $\mcO^n_{\bar{a}}$ ordered by reverse inclusion.
Note that Malicki only defined this for orbits of individual points rather than tuples, but the definition generalizes naturally. The {\bf Malicki rank}  $\mathrm{Mrk}(P)$ of $P$ is then  defined to be  Cantor-Bendixson rank $\CBrk(T_P)$ of the tree $T_P$, where the Cantor-Bendixson rank of a tree $T$ is defined as follows. 
Given $a \in T$, if $a$ is a leaf of $T$ then $\CBrk(a) = 0$.
For $\alpha > 0$, we inductively define $\CBrk(a) \le \alpha$ iff $\CBrk(b) < \alpha$ holds for every $b \in T$ with $b \ge a$.
If $\CBrk(a) \le \alpha$ does not hold for any $\alpha$, then  $\CBrk(a) = \infty$, otherwise  $\CBrk(a) = \alpha$ for the least $\alpha$ so
that $\CBrk(a) \le \alpha$ holds.
Finally, $\CBrk(T)$ is defined to be the sup of $\CBrk(a)+1$ where $a$ ranges over all of $T$.

\begin{proposition}
For any $P\leq\mathrm{Sym}(\mathbb{N}_{+})$, we have $\rk^*(P)$ is precisely $\mathrm{Mrk}(P)$.
\end{proposition}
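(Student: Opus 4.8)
The plan is to prove that $\rk^*(P) = \mathrm{Mrk}(P)$ by relating the rank computation $\rk^*(V_{\bar a}, V_{\bar b})$ to the Cantor--Bendixson rank of the node of the orbit tree $T_P$ corresponding to the orbit $\mcO^{|\bar b|}_{\bar a \bar b}$ (or, more precisely, to the orbit of $\bar a$ under the stabilizer of $\bar b$). Since a basic neighborhood of $1_P$ in $P \leq \mathrm{Sym}(\mathbb{N}_+)$ is a pointwise stabilizer $V_{\bar b} = P_{[\bar b]}$, and by Lemma~\ref{L:rk*} the rank $\rk^*$ can be computed using only these basic neighborhoods and countably much data, it suffices to establish a node-by-node correspondence. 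The key dictionary entry is: $\rk^*(V_{\bar a}, V_{\bar b}) \le 0$ iff $V_{\bar b} \subseteq F V_{\bar a}$ for some finite $F$, which unwinds to saying that $\bar a$ has a finite orbit under $P_{[\bar b]}$ acting diagonally; equivalently, the node $\mcO_{\bar a}$ (relative to $\bar b$) is \emph{not} in $T_P$ at all, i.e.\ it is a ``leaf'' in the Cantor--Bendixson bookkeeping, matching $\CBrk = 0$.

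The main step is then to prove, by simultaneous induction on $\alpha$, that for all tuples $\bar a, \bar b$,
\[
\rk^*(V_{\bar a}, V_{\bar b}) \le \alpha \quad \iff \quad \CBrk\big(\mcO_{\bar a}^{|\bar b|}\big) \le \alpha \text{ in } T_P,
\]
with the convention that $\CBrk$ of an orbit not appearing in $T_P$ (a finite orbit) is $0$. For the successor step, note that $\rk^*(V_{\bar a}, V_{\bar b}) \le \alpha+1$ means there is $W \subseteq_1 P$ with $\rk^*(V_{\bar a}, g W g^{-1}) < \alpha+1$ for all $g \in V_{\bar b}$; by shrinking we may take $W = V_{\bar c}$ for a tuple $\bar c$, and $g V_{\bar c} g^{-1} = V_{g\bar c}$, so this says $\rk^*(V_{\bar a}, V_{g \bar c}) \le \alpha$ for all $g \in P_{[\bar b]}$. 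Translating via the induction hypothesis, this says every child of the node $\mcO_{\bar a}$ (obtained by appending one more orbit-block's worth of information, i.e.\ passing to a larger finite set / longer tuple $\bar b \bar c$ that refines the stabilizer) has $\CBrk < \alpha+1$ — which is exactly the definition $\CBrk(\mcO_{\bar a}) \le \alpha+1$. Here one uses that the children of a node $\mcO^n_{\bar a}$ in $T_P$ are the infinite orbits $\mcO^{n'}_{\bar a}$ for $n' > n$ lying below it, and that each such refinement is realized by stabilizing finitely many additional points; the ``$3$-fold'' trick from the $\rk^* \le \rk^* + 1$ proposition is not needed here since everything is already phrased with basic stabilizer neighborhoods, though a $W_0^3 \subseteq W$ argument as in Lemma~\ref{L:rk*} is what lets us assume $W$ is a stabilizer. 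The limit step is immediate from the definitions of both ranks at limits (both $\rk^*(V,U) \le \lambda$ and $\CBrk(a) \le \lambda$ reduce to ``$< \lambda$ for something'' in a way that passes through, using well-foundedness).

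Having the node-by-node identity, one concludes by taking suprema: $\rk^*(P) = \sup\{\rk^*(V, P) + 1 : V \subseteq_1 P\} = \sup\{\rk^*(V_{\bar a}, V_\emptyset) + 1 : \bar a\} = \sup\{\CBrk(\mcO^0_{\bar a}) + 1 : \bar a\}$, and since every node of $T_P$ is of the form $\mcO^n_{\bar a}$, which is a child (under refinement) of $\mcO^0_{\bar a \cdot [n]}$ — more carefully, every node is $\mcO^0_{\bar c}$ for a suitable tuple $\bar c$ absorbing the parameters $[n]$ — this supremum equals $\sup\{\CBrk(a)+1 : a \in T_P\} = \CBrk(T_P) = \mathrm{Mrk}(P)$. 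The main obstacle I anticipate is bookkeeping the precise correspondence between ``append a tuple $\bar c$ to the parameter $\bar b$'' on the group side and ``pass to a child node'' on the tree side: Malicki's tree is indexed by a natural number $n$ (stabilizing $[n]$) rather than an arbitrary finite parameter tuple, and tuples versus points (the excerpt notes Malicki's original definition used points) must be reconciled. The fix is the standard observation that stabilizing a finite tuple and stabilizing an initial segment $[n]$ generate the same filter of neighborhoods of $1_P$ in the $\mathrm{Sym}(\mathbb{N}_+)$ topology up to cofinal refinement, so the two trees have the same Cantor--Bendixson rank; one should state this as a small lemma and then the induction goes through cleanly, exactly paralleling the proof of Proposition~\ref{Prop:RanksAreTheSame}.
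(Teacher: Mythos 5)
Your proposal is correct and takes essentially the same route as the paper: prove by transfinite induction that $\rk^*(V_{\bar a}, V_{\bar b}) \le \alpha$ iff the corresponding node of the orbit tree has Cantor--Bendixson rank $\le \alpha$, using $\rk^*$'s monotonicity and conjugation-invariance (Proposition~\ref{prop:basic_properties_of_rk}) to replace $W$ by a pointwise stabilizer and reinterpret the quantification over $g\in V_{\bar b}$ as quantification over conjugate tuples, i.e.\ over the elements of the relevant orbit. The only real difference is cosmetic: the paper parameterizes from the outset by $V_{[n]}$ rather than arbitrary $V_{\bar b}$, so the base parameter already matches Malicki's indexing by initial segments and the ``same cofinal filter'' lemma you flag is not needed; moreover, since both $\rk^*(V,U)\le\alpha$ and $\CBrk(a)\le\alpha$ are given by a single uniform clause for all $\alpha>0$, there is no separate limit case to treat.
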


\begin{proof}
    We prove that $\rk(V_{\bar{a}}, V_{[n]}) = \CBrk(\mcO^n_{\bar{a}})$.
    Specifically, we prove by induction that for any ordinal $\alpha$, we have $\rk^*(V_{\bar{a}}, V_{[n]}) \le \alpha$ if and only if $\CBrk(\mcO^n_{\bar{a}}) \le \alpha$.

    For $\alpha = 0$, we simply observe that $\rk^*(V_{\bar{a}}, V_{[n]}) \le 0$ exactly when $\mcO^n_{\bar{a}}$ is finite, which happens exactly when it is a leaf in $T_G$.
    Now let $\alpha > 0$ and assume for every $\beta <\alpha$ we have $\rk^*(V_{\bar{a}}, V_{[n]}) \le \beta$ if and only if $\CBrk(\mcO^n_{\bar{a}}) \le \beta$.
    Observe that $\rk^*(V_{\bar{a}}, V_{[n]}) \le \alpha$ if and only if there is some $W \subseteq_1 G$ with $\rk^*(V_{\bar{a}}, vWv^{-1}) < \alpha$ for every $v \in V_{[n]}$.
    By Proposition \ref{prop:basic_properties_of_rk}.(1) we may as well take $W$ to be $V_{[m]}$ for some $m > n$, and by Proposition \ref{prop:basic_properties_of_rk}.(2) this is equivalent to saying that $\rk^*(V_{\bar{c}}, V_{[m]}) < \alpha$ for every $\bar{c} \in \mcO^n_{\bar{a}}$.
    By the induction hypothesis, this is equivalent to saying that $\CBrk(\mcO^m_{\bar{c}}) < \alpha$ for every $\bar{c} \in \mcO^n_{\bar{a}}$.
\end{proof}

\begin{LARGE}
\part{The dynamics of $\alpha$-balanced Polish groups}\label{PartII}
\end{LARGE}

In Part II of this paper we explore the consequences  of $\alpha$-balancedness of a Polish group  $G$  to its dynamics. Our first goal is to prove Theorem \ref{thm:main2} which  establishes that  \emph{generic $\alpha$-unbalancedness}  is an obstruction to classification by  $\alpha$-balanced groups. 
The proof of Theorem \ref{thm:main2} is given in Section  \ref{S:II:Proof1st} and it relies on a series of intermediate results from Sections  \ref{S:II:GenericAlpha},  \ref{S:II:BandF},  
\ref{S:II:Criterion}, which are   interesting in their own right.
 Our second goal is to prove Theorem \ref{T:mainBernoulli} which provides, for each $\alpha<\omega_1$, an example of an action of some $\alpha$-balanced Polish group (by Lemma \ref{lem:scattered_cli}), which is  generically $\beta$-unbalanced for all $\beta<\alpha$.
 
 In Section \ref{S:II:Examples} we discuss these actions and we introduce the ``fusion lemma", see Lemma \ref{L:Ind}. We then  show how Theorem \ref{T:mainBernoulli} follows from this lemma.  The proof of Lemma \ref{L:Ind} is then broken into two main cases which are proved in Sections \ref{S:lambda=0} and \ref{S:lambda>0}, respectively.

We now recall and fix some usual conventions that will be used throughout Part II.

\smallskip{}

\subsection{Definitions and notation} Let $G$ be a Polish group. Recall that a {\bf Polish $G$-space}   is a Polish space $X$ along with a continuous (left) action $G \curvearrowright X$. The associated {\bf orbit equivalence relation} $E^G_X$ is defined by setting $x \mathrel{E^G_X} y \iff \exists g\in G \; gx=y$ for all $x,y\in X$, and induces a classification problem $(X,E^G_X)$.    
If $V\subseteq G$,  then ``$\forall^*g\in V\, P(g)$"  stands for ``the set $\{g\in V \colon P(g)\}$ is comeager  in $V$" and ``$\exists^*g\in V\, P(g)$"  stands for ``the set $\{g\in V \colon P(g)\}$ is nonmeager in $V$". Here $V$ has been endowed with the subspace topology.
If $A\subseteq X$ and $V\subseteq G$ then the {\bf Vaught transforms}  of $A$ in $G\curvearrowright X$ are the sets given by  
\[A^{*V}:=\{x\in X\colon \forall^* g\in V, \, gx\in A \} \quad \text{ and } \quad A^{\Delta V}:=\{x\in X\colon \exists^* g\in V, \, gx\in A \}.\]

A {\bf classification problem} is a pair $(X,E)$ where $X$ is a Polish space and $E$ is an analytic equivalence relation on $X$.
Let $(X,E)$ and $(Y,F)$ be two classification problems.  A map $f\colon X\to Y$ is an {\bf $(E,F)$-homomorphism} if for all $x,y\in X$ we have that $xEy\implies f(x)Ff(y)$. It is a {\bf reduction from $E$ to $F$} if  $xEy\iff f(x)Ff(y)$ holds for all $x,y\in X$. We say $E$ is {\bf generically ergodic with respect to $F$} if for every Baire-measurable $(E,F)$-homomorphism $f\colon X\to Y$ there is a comeager $C\subseteq X$ so that $f(x) F f(y)$ for all $x,y\in C$. Let $H$ be a Polish group. We say that $E$ is {\bf generically ergodic against actions of $H$} if for every Polish $H$-space $Y$ we have that $E$ is generically ergodic with respect to $E^H_Y$.

\section{Generic $\alpha$-unbalancedness}\label{S:II:GenericAlpha}

 In this section we develop some basic theory around generic $\alpha$-unbalancedness. We start by recalling the main definitions from the introduction.

\begin{definition}\label{Def:Warrow2}
Let $G\curvearrowright X$ be a Polish $G$-space and let  $V\subseteq_1 G$.
We recursively define binary relations $x \leftrightsquigarrow^{\alpha}_V y$ on $X$ for any ordinal $\alpha<\omega_1$ as follows:
\begin{enumerate}
\item   $x \leftrightsquigarrow^{0}_V y$   iff both $x \in \overline{V \cdot y}$ and $y \in \overline{V \cdot x}$ hold.
 \item  $x\leftrightsquigarrow^{\alpha}_V y$  iff  for all open $W\subseteq_1 G$ and  $U\subseteq X$, with $x\in U$ or $y\in U$,   there exist   $g^x,g^y\in V$ with $g^xx,g^yy\in U$ so that   $g^x x \leftrightsquigarrow^{\beta}_W  g^y y$ holds for every $\beta<\alpha$. 
\end{enumerate}  
\end{definition}

\begin{definition}\label{D:GenericUnbalancedness2}
We say that  $G\curvearrowright X$  is {\bf generically $\alpha$-unbalanced}  for some $\alpha>0$,
if   for every comeager $C\subseteq X$ there is a comeager $D\subseteq C$ so that for all $x,y\in D$  there exists a  finite sequence $x_0,\ldots, x_n\in C$  with $x_0=x, x_n =y$, so that for all $\beta<\alpha$ we have that:
\[x_0 \leftrightsquigarrow^{\beta}_G x_1 \leftrightsquigarrow^{\beta}_G \cdots \leftrightsquigarrow^{\beta}_G x_{n-1} \leftrightsquigarrow^{\beta}_G x_n. \]
\end{definition}

The following lemma summarizes some basic properties that  we will  use repeatedly.

\begin{lemma}\label{Aris:LSimpleSquiggleProperties}
Let $G\curvearrowright X$ be a Polish $G$-space and let $x,y\in X$ and   $V\subseteq_1 G$ with  $ x \leftrightsquigarrow^\alpha_{V} y$.
The following all hold:
\begin{enumerate}
\item the symmetric statement $y \leftrightsquigarrow^\alpha_{V} x$ holds;
\item for any $\beta<\alpha$, we also have $ x \leftrightsquigarrow^\beta_{V} y$;
\item for any $W\supseteq V$, we also have $ x \leftrightsquigarrow^\alpha_{W} y$;
\item for any $g\in G$, we also have  $ gx \leftrightsquigarrow^\alpha_{g Vg^{-1}} g y$; and
\item if $D\subseteq G$ is comeager  and  $ x \leftrightsquigarrow^{\alpha,D}_{V} y$ is  defined exactly as  $ x \leftrightsquigarrow^{\alpha}_{V} y$, except that  in  Definition \ref{Def:Warrow2}(2) we require $g^x,g^y\in V\cap D$, then we also have   $ x \leftrightsquigarrow^{\alpha,D}_{V} y$.
\end{enumerate}
\end{lemma}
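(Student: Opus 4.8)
The plan is to prove all five parts of Lemma~\ref{Aris:LSimpleSquiggleProperties} by simultaneous transfinite induction on $\alpha$, since the clauses for $\leftrightsquigarrow^\alpha$ refer to $\leftrightsquigarrow^\beta$ for $\beta<\alpha$ and several of the statements feed into each other. The base case $\alpha=0$ is immediate in every instance: $x\in\overline{V\cdot y}$ and $y\in\overline{V\cdot x}$ is visibly symmetric, giving (1); (2) is vacuous; (3) follows since $V\subseteq W$ implies $\overline{V\cdot y}\subseteq\overline{W\cdot y}$; (4) follows because $x\mapsto gx$ is a homeomorphism carrying $V\cdot y$ onto $(gVg^{-1})\cdot(gy)$, hence $\overline{V\cdot y}$ onto $\overline{(gVg^{-1})\cdot(gy)}$; and (5) reduces to the observation that for a comeager $D\subseteq G$, $\overline{V\cdot y}=\overline{(V\cap D)\cdot y}$, because $V\cap D$ is comeager in $V$ and hence dense in $V$, so $V\cdot y\subseteq\overline{(V\cap D)\cdot y}$.

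For the successor/limit case $\alpha>0$, I would handle the parts in the order (1),(3),(4),(2),(5), at each step using the inductive hypotheses for all parts at levels $\beta<\alpha$ and, where needed, the already-established parts at level $\alpha$. Part (1) is formal: the defining condition of $x\leftrightsquigarrow^\alpha_V y$ is already symmetric in $x$ and $y$ in its quantifier structure (``there exist $g^x,g^y\in V$ with $g^xx,g^yy\in U$ such that $g^xx\leftrightsquigarrow^\beta_W g^yy$''), and $g^xx\leftrightsquigarrow^\beta_W g^yy$ implies $g^yy\leftrightsquigarrow^\beta_W g^xx$ by the inductive hypothesis for~(1). Part (3): given $W\subseteq_1 G$ and $U$, apply the definition of $x\leftrightsquigarrow^\alpha_V y$ to get $g^x,g^y\in V\subseteq W$ with $g^xx,g^yy\in U$ and $g^xx\leftrightsquigarrow^\beta_{W'}g^yy$ for all $\beta<\alpha$ and all $W'$; the same witnesses work for $\leftrightsquigarrow^\alpha_W$ since $V\subseteq W$. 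Part (4): given $W\subseteq_1 G$ and open $U'$ with $gx\in U'$ or $gy\in U'$, set $U:=g^{-1}U'$ and $W_0:=g^{-1}Wg$; applying the definition of $x\leftrightsquigarrow^\alpha_V y$ to $W_0$ and $U$ yields $g^x,g^y\in V$ with $g^xx,g^yy\in U$ and $g^xx\leftrightsquigarrow^\beta_{W_0}g^yy$, and then conjugating by $g$ (inductive hypothesis for~(4)) gives $(gg^xg^{-1})(gx)\leftrightsquigarrow^\beta_{gW_0g^{-1}}(gg^yg^{-1})(gy)$, i.e. with witnesses $gg^xg^{-1},gg^yg^{-1}\in gVg^{-1}$ lying in $U'$ and $gW_0g^{-1}=W$, as required.

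Part (2), monotonicity in $\alpha$, needs a little care when $\alpha$ is a limit versus a successor, but in both cases it is essentially built into the definition: for $\beta<\alpha$, to check $x\leftrightsquigarrow^\beta_V y$ we are given $W,U$ and must produce $g^x,g^y\in V$ with $g^xx,g^yy\in U$ and $g^xx\leftrightsquigarrow^\gamma_W g^yy$ for all $\gamma<\beta$; but the witnesses supplied by $x\leftrightsquigarrow^\alpha_V y$ already satisfy $g^xx\leftrightsquigarrow^\gamma_W g^yy$ for all $\gamma<\alpha\supseteq\{\gamma:\gamma<\beta\}$, so we are done (no induction even needed here, just unwinding definitions). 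Part (5) is the one I expect to require the most thought: I want to replace ``$g^x,g^y\in V$'' by ``$g^x,g^y\in V\cap D$'' in the witnessing clause. The point is that the set of $(g^x,g^y)$ satisfying the inner requirements is ``large'' in a Baire-category sense, so it meets $D\times D$. Concretely, fixing $W$ and $U$, one shows that $\{g\in V: gx\in U \text{ and } g \text{ can serve as } g^x\}$ — or more precisely the relevant set of pairs — is comeager in the appropriate sense, using that $\{g:gx\in U\}$ is open and the squiggle-witnessing conditions are inherited along dense subsets via the inductive hypothesis for~(5) applied at levels $\beta<\alpha$; then intersecting with the comeager $D$ keeps it nonempty. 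The mild subtlety is that the condition $g^xx\leftrightsquigarrow^\beta_W g^yy$ couples the two witnesses, so one argues that for comeagerly many $g^x\in V\cap D$ with $g^xx\in U$ there is a matching $g^y\in V\cap D$, appealing to the level-$\beta$ version of~(5) to push the constraint through $D$; a Kuratowski--Ulam style argument on $V\times V$ makes this precise. I would expect this part to be the main obstacle, everything else being routine bookkeeping with the definitions.
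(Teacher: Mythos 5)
Parts (1)--(4) of your argument track the paper's proof faithfully: the paper dismisses (1)--(3) as following directly from the definition, and your argument for (4) is exactly the paper's conjugation argument (pull back $U$ and $W$ through $g$, apply the definition, then push the witnesses forward by $g$ and invoke the inductive hypothesis for (4)).

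Your sketch of (5), however, has a genuine gap. You propose to show that the set of valid witness pairs $(g^x,g^y)\in V\times V$ (those with $g^xx,g^yy\in U$ and $g^xx\leftrightsquigarrow^\beta_W g^yy$ for all $\beta<\alpha$) is comeager in the relevant sense and then intersect it with $D\times D$ via Kuratowski--Ulam. This comeagerness is not established and is not to be expected: the definition of $x\leftrightsquigarrow^\alpha_V y$ only furnishes nonemptiness of the witness set, and although that set is saturated under left-multiplication of both coordinates by a common small $g$ (if $(g^x,g^y)$ works then so does $(gg^x,gg^y)$, by (3) and (4)), such a \emph{diagonal} saturation only yields a slab through the given pair, which is a meager subset of $V\times V$. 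The appeal to ``the level-$\beta$ version of (5)'' does not touch the issue either, since (5) at level $\beta$ concerns the inner quantifiers of $\leftrightsquigarrow^\beta_W$, not the choice of $g^x,g^y$ at level $\alpha$.

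The paper's proof of (5) is shorter and sidesteps all of this. Given $W$ and $U$, pick a symmetric $\widehat W\subseteq_1 G$ with $\widehat W^3\subseteq W$, and apply $x\leftrightsquigarrow^\alpha_V y$ (no $D$) to $\widehat W$ and $U$ to obtain one witness pair $g^x,g^y\in V$ with $g^xx,g^yy\in U$ and $g^xx\leftrightsquigarrow^\beta_{\widehat W}g^yy$ for all $\beta<\alpha$. The key move is then to perturb \emph{both} witnesses by the same small $g$: the sets $D(g^x)^{-1}$ and $D(g^y)^{-1}$ are comeager, and the remaining requirements that $g\in\widehat W$, that $gg^x,gg^y\in V$, and that $(gg^x)x,(gg^y)y\in U$ are open conditions satisfied at $g=1_G$, so one can find $g\in\widehat W$ with $gg^x,gg^y\in V\cap D$ and $(gg^x)x,(gg^y)y\in U$. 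Parts (3) and (4), at levels $\beta<\alpha$, then upgrade $g^xx\leftrightsquigarrow^\beta_{\widehat W}g^yy$ to $(gg^x)x\leftrightsquigarrow^\beta_W(gg^y)y$. This common-$g$ perturbation is precisely what resolves the coupling between $g^x$ and $g^y$ that you correctly flagged as the subtlety; a category argument on the product $V\times V$ is neither needed nor, as far as I can tell, available.
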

\begin{proof}
The first three properties follow  straight from the definition of  $ x \leftrightsquigarrow^\alpha_{V} y$. For  (4), let  $U\subseteq X$ and  $W\subseteq_1 G$ be open, say with $gy\in U$ (the case $gx\in U$ is similar).
Since   $g^{-1}U$ contains $y$,  $g^{-1} W g$ contains $1_G$, and  $ x \leftrightsquigarrow^\alpha_{V} y$ holds, we can find $h^x,h^y\in V$ with $h^xx,h^yy\in g^{-1}U$ and  $h^x x \leftrightsquigarrow^{\beta}_{g^{-1} W g}  h^y y$  for all $\beta<\alpha$. But then $gh^xx, gh^yy\in U$ and by inductive hypothesis   $gh^x x \leftrightsquigarrow^{\beta}_{W }  gh^y y$. Setting $g^x:=g h^x g^{-1}$ , $g^y:=g h^y g^{-1}$, we found  $g^x,g^y\in g Vg^{-1}$  so that $g^{x}(gx)\in U$,  $g^{y}(gy)\in U$ and 
$g^{x} (g  x)\leftrightsquigarrow^{\beta}_{W} g^{y}  (g y)$, as desired.

For (5), let  $W\subseteq_1 G$ and $U\subseteq X$ open with $x\in U$  (the case $y\in U$ is similar). Find $\widehat{W}\subseteq_1G$ with $\widehat{W}=\widehat{W}^{-1}$ and $\widehat{W}^3\subseteq W$. Since   $ x \leftrightsquigarrow^\alpha_{V} y$ holds  we can find $g^x,g^y\in V$ so that  $g^xx \leftrightsquigarrow^{\beta}_{\widehat{W}} g^yy$ for all $\beta<\alpha$ . Since $D$ is comeager in $G$ we can find  $g \in \widehat{W}$ so that $gg^x, gg^y \in  V\cap D$ and    $(gg^x)x, (gg^y)y\in U$. By  (3) and (4)  we have    $(gg^x)x \leftrightsquigarrow^{\beta}_{W} (gg^y)y$.
\end{proof}

Generic $\alpha$-unbalancedness for the action $G\curvearrowright X$ is a strong form of ``path-connectedness" for the edge relation   $ \leftrightsquigarrow^\alpha_{G}$ on $X$. What is important for what follows is that, generically, these edges push forward under Baire--measurable $(E^G_X,E^H_Y)$-homomorphisms.

\begin{theorem}\label{Th:SguigglePushesForward}
Let $X$ be a Polish $G$-space and $Y$ be  a Polish $H$-space, for Polish groups $G,H$. If $f : X \rightarrow Y$ is a Baire--measurable $(E^G_X,E^H_Y)$--homomorphism, then there exists a comeager set $C\subseteq X$ so that for all $x,y\in C$ and any countable ordinal $\alpha$ we have that:
\[x \leftrightsquigarrow^\alpha_{G} y\implies f(x) \leftrightsquigarrow^\alpha_{H} f(y).\]
\end{theorem}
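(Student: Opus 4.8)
The plan is to run an induction on the countable ordinal $\alpha$, after first cutting $X$ down to a single comeager set $C$ on which $f$ behaves well with respect to all the relevant topological operations simultaneously. The key preliminary reductions are standard Hjorth-style manipulations: since $f$ is Baire--measurable, there is a comeager $C_0\subseteq X$ on which $f$ is continuous; and since $G\curvearrowright X$ and $H\curvearrowright Y$ are continuous actions, we may use the Vaught transform to find a comeager $G$-invariant $C\subseteq C_0$ and a comeager $D\subseteq G$ such that for all $x\in C$ and $g\in D$ we have $gx\in C$, together with the usual ``translation" compatibility $f(gx)\mathrel{E^H_Y}f(x)$ — indeed, more precisely, for each basic open $U\subseteq X$ and each basic open $O\subseteq H$ the set of $x$ with $f(U^{*G}\ni x)\Rightarrow \big(\exists h\in O\, hf(x)\in f(U)\big)$-type statements hold comeagerly; the point is to arrange that ``$f$ pushes a $G$-translation inside a small open set forward to an $H$-translation inside a small open set." I will state this as the one structural lemma I need and prove it using Lemma \ref{Aris:LSimpleSquiggleProperties}(5) (the comeager-witness variant) together with the continuity of $f$ on $C$.

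With $C$ fixed, the induction begins at $\alpha=0$: if $x,y\in C$ and $x\leftrightsquigarrow^0_G y$, then $x\in\overline{G\cdot y}$, so there are $g_n\in G$ with $g_n y\to x$; pushing a comeager subset of these through the continuity of $f$ on $C$ (using the structural lemma to keep $g_n y$ inside $C$ and to read off $f(g_n y)$ as an $H$-translate of $f(y)$), we get $f(x)\in\overline{H\cdot f(y)}$, and symmetrically, so $f(x)\leftrightsquigarrow^0_H f(y)$. For the successor/limit step, suppose $x\leftrightsquigarrow^\alpha_G y$ with $x,y\in C$, and fix an open $W'\subseteq_1 H$ and an open $U'\subseteq Y$ with, say, $f(x)\in U'$. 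By continuity of $f$ on $C$, pull $U'$ back to an open $U\ni x$ with $f(U\cap C)\subseteq U'$; pull $W'$ back along the structural lemma to an open $W\subseteq_1 G$ (the lemma guarantees that $G$-translations by elements of $W$, staying in $C$, map to $H$-translations landing in $W' $ up to the $E^H_Y$-action). Applying $x\leftrightsquigarrow^\alpha_G y$ to this $W$, $U$ (and using Lemma \ref{Aris:LSimpleSquiggleProperties}(5) to force the witnesses $g^x,g^y$ to lie in the comeager set $D$ so that $g^x x, g^y y\in C$), we obtain $g^x x,g^y y\in C\cap U$ with $g^x x\leftrightsquigarrow^\beta_W g^y y$ for all $\beta<\alpha$. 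By the induction hypothesis applied inside $C$, $f(g^x x)\leftrightsquigarrow^\beta_{W'} f(g^y y)$ for all $\beta<\alpha$; and by the structural lemma $f(g^x x)=h^x f(x)$, $f(g^y y)=h^y f(y)$ for suitable $h^x,h^y\in W'$ with $h^x f(x),h^y f(y)\in U'$. Using Lemma \ref{Aris:LSimpleSquiggleProperties}(4) to conjugate the $\leftrightsquigarrow^\beta$ relations by the translating elements, this exhibits exactly the witnesses required by Definition \ref{Def:Warrow2}(2) for $f(x)\leftrightsquigarrow^\alpha_H f(y)$.

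The main obstacle is the structural lemma, i.e.\ making precise and uniform the statement ``$f$ sends small $G$-translations to small $H$-translations.'' The subtlety is that $f$ is only an $(E^G_X,E^H_Y)$-homomorphism, so a priori $f(gx)$ is merely $E^H_Y$-equivalent to $f(x)$ with no control on \emph{which} $h\in H$ realizes this, and certainly no control keeping $h$ in a prescribed $W'\subseteq_1 H$. The resolution is the by-now standard Hjorth argument: for a countable basis $\{O_k\}$ of $H$ and a countable basis $\{U_j\}$ of $Y$, the set of $x\in X$ for which the implications ``$f(x)\in U_j$ and $gx\in$ (a prescribed basic open) $\Rightarrow \exists h\in O_k\; (hf(x)\in U_j$ and $h f(x)\in f(\text{that open}))$'' hold comeagerly-often in $g$ is comeager; intersect over all $j,k$ and the countably many basic opens to get $C$. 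This is exactly the content needed, and it is where the continuity of $f$ on a comeager set and the openness of the actions are both used; once it is in hand, the transfinite induction above is routine, closely following the proof of \cite[Theorem 1.3]{AP2021}.
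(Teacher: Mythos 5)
Your proposal is correct and takes essentially the same route as the paper: the paper also isolates the orbit-continuity lemma (Lemma \ref{L:OrbitContinuity}) and its local ``capture'' formulation, then runs the same transfinite induction, using Lemma \ref{Aris:LSimpleSquiggleProperties}(5) to force the witnesses $g^x,g^y$ into a comeager subset of $G$ so that the translates stay inside $C$. Two small corrections to your setup, neither of which affects the overall approach: the orbit-continuity lemma is not a consequence of Lemma \ref{Aris:LSimpleSquiggleProperties}(5) plus continuity of $f$ on $C$ --- it is an independent Baire-category fact about Baire-measurable homomorphisms (the paper cites Hjorth's Lemma 3.17/3.18 and \cite{LupiniPanagio2018} for a direct proof), and Lemma \ref{Aris:LSimpleSquiggleProperties}(5) is only used later inside the induction; also, one cannot take $C$ to be $G$-invariant while keeping $f$ continuous on it, and the correct statement, as in Lemma \ref{L:OrbitContinuity}(2), is that for each $x\in C$, comeager-many $g\in G$ keep $gx$ in $C$ (the comeager set of $g$ depending on $x$). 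Your last paragraph correctly identifies the Hjorth-style category argument behind the structural lemma, so the argument as a whole goes through.
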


The proof of Theorem \ref{Th:SguigglePushesForward} relies on the following  ``orbit continuity" lemma. This lemma  is essentially 
\cite[Lemma 3.17]{Hjorth2000}, modified as in the beginning of the proof of \cite[Lemma 3.18]{Hjorth2000}.  For a direct proof see  \cite{LupiniPanagio2018}.

\begin{lemma}\label{L:OrbitContinuity}
Let $X$ be a Polish $G$-space and $Y$ a Polish $H$-space, and let $f : X \rightarrow Y$ be a Baire-measurable $(E^G_X,E^H_Y)$-homomorphism. Then there is a comeager set $C \subseteq X$ with:
\begin{enumerate}
    \item $f$ is continuous on $C$;
    \item for every $x \in  C$, there is a comeager set of $g \in G$ such that $gx \in C$.
    \item for all  $x_0 \in C$ and  every $W \subseteq_1 H$, there is a $V \subseteq_1 G$ and open $U\subseteq X$ with $x_0\in U$ such that for all $x \in C \cap U$ and comeager many  $g \in V$ we have $f(gx) \in Wf(x)$.
\end{enumerate}
\end{lemma}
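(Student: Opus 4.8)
The plan is to obtain the set $C$ by three successive refinements of a fixed dense $G_\delta$ set, one refinement per clause.

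\emph{Clause (1) and set-up.} Since $f$ is Baire-measurable and $Y$ is second-countable, there is a dense $G_\delta$ set $C_0\subseteq X$ on which $f$ is continuous; fixing a countable basis $(O_k)_k$ of $Y$, each $f^{-1}(O_k)\cap C_0$ is relatively open in $C_0$. Every comeager $C\subseteq C_0$ then satisfies (1), so the task is to shrink $C_0$ within the comeager sets so as to also secure (2) and (3).

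\emph{Clause (2) via a Vaught transform.} Put $C:=C_0\cap C_0^{*G}$, where $C_0^{*G}=\{x\in X:\forall^*g\in G,\ gx\in C_0\}$. As $C_0$ is comeager with the Baire property, $C_0^{*G}$ is comeager and Borel, and the change of variable $g\mapsto gg_0$ --- a category-preserving homeomorphism of $G$ --- shows $C_0^{*G}$ is $G$-invariant. Thus $C$ is Borel, comeager, and contained in $C_0$ (so (1) persists), and for $x\in C$: membership in $C_0^{*G}$ gives $\forall^*g\,(gx\in C_0)$, while $G$-invariance of $C_0^{*G}$ gives $gx\in C_0^{*G}$ for \emph{every} $g$; intersecting, $\forall^*g\,(gx\in C)$, which is (2). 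If securing (3) forces a further shrinking of $C$, this construction is re-applied to the smaller set so that (1) and (2) are preserved.

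\emph{Clause (3), the crux.} One clean route is through the cocycle attached to $f$. The set $P:=\{(g,x)\in G\times X: x\in C\text{ and }gx\in C\}$ is Borel and comeager in $G\times X$, since $(g,x)\mapsto gx$ is continuous and open, so the preimage of a comeager set is comeager, and one intersects with the comeager $G\times C$. On $P$ the maps $(g,x)\mapsto f(x)$ and $(g,x)\mapsto f(gx)$ are continuous, and the homomorphism hypothesis makes $\{(g,x,h)\in P\times H: hf(x)=f(gx)\}$ a Borel set with closed nonempty sections, so by Jankov--von Neumann uniformization there is a Baire-measurable selector $c:P\to H$ with $c(g,x)f(x)=f(gx)$; evaluating $f(g_1g_2x)$ two ways gives the cocycle identity $c(g_1g_2,x)\,H_{f(x)}=c(g_1,g_2x)\,c(g_2,x)\,H_{f(x)}$ wherever defined. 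Now fix $x_0\in C$, $W\subseteq_1 H$, and a symmetric $W_0\subseteq_1 H$ with $W_0^3\subseteq W$, and refine $P$ to a comeager $P'$ on which $c$ is continuous. The conclusion is assembled from: (i) continuity of the action and of $f\upharpoonright C$, which forces $f(g_nx)\to f(x)$ whenever $g_n\to 1_G$ with $g_nx\in C$; and (ii) the cocycle identity, which allows an arbitrary $(g,x)$ with $g$ near $1_G$ and $x$ near $x_0$ to be written $g=g'g''$ with $(g',g''x),(g'',x)\in P'$ and $c(g',g''x),c(g'',x)\in W_0$ --- the standard ``$W_0^3$'' device for transporting continuity of $c$ down to the generally inaccessible point $(1_G,x_0)$. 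Combining (i) and (ii) one extracts $V\subseteq_1 G$ and open $U\ni x_0$ with $c(g,x)\in W_0^3\subseteq W$, i.e. $f(gx)\in Wf(x)$, for all $x\in C\cap U$ and comeager-many $g\in V$. Re-running the clause-(2) refinement, then quantifying over the countably many basic $W$ and a countable dense set of base points $x_0$ (using (2) to propagate from that set to all of $C$), yields a single comeager $C$ valid for all $x_0$ and $W$ at once.

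The main obstacle is step (ii): one cannot directly invoke continuity of $c$ at $(1_G,x_0)$, since that point need not lie in the comeager set $P'$, so the argument must be routed through genuine products of elements of the comeager fibres and must use both the cocycle identity and the fact that $f$ is \emph{continuous} --- not merely Baire-measurable --- on $C$. This is exactly the content of \cite[Lemma 3.17]{Hjorth2000} together with the opening of the proof of \cite[Lemma 3.18]{Hjorth2000}, and a self-contained treatment phrased entirely in the language of Vaught transforms appears in \cite{LupiniPanagio2018}; I would follow that route.
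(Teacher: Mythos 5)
The paper does not actually prove this lemma: it records that the statement is essentially \cite[Lemma 3.17]{Hjorth2000}, modified as at the beginning of the proof of \cite[Lemma 3.18]{Hjorth2000}, and points to \cite{LupiniPanagio2018} for a direct proof. Since you end by deferring the crux to exactly these sources, your endpoint coincides with the paper's treatment; moreover your handling of clauses (1) and (2) --- continuity of $f$ on a dense $G_\delta$ set $C_0$, then passing to $C=C_0\cap C_0^{*G}$ and using comeagerness (Kuratowski--Ulam), Borelness and $G$-invariance of the Vaught transform --- is correct and is the standard argument.

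A caution, in case the sketch of clause (3) is meant as more than a pointer: step (ii) does not work as stated. Continuity of the selector $c$ on the comeager set $P'$ only gives local near-constancy of $c$ at points of $P'$, and the value of $c$ at such a point is an arbitrary element of $H$; nothing lets you choose the factorization $g=g'g''$ so that \emph{both} $c(g',g''x)$ and $c(g'',x)$ lie in $W_0$. Nor can item (i) substitute for this: $f(g_nx)\to f(x)$ in $Y$ does not give $f(g_nx)\in W f(x)$ eventually, because $Wf(x)$ is a neighborhood of $f(x)$ only in the finer topology of the orbit $H\cdot f(x)$, not in $Y$ --- this mismatch is precisely what makes the lemma nontrivial. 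The genuine argument (Hjorth's, or the Vaught-transform formulation in \cite{LupiniPanagio2018}) instead compares $c(gg_1,x)$ with $c(g_1,x)$ for generically chosen $g_1$ via the cocycle identity together with Kuratowski--Ulam/Pettis-style category arguments, rather than trying to make both cocycle values small. Separately, the final assembly ``establish the property at a countable dense set of base points and use (2) to propagate to all of $C$'' is not justified: an arbitrary $x_0\in C$ need not lie in any of the witnessing neighborhoods $U(x_0',W)$ of nearby base points, so the quantifier ``for all $x_0\in C$'' has to be secured directly in the construction of $C$ (as the cited proofs do, via countably many Vaught-transform refinements). None of this is fatal to your plan --- relying on \cite{Hjorth2000} and \cite{LupiniPanagio2018} is exactly what the paper does --- but the sketched mechanism should not be presented as the actual proof.
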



We may now turn to the proof of  Theorem \ref{Th:SguigglePushesForward}.

\begin{proof}[Proof of Theorem \ref{Th:SguigglePushesForward}]
Let  $f : X \rightarrow Y$ be a Baire--measurable  $(E^G_X,E^H_Y)$--homomorphism and let $C$ be the set from Lemma \ref{L:OrbitContinuity}.  For open sets $U\subseteq X$, $V\subseteq_1 G$,  $W\subseteq_1 H$ we say that $(U,V)$ {\bf captures} $W$ if  $\forall x\in U\cap C$ and $\forall^* g\in V$  we have  $f(gx) \in Wf(x)$ and $gx\in C$. Notice that:
\begin{enumerate}[(i)]
\item $(X,G)$ captures $H$;
\item for all  $x\in C$ and $W\subseteq_1 H$ there are open $U\ni x$, $V\subseteq_1 H$  so that  $(U,V)$ captures $W$.
\end{enumerate}
Indeed (i) follows from (2) of Lemma \ref{L:OrbitContinuity}  and (ii) follows from (2), (3) of Lemma \ref{L:OrbitContinuity}. Theorem \ref{Th:SguigglePushesForward} now follows
 from the following claim and point (i) above.

\begin{claim}
If $(U,V)$ captures $W$ and $x,y\in U\cap C$, then for any countable ordinal $\alpha$ 
 \[x \leftrightsquigarrow^\alpha_{V} y\implies f(x) \leftrightsquigarrow^\alpha_{W} f(y).\]
\end{claim}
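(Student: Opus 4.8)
The plan is to prove the Claim by induction on the countable ordinal $\alpha$, with the induction hypothesis applied to all pairs $x, y$ and all $(U, V)$ capturing $W$ simultaneously. Throughout I would fix the comeager set $C$ from Lemma \ref{L:OrbitContinuity} and work with the capturing relation as defined.

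\medskip

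For the base case $\alpha = 0$, suppose $(U, V)$ captures $W$ and $x, y \in U \cap C$ with $x \leftrightsquigarrow^0_V y$, so $x \in \overline{V \cdot y}$ and $y \in \overline{V \cdot x}$. I want $f(x) \in \overline{W \cdot f(y)}$ and symmetrically. Take a basic open neighborhood $O$ of $f(x)$ in $Y$; since $f$ is continuous on $C$, there is an open $U' \subseteq U$ with $x \in U'$ and $f(U' \cap C) \subseteq O$. Because $x \in \overline{V \cdot y}$, the set $\{g \in V : gy \in U'\}$ is nonempty open, hence nonmeager, so by the capturing hypothesis applied at the point $y$ I can find $g \in V$ with $gy \in U' \cap C$ and $f(gy) \in W f(y)$. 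Then $f(gy) \in O \cap W f(y)$, witnessing that $O$ meets $W \cdot f(y)$; as $O$ was arbitrary, $f(x) \in \overline{W \cdot f(y)}$. The symmetric statement is identical with the roles of $x, y$ swapped.

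\medskip

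For the successor/limit step (both handled uniformly by Definition \ref{Def:Warrow2}(2)), assume the Claim holds for all $\beta < \alpha$. Suppose $(U, V)$ captures $W$, and $x, y \in U \cap C$ with $x \leftrightsquigarrow^\alpha_V y$. To check $f(x) \leftrightsquigarrow^\alpha_W f(y)$, fix open $W' \subseteq_1 H$ and open $O \subseteq Y$ with, say, $f(x) \in O$ (the case $f(y) \in O$ is symmetric). Using property (ii) above at the point $x$ and the neighborhood $W'$, choose open $U_1 \ni x$ and $V_1 \subseteq_1 G$ so that $(U_1, V_1)$ captures $W'$; shrink $U_1$ so that $U_1 \subseteq U$ and $f(U_1 \cap C) \subseteq O$ (using continuity of $f$ on $C$). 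Now apply $x \leftrightsquigarrow^\alpha_V y$ to the open set $W_0 := V_1 \cap V$ and the open set $U_1$: there exist $g^x, g^y \in V$ with $g^x x, g^y y \in U_1$ and $g^x x \leftrightsquigarrow^\beta_{W_0} g^y y$ for all $\beta < \alpha$. Using Lemma \ref{Aris:LSimpleSquiggleProperties}(5) with the comeager set of $g$ for which $g(g^x x), g(g^y y) \in C$ and $f$ behaves well (from capturing at $g^x x, g^y y \in U_1 \cap C$ — note $g^x x \in U_1 \cap C$ requires first pushing $g^x x$ into $C$, which is exactly where Lemma \ref{L:OrbitContinuity}(2) and part (5) of Lemma \ref{Aris:LSimpleSquiggleProperties} combine), I may assume $g^x x, g^y y \in U_1 \cap C$ while keeping $g^x x \leftrightsquigarrow^\beta_{W_0} g^y y$. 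Since $(U_1, V_1)$ captures $W'$ and $W_0 \subseteq V_1$, the pair $(U_1, W_0)$ also captures $W'$ (capturing is monotone: shrinking $V$ only helps). By the induction hypothesis, $f(g^x x) \leftrightsquigarrow^\beta_{W'} f(g^y y)$ for all $\beta < \alpha$. Moreover $f(g^x x), f(g^y y) \in O$, and since $g^x x \in U \cap C$ with $g^x \in V$ and $(U,V)$ captures $W$ — again after an application of Lemma \ref{Aris:LSimpleSquiggleProperties}(5)-type absorption — $f(g^x x) \in W f(x)$ and $f(g^y y) \in W f(y)$. Setting $h^x := $ the element of $W$ with $h^x f(x) = f(g^x x)$ and similarly $h^y$, these witness the required instance of $f(x) \leftrightsquigarrow^\alpha_W f(y)$.

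\medskip

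The main obstacle I anticipate is bookkeeping the comeager-set conditions cleanly: the relation $\leftrightsquigarrow^\alpha$ as stated demands honest group elements $g^x, g^y \in V$ with $g^x x, g^y y$ in a prescribed open set, but the capturing hypothesis only gives good behavior of $f$ for \emph{comeager}-many $g$, and only at points already in $C$. Reconciling these requires repeatedly invoking Lemma \ref{Aris:LSimpleSquiggleProperties}(5) (to pass between $\leftrightsquigarrow^\alpha$ and its comeager-restricted variant $\leftrightsquigarrow^{\alpha,D}$) together with Lemma \ref{L:OrbitContinuity}(2) (to ensure we can land inside $C$), and being careful that shrinking the witnessing neighborhoods $W_0 \subseteq V_1 \cap V$ preserves both the capturing of $W'$ and the capturing of $W$. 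Once the interface between "comeagerly many translates" and "the combinatorial $\leftrightsquigarrow$ game" is set up correctly, the transfinite induction itself is routine, and Theorem \ref{Th:SguigglePushesForward} then follows by applying the Claim to $(X, G)$, which captures $H$ by point (i).
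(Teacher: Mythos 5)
Your proof is correct and follows essentially the same route as the paper's: an induction on $\alpha$ using Lemma \ref{L:OrbitContinuity} to generate capturing pairs and Lemma \ref{Aris:LSimpleSquiggleProperties}(5) to steer $g^x, g^y$ into the comeager set where $f$ behaves well. The minor choices where you differ from the paper (applying capturing at $y$ rather than $x$ in the base case, using $W_0 = V_1 \cap V$ rather than $V_1$ directly, which then forces the harmless monotonicity observation) are cosmetic and do not change the argument.
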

\begin{proof}[Proof of Claim]
If $\alpha=0$ and $x \leftrightsquigarrow^0_{V} y$, then  $y \in \overline{V \cdot x}$. Since  $y\in U$ and  $(U,V)$ captures $W$, we can find a sequence $(g_n)_n$ in $V$  with $g_n x\to y$ so that for $n\in \mathbb{N}$ we have that  $f(g_nx) \in Wf(x)$ and $g_nx\in C$. Pick $h_n\in W$ so that   $h_nf(x)=f(g_n x)$. Since $f$ is continuous on $C$, we have that $f(g_n x)\to f(y)$ and hence $h_nf(x)\to f(y)$. That is $f(y)\in \overline{W \cdot f(x)}$. Similarly we show that $f(x)\in \overline{W \cdot f(y)}$. As a consequence  $f(x) \leftrightsquigarrow^0_{W} f(y)$ holds.

Assume now that the statement of the claim is true for all  ordinals $\beta$ with $\beta<\alpha$ and assume that $x \leftrightsquigarrow^\alpha_{V} y$ holds.  We will show that so does  $f(x) \leftrightsquigarrow^\alpha_{W} f(y)$. Let  $\widehat{W}\subseteq_1 H$ and   $O\subseteq Y$  open, say with  $f(y)\in O$ (the case where $f(x)\in O$ is similar). By continuity of $f\upharpoonright C$ there is some open set $\widehat{O}\subseteq X$, with $y\in \widehat{O}$, so that $f(C\cap \widehat{O})\subseteq O$. By  applying (ii) above to  $y\in C$, $\widehat{W}\subseteq_1 H$, we get $\widehat{V}\subseteq_1 G$ and the ability to shrink 
$\widehat{O}$, if necessary, so that  $(\widehat{O},\widehat{V})$ captures $\widehat{W}$, in addition to $y\in \widehat{O}$. Since  $x \leftrightsquigarrow^\alpha_{V} y$ holds, we get $g^x,g^y\in V$ with $g^xx,g^yy\in \widehat{O}$ and  $g^x x \leftrightsquigarrow^{\beta}_{\widehat{V}}  g^y y$ for all $\beta<\alpha$. By Lemma \ref{Aris:LSimpleSquiggleProperties}(5), and since $(U,V)$ captures $W$, we can arrange so that $g^x,g^y$ come from this comeager subset of $V$ which guarantees that  we also have $g^xx,g^yy\in C$ and $h^xf(x)=f(g^xx)$,  $h^yf(y)=f(g^yy)$ for some $h^x,h^y\in W$. Since $g^xx,g^yy\in \widehat{O}\cap  C$ and $(\widehat{O},\widehat{V})$ captures $\widehat{W}$, by inductive hypothesis    $g^x x \leftrightsquigarrow^{\beta}_{\widehat{V}}  g^y y$ implies that   $f(g^xx) \leftrightsquigarrow^{\beta}_{\widehat{W}} f(g^yy)$. Hence, there exist  $h^x,h^y\in W$ so that  $h^xf(x),h^yf(y)\in O$ and  $h^xf(x)\leftrightsquigarrow^{\beta}_{\widehat{W}} h^yf(y)$ holds for all $\beta<\alpha$.   
\end{proof}
This finishes the proof of Theorem \ref{Th:SguigglePushesForward}.
\end{proof}

\section{Dynamical Back \& Forths}\label{S:II:BandF}

Let $G$ be a Polish group and $X$ a Polish $G$-space. 
Recall the following relations from \cite{Hjorth2000}, which are a generalization of the back-and-forth relations of model theory into the more general context of Polish group actions.
For $x, y \in X$ and open $U, V \subseteq G$, write $(x, U) \le_1 (y, V)$ iff $U \cdot x \subseteq \overline{V \cdot x}$.
For $\alpha \ge 1$, write $(x, U) \le_{\alpha+1} (y, V)$ iff for every open $U' \subseteq U$ there is some $V' \subseteq V$ such that $(y, V') \le_\alpha (x, U')$.
For limit $\gamma$, write $(x, U) \le_\gamma (y, V)$ iff for every $\alpha < \gamma$, $(x, U) \le_\alpha (y, V)$.

\begin{proposition}\label{prop:hjorth}
Let $X$ be a Polish $G$-space and let $x, y \in X$ and $\alpha > 0$. Then $(x, U) \le_\alpha (y, V)$ if and only if for every $\BPi^0_\alpha$ set $A$, $y \in A^{*V}$  implies $x \in A^{*U}$.
\end{proposition}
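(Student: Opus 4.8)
The plan is to prove both directions simultaneously by transfinite induction on $\alpha\ge 1$. Before the induction I would record the standard toolkit: that $(\bigcap_n A_n)^{*V}=\bigcap_n A_n^{*V}$; that for Borel $B$ the complement transform satisfies $(X\setminus B)^{*V}=X\setminus B^{\triangle V}$, where $B^{\triangle V}:=\{z\in X:\{g\in V:gz\in B\}\text{ is non-meager in }V\}$; that both transforms are monotone in the set argument, that $z\in A^{*V}$ implies $z\in A^{*W}$ whenever $W\subseteq V$ is open, and that $z\in B^{*W}$ with $W\subseteq V$ implies $z\in B^{\triangle V}$; and the localization identity $B^{\triangle V}=\bigcup\{B^{*W}:W\subseteq V\text{ open}\}$ for Borel $B$, where the union may be restricted to the countably many basic open $W$ contained in $V$. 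On the hierarchy side I would use that $\BSigma^0_{\alpha+1}$ is the class of countable unions of $\BPi^0_\alpha$ sets (for $\alpha\ge 1$), that $\BPi^0_\alpha$ is closed under countable intersections, that $\BSigma^0_\beta\subseteq\BPi^0_{\beta+1}$, and that for limit $\gamma$ every $\BPi^0_\gamma$ set is a countable intersection of $\BPi^0_{<\gamma}$ sets. Throughout, all opens appearing in the back-and-forth relations are taken nonempty (with empty opens the relations degenerate trivially).

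For the base case $\alpha=1$ the key point is that for a \emph{closed} set $A$ one has $y\in A^{*V}\iff\overline{V\cdot y}\subseteq A$: if $\{g\in V:gy\in A\}$ is comeager in $V$ then it is dense, so its image under the continuous map $g\mapsto gy$ is dense in $V\cdot y$, whence $\overline{V\cdot y}\subseteq A$ as $A$ is closed; the converse is immediate. Substituting this on both sides, the condition ``for every closed $A$, $y\in A^{*V}\Rightarrow x\in A^{*U}$'' becomes ``for every closed $A$, $\overline{V\cdot y}\subseteq A\Rightarrow\overline{U\cdot x}\subseteq A$'', and plugging in $A:=\overline{V\cdot y}$ shows it is equivalent to $\overline{U\cdot x}\subseteq\overline{V\cdot y}$, i.e.\ to $U\cdot x\subseteq\overline{V\cdot y}$, which is $(x,U)\le_1(y,V)$.

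For the successor step, assume the proposition at $\alpha\ge 1$. For ($\Rightarrow$), suppose $(x,U)\le_{\alpha+1}(y,V)$ and write a given $\BPi^0_{\alpha+1}$ set as $A=\bigcap_n(X\setminus B_n)$ with $B_n\in\BPi^0_\alpha$. If $y\in A^{*V}$ but $x\notin A^{*U}$, then $x\notin(X\setminus B_n)^{*U}$ for some $n$, i.e.\ $x\in B_n^{\triangle U}$, so $x\in B_n^{*U'}$ for some open $U'\subseteq U$; applying $\le_{\alpha+1}$ to $U'$ yields an open $V'\subseteq V$ with $(y,V')\le_\alpha(x,U')$, and the induction hypothesis (used as ``$x\in B^{*U'}\Rightarrow y\in B^{*V'}$ for every $\BPi^0_\alpha$ set $B$'') applied to $B_n$ gives $y\in B_n^{*V'}\subseteq B_n^{\triangle V'}$, contradicting $y\in(X\setminus B_n)^{*V}\subseteq(X\setminus B_n)^{*V'}$. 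For ($\Leftarrow$), suppose $(x,U)\not\le_{\alpha+1}(y,V)$; fix an open $U'\subseteq U$ with $(y,W)\not\le_\alpha(x,U')$ for every open $W\subseteq V$, and, using the induction hypothesis, for each basic open $V_k\subseteq V$ pick a $\BPi^0_\alpha$ set $A_k$ with $x\in A_k^{*U'}$ and $y\notin A_k^{*V_k}$. Set $B:=\bigcap_k A_k\in\BPi^0_\alpha$ and $A:=X\setminus B\in\BPi^0_{\alpha+1}$; then $x\in B^{*U'}\subseteq B^{\triangle U}$ gives $x\notin A^{*U}$, while monotonicity gives $B^{*V_k}\subseteq A_k^{*V_k}$, so $y\notin B^{*V_k}$ for all basic $V_k\subseteq V$, whence $y\notin B^{\triangle V}$, i.e.\ $y\in A^{*V}$; this contradicts the hypothesis. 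The limit case $\gamma$ is routine: for ($\Rightarrow$) write a $\BPi^0_\gamma$ set as $\bigcap_n A_n$ with $A_n\in\BPi^0_{\beta_n}$, $1\le\beta_n<\gamma$, and combine $(\bigcap_n A_n)^{*V}=\bigcap_n A_n^{*V}$ with the induction hypothesis at each $\beta_n$; for ($\Leftarrow$) use $\BPi^0_\beta\subseteq\BPi^0_\gamma$ to get $(x,U)\le_\beta(y,V)$ for every $\beta<\gamma$.

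The step I expect to be the main obstacle is the ($\Leftarrow$) half of the successor case, where one must compress the countably many $\BPi^0_\alpha$ separators $A_k$ — one per basic $V_k\subseteq V$ — into a \emph{single} $\BPi^0_{\alpha+1}$ set witnessing $y\in A^{*V}$ and $x\notin A^{*U}$. The reason it goes through is that intersecting is harmless on both counts: $B=\bigcap_k A_k$ stays $\BPi^0_\alpha$, keeps $x\in B^{*U'}$ because $(\,\cdot\,)^{*U'}$ commutes with countable intersections, and keeps $y\notin B^{*V_k}$ by monotonicity — and the localization identity $B^{\triangle V}=\bigcup_{k:V_k\subseteq V}B^{*V_k}$ is precisely what converts ``$y\notin B^{*V_k}$ for every basic $V_k\subseteq V$'' into ``$y\in(X\setminus B)^{*V}$''. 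A smaller technical point needing care is the base case's passage from a comeager set of group elements to the closure of an orbit piece, together with the uniform convention that all opens in play are nonempty so that the Baire-category localization lemmas apply.
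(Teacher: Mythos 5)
Your proof is correct. The paper itself does not supply a proof of this proposition --- it cites the forward direction to Hjorth's preprint, the statement to Drucker, and a complete proof to Allison (2020) --- so there is no internal argument to compare against. What you have written is a complete, self-contained transfinite induction using the standard Vaught-transform calculus (localization $B^{\triangle V}=\bigcup_k B^{*V_k}$ over basic $V_k\subseteq V$, complementation $(X\setminus B)^{*V}=X\setminus B^{\triangle V}$, commutation of $(\cdot)^{*V}$ with countable intersections), and all the steps check: the base case characterizes $A^{*V}$ for closed $A$ as $\overline{V\cdot y}\subseteq A$ and substitutes $A=\overline{V\cdot y}$; the forward successor step localizes $x\in B_n^{\triangle U}$ to a basic $U'\subseteq U$ and then exploits the \emph{alternating} form of $\le_{\alpha+1}$ so the inductive hypothesis is invoked with $x,y$ and $U',V'$ swapped; and the reverse successor step handles the one real difficulty --- compressing a separator for each basic $V_k\subseteq V$ into a single $\BPi^0_{\alpha+1}$ witness --- by intersecting, using that $\BPi^0_\alpha$ is closed under countable intersections and that the intersection still separates on both sides. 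Two details you flagged are genuinely load-bearing, not pedantry: the convention that all opens in the $\le_\alpha$ relations are nonempty (if empty $V'$ were allowed as a witness, $(y,\emptyset)\le_\alpha(x,U')$ holds vacuously and $\le_{\alpha+1}$ would trivialize, breaking the forward direction), and your reading of $\le_1$ as $U\cdot x\subseteq\overline{V\cdot y}$; the $\overline{V\cdot x}$ printed in the paper, like the $\forall^*g\in G$ in the displayed definition of $A^{*V}$, is a typo, and your corrected readings are the intended ones.
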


\begin{proof}
The forward direction is stated and proved in Hjorth's original preprint \cite{Hjorth2010}. 
The full statement of this appeared in \cite{Drucker_2021} but without proof.
A complete proof of the statement appears in \cite{Allison2020}.
\end{proof}

In \cite{AP2021}, we introduced a system of relations that are similar  to Hjorth's relations but slightly stronger.
These appear to be more appropriate for the following arguments.

\begin{definition}
Let  $X$ be a Polish $G$-space. For any $x, y \in X$ and  $V\subseteq_1 G$ we write:
\begin{enumerate}
\item  $x \preceq^0_V y$, if and only if $x \in \overline{Vy}$ holds;
\item  $x \sim^\alpha_V y$, for an ordinal  $\alpha$,   if and only if  both $x \preceq^\alpha_V y$ and $y \preceq^\alpha_V x$ hold;
\item   $x \preceq^\alpha_V y$,   for an ordinal  $\alpha > 0$,  if and only if for every  $W\subseteq_1 G$, there exists some $v \in V$ such that for every $\beta < \alpha$ we have that $vy\sim^\beta_W x$.
\end{enumerate}
\end{definition}

Below we collect a few basic properties that are straightforward to prove by induction.
\begin{proposition}\label{prop:sim_basic_facts}
Let $X$ be a Polish $G$-space and $x, y, z \in X$,  $\alpha \ge 0$, and let $V$ and $W$ be symmetric open neighborhoods of $1_G$. Then
\begin{enumerate}
    \item if $x \sim^\alpha_V y$ then $y \sim^\alpha_V x$;
    \item if $V \subseteq W$ and $x \sim^\alpha_V y$ then $x \sim^\alpha_W y$;
    \item for any $v \in V$, $vx \sim^\alpha_V x$;
    \item if $x \sim^\alpha_V y$ and $y \sim^\alpha_V z$ then $x \sim^\alpha_{V^2} z$;
    \item if $x \preceq^\alpha_V y$ and $g \in G$ then $g x \preceq^\alpha_{gVg^{-1}} g y$; and
    \item if $x \preceq^\alpha_V y$ with $\alpha \ge 1$ then for any open neighborhood $W$ of $1_G$ there is some $v_0 \in G_0 \cap V$ such that $v_0 y \sim_W^\beta x$ for every $\beta < \alpha$, where $G_0$ is some fixed countable dense subgroup of $G$.
\end{enumerate}
\end{proposition}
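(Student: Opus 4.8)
The plan is to prove all six items by a simultaneous induction on $\alpha$, following the same pattern already used for Lemma \ref{Aris:LSimpleSquiggleProperties}, since the relations $\preceq^\alpha_V$ and $\sim^\alpha_V$ are formal variants of $\leftrightsquigarrow^\alpha_V$. For the base case $\alpha=0$: item (1) is immediate from the definition of $\sim^0_V$ as a conjunction; item (2) follows because $V\subseteq W$ gives $\overline{Vy}\subseteq\overline{Wy}$; item (3) follows from $v\in V$ symmetric, so $vx\in Vx\subseteq\overline{Vx}$ and $x=v^{-1}(vx)\in V(vx)\subseteq\overline{V(vx)}$; item (4) is the standard observation that $\overline{Vy}\ni x$ and $\overline{Vz}\ni y$ together with continuity of the action give $x\in\overline{V\cdot\overline{Vz}}\subseteq\overline{V^2 z}$ (using that $V$ is a neighborhood of $1_G$, so $\overline{V\cdot\overline{A}}\subseteq\overline{V^2\cdot A}$); item (5) is the elementary computation $g\cdot x\in g\overline{Vy}=\overline{gVy}=\overline{(gVg^{-1})(gy)}$; and item (6) is vacuous since it is only asserted for $\alpha\geq 1$.

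For the inductive step, assume all six hold for all $\beta<\alpha$ with $\alpha\geq 1$. Item (2) at level $\alpha$: given $x\preceq^\alpha_V y$ and any $W'\subseteq_1 G$, pick $v\in V\subseteq W$ witnessing $vy\sim^\beta_{W'}x$ for all $\beta<\alpha$; this same $v$ (now viewed as an element of $W$) witnesses $x\preceq^\alpha_W y$, and symmetrically for $\preceq^\alpha_W x$, giving $x\sim^\alpha_W y$. Item (1) at level $\alpha$ is again immediate since $\sim^\alpha_V$ is by definition symmetric. Item (3) at level $\alpha$: to see $vx\preceq^\alpha_V x$, given $W\subseteq_1 G$ we must produce $v'\in V$ with $v'x\sim^\beta_W vx$ for $\beta<\alpha$; taking $v':=v$ and invoking the inductive instance of (3) (at all $\beta<\alpha$), together with (5) to transport, handles it — more directly, $vx\sim^\alpha_V x$ unfolds to $vx\preceq^\alpha_V x$ and $x\preceq^\alpha_V vx$, each witnessed by suitable group elements using the lower-level (3). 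Item (5) at level $\alpha$: given $x\preceq^\alpha_V y$ and $g\in G$, for any $W\subseteq_1 G$ apply the hypothesis at the neighborhood $g^{-1}Wg$ to get $v\in V$ with $vy\sim^\beta_{g^{-1}Wg}x$ for all $\beta<\alpha$; then $gvg^{-1}\in gVg^{-1}$ and by the inductive instance of (5) applied at each $\beta<\alpha$ we get $(gvg^{-1})(gy)=g(vy)\sim^\beta_W gx$, which is exactly what $gx\preceq^\alpha_{gVg^{-1}}gy$ demands. Item (4) at level $\alpha$: from $x\sim^\alpha_V y$ and $y\sim^\alpha_V z$, unfold to get for each $W$ elements $v_1,v_2\in V$ with $v_1 y\sim^\beta_W x$ and $v_2 z\sim^\beta_W y$; combine using the inductive version of (4) (passing to $W^2$) and then item (2) to get $v_1 v_2 z\sim^\beta_{W^2}x$ type statements, assembling to $x\sim^\alpha_{V^2}z$ after tracking that the witnessing element lands in $V^2$.

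Item (6) is the one deserving genuine care, and I expect it to be the main obstacle. Here we are given $x\preceq^\alpha_V y$ with $\alpha\geq 1$ and a fixed countable dense $G_0\leq G$, and we want a \emph{single} $v_0\in G_0\cap V$ with $v_0 y\sim^\beta_W x$ for all $\beta<\alpha$. The definition of $\preceq^\alpha_V$ only gives, for each $W$, some $v\in V$ (not necessarily in $G_0$). The fix is the standard density/openness maneuver: choose a symmetric $\widehat W\subseteq_1 G$ with $\widehat W^3\subseteq W$, apply the definition at $\widehat W$ to get $v\in V$ with $vy\sim^\beta_{\widehat W}x$ for all $\beta<\alpha$, then pick $v_0\in G_0\cap V$ close enough to $v$ that $v_0\in v\widehat W$ (possible by density of $G_0$ and since $V$ is open — shrinking so $v\widehat W\subseteq V$); by the inductive instance of (3), $v_0 y\sim^\beta_{\widehat W}vy$, hence by (4) and (2), $v_0 y\sim^\beta_W x$. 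The subtlety is that (6) at level $\alpha$ is used to prove nothing circular — it is a consequence, not a tool, at each stage — so the induction is well-founded; one only needs the \emph{lower-level} items (3), (4), (2) in its proof. I would write items (1)–(5) as a block and then deduce (6) at the end of the inductive step from the already-established lower levels, noting that (6) does not feed back into (1)–(5). This completes the induction and the proof.
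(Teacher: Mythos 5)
Your overall strategy---a simultaneous induction, the observation that item (6) is a consequence that does not feed back into (1)--(5), and the density/openness manoeuvre of perturbing $v$ to a nearby element of $G_0$---matches what the paper does. (The paper in fact just cites \cite[Lemma~4.3]{AP2021} for items (4) and (5) and only proves (6) in detail; your sketches of (4) and (5) go slightly beyond the text.) However, there is a left/right slip in your argument for (6) that makes one step fail as written. You pick $v_0 \in v\widehat{W}$, so $v_0 = v\widehat{w}$ for some $\widehat{w} \in \widehat{W}$, and then assert $v_0 y \sim^\beta_{\widehat{W}} vy$ ``by the inductive instance of (3).'' But (3) says $w z \sim^\beta_W z$ for $w \in W$; to apply it to $v_0 y$ relative to $vy$ you need $v_0 y = w'(vy)$ with $w' \in \widehat{W}$, i.e.\ $v_0 \in \widehat{W}v$. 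With $v_0 = v\widehat{w}$ you instead get $v_0 y = (v\widehat{w}v^{-1})(vy)$, so (3) yields only $v_0 y \sim^\beta_{v\widehat{W}v^{-1}} vy$, and $v\widehat{W}v^{-1}$ is not under control. The fix is immediate---take $v_0 \in \widehat{W}v \cap V \cap G_0$, which is the side the paper uses---but as stated the inference is wrong, and in a non-abelian $G$ the distinction matters.

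A symptom of the same inattention appears in your sketch of (4): from $v_1 y \sim^\beta_W x$ and $v_2 z \sim^\beta_W y$ you cannot conclude $(v_1 v_2)z \sim^\beta_{W^2} x$ by ``the inductive version of (4),'' because applying (5) to the second relation conjugates the neighbourhood by $v_1$. You must instead fix $W_0$ with $W_0^2 \subseteq W$, obtain $v_1$ first, then apply the hypothesis on $y \preceq^\alpha_V z$ at the conjugated neighbourhood $v_1^{-1}W_0 v_1$ to obtain $v_2$; only then does (5) followed by the lower-level (4) land you in $W_0^2 \subseteq W$ with witness $v_1 v_2 \in V^2$. These are small, local repairs, but they are exactly the kind of bookkeeping the non-commutativity forces, and they should be made explicit.
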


\begin{proof}
Statement (1) is immediate.
Statement (2) is proved by a  straightforward induction, as is statement (3).
Statements (4), (5), are \cite[Lemma 4.3]{AP2021}.

We prove Statement (6).
Let $\alpha \ge 1$ and assume $x \preceq^\alpha_V y$. Fix any open neighborhood $W$ of $1_G$.
Let $W_0$ be an open neighborhood of $1_G$ with $W_0^2 \subseteq W$.
Now fix some $v \in V$ such that $v \cdot y \sim^\beta_{W_0} y$ for every $\beta < \alpha$.
Let $v_0 \in W_0v \cap G_0$.
For any $\beta < \alpha$ we have by (3) that $v_0 \cdot y \sim^\beta_{W_0} v \cdot y$ and by (4) we have $v_0 \cdot y \sim^\beta_{W_0^2} x$ and thus by (2) we have $v_0 \cdot y \sim^\beta_W x$, as desired.
\end{proof}

Here we show that the relations $\sim^\alpha_G$ are just as strong as the Hjorth relations.

\begin{proposition}\label{prop:sim_to_hjorth}
Let $X$ be a Polish $G$-space and $x, y \in X$.Then for any $\alpha \ge 0$ if $x \preceq^\alpha_G y$ then $(x, G) \le_{\alpha+1} (y, G)$.
\end{proposition}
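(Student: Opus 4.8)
The plan is to prove, by induction on $\alpha$, the formally stronger statement $(P_\alpha)$: \emph{for all $x,y\in X$ and every symmetric $V\subseteq_1 G$, if $x\preceq^\alpha_V y$ then $(x,V)\le_{\alpha+1}(y,V^3)$.} Proposition \ref{prop:sim_to_hjorth} is then the special case $V=G$, since $G^3=G$. Before starting the induction I would record two routine facts about Hjorth's relations, both read off from Proposition \ref{prop:hjorth} (or proved directly by induction on the rank): \emph{monotonicity} --- $(x,U)\le_\alpha(y,V)$ is preserved when $U$ shrinks, when $V$ grows, and when $\alpha$ decreases (using $\BPi^0_\beta\subseteq\BPi^0_\alpha$ for $1\le\beta\le\alpha$); and \emph{translation invariance} --- for every $g\in G$, $(x,U)\le_\alpha(y,V)\iff(gx,Ug^{-1})\le_\alpha(y,V)\iff(x,U)\le_\alpha(gy,Vg^{-1})$, which follows from the Vaught-transform identity $z\in A^{*S}\iff gz\in A^{*(Sg^{-1})}$.

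The base case $\alpha=0$ is immediate: $x\preceq^0_V y$ says $x\in\overline{Vy}$, so $wx\in w\overline{Vy}=\overline{wVy}\subseteq\overline{V^3y}$ for every $w\in V$, i.e. $V\cdot x\subseteq\overline{V^3\cdot y}$, which is exactly $(x,V)\le_1(y,V^3)$. For the inductive step I assume $\alpha\ge1$, that $(P_\beta)$ holds for all $\beta<\alpha$, and that $x\preceq^\alpha_V y$. To verify the successor clause of $(x,V)\le_{\alpha+1}(y,V^3)$, fix an open $U'\subseteq V$ (the case $U'=\emptyset$ being trivial) and pick $u\in U'$; then choose a symmetric $W\subseteq_1 G$ small enough that $W\subseteq V$ and $uW^3\subseteq U'$. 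Applying the defining property of $x\preceq^\alpha_V y$ to this $W$ yields a single $v\in V$ with $vy\sim^\beta_W x$ --- hence $vy\preceq^\beta_W x$ --- for all $\beta<\alpha$. I then set $V':=uWv$, which is open and contained in $V^3$, so it remains to prove $(y,V')\le_\alpha(x,U')$. Three applications of translation invariance (moving the $y$-side by $v$, then by $u$, and the $x$-side by $u$) show this is equivalent to $(u(vy),uWu^{-1})\le_\alpha(ux,U'u^{-1})$, and since $uW^3\subseteq U'$ gives $(uWu^{-1})^3\subseteq U'u^{-1}$, monotonicity reduces it to
\[\big(u(vy),\,uWu^{-1}\big)\ \le_\alpha\ \big(ux,\,(uWu^{-1})^3\big).\]
Finally, Proposition \ref{prop:sim_basic_facts}(5) turns $vy\preceq^\beta_W x$ into $u(vy)\preceq^\beta_{uWu^{-1}}ux$ for each $\beta<\alpha$, the inductive hypothesis $(P_\beta)$ (with the symmetric neighborhood $uWu^{-1}$) upgrades this to $(u(vy),uWu^{-1})\le_{\beta+1}(ux,(uWu^{-1})^3)$, and aggregating over $\beta<\alpha$ --- taking $\beta=\alpha_0$ when $\alpha=\alpha_0+1$, and combining monotonicity in the subscript with the limit clause when $\alpha$ is a limit --- delivers the displayed inequality.

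I expect the only genuinely delicate point to be the neighborhood bookkeeping in the inductive step: one has to match the ``$\forall\,W\subseteq_1 G$'' quantifier hidden in $x\preceq^\alpha_V y$ with the ``$\forall$ open $U'\subseteq V$'' quantifier in the successor clause of $\le_{\alpha+1}$, absorb the translate $v$ into a genuine open subneighborhood $V'=uWv$ of $V^3$, and perform the translations so that all the conjugated neighborhoods ($uW$, $uWu^{-1}$, $(uWu^{-1})^3$) remain symmetric and correctly nested inside $U'u^{-1}$, so that both $(P_\beta)$ and Proposition \ref{prop:sim_basic_facts}(5) can be applied. Everything else --- the base case, the aggregation over $\beta$, and the two preliminary facts about $\le_\alpha$ --- is routine.
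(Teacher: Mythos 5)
Your proof is correct and takes essentially the same route as the paper's: both establish by induction a strengthened local statement with a small neighborhood on the left and a power of it on the right (the paper proves $(x,W_0)\le_{\alpha+1}(y,W_0^2)$; you prove $(x,V)\le_{\alpha+1}(y,V^3)$, a slightly looser but equally serviceable bound), both fix $U'\subseteq V$, pick $u\in U'$ and a small $W$, extract a single witness $v$ (resp.\ $w$) from the $\preceq^\alpha$ hypothesis, conjugate by $u$ via Proposition~\ref{prop:sim_basic_facts}(5), apply the inductive hypothesis at each $\beta<\alpha$, translate back, and aggregate over $\beta<\alpha$ by a successor/limit case split; the only difference is the order in which you apply the conjugation (5) and unwind the definition of $\preceq^\alpha_V$, which is cosmetic. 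Your explicit statements of monotonicity and translation invariance for Hjorth's $\le_\alpha$ (read off Proposition~\ref{prop:hjorth} and the Vaught-transform identity) make overt what the paper's proof uses implicitly with the phrase ``or equivalently.''
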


\begin{proof}
We prove the stronger claim that for any ordinal $\alpha$, open neighborhood $W_0$ of $1_G$, and $x, y \in X$ that $x \preceq^\alpha_{W_0} y$ implies $(x, W_0) \le_{\alpha+1} (y, W_0^2)$.


For $\alpha = 0$ this is immediate, so assume $\alpha > 0$ and assume the claim is true below $\alpha$.
Suppose $x \preceq^{\alpha}_{W_0} y$.
With the intention of showing $(x, W_0) \le_{\alpha+1} (y, W_0^2)$, we fix an arbitrary basic open $U \subseteq W_0$.
Choose any $u \in U$ and basic open neighborhood $U_0$ of $1_G$ such that $U_0^2u \subseteq U$.
We have $u \cdot x \preceq^{\alpha}_{uW_0u^{-1}} u \cdot y$ by Proposition \ref{prop:sim_basic_facts}.(5).
Find $w \in W_0$ such that 
\[ \forall \beta < \alpha, \; uw \cdot y \preceq^{\alpha}_{U_0} u \cdot x.\]
By the induction hypothesis we have
\[\forall \beta < \alpha, \; (uw \cdot y, U_0) \le_{\beta+1} (u \cdot x, U_0^2),\]
or equivalently,
\[ \forall \beta < \alpha, \; (y, U_0uw) \le_{\beta+1} (x, U_0^2u).\]
And because $U_0^2u \subseteq U$, we have
\[\forall \beta < \alpha, \;  (y, U_0uw) \le_{\beta+1} (x, U).\]
A case analysis on whether $\alpha$ is successor or limit applied to the recursive definition of $\le$ gives us
\[(y, U_0uw) \le_{\alpha} (x, U).\]

Finally, we observe that because $U_0uw \subseteq UW \subseteq W_0^2$ and $U$ was arbitrary we have proved
\[(x, W_0) \le_{\alpha+1} (y, W_0^2)\]
as desired.
\end{proof}

\section{A criterion for strong generic ergodicity}\label{S:II:Criterion}

The following is the critical fact for our generic ergodicity result.
This should be compared to the theorem in Hjorth's theory of turbulence that an equivalence relation being generically ergodic with respect to $=^+$ implies being generically ergodic with respect to any orbit equivalence relation induced by a continuous action of $S_\infty$ on any Polish space, see e.g., \cite[Corollary 10.3.7]{Gao2008}.

\begin{theorem}\label{Theorem:SimGenErg}
Let $E$ be an equivalence relation on a Polish space $X$ and let $Y$ be a Polish $G$-space.
Suppose $E$ is generically ergodic with respect to $\sim^1_G$ as computed in $(Y, \tau)$ for every compatible $G$-space Polish topology $\tau$ on $Y$.
Then $E$ is generically ergodic with respect to $E^G_Y$.
\end{theorem}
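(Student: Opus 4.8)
The plan is to follow the template of Hjorth's turbulence argument, where generic ergodicity with respect to $=^+$ upgrades to generic ergodicity against all $S_\infty$-actions via a change-of-topology argument. Here the role of $=^+$ is played by the relation $\sim^1_G$, and the role of the ``iterated $S_\infty$" structure is played by the transfinite hierarchy $\sim^\alpha_G$, so the induction will be on the Scott-type analysis of orbits in $Y$. The key external inputs are: Proposition \ref{prop:sim_to_hjorth}, which says $x \preceq^\alpha_G y$ implies $(x,G)\le_{\alpha+1}(y,G)$; Proposition \ref{prop:hjorth}, relating the Hjorth relations $\le_\alpha$ to Vaught transforms of $\BPi^0_\alpha$ sets; and the standard fact that for a Polish $G$-space $Y$ and any countable family of Borel (in particular $\BPi^0_\alpha$) sets, one may refine the topology $\tau$ on $Y$ to a finer Polish $G$-space topology $\tau'$ in which those sets become open (clopen), while keeping the same Borel structure and the same orbit equivalence relation $E^G_Y$.

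First I would reduce to showing: if $f\colon X\to Y$ is a Baire-measurable $(E,E^G_Y)$-homomorphism, then there is a comeager $C\subseteq X$ on which $f$ is constant up to $E^G_Y$. Fix such an $f$. Since $E^G_Y$ is analytic, pick a countable family $\{A_n\}$ of Borel sets separating $E^G_Y$-classes in a suitable sense (e.g.\ a sequence realizing the Scott analysis of the relevant orbits, or simply a countable basis together with all iterated Vaught transforms $A^{*V}$ for $V$ in a countable basis of $1_G$). The goal is to show $f(x)$ and $f(y)$ are $E^G_Y$-equivalent for comeagerly many $(x,y)$, and by Proposition \ref{prop:hjorth} it suffices to show that for each $\alpha<\omega_1$ and each $\BPi^0_\alpha$ set $A$ (in an appropriately refined topology $\tau_\alpha$), $f(x)\in A^{*G}\iff f(y)\in A^{*G}$ comeagerly. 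The bridge to the hypothesis is: generic ergodicity of $E$ with respect to $\sim^1_G$ computed in $(Y,\tau)$ for \emph{every} compatible Polish $G$-topology $\tau$ — applied to the refined topology $\tau_\alpha$ — gives a comeager set on which $f(x)\sim^1_{(Y,\tau_\alpha)} f(y)$, and $\sim^1$ in the refined topology controls the $\BPi^0_1(\tau_\alpha)=\BPi^0_\alpha(\tau)$ Vaught transforms by the $\alpha=1$ case of Proposition \ref{prop:hjorth} together with Proposition \ref{prop:sim_to_hjorth}.

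The transfinite induction runs as follows. For $\alpha=1$: the hypothesis directly gives a comeager $C_1$ with $f(x)\sim^1_G f(y)$ for all $x,y\in C_1$, hence by Propositions \ref{prop:sim_to_hjorth} and \ref{prop:hjorth}, $f(x)$ and $f(y)$ lie in the same $\BPi^0_1$-Vaught transforms. For the inductive step at $\alpha$, refine $\tau$ to a Polish $G$-topology $\tau_\alpha$ in which all the relevant $\BPi^0_\alpha(\tau)$ sets — equivalently, a countable family generating the $\alpha$-th level of the Borel hierarchy restricted to the data we track — become open; this uses that the new topology is again a compatible Polish $G$-space topology (Becker–Kechris style topology refinement), so the hypothesis applies to it. Applying the hypothesis in $(Y,\tau_\alpha)$ yields a comeager $C_\alpha$ on which $f(x)\sim^1_{(Y,\tau_\alpha)}f(y)$; unpacking via Proposition \ref{prop:hjorth} at level $1$ in $\tau_\alpha$ gives agreement on $\BPi^0_1(\tau_\alpha)$-Vaught transforms, i.e.\ on $\BPi^0_\alpha(\tau)$-Vaught transforms. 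Intersecting $\bigcap_{\alpha<\omega_1}$ is not allowed directly, but one only needs countably many $\alpha$: since $Y$ is Polish and $E^G_Y$ is the orbit equivalence relation of a Polish group action, there is a \emph{countable} ordinal $\alpha_0$ (a bound on the Scott heights of the orbits of the points $f(x)$, $x\in C$, which can be taken uniform on a comeager set by a standard reflection/boundedness argument) such that agreement on all $\BPi^0_{\alpha_0}$-Vaught transforms already forces $f(x)\mathrel{E^G_Y}f(y)$. Taking $C:=C_{\alpha_0}$ finishes the argument.

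The main obstacle I anticipate is the bookkeeping around the change of topology: one must verify that the refined topology $\tau_\alpha$ remains a Polish group \emph{$G$-space} topology (not just Polish), so that the hypothesis — which quantifies over \emph{all} compatible $G$-space Polish topologies — genuinely applies, and that $E^G_{(Y,\tau_\alpha)}=E^G_{(Y,\tau)}$ so the conclusion transfers back. This is where the strength of the hypothesis (the quantifier over all $\tau$) is essential and is exactly what lets the single relation $\sim^1_G$ simulate the whole hierarchy $\sim^\alpha_G$. A secondary technical point is extracting the uniform countable bound $\alpha_0$ on Scott heights along a comeager set; this is a routine $\BSigma^1_1$-boundedness argument but needs to be stated carefully so that the comeager sets $C_\alpha$ can be amalgamated into a single comeager $C$.
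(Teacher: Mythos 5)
Your proposal captures the correct high-level architecture — use a $\BSigma^1_1$-boundedness argument to fix a countable bound $\alpha_0$ on the Scott heights of the orbits $G\cdot f(x)$, refine the topology so that the relevant $\BPi^0_{\alpha_0}$ data becomes open, and then apply the hypothesis (generic ergodicity for $\sim^1_G$ in every compatible topology) to the refined topology. This matches the skeleton of the paper's argument. However, there is a genuine gap at the crux of the proof: you do not specify \emph{which} countable family of Borel sets is to be made open, and this choice cannot be made naively.

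The difficulty is this. There are uncountably many $\BPi^0_{\alpha_0}(\tau)$ sets, and the Hjorth/Becker–Kechris change-of-topology theorem only lets you open up countably many Vaught transforms at a time. Your proposal gestures at ``a sequence realizing the Scott analysis of the relevant orbits'' or ``a countable basis together with all iterated Vaught transforms,'' but the Scott analysis of a point is relative to that point, so the family you need \emph{a priori} depends on $f(x)$ as $x$ varies over the comeager set; you have not explained why a single countable family works uniformly. This is exactly what the paper's transfinite induction is for. At stage $\alpha$ the paper fixes one reference point $x_0$ in the comeager set and defines the countable family
\[
A_{W_0,g_0,\beta}:=\{\,y\in Y\colon \forall\gamma<\beta,\ y\sim^\gamma_{W_0} g_0\cdot f(x_0)\,\},
\]
indexed by basic $W_0\subseteq_1 G$, $g_0$ in a countable dense subgroup $G_0$, and $\beta<\alpha$. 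The point of the inductive hypothesis — that $f(x)\sim^\beta_G f(x_0)$ holds for all $\beta<\alpha$ on a comeager set — is precisely to guarantee that this one $x_0$-centred family captures the $\sim^\beta$-type of \emph{every} $f(x)$ on the comeager set, so that the $\sim^{1,\sigma}_G$-agreement in the refined topology $\sigma$ can be unwound (via Proposition \ref{prop:sim_basic_facts}) to yield $f(x)\preceq^\alpha_G f(x')$. Skipping the induction and jumping directly to $\alpha_0$ deprives you of this hypothesis, and without it the countable family is underdetermined. A secondary, smaller imprecision: the change-of-topology tool you need here (Hjorth's theorem, \cite[Theorem 4.3.3]{Gao2008}) makes the \emph{Vaught transforms} $A^{\Delta W_1}$ open in a compatible Polish $G$-space topology, not the $A$'s themselves — this distinction matters in the bookkeeping. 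In short: your outline is correct, but the identification of the countable family and the inductive mechanism that makes it work are the heart of the proof and are missing from the proposal.
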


\begin{proof}
We first claim that it is enough to show that $E$ is generically ergodic with respect to $\sim^\alpha_G$ on $Y$ for every countable $\alpha$.
Indeed, we can find a countable ordinal $\alpha$ and a comeager set $C \subseteq X$ such that $G \cdot f(x)$ is $\BPi^0_\alpha$ for every $x \in C$, see \cite[Claim 5.4]{AP2021}.
In particular, by Propositions \ref{prop:hjorth} and \ref{prop:sim_to_hjorth}, there is a countable ordinal $\alpha$ such that for every $x,y \in C$, $f(x) \sim^\alpha_G f(y)$ if and only if $f(x) \mathrel{E^G_X} f(y)$.
Thus if comeager $D \subseteq X$ satisfies that $f(x) \sim^\alpha_G f(y)$ for every $x,y \in D$, then we have $f(x) \mathrel{E^G_Y} f(y)$ for every $x,y \in C \cap D$.

We proceed to show by induction on $\alpha$ that $E$ is generically ergodic with respect to $\sim^\alpha_G$ on $Y$ for every countable $\alpha$.
If $\alpha = 1$ this is immediately true, so assume $\alpha > 1$.
Let $f : X \rightarrow Y$ be a Baire-measurable homomorphism from $E$ to $E^G_X$.
Suppose for the induction hypothesis we have a comeager set $C \subseteq X$ such that for every $x, y \in C$ and for every $\beta < \alpha$, we have $f(x) \sim_\beta f(y)$.

Fix some $x_0 \in C$.
Let $G_0$ be a countable dense subgroup of $G$.
For any basic open $W_0 \subseteq G$, $g_0 \in G_0$, and $\beta < \alpha$, define
\[ A_{W_0, g_0, \beta} := \{y \in Y \mid \forall \gamma < \beta, \; y \sim^\gamma_{W_0} g_0 \cdot f(x_0)\}.\]

Let $\sigma$ be a compatible Polish topology on $Y$ such that $A_{W_0, g_0, \beta}^{\Delta W_1}$ is open for every basic open neighborhood $W_0$ of $1_G$, basic open $W_1 \subseteq G$, $g_0 \in G_0$, and $\beta < \alpha$.
Such topologies exist by a result of Hjorth, see \cite[Theorem 4.3.3]{Gao2008}.
Let $D \subseteq C$ be comeager such that $f(x) \sim^{1, \sigma}_G f(x')$ for every $x, x' \in D$, where $\sim^{1, \sigma}_G$ is the relation $\sim^1_G$ as computed in $(Y, \sigma)$.
Now fix any $x, x' \in D$ and our goal is to show that 
\begin{equation}\label{eq:claim0}
   f(x) \preceq^\alpha_G f(x'). 
\end{equation}

To that end, fix any basic open neighborhood $V$ of $1_G$.
Choose $V_0$ to be a basic open symmetric neighborhood of $1_G$ such that $V_0^2 \subseteq V$.
Applying the definition of $f(x) \sim^{1, \sigma}_G f(x')$, find some $g \in G$ such that
\begin{equation}\label{eq:part0}
    g \cdot f(x') \sim^{0, \sigma}_{V_0} f(x).
\end{equation}
Fix any $\beta < \alpha$.
We argue that 
\[g \cdot f(x') \sim^\beta_{V} f(x),\]
which would be enough to prove Equation (\ref{eq:claim0}).

To that end, fix any open neighborhood $W$ of $1_G$.
Choose $W_0$ to be a basic open neighborhood of $1_G$ such that $W_0^2 \subseteq W$.
Because $f(x) \sim^\beta_G f(x_0)$ holds, we can find some $g_0 \in G$ such that
\begin{equation}\label{eq:part1}
\forall \gamma < \beta, f(x) \sim^{\gamma}_{W_0} g_0 \cdot f(x_0).
\end{equation}
By Proposition \ref{prop:sim_basic_facts}.(6), we can choose $g_0$ to be in $G_0$.
In particular,
\[f(x) \in A_{W_0, g_0, \beta}^{\Delta V_0}.\]

Thus by Equation (\ref{eq:part0}), there is some $v_0 \in V_0$ such that
\[v_0g \cdot f(x') \in  A_{W_0, g_0, \beta}^{\Delta V_0}.\]

In particular, there is some $v_1 \in V_0$ such that
\begin{equation}\label{eq:part2}
\forall \gamma < \beta, \; v_1v_0g \cdot f(x') \sim^{\gamma}_{W_0} g_0 \cdot f(x_0).
\end{equation}
By Proposition \ref{prop:sim_basic_facts}.(4) and Equation (\ref{eq:part1}) and Equation (\ref{eq:part2}), we have
\[ \forall \gamma < \beta, \; v_1v_0g \cdot f(x') \sim^{\gamma}_{W_0^2} f(x).\]
By Proposition \ref{prop:sim_basic_facts}.(2), we have
\[ \forall \gamma < \beta, \; v_1v_0g \cdot f(x') \sim^\gamma_{W} f(x)\]
and then by observing $v_1v_0 \in V_0^2 \subseteq V$, we are done.
\end{proof}

\section{The proof of Theorem \ref{thm:main2}}\label{S:II:Proof1st}

Here we show how Theorem \ref{thm:main2} follows from  Theorem \ref{Th:SguigglePushesForward} and Theorem \ref{Theorem:SimGenErg}.  We start by proving a lemma that  will be needed in  putting everything together.

\begin{lemma}\label{L:FinalAris}
Let $G$ be a Polish group with  $\rk(G)\leq \alpha$ and   $G\curvearrowright X$ be a Polish $G$-space. For every $x,y\in X$, if  $x \leftrightsquigarrow^{\beta}_G y$ holds for all $\beta<\alpha$, then so does  $ x \sim^{1}_G y$.
\end{lemma}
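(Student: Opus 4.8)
The plan is to reduce the statement to one transfinite induction. First I would unfold the goal: $x \sim^1_G y$ means $x \preceq^1_G y$ and $y \preceq^1_G x$, and $x \preceq^1_G y$ means that for every $W \subseteq_1 G$ there is $v \in G$ with $vy \sim^0_W x$, i.e. $vy \in \overline{Wx}$ and $x \in \overline{W(vy)}$. Since $\leftrightsquigarrow^{\beta}_G$ is symmetric by Lemma \ref{Aris:LSimpleSquiggleProperties}(1), it suffices to produce, for a fixed $W$, such a $v$; the point is that $\rk(G) \le \alpha$ forces $\rk(W,G) + 1 \le \alpha$, so $\beta := \rk(W,G)$ is an ordinal strictly below $\alpha$, and therefore the hypothesis supplies the relation $x \leftrightsquigarrow^{\beta}_G y$. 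The plan is to read the witnessing $v$ off of this relation, using the rank-$\beta$ ``strategy'' witnessing $\rk(W,G) \le \beta$ as a guide.

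The engine will be the following Claim, proved by induction on $\beta$: for every symmetric $U \subseteq_1 G$ and every $V \subseteq_1 G$ with $\rk(V, U) \le \beta$, and all $x, y$ with $x \leftrightsquigarrow^{\beta}_U y$, there is $v \in G$ with $vy \in \overline{Vx}$ and $x \in \overline{V(vy)}$. The base case $\beta = 0$ is immediate: $\rk(V,U) \le 0$ means $U \subseteq V$, so $x \leftrightsquigarrow^0_U y$ directly gives $v = 1_G$. For the inductive step I would fix the witness $W \subseteq_1 G$ with $\rk(V, gWg^{-1}) < \beta$ for all $g \in U$, taking $W$ symmetric (harmless by Proposition \ref{prop:basic_properties_of_rk}(1)); feed $W$ and the open set $X \ni x$ into clause (2) of Definition \ref{Def:Warrow2} to obtain $a, b \in U$ with $ax \leftrightsquigarrow^{\beta'}_W by$ for all $\beta' < \beta$; then apply Lemma \ref{Aris:LSimpleSquiggleProperties}(4) with the group element $a^{-1}$ to get $x \leftrightsquigarrow^{\beta'}_{a^{-1}Wa} (a^{-1}b)y$. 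Since $U$ is symmetric, $a^{-1} \in U$, so $\gamma := \rk(V, a^{-1}Wa) < \beta$ and $a^{-1}Wa$ is again symmetric; applying the induction hypothesis at $\gamma$ to the points $x$ and $(a^{-1}b)y$ gives some $v'$, and $v := v' a^{-1} b$ is the desired element. The argument is uniform in whether $\beta$ is a successor or a limit.

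Granting the Claim, the lemma follows immediately: given $W \subseteq_1 G$, replace it by $W \cap W^{-1}$ to assume $W$ symmetric, set $\beta := \rk(W,G) < \alpha$, note $x \leftrightsquigarrow^{\beta}_G y$ holds, and apply the Claim with $U := G$ and $V := W$ to get $v$ with $vy \sim^0_W x$; this establishes $x \preceq^1_G y$, and running the same argument starting from $y \leftrightsquigarrow^{\beta}_G x$ yields $y \preceq^1_G x$, hence $x \sim^1_G y$.

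The main thing requiring care is the formulation of the Claim. One cannot hope to prove directly that $x \leftrightsquigarrow^{\beta}_U y$ together with $\rk(V,U) \le \beta$ implies $x \sim^0_V y$, because applying clause (2) of the squiggle definition moves the points to $ax$ and $by$ while the rank computation conjugates the relevant neighborhood; the fix is to carry an unknown ``connecting'' group element $v$ through the induction and to only ever conjugate by inverses of elements $a$ of the symmetric neighborhood $U$, so that $\rk(V, gWg^{-1}) < \beta$ can be invoked at $g = a^{-1} \in U$. Everything else — keeping the conjugated neighborhoods symmetric so that the argument iterates, and the base case — is a routine unwinding of the definitions.
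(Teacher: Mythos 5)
Your proposal is correct and follows essentially the same inductive engine as the paper's proof. Both arguments rest on the same claim proved by induction on the rank ordinal, with the same key maneuver: extract $a,b\in U$ from the squiggle relation, conjugate by $a^{-1}$ via Lemma \ref{Aris:LSimpleSquiggleProperties}(4), observe that $a^{-1}\in U$ by symmetry of $U$ so the rank hypothesis applies at the conjugated neighborhood, and apply the induction hypothesis. The paper's version of the claim additionally tracks that the witnessing element lies in $U^3$, which requires arranging $W^3\subseteq U$ and then computing $u=(g^a)^{-1}\widetilde{w}g^b\in U^{-1}W^3U\subseteq U^3$; you dropped this bookkeeping, and you are right that it is unnecessary, since the final application of the claim (with $U=G$) uses only that $v\in G$. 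Your formulation is therefore a slight streamlining — it also dispenses with the paper's separate treatment of the trivial case $\rk(G)=1$, which your uniform claim handles at $\beta=0$ — but the substance of the proof is the same.
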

\begin{proof} 
If $\rk(G)=1$,  then $G=\{1_G\}$ is the trivial group. In this case   $x \leftrightsquigarrow^{0}_G y$ trivially implies that $[x]_G=[y]_G$, and therefore    $x \sim^{1}_G y$ holds. So we may assume that $\alpha>1$.

Let $V\subseteq_1 G$. We will find some $h\in G$ so that  $x\sim^{0}_V hy$. A symmetric argument  will then establish $x \sim^{1}_G y$. Set  $\beta:=\rk(V,G)$.  Since $\rk(G)\leq \alpha$ we have that $\beta<\alpha$. Let now $W\subseteq_1 G$ so that for every $g\in G$ we have that  $\rk(V,gWg^{-1})<\beta$.  But by assumption we have that  $x \leftrightsquigarrow^{\beta}_G y $ holds. Hence there exist   $g^x,g^y\in G$ so that  for all $\gamma<\beta$ we have:
\[g^x x   \leftrightsquigarrow^{\gamma}_W  g^y y. \]
Set  $h_0:=(g^x)^{-1}g^y$ and $g:=(g^x)^{-1}$. By Lemma \ref{Aris:LSimpleSquiggleProperties}(4),  for all $\gamma<\beta$ we have that: 
\[  x   \leftrightsquigarrow^{\gamma}_{gWg^{-1}}  h_0 y.\]
Set $\gamma:=\rk(V,gWg^{-1})$ and notice that by the choice of $W$ we have that $\gamma<\beta$.
 Hence, by the next claim,  we get $u \in \big(gWg^{-1}\big)^3\subseteq G$ so that, setting $h:=u\cdot h_0$, we have that:
\[x\sim^{0}_V hy\]
\begin{claim}
Let $V,U\subseteq_1 G$  with   $\rk(V,U)\leq \gamma$ so that $U=U^{-1}$. Assume that  $a\leftrightsquigarrow^{\gamma}_U b$ holds for some  $a,b\in X$. Then we have that $a \sim^{0}_V u b$  holds for some  $u\in U^3$.
\end{claim}
\begin{proof}[Proof of Claim]
For $\gamma=0$,    $\rk(V,U)\leq 0$ implies that $U\subseteq V$. Hence, by  Lemma \ref{Aris:LSimpleSquiggleProperties}(3),  $a\leftrightsquigarrow^{0}_U b$ implies $a\leftrightsquigarrow^{0}_V u b$ for $u=1_G\in U^{3}$. Hence, we may assume that $\gamma>0$.

Since $\rk(V,U)\leq \gamma$, there exists some $W\subseteq_1 G$ so that for every $g\in U$ we have that  \[\rk(V,gWg^{-1})<\gamma.\]
By Proposition \ref{prop:basic_properties_of_rk}(1) we may assume  that $W^3\subseteq U$. Since $a\leftrightsquigarrow^{\gamma}_U b$ holds, we get   $g^a,g^b\in U$ so that for all $\delta<\gamma$ we have that  $g^a a   \leftrightsquigarrow^{\delta}_W  g^b b$. By Lemma \ref{Aris:LSimpleSquiggleProperties}(4), for all $\delta<\gamma$, we have 
\[a   \leftrightsquigarrow^{\delta}_{(g^a)^{-1}Wg^a}  (g^{a})^{-1}g^b b\]
But since $\rk(V,(g^a)^{-1}Wg^a)\leq \delta$ for some $\delta<\gamma$, by inductive hypothesis, and since 
\[\big((g^a)^{-1}Wg^a\big)^3=(g^a)^{-1}W^3g^a,\]
 we get some $\widetilde{w}\in W^3$ so that
\[a \sim^{0}_V   (g^{a})^{-1} \widetilde{w} g^{a}  (g^{a})^{-1}g^b  b.\]
But then, for $u:= (g^{a})^{-1} \widetilde{w} g^b \in U^{-1} W^3 U\subseteq U^3$  we have that
\[a \sim^{0}_V  u  b\]
as desired.
\end{proof}
The proof of the claim concludes the proof of Lemma \ref{L:FinalAris}.
\end{proof}

We can now put everything together and conclude with the proof of Theorem \ref{thm:main2}. 

\begin{proof}[Proof of Theorem \ref{thm:main2}]
Let $G\curvearrowright X$ and $H\curvearrowright Y$ be continuous actions of Polish groups on Polish spaces,  with $G\curvearrowright X$  generically $\alpha$-unbalanced and  $H$ being $\alpha$-balanced.   By Theorem \ref{Theorem:SimGenErg} it suffices to show that  $E^G_X$ is generically ergodic with respect to  $\sim^1_H$ on $Y$.

Indeed, let $f\colon X\to Y$ be a Baire-measurable $(E^G_X,E^H_Y)$-homomorphism. Let  $C\subseteq X$ be the comeager set provided by Theorem \ref{Th:SguigglePushesForward} and consequently let  $D\subseteq C$ be the comeager set provided by  Definition \ref{D:GenericUnbalancedness2}.  
\begin{claim}\label{Claim100}
For every $x,y\in D$ we have that   $f(x) \sim^{1}_H f(y)$.
\end{claim}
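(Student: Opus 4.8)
The plan is to chain together the two main theorems of the preceding sections with Lemma \ref{L:FinalAris}. We are given $x,y\in D$, where $D$ is the comeager set from Definition \ref{D:GenericUnbalancedness2} applied to $C$, and $C$ is the comeager set from Theorem \ref{Th:SguigglePushesForward}. First I would unpack the definition of generic $\alpha$-unbalancedness: since $x,y\in D$, there is a finite sequence $x_0,\ldots,x_n\in C$ with $x_0=x$, $x_n=y$, such that $x_i \leftrightsquigarrow^\beta_G x_{i+1}$ holds for all $i<n$ and all $\beta<\alpha$.

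The key step is then to push this chain forward through $f$. Since each $x_i\in C$ and $C$ is the set provided by Theorem \ref{Th:SguigglePushesForward}, for each $i<n$ and each countable ordinal $\beta$ we have
\[x_i \leftrightsquigarrow^\beta_G x_{i+1} \implies f(x_i) \leftrightsquigarrow^\beta_H f(x_{i+1}).\]
In particular, since $x_i \leftrightsquigarrow^\beta_G x_{i+1}$ holds for all $\beta<\alpha$, we conclude $f(x_i) \leftrightsquigarrow^\beta_H f(x_{i+1})$ for all $\beta<\alpha$ and all $i<n$. Now, because $H$ is $\alpha$-balanced, i.e.\ $\rk(H)\leq\alpha$, Lemma \ref{L:FinalAris} applies to the Polish $H$-space $Y$: from $f(x_i) \leftrightsquigarrow^\beta_H f(x_{i+1})$ for all $\beta<\alpha$ we obtain $f(x_i) \sim^1_H f(x_{i+1})$ for each $i<n$.

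Finally I would assemble the chain. By Proposition \ref{prop:sim_basic_facts}.(1) each $\sim^1_H$ is symmetric, and one can transitively compose the relations $\sim^1_H$ along the path $f(x_0),\ldots,f(x_n)$. Strictly speaking $\sim^1_H$ need not be transitive on the nose, but Proposition \ref{prop:sim_basic_facts}.(4) gives $f(x_0) \sim^1_{H^2} f(x_n)$ after composing $n$ steps; since $H$ is a group, $H^2 = H$, so in fact $f(x) = f(x_0) \sim^1_H f(x_n) = f(y)$, as desired. (Alternatively, one checks directly from the definition of $\sim^1_H = \sim^1_G$ that composing along a finite path stays at level $1$ with the full group $G=H$, since the open sets $W\subseteq_1 H$ in the definition can be shrunk freely.) This establishes Claim \ref{Claim100}.

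The main obstacle I anticipate is the bookkeeping in this last step: one must make sure that the ``radius'' parameter on $\sim^1$ does not degrade when concatenating finitely many edges, and that the quantifier structure in Definition of $\sim^1_V$ genuinely allows the composition. The cleanest route is to absorb the finite product of neighborhoods using that $V$ ranges over \emph{all} of $\bigcup_1 H$, so that $V^n$ for fixed $n$ is cofinal among neighborhoods; this is exactly the kind of argument encoded in Proposition \ref{prop:sim_basic_facts}. Everything else — the forward-pushing and the descent from the $\leftrightsquigarrow$-hierarchy to $\sim^1$ — is handed to us by Theorem \ref{Th:SguigglePushesForward} and Lemma \ref{L:FinalAris} respectively, so the proof of the claim is short.
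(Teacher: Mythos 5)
Your proposal is correct and follows essentially the same route as the paper: push the $\leftrightsquigarrow^\beta_G$-path forward with Theorem \ref{Th:SguigglePushesForward}, descend to $\sim^1_H$ via Lemma \ref{L:FinalAris} using $\rk(H)\leq\alpha$, and chain the finitely many $\sim^1_H$-edges together with Proposition \ref{prop:sim_basic_facts}. Your extra care about the "radius degrading" under composition is a legitimate worry in general, but your resolution is exactly right — since the outer parameter is the whole group $H$ and $H^2=H$, Proposition \ref{prop:sim_basic_facts}.(4) composes without loss — and this is precisely what the paper relies on implicitly when it cites Proposition \ref{prop:sim_basic_facts} at the end.
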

\begin{proof}[Proof of Claim \ref{Claim100}] Fix $x,y\in D$ and let  $x_0,\ldots, x_n\in C$   with  $x_0=x$,  $x_n =y$  and
\[x_0 \leftrightsquigarrow^{\beta}_G x_1 \leftrightsquigarrow^{\beta}_G \cdots \leftrightsquigarrow^{\beta}_G x_{n-1} \leftrightsquigarrow^{\beta}_G x_n, \]
for all $\beta<\alpha$.
 Having chosen  $C$  according to Theorem \ref{Th:SguigglePushesForward},  for all $\beta<\alpha$ we have that: 
\[f(x_0) \leftrightsquigarrow^{\beta}_H f(x_1) \leftrightsquigarrow^{\beta}_H \cdots \leftrightsquigarrow^{\beta}_H f(x_{n-1}) \leftrightsquigarrow^{\beta}_H f(x_n) \]
But since $\rk(H)\leq \alpha$  by Lemma \ref{L:FinalAris} we have that:
\[f(x_0) \sim^{1}_H f(x_1)  \sim^{1}_H  \cdots  \sim^{1}_H  f(x_{n-1})  \sim^{1}_H f(x_n) \]
By  Proposition \ref{prop:sim_basic_facts},  since $x=x_0, y=x_n$ we have that   $f(x) \sim^{1}_H f(y)$.
\end{proof}
The claim shows that $E^G_X$ is generically ergodic with respect to  $\sim^1_H$ and hence,  by Theorem \ref{Theorem:SimGenErg}, we have that  $E^G_X$ is generically ergodic against actions of $H$.
\end{proof}


\section{Examples of generically $\alpha$-unbalanced Bernoulli shifts}\label{S:II:Examples}

Let $N$ be a countably-infinite set and let $P$ be a closed subgroup of the group $\mathrm{Sym}(N)$, of  permutations of $N$, endowed with the pointwise convergence topology. The action $P\curvearrowright N$ induces continuous action  $P\curvearrowright \{0,1\}^N$ on the space of all maps $x\colon N\to\{0,1\}$ given by
\[(p,x)\mapsto p\cdot x \text{ where } (p\cdot x)(n)=x(p^{-1}(n))\]
We call  $P\curvearrowright \{0,1\}^N$  the {\bf Bernoulli shift of $P$}.  In the remainder of this paper we will be interested in the Bernoulli shifts of the automorphism groups  of the $\alpha$-scattered linear orders $\mathbb{Z}[\alpha]$ of countable ordinals $\alpha$, where $\alpha\mapsto \mathbb{Z}[\alpha]$ is defined in Section \ref{SS:Scattered_LOs}. More precisely, for each  ordinal $0<\alpha<\omega_1$, let $\mathbb{Z}[\alpha]$ be the collection of all maps $a\colon \alpha\to \mathbb{Z}$ with $a(\beta)=0$ for all but finitely many $\beta<\alpha$. We view $\mathbb{Z}[\alpha]$ as a linear ordering, by setting:
\[a<^* b \iff ``a(\beta)<b(\beta), \text{ for the largest } \beta<\alpha \text{ for which } a(\beta)\neq b(\beta) "\]

In the remainder of this paper we provide the proof of Theorem \ref{T:mainBernoulli}.
 Set $G_{\alpha}:= \mathrm{Aut}(\mathbb{Z}[\alpha])$ and $X_{\alpha}:=\{0,1\}^{\mathbb{Z}[\alpha]}$, so  that $G_{\alpha}\curvearrowright X_{\alpha}$ denotes the Bernoulli shift of $\mathrm{Aut}(\mathbb{Z}[\alpha])$. Recall  that from Lemma \ref{lem:scattered_cli} we have that $G_{\alpha}$ is $(\alpha+1)$-balanced. 
 We  prove:

\begin{theorem}\label{T:mainBernoulli_2}
 $G_{\alpha}\curvearrowright X_{\alpha}$ is generically $\alpha$-unbalanced and has meager orbits. 
\end{theorem}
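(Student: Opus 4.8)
The statement splits naturally into two assertions: that $G_\alpha\curvearrowright X_\alpha$ has meager orbits, and that it is generically $\alpha$-unbalanced. The first is routine: since $\alpha^*$ is an infinite linear order, every orbit of $G_\alpha\curvearrowright X_\alpha=\{0,1\}^{\alpha^*}$ fixes pointwise no nonempty open set of $X_\alpha$, and a standard category argument (the orbit of any $x$ is meager because for any finite $F\subseteq\alpha^*$ one can find $y$ agreeing with $x$ on $F$ but not $G_\alpha$-conjugate to $x$) shows orbits are meager; I will dispatch this in a sentence or two. The bulk of the work — and essentially all of the difficulty — lies in generic $\alpha$-unbalancedness, and here I would follow the strategy outlined in the introduction: \emph{do not} attempt a naive transfinite induction on $\alpha$ with successor and limit stages, since the limit stages fail. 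Instead, I would induct on the length $m$ of the Cantor normal form $\alpha=\omega^{\lambda_m}+\cdots+\omega^{\lambda_1}$, proving a strengthened statement in which the length $n$ of the $\leftrightsquigarrow$-path witnessing generic $\alpha$-unbalancedness is controlled as a function of $m$ (something like $n=2^m-1$ or similar), so that the inductive bookkeeping closes.

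\textbf{Key steps, in order.} First I would isolate the ``atomic'' case $\alpha=\omega^\lambda$ and introduce, for each such $\lambda$, a jump operation $(P\curvearrowright Y)\mapsto(\widehat P_\lambda\curvearrowright \widehat Y_\lambda)$ refining the $\mathbb{Z}$-wreath jump from \cite{AP2021}; the key algebraic input is that the ordinals $\omega^\lambda$ are closed under Hessenberg (natural) sum $\oplus$, which is precisely what allows two such jumps to be ``fused'' — i.e., one can combine a path of length $n$ for $\omega^\lambda$ with another to get a path of length roughly $2n$ for $\omega^\lambda\oplus\omega^\lambda$, or more generally glue along the $\oplus$-decomposition. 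This is the content of a ``fusion lemma'' (Lemma \ref{L:Ind} in the paper's numbering), which I would state carefully and then prove by analyzing how $\leftrightsquigarrow^\beta_V$ behaves under the jump: concretely, one shows that if $x,y$ are related by the appropriate relation at the base, then their lifts under the jump are related by $\leftrightsquigarrow^{\beta}$ for the relevant larger $\beta$, using Lemma \ref{Aris:LSimpleSquiggleProperties} repeatedly (especially the conjugation-invariance clause (4) and the comeager-restriction clause (5)) to move witnesses around inside the wreath product. Second, for the inductive step on $m$, I would write $\alpha=\omega^{\lambda_m}+\beta$ with $\beta=\omega^{\lambda_{m-1}}+\cdots+\omega^{\lambda_1}$ of CNF-length $m-1$, observe via Lemma \ref{lem:combining_auts} / Lemma \ref{lem:linear_order_involves} that $G_\alpha=\mathrm{Aut}(\alpha^*)$ decomposes compatibly with the linear-order sum $\alpha^* $ coming from $\alpha=\beta+\omega^{\lambda_m}$ (note the order of summands must be chosen correctly so that the tail piece is the ``atomic'' part), and then combine the atomic-case jump analysis with the inductive hypothesis for $\beta$, concatenating the two paths and bounding the total length.

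\textbf{Main obstacle.} The genuinely hard part is the fusion lemma itself — verifying that the jump operation for $\omega^\lambda$ actually pushes $\leftrightsquigarrow$-relations up by the right ordinal amount and that paths can be doubled/glued uniformly. The subtlety is that $\leftrightsquigarrow^\alpha_V$ is defined by a two-sided back-and-forth recursion quantifying over \emph{all} open $W\subseteq_1 G$ and \emph{all} open $U\subseteq X$, so when one passes to the wreath product $P\wreath G$ (or the $\omega^\lambda$-jump analogue) one must produce, for an arbitrary basic open neighborhood of the identity in the big group, a coordinated choice of witnesses $g^x,g^y$ that simultaneously (a) move the points into the prescribed open set $U$, (b) respect the finitely-many nontrivial coordinates, and (c) leave enough ``room'' in infinitely many coordinates to continue the recursion at all $\beta<\alpha$. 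Managing the interaction between the shift part (the $\mathbb{Z}$ or $\omega^\lambda$ acting by translation) and the base-group part, while keeping the path length from blowing up beyond the $m$-controlled bound, is where I expect most of the technical effort — and it is exactly why the paper devotes two separate sections (the $\lambda=0$ case and the $\lambda>0$ case) to proving Lemma \ref{L:Ind}. I would structure the argument so that the $\lambda=0$ case recovers the $\mathbb{Z}$-jump picture of \cite{AP2021} essentially verbatim, and then handle $\lambda>0$ by iterating/transfinitely assembling these jumps, using $\oplus$-closure to keep the construction inside the class $\{\omega^\lambda\}$ at each stage.
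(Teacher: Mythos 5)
Your plan follows the paper's strategy closely: handle the meager-orbit claim briefly, reduce generic $\alpha$-unbalancedness to an induction on the length $m$ of the Cantor normal form, isolate the atomic jumps $J(\omega^\lambda, -)$, exploit the $\oplus$-indecomposability of $\omega^\lambda$ in a fusion lemma, and split the proof of the fusion lemma into the $\lambda=0$ and $\lambda>0$ cases. You also correctly locate where the real work lies. This is essentially the approach the paper takes.

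There is, however, one concrete error in the inductive step that would derail the argument. You peel off the \emph{largest}-exponent term, writing $\alpha = \omega^{\lambda_m} + \beta$ with $\beta = \omega^{\lambda_{m-1}} + \cdots + \omega^{\lambda_1}$, and then re-align to ``the linear-order sum coming from $\alpha = \beta + \omega^{\lambda_m}$.'' But since $\lambda_m \ge \lambda_i$ for every $i$, adding $\omega^{\lambda_m}$ on the right absorbs $\beta$, so $\beta + \omega^{\lambda_m} = \omega^{\lambda_m} \neq \alpha$ whenever $m > 1$: that identity is simply false. Your first decomposition $\alpha = \omega^{\lambda_m} + \beta$ \emph{is} a valid ordinal identity, but it places the non-atomic piece $\beta$ at the tail of $[0,\alpha)$, which is the opposite of what the jump framework requires. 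The paper instead peels off the \emph{smallest}-exponent term from the right: $\alpha = \nu + \omega^{\lambda_1}$ with $\nu := \omega^{\lambda_m} + \cdots + \omega^{\lambda_2}$ of CNF-length $m-1$. This is a genuine ordinal identity (since $\lambda_2 \ge \lambda_1$), and it puts the atomic piece $\omega^{\lambda_1}$ at the tail, i.e.\ at the high-priority coordinates in the ordering on $\alpha^*$, so that $G_\alpha \curvearrowright X_\alpha \cong J(\omega^{\lambda_1}, G_\nu \curvearrowright X_\nu)$ with outer wreath factor $\mathrm{Aut}((\omega^{\lambda_1})^*)$ and base $G_\nu \curvearrowright X_\nu$ covered by the inductive hypothesis. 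Your parenthetical (``the tail piece must be the atomic part'') shows you sensed the constraint; you just decomposed at the wrong end of the Cantor normal form.
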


One interesting aspect of Theorem \ref{T:mainBernoulli_2} is that its proof seems to require something more sophisticated than a straightforward induction. Before we proceed to the actual proof, we first discuss where the usual base/successor/limit--case induction falls short.

When it comes to the base--case $\alpha=1$, Theorem \ref{T:mainBernoulli_2} follows directly from the fact that the Bernoulli shift $\mathbb{Z}\curvearrowright \{0,1\}^{\mathbb{Z}}$ of $\mathbb{Z}$ is generically ergodic and has countable orbits. Successor stages $\alpha:=\beta+1$ can also be dealt with, just  by  appropriating  ideas and methods from \cite{AP2021}. Indeed, if $\alpha:=\beta+1$, then $G_{\alpha}\curvearrowright X_{\alpha}$ turns out to just be the ``$\mathbb{Z}$--jump"  \[\mathbb{Z} \; \mathrel{\mathrm{Wr}} \; G_{\beta}\curvearrowright (X_{\beta
})^{\mathbb{Z}}.\]
Assuming now  that the paths $x_0,\ldots,x_n$  witnessesing the  generic $\beta$-unbalancedness of   $G_{\beta}\curvearrowright X_{\beta}$ according to Definition \ref{D:GenericUnbalancedness2} are ``nice enough", a technical elaboration on the ideas from  \cite{AP2021} can be used to leverage these dynamics to generic $\alpha$-unbalancedness of  $G_{\alpha}\curvearrowright X_{\alpha}$. 

The difficulty lies in the limit stages. For example, already for finite  $\alpha=1,2,3,\ldots$, the length $n:=n(\alpha)$ of the paths $x_0,\ldots,x_n$ witnessing the $\alpha$-unbalancedness of  $G_{\alpha}\curvearrowright X_{\alpha}$ goes to infinity $n(\alpha)\to \infty$ as $\alpha\to\omega$.  As a consequence, there seems to be no simple way to combine these paths to some finite ``limiting" path witnessing the  generic $\omega$-unbalancedness of $G_{\omega}\curvearrowright X_{\omega}$. But even when the ``right" argument for $\alpha=\omega$ has been established, a similar ``phase transition" takes place at  the limit ordinal $\alpha=\omega^2$, requiring yet another new argument that was not needed for the earlier limit ordinals $\omega, 2\omega,3\omega,\ldots$.

As it turns out one needs to analyze separately the ``atomic" case $\alpha=\omega^{\lambda}$ for each $\lambda<\omega_1$ and then use the Cantor normal form in order to reduce the general $\alpha$  to a finite combination of atomic cases. In the remainder of this section we make this strategy precise. More specifically, we  start by introducing a jump operator  $\big(G\curvearrowright X\big)\mapsto J\big(\omega^{\lambda},G\curvearrowright X\big)$ for Polish  $G$-spaces and we establish that, in a certain technical sense, it amplifies generic unbalancedness  from $\alpha$ to $\alpha+\omega^{\lambda}$; see Lemma \ref{L:Ind}. We then  show how to reduce Theorem \ref{T:mainBernoulli_2} to   Lemma \ref{L:Ind}. The proof of Lemma \ref{L:Ind} is given in the  subsequent Sections \ref{S:lambda=0}, \ref{S:lambda>0}.

\subsection{The fusion lemma} For every Polish group $G$ and any Polish permutation group  $P\leq \mathrm{Sym}(L)$ on a countable set $L$, 
let $P \mathrel{\mathrm{Wr}} G:= P\ltimes\prod_{a\in L}  G$ be the Polish group of all pairs $(p,(g_a)_a)$ with $p\in P$ and $(g_a)_a\in \prod_{a\in L}  G$, endowed with the product topology and 
\[ (p,(g_a)_a)\cdot (q,(h_a)_a) =(pq, (g_{q(a)} h_a)_a)\]
For any Polish $G$-space $X$ we have a Polish  $P \mathrel{\mathrm{Wr}} G$-space $X^L$ given by:
\begin{equation}\label{EQ:Jump}
(p,(g_a)_a)\cdot (x_a)_a := (g_{p^{-1}(a)}\cdot x_{p^{-1}(a)} )_a
\end{equation}


\begin{definition}
For any countable ordinal $\mu>0$ and any Polish $G$-space $X$, 
the {\bf $\mu$-jump} 
\[J(\mu,G\curvearrowright X)\]
of $G\curvearrowright X$ is the Polish  $P \mathrel{\mathrm{Wr}} G$-space  $X^L$, where $L$ is the linear order $\mathbb{Z}[\mu]$ and $P=\mathrm{Aut}(L)$.
\end{definition}

Notice that $J(1,G\curvearrowright X)$ is just the  $\mathbb{Z}$--jump, as defined in \cite{AP2021}; see also \cite{ClemensCoskey}. In this case, $X^\mathbb{Z}$ comes together with a continuous, surjective, and  open map 
$\pi\colon X^{\mathbb{Z}}\times X^{\mathbb{Z}}\to X^{\mathbb{Z}}$ which combines any  pair $(x_a)_a,(y_a)_a\in X^{\mathbb{Z}}$ to the ``fused"  $(z_a)_a=\pi((x_a)_a,(y_a)_a)$ so that if $x_a$ and $y_a$ have dense orbits in $G\curvearrowright X$, then both 
\[(x_a)_a \leftrightsquigarrow^{1}_{\mathbb{Z}\,\mathrel{\mathrm{Wr}}\, G}  (z_a)_a \quad \text{ and } \quad   (z_a)_a \leftrightsquigarrow^{1}_{\mathbb{Z}\,\mathrel{\mathrm{Wr}}\, G}  (y_a)_a\]
hold in $\mathbb{Z}\mathrel{\mathrm{Wr}} G \curvearrowright X^{\mathbb{Z}}$; see proof of  \cite[Theorem 1.5]{AP2021}.
Central to the proof of Theorem \ref{T:mainBernoulli_2} is that the jumps $J(\mu,G\curvearrowright X)$ admit a \emph{fusion map} with similar properties, if $\mu$ is of the form $\omega^{\lambda}$ for some countable ordinal $\lambda$. This is made precise in the following lemma.

\begin{lemma}\label{L:Ind}
Let  $X$ be a Polish $G$-space and  $X^L$ be the $\mathrm{Aut}(L) \mathrm{Wr}\, G$-space $J(\mu,G\curvearrowright X)$ for some ordinal of the form $\mu=\omega^{\lambda}$.  Then, there exists a continuous, surjective, and open map $\pi\colon X^{L}\times X^{L}\to X^{L}$ so that for  any $x=(x_a)_a$, $y=(y_a)_a \in X^{L}$ and $\nu<\omega_1$, if 
\begin{enumerate}
\item  $x_a \leftrightsquigarrow^{\gamma}_G y_a$ holds  for all $a\in L$ and all $\gamma<\nu$; and if
\item the orbits of $x_a$ and of $y_a$ are dense in $X$ for all $a\in L$;
\end{enumerate}
then, for every $\beta<\mu $ we have that:
\[ x  \; \leftrightsquigarrow^{\nu+\beta}_{\mathrm{Aut}(L) \mathrel{\mathrm{Wr}} G}  \; \pi(x,y) \quad  \text{ and } \quad \pi(x,y)   \;  \leftrightsquigarrow^{\nu+\beta}_{\mathrm{Aut}(L) \mathrel{\mathrm{Wr}} G}  \; y.\]
\end{lemma}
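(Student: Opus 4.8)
The strategy is to build the fusion map $\pi$ explicitly by prescribing, coordinate-by-coordinate over the linear order $L=\mu^*$, how to interleave the two inputs $x=(x_a)_a$ and $y=(y_a)_a$, and then to verify the two $\leftrightsquigarrow$-statements by transfinite induction on $\beta<\mu=\omega^\lambda$. The key structural fact about $\mu=\omega^\lambda$ that makes this work is that these ordinals are exactly the ordinals closed under Hessenberg (natural) sum $\oplus$; concretely, $L=\mu^*$ decomposes into ``many copies of itself'' in a way that lets a single move of $\mathrm{Aut}(L)\mathrm{Wr}\,G$ shift an entire tail, and nested such decompositions provide the branching needed to absorb the $+\beta$ in the superscript. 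The first thing I would do is fix, once and for all, a family of order-embeddings $\iota_n\colon L\hookrightarrow L$ with pairwise disjoint, cofinal (or suitably spread-out) images together with an order-isomorphism $L\cong \bigsqcup$ of shifted copies of $L$, analogous to the identification $\mathbb{Z}^*\cong\mathbb{Z}$ used for the $\mathbb{Z}$-jump; this is where the $\omega^\lambda$ hypothesis is actually consumed.

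Next I would \emph{define} $\pi$: on the $n$-th block of the decomposition of $L$, let $\pi(x,y)$ agree with $x$ on even sub-blocks and with $y$ on odd sub-blocks (or some similar alternating scheme), chosen so that $\pi(x,y)$ can be brought arbitrarily close, in $X^L$, both to $x$ and to $y$ by a single group element of $\mathrm{Aut}(L)\mathrm{Wr}\,G$ that translates the relevant indices far out; continuity, surjectivity and openness of $\pi$ are then checked directly from the block description (surjectivity e.g.\ by taking $y=x=z$, openness because a basic open set in $X^L$ only constrains finitely many coordinates, which can be matched by adjusting finitely many coordinates of a preimage). The heart of the argument is the claim: under hypotheses (1)--(2), for every $\beta<\mu$ one has $x\leftrightsquigarrow^{\nu+\beta}_{\mathrm{Aut}(L)\mathrm{Wr}\,G}\pi(x,y)$ and symmetrically. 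I would prove this by induction on $\beta$. The base case $\beta=0$ uses hypothesis (2): the orbits of all $x_a$ being dense makes the $\mathrm{Aut}(L)\mathrm{Wr}\,G$-orbit of $x$ dense, so $x\in\overline{G'\cdot\pi(x,y)}$ with $G'=\mathrm{Aut}(L)\mathrm{Wr}\,G$, giving $\leftrightsquigarrow^0$; to get up to $\leftrightsquigarrow^{\nu}$ one feeds hypothesis (1), $x_a\leftrightsquigarrow^\gamma_G y_a$ for $\gamma<\nu$, through the coordinatewise action — this is essentially the content of the successor ($\mathbb{Z}$-jump) analysis of \cite{AP2021} applied blockwise. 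For the successor step $\beta=\beta'+1$ and limit $\beta$, one uses Lemma \ref{Aris:LSimpleSquiggleProperties}(2),(3),(4): given an open $W\subseteq_1 G'$ and an open $U\ni x$ (or $\ni\pi(x,y)$), one exhibits $g^x,g^y\in G'$ — translations pushing activity into a block far enough out that $W$ contains the relevant coordinate-group neighborhood there — with $g^xx,g^y\pi(x,y)\in U$ and $g^xx\leftrightsquigarrow^{\nu+\beta'}_W g^y\pi(x,y)$ coming from the induction hypothesis \emph{after} re-indexing, where the re-indexing is exactly where $\omega^\lambda=\omega^\lambda\oplus\omega^{\lambda'}$ for smaller exponents is used to see that the shifted configuration still satisfies hypotheses (1)--(2) with the same $\nu$.

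\textbf{Main obstacle.} I expect the genuinely delicate point to be the bookkeeping in the inductive step that keeps hypotheses (1) and (2) invariant under the block-translations: after applying $g^x$ and $g^y$ and restricting attention to a deep block, one must recognize the resulting pair of configurations as (a group-translate of) an instance of the \emph{same} lemma — same $\nu$, smaller $\beta$ — and for this the decomposition $L\cong$ (copies of $L$) has to be chosen so that each block is order-isomorphic to $\mu^*$ again, which is precisely the statement that $\mu=\omega^\lambda$ is additively indecomposable in the Hessenberg sense. Getting the quantifier order right in Definition \ref{Def:Warrow2}(2) (``for every $W$ and $U$ there exist $g^x,g^y$ \ldots for every $\beta$'') while simultaneously arranging that the translates land in the prescribed open set $U$ and that the deep-block data still has dense orbits will require a careful, but ultimately routine, choice of how far out to translate as a function of $W$, $U$, and the finitely many coordinates they constrain. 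The continuity/openness of $\pi$ and the base case are comparatively mechanical; everything hinges on this fusion-of-self-similar-blocks induction, which is the reason the lemma is stated only for $\mu$ of the form $\omega^\lambda$ rather than for arbitrary $\mu$.
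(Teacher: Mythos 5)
Your high-level plan — build a fusion map, induct on $\beta<\mu$, exploit that $\mu=\omega^\lambda$ is closed under Hessenberg sum — matches the paper's intent, and your picture is essentially correct for the base case $\lambda=0$ (where the paper itself uses an alternating partition of $\mathbb{Z}$ chosen to meet a density condition). But for $\lambda>0$ your concrete proposal has a real gap: the partition underlying $\pi$ cannot be a partition of $L$ into blocks, alternating or otherwise. To see why, suppose $P_0\sqcup P_1$ is a partition of $L$ itself (so of elements at height $0$). Then, after a finite patch of coordinates has been fixed by the quantifier in Definition~\ref{Def:Warrow2}(2), the move $g^x,g^y$ you need to produce at stage $\nu+\beta'$ must place new finitely many constrained coordinates so that the fusion at those coordinates agrees with one of $x,y$ \emph{and} still leaves room for all the further rounds $\nu+\gamma$, $\gamma<\beta'$. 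A partition of $L$ at height $0$ supplies no "room above height $0$" to absorb those further rounds, and the induction stalls after one step. Also, your assertion that $L=\mu^*$ decomposes into copies of itself is not true for limit $\mu$: for $s\in T$ with $\mathrm{ht}(s)=\alpha$, the covered set $L(s)$ is isomorphic to $\alpha^*$, not to $\mu^*$; for $\mu=\omega$, say, $\omega^*$ is a nested union of copies of $n^*$, not a $\mathbb{Z}$-indexed sum of copies of $\omega^*$. So the re-indexing you plan to use to "recognize the shifted configuration as an instance of the same lemma with the same $\nu$" is not available.

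What the paper does instead — and what your proposal is missing — is to move the partition up into the \emph{tree} $T$ of final segments of elements of $L$, not $L$ itself. One fixes a maximal antichain $P_0\sqcup P_1\subseteq T$ subject to the Hessenberg spacing condition (P2): $\mathrm{ht}(s)\oplus\mathrm{ht}(t)<\mathrm{ht}(s\wedge t)$ for $s\in P_0$, $t\in P_1$. This forces both pieces to contain elements of unboundedly large height $<\mu$, and the spacing guarantees that at each inductive stage there is enough "clearance" between $P_0$- and $P_1$-nodes to insert a local automorphism. The precise property that drives the induction is $(\star_\alpha)$ of Definition~\ref{Def_Star_alpha}, and the paper does not construct the partition explicitly: it shows (Lemma~\ref{L:GenericPairHasStar}) by a Baire category argument that a \emph{generic} antichain pair satisfies $(\star_\alpha)$ for all $\alpha<\mu$. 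Finally, the induction in Claim~\ref{C:Final} is not on $\beta$ directly, but on an auxiliary parameter $\alpha$ bounding from below the heights of the $P_0$-nodes covering the support of the open $V$; that strengthened hypothesis is what lets the argument descend from $\alpha$ to $\beta<\alpha$ rather than attempting a self-similar re-indexing. These three pieces — the tree $T$, the Hessenberg-spaced generic antichain pair, and the $\alpha$-tracking induction — are the essential missing ingredients in your proposal.
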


The proof of Lemma \ref{L:Ind} will be given in Sections   \ref{S:lambda=0} and \ref{S:lambda>0}. Section \ref{S:lambda=0} deals with the case $\mu=1$, by elaborating on  a construction  covered in \cite{AP2021}.  Section   \ref{S:lambda>0} deals with the remaining cases: $\mu=\omega^{\lambda}$ with $\lambda>0$. The special  property of ordinals of the form  $\omega^{\lambda}$ that will be used is  additive idecomposability: if $\alpha,\beta<\omega^{\lambda}$, then $\alpha+\beta<\omega^{\lambda}$; see Lemma \ref{L:additivelyIndecompo}.
It is not very difficult to find a  uniform argument covering both cases. However, dealing separately with the $\lambda=0$ case  provides not only a nice warm-up, but it also allows us to argue the   $\lambda>0$ case more efficiently, as we can can specialize our notation to the case when $\omega^{\lambda}$ is a limit ordinal.
We may now    reduce Theorem \ref{T:mainBernoulli_2} to Lemma \ref{L:Ind}.

\subsection{Proof of  Theorem \ref{T:mainBernoulli_2} from Lemma \ref{L:Ind}}
Fix $\alpha$ with $0<\alpha<\omega_1$, and let
\begin{equation}\label{EQ:CantorNormalForm}
\alpha = \omega^{\lambda_m} + \omega^{\lambda_{m-1}}+\cdots  +\omega^{\lambda_1}, \quad \quad \lambda_m\geq \lambda_{m-1}\geq \ldots\geq \lambda_1, \quad \quad m\geq 1
\end{equation} 
be the  Cantor normal form  of $\alpha$. We record the length $m$ of the above expression by setting 
\[\mathrm{cnf}(\alpha):=m.\]

\begin{claim}
There is a sequence $\{\pi_i\colon 0\leq i \leq 2^m\}$ of  continuous, open,   surjective maps
\[\pi_i\colon X_{\alpha}\times X_{\alpha}\to X_{\alpha},\]
and a comeager $Y\subseteq X_{\alpha}$ so that for all $x,y\in Y$  and  every $i< 2^m$ we have that 
\[\pi_0(x,y)=x, \quad  \pi_{2^m}(x,y)=y, \; \text{ and } \; \; \pi_i(x,y)\leftrightsquigarrow^{\beta}_{G_\alpha} \pi_{i+1}(x,y) \text{ for all } \beta<\alpha.\]
\end{claim}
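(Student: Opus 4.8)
The plan is to obtain the maps $\pi_i$ by induction on $m=\mathrm{cnf}(\alpha)$, feeding the Cantor normal form of $\alpha$ into the fusion lemma (Lemma \ref{L:Ind}) one summand at a time, while keeping track of the dense-orbit hypothesis on a comeager set. First I would record the key identifications: writing $\alpha = \omega^{\lambda_m}+\beta_{m-1}$ with $\beta_{m-1}=\omega^{\lambda_{m-1}}+\cdots+\omega^{\lambda_1}$ and $\mathrm{cnf}(\beta_{m-1})=m-1$, the linear order $\alpha^*$ decomposes, via the concatenation analysis of Section \ref{SS:Scattered_LOs} and Lemma \ref{lem:combining_auts}/\ref{lem:linear_order_involves}, so that $G_\alpha \curvearrowright X_\alpha$ is (equivariantly isomorphic to) the $\mu$-jump $J(\omega^{\lambda_m}, G_{\beta_{m-1}}\curvearrowright X_{\beta_{m-1}})$ with $L=(\omega^{\lambda_m})^*$ and $P=\mathrm{Aut}(L)$. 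This is the same bookkeeping already used to identify the successor case with a $\mathbb Z$-jump; the only new point is that a tail segment of the Cantor normal form behaves like a single ``base" space in the jump. I would also fix a comeager $Y_0\subseteq X_{\beta_{m-1}}$ on which every coordinate of every point has dense orbit (this uses that the Bernoulli shift is topologically transitive, so the dense-orbit points are comeager, together with Kuratowski–Ulam to handle the product $X_{\beta_{m-1}}^L$).

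Next I would run the induction. For $m=1$, $\alpha=\omega^{\lambda_1}$, so $G_\alpha\curvearrowright X_\alpha$ is already $J(\omega^{\lambda_1}, \{1\}\curvearrowright\{0,1\})$ or, more usefully, $J(\omega^{\lambda_1}, G\curvearrowright X)$ for a suitable transitive Bernoulli base; then Lemma \ref{L:Ind} applied with $\nu=0$ gives a single fusion map $\pi\colon X_\alpha\times X_\alpha\to X_\alpha$ that is continuous, open, surjective, and satisfies $x\leftrightsquigarrow^{\beta}_{G_\alpha}\pi(x,y)\leftrightsquigarrow^{\beta}_{G_\alpha}y$ for all $\beta<\omega^{\lambda_1}=\alpha$; taking $\pi_0=\mathrm{pr}_1$, $\pi_1=\pi$, $\pi_2=\mathrm{pr}_2$ and $Y$ the comeager dense-orbit set gives the claim with $2^1=2$ maps. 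For the inductive step, assume the claim for $\beta_{m-1}$: we have $2^{m-1}+1$ maps $\sigma_0=\mathrm{pr}_1,\dots,\sigma_{2^{m-1}}=\mathrm{pr}_2$ on $X_{\beta_{m-1}}\times X_{\beta_{m-1}}$ and a comeager $Y'\subseteq X_{\beta_{m-1}}$ with $\sigma_j(u,v)\leftrightsquigarrow^{\gamma}_{G_{\beta_{m-1}}}\sigma_{j+1}(u,v)$ for all $\gamma<\beta_{m-1}$. Applying these maps coordinatewise inside $X_{\beta_{m-1}}^L = X_\alpha$ produces $2^{m-1}+1$ intermediate points $x=w_0,w_1,\dots,w_{2^{m-1}}=y$ with $w_j\leftrightsquigarrow^{\gamma}_{G_\alpha}w_{j+1}$ for all $\gamma<\beta_{m-1}$ — here one checks that $\leftrightsquigarrow^{\gamma}$ in the wreath product $\mathrm{Aut}(L)\mathrm{Wr}\,G_{\beta_{m-1}}$ is implied by holding $\leftrightsquigarrow^{\gamma}$ in every $G_{\beta_{m-1}}$-coordinate, using that the identity component $\prod_a G_{\beta_{m-1}}$ sits inside $G_\alpha$, Lemma \ref{Aris:LSimpleSquiggleProperties}(3),(4), and the product description of neighborhoods of $1$. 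Then, between each consecutive pair $w_j,w_{j+1}$ I insert the fusion map of Lemma \ref{L:Ind} with base $G_{\beta_{m-1}}\curvearrowright X_{\beta_{m-1}}$, $\mu=\omega^{\lambda_m}$, and $\nu=\beta_{m-1}$: since $w_j,w_{j+1}$ satisfy $\leftrightsquigarrow^{\gamma}$ in every coordinate for all $\gamma<\nu$, and (on the comeager dense-orbit set) every coordinate has dense orbit, the lemma yields a point $\pi(w_j,w_{j+1})$ with $w_j\leftrightsquigarrow^{\nu+\delta}_{G_\alpha}\pi(w_j,w_{j+1})\leftrightsquigarrow^{\nu+\delta}_{G_\alpha}w_{j+1}$ for every $\delta<\omega^{\lambda_m}$; since $\lambda_{m-1}\le\lambda_m$ we have $\beta_{m-1}<\omega^{\lambda_m}$ hence $\nu+\delta$ ranges over all ordinals below $\omega^{\lambda_m}+\beta_{m-1}=\alpha$ as $\delta$ ranges below $\omega^{\lambda_m}$ — this is exactly where additive decomposition of the Cantor normal form is used, and it is what makes $\nu+\beta$ (rather than just $\beta$) the right exponent in Lemma \ref{L:Ind}. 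Interleaving the $w_j$'s with the fused points doubles the number of segments, giving $2^m+1$ maps $\pi_0,\dots,\pi_{2^m}$ with consecutive $\pi_i\leftrightsquigarrow^{\beta}_{G_\alpha}\pi_{i+1}$ for all $\beta<\alpha$; continuity, openness and surjectivity are preserved since each $\pi_i$ is a finite composition of coordinatewise applications of the (continuous, open, surjective) $\sigma_j$'s and the (continuous, open, surjective) fusion maps, and openness/surjectivity of such compositions and of maps applied coordinatewise on a product are routine. The comeager set $Y$ is the intersection of the (countably many, pulled-back) dense-orbit sets and $Y'$-type sets needed along the way.

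The main obstacle I anticipate is the coordinatewise transfer of the $\leftrightsquigarrow^{\gamma}$ relations through the wreath-product structure: one must verify carefully that $\leftrightsquigarrow^{\gamma}_{G_{\beta_{m-1}}}$ holding in each $L$-coordinate really does imply $\leftrightsquigarrow^{\gamma}_{\mathrm{Aut}(L)\mathrm{Wr}\,G_{\beta_{m-1}}}$ for the product points, since the permutation part $\mathrm{Aut}(L)$ acts nontrivially and the two points being compared share the same (trivial) $\mathrm{Aut}(L)$-component; this should follow by an induction on $\gamma$ mirroring the proof of Lemma \ref{Aris:LSimpleSquiggleProperties}, restricting attention to group elements with trivial permutation part, but it needs to be stated and checked rather than taken for granted. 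A secondary technical point is making the identification $X_\alpha \cong X_{\beta_{m-1}}^{L}$ genuinely $\mathrm{Aut}(L)\mathrm{Wr}\,G_{\beta_{m-1}}$-equivariant and matching the group $G_\alpha=\mathrm{Aut}(\alpha^*)$ with (a large enough subgroup of) $\mathrm{Aut}(L)\mathrm{Wr}\,G_{\beta_{m-1}}$ — here one uses that, by Lemma \ref{lem:linear_order_involves} and the structure of $\alpha^*=((\omega^{\lambda_m})^* )$-many copies of $\beta_{m-1}^*$ glued appropriately, the relevant automorphisms are exactly the permutations of the blocks together with independent automorphisms within each block, and that for the $\leftrightsquigarrow$-computations it suffices to use this subgroup since enlarging the acting group only weakens the relation (Lemma \ref{Aris:LSimpleSquiggleProperties}(3)), while Lemma \ref{L:Ind} already supplies the relations for the full $\mathrm{Aut}(L)\mathrm{Wr}\,G$.
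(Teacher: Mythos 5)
Your high‑level strategy (induct on $m=\mathrm{cnf}(\alpha)$, apply Lemma \ref{L:Ind} to double the path at each step, and keep track of dense‑orbit points) is the paper's strategy, but your decomposition of the Cantor normal form is reversed, and this breaks both the jump identification and the ordinal arithmetic. The paper peels off the \emph{smallest} summand: it writes $\alpha = \nu + \omega^{\lambda}$ with $\nu := \omega^{\lambda_m}+\cdots+\omega^{\lambda_2}$ and $\omega^{\lambda}:=\omega^{\lambda_1}$, and uses $G_\alpha\curvearrowright X_\alpha \cong J(\omega^{\lambda}, G_\nu\curvearrowright X_\nu)$. You peel off the \emph{largest} summand $\omega^{\lambda_m}$ and claim $G_\alpha\curvearrowright X_\alpha \cong J(\omega^{\lambda_m}, G_{\beta_{m-1}}\curvearrowright X_{\beta_{m-1}})$ with $\beta_{m-1}=\omega^{\lambda_{m-1}}+\cdots+\omega^{\lambda_1}$. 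But the jump $J(\mu, G_{\nu}\curvearrowright X_{\nu})$ is equivariantly $G_{\nu+\mu}\curvearrowright X_{\nu+\mu}$ (the indexing order $\mu^*$ is the \emph{outer}, more significant, part of $(\nu+\mu)^*$), so your $J(\omega^{\lambda_m}, G_{\beta_{m-1}}\curvearrowright X_{\beta_{m-1}})$ is $G_{\beta_{m-1}+\omega^{\lambda_m}}\curvearrowright X_{\beta_{m-1}+\omega^{\lambda_m}}$, which differs from $G_\alpha\curvearrowright X_\alpha$ since ordinal addition is not commutative. Concretely, for $\alpha=\omega+1$ your formula would give $J(\omega, G_1\curvearrowright X_1)$, whose acting group is $\mathrm{Aut}(\omega^*)\mathrm{Wr}\,\mathbb{Z}\cong G_{1+\omega}=G_\omega$, not $G_{\omega+1}$.

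The arithmetic at the end also fails for the same reason. Your claim that ``since $\lambda_{m-1}\le\lambda_m$ we have $\beta_{m-1}<\omega^{\lambda_m}$'' is already false when $\lambda_{m-1}=\lambda_m$ (take $\alpha=\omega+\omega$: then $\beta_1=\omega=\omega^{\lambda_2}$). And even when $\beta_{m-1}<\omega^{\lambda_m}$, the ordinals $\nu+\delta=\beta_{m-1}+\delta$ for $\delta<\omega^{\lambda_m}$ range only over $[\beta_{m-1},\omega^{\lambda_m})$, never reaching $\alpha=\omega^{\lambda_m}+\beta_{m-1}>\omega^{\lambda_m}$; they certainly do not ``range over all ordinals below $\alpha$'' as you assert. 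The fix is to do what the paper does: set $\nu:=\omega^{\lambda_m}+\cdots+\omega^{\lambda_2}$ (all \emph{but} the smallest summand, so $\mathrm{cnf}(\nu)=m-1$) and $\mu:=\omega^{\lambda_1}$; then $\alpha=\nu+\mu$, the jump identification is honest, and Lemma \ref{L:Ind} produces $\leftrightsquigarrow^{\nu+\beta}_{G_\alpha}$ for $\beta<\mu$, which together with downward monotonicity in the superscript (Lemma \ref{Aris:LSimpleSquiggleProperties}(2)) covers all ordinals below $\nu+\mu=\alpha$. The rest of your interleaving/doubling picture, the coordinatewise application of the $\sigma_j$'s, and the comeager set bookkeeping are essentially the paper's, and your remark about transferring $\leftrightsquigarrow^\gamma$ across the product is handled by Lemma \ref{Aris:LProd} (and is in any case only used as a hypothesis of Lemma \ref{L:Ind}, not as a separate intermediate step).
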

\begin{proof}[Proof of Claim]
We will run an induction on $m$.

\medskip{}

\noindent\underline{Assume that  $m=1$}. Then $\alpha=\omega^{\lambda}$ for some countable ordinal $\lambda$.  

If $\lambda=0$, then $\alpha=1$ and $G_1\curvearrowright X_1$ is  just the Bernoulli shift $\mathbb{Z}\curvearrowright \{0,1\}^{\mathbb{Z}}$ of the discrete group $\mathbb{Z}=G_1$. In this case, simply let $Y\subseteq X_1$  be the set of all  elements with dense orbit in $\{0,1\}^{\mathbb{Z}}$ and let $\pi_0,\pi_1,\pi_2$ simply be the maps $X_{\kappa}\times X_{\kappa}\to X_{\kappa}$ with $\pi_0(x,y)=x$, $\pi_1(x,y)=x$, $\pi_2(x,y)=y$. Since  all $x,y\in Y$ have dense orbits, we have $x\leftrightsquigarrow^{1}_{G_1} x \leftrightsquigarrow^{1}_{G_1} y$.

If    $\lambda>0$, then $\alpha=1+\alpha$, and hence   $G_{\alpha}\curvearrowright X_{\alpha}$ is isomorphic to 
$J(\omega^{\lambda},G_1\curvearrowright X_1)$. Let $Y_1\subseteq X_1$ be  set of all  elements with dense orbit in the $G_1\curvearrowright X_1$. Applying  Lemma \ref{L:Ind} to $J(\omega^{\lambda},G_1\curvearrowright X_1)$ we get a map $\pi\colon X^L\times
X^L\to X^L$ as in the conclusion of the lemma, where $L=\mathbb{Z}[\alpha]=\mathbb{Z}[(\omega^{\lambda})]$. Let  $Y:=Y_1^L$ and $\pi_0,\pi_1,\pi_2$ be the maps $X^{L}\times X^{L}\to X^{L}$ with $\pi_0(x,y)=x$, $\pi_1(x,y)=\pi(x,y)$, $\pi_2(x,y)=y$. Since $x=(x_a)_{a\in L}$,  $y=(y_a)_{a\in L}\in Y$, we have that  $x_a \leftrightsquigarrow^{0}_{G}y_a$ for all $a\in L$. Hence, by  Lemma \ref{L:Ind}   for all $\beta<\alpha$ we have that:
\[ \pi_0(x,y) \; \leftrightsquigarrow^{\beta}_{G_{\alpha}}  \; \pi_1(x,y) \quad  \text{ and } \quad \pi_1(x,y)   \;  \leftrightsquigarrow^{\beta}_{G_{\alpha}}  \; \pi_2(x,y).\]

\smallskip
\noindent\underline{Assume that  $m>1$}. Then,  we can rewrite $\alpha$ as $\nu + \omega^{\lambda}$ where $\nu:=\omega^{\lambda_m} +\cdots  +\omega^{\lambda_2}\geq 1$ and  $\omega^{\lambda}:=\omega^{\lambda_{1}}\geq 1$ as in the Cantor normal form (\ref{EQ:CantorNormalForm}) of $\alpha$.
Since  $\mathrm{cnf}(\nu)=m-1<m$, the inductive hypothesis applies and we get a comeager set $Y_{\nu}\subseteq X_{\nu}$ and a sequence $\{\rho_{\ell}\colon 0\leq \ell \leq 2^{m-1}\}$ of maps $X_{\nu}\times X_{\nu}\to X_{\nu}$ so that for  all  $x,y\in Y_{\nu}$ and  all $\gamma<\nu$ we have: 
\[\rho_0(x,y)=x, \quad  \rho_{2^{m-1}}(x,y)=y, \; \text{ and } \; \; \rho_{\ell}(x,y)\leftrightsquigarrow^{\gamma}_{G_\nu} \rho_{\ell+1}(x,y).\]
But since $\alpha=\nu+\omega^{\lambda}$, we have that 
  $G_{\alpha}\curvearrowright X_{\alpha}$ is isomorphic to $J(\omega^{\lambda}, G_{\nu}\curvearrowright X_{\nu})$.    Set   $L:=\mathbb{Z}[(\omega^{\lambda})]$ and let  $\pi\colon X_{\nu}^L\times
X_{\nu}^L\to X_{\nu}^L$  be the map provided by Lemma \ref{L:Ind}.

We define the desired sequence  $\{\pi_i\colon 0\leq i \leq 2^m\}$ of maps
$X_{\alpha}\times X_{\alpha}\to X_{\alpha}$ using the identification of $X_{\alpha}$ with $X_{\nu}^L$. If $i=2\ell$ for some $\ell\leq 2^{m-1}$ then set $\pi_i=\otimes_{a\in L} \,\rho_\ell$. That is,
\[\pi_i((x_a)_a,(y_a)_a):=(\rho_\ell(x_a,y_a))_a, \text{ when  }  i=2\ell \text{ for } \ell\leq 2^{m-1}.\]
Otherwise, we have that $i=2\ell+1$ for some $\ell< 2^{m-1}$. In which case we let $\pi_{i}$ to be the ``fusion of $\rho_{\ell}$ and $\rho_{\ell+1}$ via $\pi$". More precisely for all $(x_a)_a, (y_a)_a \in X_{\nu}^{L}$ we have that 
\[\pi_i((x_a)_a,(y_a)_a):=\pi\big((\rho_\ell(x_a,y_a))_a,(\rho_{\ell+1}(x_a,y_a))_a \big), \; \text{ if  }  i=2\ell+1 \text{ for } \ell< 2^{m-1}.\]
Set  finally $Y:=Y_{\nu}^{L}\subseteq  X_{\nu}^L = X_{\alpha}$. Clearly $Y$ is comeager in $X_{\alpha}$. Moreover, by the choice of  $Y_{\nu}$ and $\rho_{\ell},\rho_{\ell-1}$, for every $(x_a)_a, (y_a)_a\in Y$, every $\ell\leq 2^{m-1}$ and all $a\in L$ we  have that 
\[ \rho_\ell(x_a,y_a) \;    \leftrightsquigarrow^{\gamma}_{G_{\nu}}    \;\rho_{\ell+1}(x_a,y_a),  \; \text{ for all   }  \gamma<\nu.  \]
But then, by the conclusion  of  Lemma \ref{L:Ind}, for $i=2\ell+1$  
we have that:
\[\pi_i((x_a)_a,(y_a)_a)   \;  \leftrightsquigarrow^{\nu+\beta}_{G_{\nu+\omega^{\lambda}}}  \; \pi_{i+1}((x_a)_a,(y_a)_a) \; \leftrightsquigarrow^{\nu+\beta}_{G_{\nu+\omega^{\lambda}}}  \; \pi_{i+2}((x_a)_a,(y_a)_a) \]
for all $x=(x_a)_a, y=(y_a)_a \in Y$ and every $\beta<\omega^{\lambda}$. Hence by \ref{Aris:LSimpleSquiggleProperties}(2) we have that 
\[\pi_i(x,y)\leftrightsquigarrow^{\beta}_{G_\alpha} \pi_{i+1}(x,y)\]
for all $\beta<\alpha$, $i<2^m$    , and   $x=(x_a)_a, y=(y_a)_a \in Y$.
 This concludes the induction.
\end{proof}

\begin{proof}[Proof of  Theorem \ref{T:mainBernoulli_2} from Lemma \ref{L:Ind}]
Let now $\{\pi_i \colon i\leq 2^m\}$ and $Y\subseteq X_{\alpha}$ as in the above claim. We may conclude with the proof of Theorem \ref{T:mainBernoulli} as follows.

Let $C\subseteq X_{\alpha}$ be comeager and consider the subset $\widehat{D}$ of $X_{\alpha}\times X_{\alpha}$ given by
\[\widehat{D}:=\bigg(\bigcap_{i\leq  2^m} \pi^{-1}_i(C) \bigg) \cap \bigg(Y\times Y\bigg). \]
Since each $\pi_i$ is continuous open and surjective, there exists some $z_*\in C$ and some comeager  $D\subseteq C$ so that for all $z\in D$ we have that $(z_*,z)\in \widehat{D}$, i.e.,  $\pi_{i}(z,z_*)\in C$ for all $i\leq 2^m$. But then, for every $x,y\in D$, the concatenation of the paths 
\[x=\pi_0(x,z_*),  \ldots, \pi_{2^m}(x,z_*) = z_* \quad\text{ and } \quad  z_* = \pi_{2^m}(y,z_*),  \ldots,  \pi_0(y,z_*)=y \]
provides a finite path from $x$ to $y$  in $C$ which witnesses the generic $\alpha$-unbalancedness of the Bernoulli shift  $G_{\alpha}\curvearrowright X_{\alpha}$, according to Definition \ref{D:GenericUnbalancedness2}.
Finally, a simple induction on $\alpha$ establishes that the action $G_{\alpha}\curvearrowright X_{\alpha}$ has meager orbits for all $\alpha<\omega_1$. 
\end{proof}

\subsection{The product lemma} We close this section with the following general lemma that is going to be used in both Sections \ref{S:lambda=0} and \ref{S:lambda>0} for the proof of Lemma \ref{L:Ind}.

\begin{lemma}\label{Aris:LProd}
Let  $G_n\curvearrowright X_n$ be a Polish $G_n$-space, $x_n,y_n\in X_n$, and $V_n\subseteq_1 G_n$ for all  $n\in\mathbb{N}$. 
  Set $G:=\prod G_n$,  $V:=\prod V_n$, $X:=\prod X_n$, $x:=(x_n)_n$, $y:=(y_n)_n$. If  there is some  $n_0\in\mathbb{N}$ so that $V_{n}=G_{n}$ for $n<n_0$ and  $x_n \leftrightsquigarrow^\alpha_{V_n} y_n$ holds for all $n\in \mathbb{N}$, then so does 
\[ x \leftrightsquigarrow^\alpha_{V} y.\]
\end{lemma}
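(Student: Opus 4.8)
The plan is to prove Lemma \ref{Aris:LProd} by induction on the ordinal $\alpha$. The base case $\alpha = 0$ is immediate: the statement $x \leftrightsquigarrow^0_V y$ unpacks as $x \in \overline{V \cdot y}$ and $y \in \overline{V \cdot x}$ in the product space $X = \prod X_n$; since $V = \prod V_n$ and the product topology has basic open sets depending on finitely many coordinates, closure in the product is computed coordinatewise, so the hypotheses $x_n \in \overline{V_n \cdot y_n}$ and $y_n \in \overline{V_n \cdot x_n}$ for all $n$ give exactly what we need. (The coordinate condition $V_n = G_n$ for $n < n_0$ plays no role here; it is needed to ensure $V \subseteq_1 G$ is a genuine basic open neighborhood of the identity, which is what makes $\leftrightsquigarrow^\alpha_V$ defined in the first place.)

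For the inductive step, assume $\alpha > 0$ and that the lemma holds for all $\beta < \alpha$ (and for all choices of the data). I would verify Definition \ref{Def:Warrow2}(2) directly. So fix an open $W \subseteq_1 G$ and an open $U \subseteq X$ with, say, $x \in U$ (the case $y \in U$ being symmetric). Shrinking if necessary, I may assume $W = \prod W_n$ is basic open (with $W_n = G_n$ for all but finitely many $n$) and $U = \prod U_n$ is basic open (with $U_n = X_n$ for all but finitely many $n$); let $A \subseteq \mathbb{N}$ be the finite set of coordinates where either $W_n \ne G_n$ or $U_n \ne X_n$, and enlarge $A$ to also contain $\{0, \dots, n_0 - 1\}$. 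For each $n \in A$, apply $x_n \leftrightsquigarrow^\alpha_{V_n} y_n$ to the open sets $W_n \subseteq_1 G_n$ and $U_n$ (which contains $x_n$) to obtain $g^x_n, g^y_n \in V_n$ with $g^x_n x_n, g^y_n y_n \in U_n$ and $g^x_n x_n \leftrightsquigarrow^\beta_{W_n} g^y_n y_n$ for all $\beta < \alpha$. For $n \notin A$ set $g^x_n := g^y_n := 1_{G_n}$; then trivially $g^x_n x_n = x_n \in U_n = X_n$ and $g^x_n x_n \leftrightsquigarrow^\beta_{G_n} g^y_n y_n$ for every $\beta$ (as $x_n \leftrightsquigarrow^\beta_{G_n} y_n$ holds by hypothesis, using $V_n = G_n$ here when $n < n_0$ and for $n \geq n_0, n \notin A$ we have $W_n = G_n$ so we can invoke the hypothesis $x_n \leftrightsquigarrow^\alpha_{V_n} y_n$ together with Lemma \ref{Aris:LSimpleSquiggleProperties}(2),(3) — actually, more carefully, for $n \notin A$ with $n \geq n_0$ we want $x_n \leftrightsquigarrow^\beta_{G_n} y_n$; this follows from the hypothesis $x_n \leftrightsquigarrow^\alpha_{V_n} y_n$ via Lemma \ref{Aris:LSimpleSquiggleProperties}(2),(3)).

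Now set $g^x := (g^x_n)_n$ and $g^y := (g^y_n)_n$. Since $g^x_n \in V_n$ for all $n$ (trivially for $n \notin A$), we have $g^x, g^y \in V = \prod V_n$. Moreover $g^x x = (g^x_n x_n)_n$ lies in $U = \prod U_n$ because $g^x_n x_n \in U_n$ for $n \in A$ and $U_n = X_n$ for $n \notin A$; similarly $g^y y \in U$. It remains to check $g^x x \leftrightsquigarrow^\beta_W g^y y$ for each $\beta < \alpha$. Fix such a $\beta$. We have $g^x_n x_n \leftrightsquigarrow^\beta_{W_n} g^y_n y_n$ for all $n$ (for $n \in A$ by construction, for $n \notin A$ because $W_n = G_n$ and $x_n \leftrightsquigarrow^\beta_{G_n} y_n$, noting $g^x_n x_n = x_n$, $g^y_n y_n = y_n$). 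Since $W_n = G_n$ for $n \notin A$ and $A \supseteq \{0, \dots, n_0 - 1\}$, the coordinate-padding hypothesis of Lemma \ref{Aris:LProd} is met for the data $(G_n \curvearrowright X_n, g^x_n x_n, g^y_n y_n, W_n)$ at level $\beta < \alpha$, so the inductive hypothesis yields $g^x x \leftrightsquigarrow^\beta_W g^y y$, as required. This completes the induction.

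I do not anticipate a serious obstacle here; the argument is a routine coordinatewise-plus-padding verification. The one point that requires mild care is the bookkeeping for coordinates outside the finite set $A$: one must make sure that the "do nothing" choice $g^x_n = g^y_n = 1_{G_n}$ really does witness $x_n \leftrightsquigarrow^\beta_{G_n} y_n$ for all $\beta < \alpha$, which is where the hypothesis $x_n \leftrightsquigarrow^\alpha_{V_n} y_n$ together with monotonicity in the ordinal and in the neighborhood (Lemma \ref{Aris:LSimpleSquiggleProperties}(2),(3)) is used, and that the finite set $A$ is chosen large enough to contain $\{0,\dots,n_0-1\}$ so that the inductive invocation of the lemma itself has its hypotheses satisfied.
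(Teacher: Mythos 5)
Your proof is correct and follows essentially the same approach as the paper's: a direct induction on $\alpha$, with the base case handled by computing closures coordinatewise in the product topology, and the inductive step handled by choosing $g^x_n,g^y_n$ coordinatewise from the hypotheses $x_n \leftrightsquigarrow^\alpha_{V_n} y_n$ and then invoking the inductive hypothesis on the product. The paper applies the hypothesis uniformly to every coordinate rather than splitting into the finite set $A$ and its complement, but the two bookkeeping schemes are equivalent; incidentally, the condition in the lemma almost certainly means ``$V_n=G_n$ for $n\geq n_0$'' (so that $V$ is open in $\prod G_n$), which is also how your argument in fact treats it, even though your parenthetical reads it the other way.
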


\begin{proof}
This is clearly true if $\alpha=0$, as in the product topology on $X$ we have that  ``$y_n \in \overline{V_n \cdot x_n}$ for all $n\in\mathbb{N}$" implies that ``$y\in \overline{V \cdot x }$", and similarly for $x$ in place of $y$. 

Assume now inductively that the lemma holds for all ordinals less than $\alpha>0$ and let  $U\subseteq X$,  $W\subseteq_1 G$ be open, say with $y\in U$ (the case $x\in U$ is similar). We may assume without loss of generality that there exists some $m_0\in\mathbb{N}$ and  open $W_n\subseteq_1 G_n$ and   $U_n\subseteq X_n$ for every $n\in \mathbb{N}$, with $W_n=G_n$  and $U_n=X_n$ for all $n<m_0$, so that  $W:=\prod W_n$ and $U:=\prod U_n$.
 Since $x_n \leftrightsquigarrow^\alpha_{V_n} y_n$,  we can find $g^x_n,g^y_n\in V_n$ so that $g^x_n\cdot x_n, g^y_n\cdot  y_n \in U_n$ and
 \[g^x_n\cdot x_n \leftrightsquigarrow^{\beta}_{W_n} g^y_n\cdot y_n\]
for all $\beta<\alpha$ and  $n\in\mathbb{N}$.
Set $g^x:=(g^x_n)_n$ and $g^y:=(g^y_n)_n$ and notice that
 $g^x x, g^y y \in U$. Moreover, by inductive assumption, for all $\beta<\alpha$ we have that 
\[g^x\cdot x \leftrightsquigarrow^{\beta}_W g^y\cdot y.\]
 \end{proof}

\section{Proof of Lemma \ref{L:Ind}, when $\lambda=0$}\label{S:lambda=0}

If $\lambda=0$ then $L$ is isomorphic to $(\mathbb{Z},\leq)$  and $\mathrm{Aut}(L)$ is isomorphic to the discrete group $\mathbb{Z}$. Hence, $J(1,G\curvearrowright X )$  is  induced simply by taking the wreath product with $\mathbb{Z}$:
\[\mathbb{Z} \; \mathrm{Wr} \; G \curvearrowright X^{\mathbb{Z}}\]
We will define a  continuous, surjective, and open map $\pi\colon X^{\mathbb{Z}}\times X^{\mathbb{Z}}\to X^{\mathbb{Z}}$ satisfying the conclusion of   Lemma \ref{L:Ind}.  First, let $p\colon \mathbb{Z}\to  \{0,1\}$ be any map with the property:

\begin{enumerate}
\item[($\star$)]  if $B\subseteq \mathbb{Z}$ is finite  and  $j\in\{0,1\}$, there is  $\ell\in\mathbb{Z}$ with  $p(k-\ell)=j$ for all $k\in B$.
\end{enumerate}

For example, the generic map in $\{0,1\}^{\mathbb{Z}}$ has this property. Alternatively one can simply take the map sending all negative
$k\in\mathbb{Z}$ to $0$ and strictly positive $k\in\mathbb{Z}$ to $1$. 
Having fixed any  $p\colon \mathbb{Z}\to  \times\{0,1\}$ which satisfies  $(\star)$ as above, we consider the induced map 
\[\pi\colon X^{\mathbb{Z}}\times X^{\mathbb{Z}}\to X^{\mathbb{Z}} \quad  \text{ with } \quad \pi(x,y)=z,  \text{ where } z_k=\begin{cases}  
x_k & \text{ if } p(k)=0\\
y_k & \text{ if } p(k)=1\\
\end{cases} \]

\begin{claim}\label{Aris:Ind0Claim1}
 The map $\pi\colon X^{\mathbb{Z}}\times X^{\mathbb{Z}}\to X^{\mathbb{Z}}$ is a continuous, open, and surjective.
\end{claim}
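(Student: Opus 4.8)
The claim states that $\pi\colon X^{\mathbb{Z}}\times X^{\mathbb{Z}}\to X^{\mathbb{Z}}$, defined coordinatewise by reading $x_k$ or $y_k$ according to whether $p(k)=0$ or $p(k)=1$, is continuous, open, and surjective. This is essentially a routine verification, so my plan is to dispatch each of the three properties in turn using the product structure of the spaces involved.

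For continuity, I would note that $\pi$ is defined coordinatewise: the $k$-th coordinate of $\pi(x,y)$ depends only on a single coordinate of the input (namely $x_k$ if $p(k)=0$, or $y_k$ if $p(k)=1$), and the coordinate projections are continuous. Since $X^{\mathbb{Z}}$ carries the product topology, a map into it is continuous iff each coordinate is continuous, so continuity follows immediately. For surjectivity, given any $z\in X^{\mathbb{Z}}$, I would exhibit a preimage explicitly: take $x$ with $x_k=z_k$ when $p(k)=0$ (and $x_k$ arbitrary otherwise), and $y$ with $y_k=z_k$ when $p(k)=1$ (and $y_k$ arbitrary otherwise); then $\pi(x,y)=z$ by construction. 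In fact, taking $x=y=z$ already works.

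The mildly substantive part is openness. Here I would work with basic open sets. A basic open set in $X^{\mathbb{Z}}\times X^{\mathbb{Z}}$ has the form $U\times V$ where $U=\prod_k U_k$, $V=\prod_k V_k$ with $U_k, V_k$ open in $X$ and equal to $X$ outside a finite set $B$. I would show $\pi(U\times V)$ contains a basic open neighborhood of any of its points: given $z=\pi(x,y)$ with $x\in U$, $y\in V$, the set $W:=\prod_k W_k$ where $W_k:=U_k$ if $p(k)=0$ and $W_k:=V_k$ if $p(k)=1$ is open (it differs from $X$ only on the finite set $B$), contains $z$, and is contained in $\pi(U\times V)$ — indeed any $z'\in W$ is hit by the pair $(x',y')$ where $x'_k:=z'_k$ when $p(k)=0$ and $x'_k:=x_k$ otherwise, and symmetrically for $y'$; one checks $x'\in U$, $y'\in V$ using that $W_k\subseteq U_k$ when $p(k)=0$ and $U_k=X$ whenever we fall back to $x_k$ (since altering coordinates only where $p(k)=1$ keeps us inside $U$ provided we only change coordinates where $U_k$ could be proper — and those are handled by the $W$ condition). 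Since $\pi$ maps this basic neighborhood basis appropriately and arbitrary open sets are unions of basic ones, $\pi$ is open.

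I do not anticipate any real obstacle here; the only point requiring a little care is bookkeeping in the openness argument to confirm that the constructed preimage $(x',y')$ genuinely lies in $U\times V$, which comes down to the fact that on coordinates outside $B$ every factor is all of $X$, and on coordinates inside $B$ the partition by $p$ lets us route each constraint to exactly one of the two factors. Note that property $(\star)$ of the map $p$ plays no role in this claim — it is only needed later to verify the squiggle relations — so I would not invoke it here.
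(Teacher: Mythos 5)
Your proof is correct, but the paper takes a slicker route. Rather than verifying continuity, surjectivity, and openness coordinate by coordinate, the paper observes that $\pi$ factors as the composition of the projection $X^{\mathbb{Z}}\times X^{\mathbb{Z}}\to X^{M}\times X^{N}$ (where $M=p^{-1}(0)$, $N=p^{-1}(1)$) with the canonical homeomorphism $X^{M}\times X^{N}\to X^{\mathbb{Z}}$ arising from the partition $M\sqcup N=\mathbb{Z}$; projections onto factors of a product are continuous, open, and surjective, and composing with a homeomorphism preserves all three properties, so the claim is immediate. Your direct verification is equivalent --- in fact, your computation in the openness step that $\pi(U\times V)=W$ is exactly what the factorization encodes --- but the structural observation avoids all the index bookkeeping and makes it transparent at once why the three properties hold simultaneously. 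One small simplification of your argument worth noting: you do not need to argue locally around an arbitrary point of $\pi(U\times V)$; a direct check shows $\pi(U\times V)=W$ on the nose for basic open $U\times V$, which immediately gives openness. You are also right that property $(\star)$ of $p$ is irrelevant here; it is used only for the squiggle relations in Claim~\ref{Aris:Ind0Claim2}.
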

\begin{proof}[Proof of Claim.]
The map $\pi$ is the composition of the projection  $X^{\mathbb{Z}}\times X^{\mathbb{Z}}\to X^{M}\times X^N$ where $M=p^{-1}(0)$ and $N=p^{-1}(1)$ with the homemorphism $X^{M}\times X^N\to X^{\mathbb{Z}}$ induced by $M\sqcup N= \mathbb{Z}$. These two maps are continuous, open, and surjective.
\end{proof}

\begin{claim}\label{Aris:Ind0Claim2}
Let  $x=(x_k)_k$, $y=(y_k)_k$ in $X^{\mathbb{Z}}$ and  a countable ordinal $\nu$ so that for all  $k\in \mathbb{Z}$ the $G$--orbits of $x_k$ and $y_k$ are dense in $X$ and  $x_k \leftrightsquigarrow^{\gamma}_G y_k$ holds for all $\gamma<\nu$. Then,
\[ x  \; \leftrightsquigarrow^{\beta}_{\mathbb{Z} \, \mathrm{Wr} \, G}  \; \pi(x,y) \quad  \text{ and } \quad \pi(x,y)   \;  \leftrightsquigarrow^{\beta}_{\mathbb{Z} \, \mathrm{Wr} \, G}   \; y  \; \text{ hold for all } \beta<\nu+1.\]
\end{claim}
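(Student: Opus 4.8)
\textbf{Proof plan for Claim \ref{Aris:Ind0Claim2}.}

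The plan is to prove the statement by induction on $\beta<\nu+1$, establishing the ``forward half'' $x\leftrightsquigarrow^{\beta}_{\mathbb{Z}\,\mathrm{Wr}\,G}\pi(x,y)$; the ``backward half'' $\pi(x,y)\leftrightsquigarrow^{\beta}_{\mathbb{Z}\,\mathrm{Wr}\,G}y$ is entirely symmetric since $\pi$ treats the roles of $x$ and $y$ symmetrically up to swapping $p$ with $1-p$, which still satisfies $(\star)$. Throughout, write $z:=\pi(x,y)$ and recall that a basic open neighborhood of the identity in $\mathbb{Z}\,\mathrm{Wr}\,G$ has the form $\{0\}\times\prod_k V_k$ with $V_k=G$ off a finite set $B$. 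The role of property $(\star)$ is the following: given any finite $B\subseteq\mathbb{Z}$ we can find $\ell\in\mathbb{Z}$ with $p(k-\ell)=0$ for all $k\in B$, so that the shift-by-$\ell$ element of $\mathbb{Z}\,\mathrm{Wr}\,G$ moves $z$ to a point that agrees with (a shift of) $x$ on the ``relevant'' coordinates $B$; symmetrically with $j=1$ for agreement with $y$.

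For the base case $\beta=0$ I would argue directly that $x\in\overline{(\mathbb{Z}\,\mathrm{Wr}\,G)\cdot z}$ and $z\in\overline{(\mathbb{Z}\,\mathrm{Wr}\,G)\cdot x}$. Given a basic neighborhood of $x$ determined by a finite set $B$ of coordinates and small open sets there, first apply $(\star)$ to find a shift $\ell$ with $p(k-\ell)=0$ for all $k\in B$; after shifting, $z$ restricted to $B$ is literally a shift of $x$ restricted to a finite set $B'$, and on the coordinates in $B'$ where $z$ disagrees with the target we use density of the $G$-orbit of each $x_k$ (equivalently of $y_k$) to move those finitely many coordinates into the prescribed open sets by a group element of the form $(\ell,(g_k)_k)$. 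The same works with $x$ and $z$ interchanged (using that $z$ is built coordinatewise from the $x_k,y_k$, all of which have dense orbit). This is the standard Bernoulli-shift density argument and should be routine.

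For the inductive step, fix $0<\beta<\nu+1$ and assume the claim for all smaller ordinals and for all pairs satisfying the hypotheses. To verify $x\leftrightsquigarrow^{\beta}_{\mathbb{Z}\,\mathrm{Wr}\,G}z$, take an arbitrary basic $W=\{0\}\times\prod_k W_k\subseteq_1\mathbb{Z}\,\mathrm{Wr}\,G$ (with $W_k=G$ off a finite set) and an open $U$ containing $x$ or $z$. Using $(\star)$, pick $\ell$ so that $p$ equals $0$ on the finite set of coordinates involved in $U$ and in the supports of $V$ (the neighborhood defining $U$) and $W$, and let $g_0:=(\ell,(h_k)_k)$ for suitable $h_k$ chosen, via density of the orbits, so that $g_0 z$ lands in $U$ and also $g_0 x$ lands in $U$ — here is where I use that after shifting, $z$ and $x$ agree on all the coordinates that matter, so a single group element simultaneously carries both into $U$. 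It then remains to see $g_0 x\leftrightsquigarrow^{\gamma}_W g_0 z$ for all $\gamma<\beta$; by Lemma \ref{Aris:LSimpleSquiggleProperties}(4) this is equivalent to $x\leftrightsquigarrow^{\gamma}_{g_0^{-1}Wg_0}z$, and since $g_0^{-1}Wg_0$ is again a basic neighborhood (conjugation by $(\ell,(h_k)_k)$ permutes and conjugates the $W_k$) and $\gamma<\beta\le\nu$, the hypotheses $x_k\leftrightsquigarrow^{\gamma}_G y_k$ (for $\gamma<\nu$, hence $\gamma\le\gamma$) still hold coordinatewise, so the inductive hypothesis applies. Actually the cleanest route is to first reduce to the case where $g_0$ is a pure shift by absorbing the $(h_k)_k$ part into the witnesses, and then invoke the inductive hypothesis together with Lemma \ref{Aris:LProd}: the point $x$ and the point $z$, viewed on the block where $p\equiv 0$, are shifts of each other, and on that block $x_k\leftrightsquigarrow^{\gamma}_{G}x_k$ trivially and the density hypotheses let us patch the remaining coordinates. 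The main obstacle I anticipate is precisely this bookkeeping: organizing the finite sets of ``relevant'' coordinates so that one application of $(\star)$ simultaneously aligns $U$, $W$, and the supports, and checking that conjugating $W$ by the chosen shift-plus-group element stays inside the allowed class of basic neighborhoods so that the inductive hypothesis is applicable — none of it is deep, but it is the kind of index-chasing where one must be careful. Once the block-alignment is set up, the recursion closes exactly as in \cite[Theorem 1.5]{AP2021}, and noting $\nu+1=\sup\{\beta+1:\beta<\nu+1\}$ handles the top ordinal $\beta$ in the range uniformly with the rest.
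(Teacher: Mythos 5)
Your proposed induction on $\beta$ does not close as you have set it up, and the point is worth spelling out. In the inductive step you need $g_0 x\leftrightsquigarrow^{\gamma}_{W}g_0 z$ for $\gamma<\beta$ with $W$ a small basic neighborhood, equivalently $x\leftrightsquigarrow^{\gamma}_{g_0^{-1}Wg_0}z$; but your inductive hypothesis only gives $x\leftrightsquigarrow^{\gamma}_{\mathbb{Z}\,\mathrm{Wr}\,G}z$, with the \emph{full} group in the subscript. Lemma \ref{Aris:LSimpleSquiggleProperties}(3) is monotone in the wrong direction for this (larger $V$, not smaller), so the induction hypothesis simply does not apply. You notice the issue and gesture at Lemma \ref{Aris:LProd}, but you then write ``invoke the inductive hypothesis together with Lemma \ref{Aris:LProd},'' which misreads the structure: Lemma \ref{Aris:LProd} \emph{replaces} the induction on $\beta$; it is not a supplement to it.

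The actual argument the paper uses (and the one you should switch to) dispenses with induction on $\beta$ entirely. By Lemma \ref{Aris:LSimpleSquiggleProperties}(2) it suffices to prove the single case $\beta=\nu$. For that, fix basic $U$ and $W=\{0\}\times\prod_k W_k$ supported on a finite $B$, use $(\star)$ to find a shift $\ell$ with $p(k-\ell)=0$ for all $k\in B$, and use density of orbits to choose $h=\oplus_{k\in B}h_k$ pushing the shifted points into $U$; set $g^x=g^z=hg$. The coordinatewise facts you then need are: for $k\in B$, $(g^xx)_k=(g^zz)_k$ (so $\leftrightsquigarrow^{\gamma}_{W_k}$ holds trivially because the points coincide); and for $k\notin B$, $W_k=G$ and $(g^zz)_k$ equals either $x_{k-\ell}$ or $y_{k-\ell}$, so the hypothesis $x_{k-\ell}\leftrightsquigarrow^{\gamma}_G y_{k-\ell}$ ($\gamma<\nu$) applies. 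Lemma \ref{Aris:LProd} --- whose hypothesis is stated for \emph{arbitrary} basic $V$, giving exactly the flexibility your induction lacked --- then yields $g^xx\leftrightsquigarrow^{\gamma}_W g^zz$ for all $\gamma<\nu$, which is the recursive clause in the definition of $\leftrightsquigarrow^{\nu}$. That is the whole proof; no descent on $\beta$ is needed or valid in the form you wrote it.
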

\begin{proof}[Proof of Claim.]
Set $z:=\pi(x,y)$. By Lemma \ref{Aris:LSimpleSquiggleProperties}(2) it suffices to show that  $x   \leftrightsquigarrow^{\nu}_{\mathbb{Z} \, \mathrm{Wr} \, G} z$ and
$z   \leftrightsquigarrow^{\nu}_{\mathbb{Z} \, \mathrm{Wr} \, G} y$ hold. Since the argument is symmetric we only establish  the relation:
 \[x   \leftrightsquigarrow^{\nu}_{\mathbb{Z} \, \mathrm{Wr} \, G} z.\]

 Let $U\subseteq X^{\mathbb{Z}}$ be open with $z\in U$ (the case $x\in U$ is similar) and let $V\subseteq_1 \mathbb{Z}\, \mathrm{Wr}\, G$. After shrinking $U,V$ if necessary we may assume that there exists some finite set $B\subseteq \mathbb{Z}$ and, for all $k\in B$,   non-empty open  sets $U_{k}\subseteq X$ and
$V_{k}\subseteq_1 G$,  so that
\begin{eqnarray*}
U=\{x \in X^{\mathbb{Z}}\colon x_k \in U_k \text{ for all } k\in B \},&\\
V=\{h \in G^{\mathbb{Z}} \colon h_k \in V_k \text{ for all } k\in B\},&
\end{eqnarray*}
where  $G^{\mathbb{Z}}$ is identified with its natural copy as a clopen subgroup of $\mathbb{Z}\, \mathrm{Wr}\, G$.

By property $(\star)$ of the map $p$ in the definition of $\pi$, we may choose some $g\in \mathbb{Z}\, \mathrm{Wr}\, G$ which implements an  outer $\mathbb{Z}$-shift  $(gx)_k=x_{k-\ell}$ by an appropriate integer $\ell$ so that for all $k\in B$ we have that $(gx)_k=(gz)_k$.
By the density of the orbits of $x_n$ and $y_n$ in $X$ we can further find $h_k\in G$ for every $k\in B$ so that $h_k\cdot (gx)_k \in U_k$, and hence $h_k\cdot (gz)_k \in U_k$, for all $k\in B$. Let $h:=\oplus_{k\in B} h_k$ be the associated element of $G^{\mathbb{Z}}\leq \mathbb{Z}\, \mathrm{Wr} \, G$. Set now $g^x=g^z=h g$. It is immediate that both  $g^x x\in U$ and $g^z z\in U$  hold. But by hypothesis
\[g^x x \leftrightsquigarrow_{V}^{\gamma}  g^z z\]
holds for all $\gamma<\nu$, as desired. Indeed, the latter follows from Lemma \ref{Aris:LProd} since 
\[(g^x x)_k \leftrightsquigarrow_{V_k}^{\gamma}  (g^z z)_k\]
holds for all $\gamma<\nu$ and all
 $k\in\mathbb{Z}$. To see this, notice that: when $k\in B$,  we  have the even stronger property $(g^x x)_k=(g^z z)_k$; and when $k\not\in B$, then $V_k=G$, 
 $(g^x x)_k=x_{k-\ell}$ and $(g^z z)_k=z_{k-\ell}$. But since  $z_{k-\ell}$ is either equal to $x_{k-\ell}$ or to $y_{k-\ell}$, by the hypothesis in the statement of the claim  we have that  $x_{k-\ell} \leftrightsquigarrow^{\gamma}_G y_{k-\ell}$ for all $\gamma<\nu$.
\end{proof}

\section{Proof of Lemma \ref{L:Ind}, when $\lambda>0$} \label{S:lambda>0}
Throughout this section we fix a countable ordinal $\lambda\neq 0 $ and  set  $\mu:=\omega^{\lambda}$ and $L:=\mathbb{Z}[\mu]$. 
We also fix some Polish $G$-space $X$ and consider the jump $J(\mu, G\curvearrowright X)$ of $G\curvearrowright X$:
\begin{equation}
\mathrm{Aut}(L)\mathrm{Wr}\, G \curvearrowright X^L
\end{equation}
We will define a ``fusion" map $\pi \colon X^L\times X^L \to X^L$  satisfying the conclusion of Lemma \ref{L:Ind}.

 The fact that $\mu$ is of the form $\omega^{\lambda}$ implies that  $\mu$  {\bf additively indecomposable}:

\begin{lemma}\label{L:additivelyIndecompo}
If $\mu=\omega^{\lambda}$ for some ordinal $\lambda$ and $\alpha,\beta<\mu$, then  $\alpha+\beta<\mu$.
\end{lemma}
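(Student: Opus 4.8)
The statement to prove is Lemma \ref{L:additivelyIndecompo}: if $\mu = \omega^\lambda$ and $\alpha, \beta < \mu$, then $\alpha + \beta < \mu$. This is the classical characterization of additively indecomposable ordinals, so the proof plan is short and standard.

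\textbf{Plan.} The plan is to reduce to the case where $\alpha$ also has the form $\omega^\lambda$-bounded Cantor normal form and then apply the rules of ordinal arithmetic. First I would dispose of trivial cases: if $\alpha = 0$ the claim is immediate since $\alpha + \beta = \beta < \mu$, and likewise if $\beta = 0$. So assume $\alpha, \beta > 0$ and write their Cantor normal forms. Since $\alpha < \mu = \omega^\lambda$, every exponent appearing in the Cantor normal form of $\alpha$ is strictly less than $\lambda$; in particular the leading term of $\alpha$ is $\omega^{\gamma}$ for some $\gamma < \lambda$, and the same for $\beta$ with leading exponent $\delta < \lambda$.

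\textbf{Key step.} The main computation is that $\alpha + \beta$ has leading term equal to $\omega^{\max(\gamma,\delta)}$ (or, more precisely, when we add $\alpha + \beta$ and collect terms via the absorption rule $\omega^{\eta} + \omega^{\eta'} = \omega^{\eta'}$ whenever $\eta < \eta'$, the leading exponent of the result is $\max$ of the leading exponents of $\alpha$ and $\beta$, which could only stay the same or decrease — it never increases). Concretely: write $\alpha = \omega^{\gamma_1} c_1 + \cdots$ and $\beta = \omega^{\delta_1} d_1 + \cdots$ in Cantor normal form with $\gamma_1 < \lambda$ and $\delta_1 < \lambda$. When forming $\alpha + \beta$, all terms of $\alpha$ with exponent $< \delta_1$ are absorbed, and the resulting Cantor normal form of $\alpha + \beta$ has all exponents among $\{\gamma_1, \ldots\} \cup \{\delta_1, \ldots\}$, hence all $< \lambda$. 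Therefore $\alpha + \beta < \omega^\lambda = \mu$.

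\textbf{Alternative (cleaner) argument.} Even simpler: for fixed $\alpha < \omega^\lambda$, the map $\beta \mapsto \alpha + \beta$ is continuous and strictly increasing in $\beta$, so $\sup_{\beta < \omega^\lambda}(\alpha + \beta) = \alpha + \omega^\lambda$. Now I claim $\alpha + \omega^\lambda = \omega^\lambda$ whenever $\alpha < \omega^\lambda$: indeed $\alpha < \omega^\lambda = \sup_{\eta < \lambda} \omega^\eta \cdot \omega = \sup\{\omega^\eta n : \eta < \lambda, n < \omega\}$, wait — better to argue $\alpha + \omega^\lambda = \alpha + \sup_{\xi < \omega^\lambda} \xi$. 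Hmm, the cleanest is: since $\alpha < \omega^\lambda$ and $\omega^\lambda = \lim_{n} \omega^{\lambda}$... let me just use: $\alpha + \omega^\lambda = \alpha + \lim_{\xi \to \omega^\lambda} \xi$; but $\omega^\lambda$ is a limit of ordinals of the form $\omega^{\lambda'} m$ with... Actually the slickest: $\omega^\lambda = \omega \cdot \omega^\lambda$ is not right unless... no. Let me just say: because $\alpha < \omega^\lambda$, there is $n$... no, $\omega^\lambda$ need not be $\omega^{\lambda-1}\cdot\omega$. I'll stick with the Cantor normal form argument as the safe primary route, and mention the continuity argument $\alpha + \omega^\lambda = \omega^\lambda$ as a remark, proving $\alpha + \omega^\lambda = \omega^\lambda$ via: $\alpha + \omega^\lambda = \sup_{\xi < \omega^\lambda}(\alpha + \xi)$ and each $\alpha + \xi < \omega^\lambda$ by the CNF computation, while the sup is $\geq \omega^\lambda$ trivially. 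This is slightly circular-looking but fine.

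\textbf{Obstacle.} There is essentially no obstacle here — it is textbook ordinal arithmetic (Cantor normal form and the absorption identity). The only care needed is bookkeeping in the Cantor normal form when exponents coincide (then coefficients add) versus when they differ (then smaller terms are absorbed), and noting in every case no exponent exceeds the maximum of those already present, hence stays $< \lambda$. I would present the Cantor normal form argument in full since it is self-contained and leaves nothing to the reader's imagination.

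\begin{proof}
If $\alpha = 0$ then $\alpha + \beta = \beta < \mu$, and if $\beta = 0$ then $\alpha + \beta = \alpha < \mu$; so assume $\alpha, \beta > 0$. Write their Cantor normal forms
\[
\alpha = \omega^{\gamma_1} k_1 + \cdots + \omega^{\gamma_r} k_r, \qquad \beta = \omega^{\delta_1} l_1 + \cdots + \omega^{\delta_s} l_s,
\]
with $\gamma_1 > \cdots > \gamma_r$, $\delta_1 > \cdots > \delta_s$, and all $k_i, l_j$ finite and positive. Since $\alpha < \mu = \omega^\lambda$ and $\beta < \mu = \omega^\lambda$, comparing leading terms gives $\gamma_1 < \lambda$ and $\delta_1 < \lambda$, hence every exponent $\gamma_i$ and $\delta_j$ is $< \lambda$.

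Now compute $\alpha + \beta$. Let $t \le r$ be least such that $\gamma_t \le \delta_1$ (if no such $t$ exists, i.e. every $\gamma_i > \delta_1$, set $t = r+1$). By the absorption rule $\omega^{\gamma} m + \omega^{\delta} n = \omega^{\delta} n$ whenever $\gamma < \delta$, and the rule $\omega^{\gamma} m + \omega^{\gamma} n = \omega^{\gamma}(m+n)$, one has
\[
\alpha + \beta = \omega^{\gamma_1} k_1 + \cdots + \omega^{\gamma_{t-1}} k_{t-1} + \big(\omega^{\delta_1} l_1 + \cdots + \omega^{\delta_s} l_s\big)
\]
in the case $\gamma_t < \delta_1$, and
\[
\alpha + \beta = \omega^{\gamma_1} k_1 + \cdots + \omega^{\gamma_{t-1}} k_{t-1} + \omega^{\delta_1}(k_t + l_1) + \omega^{\delta_2} l_2 + \cdots + \omega^{\delta_s} l_s
\]
in the case $\gamma_t = \delta_1$. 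In either case the exponents occurring in the Cantor normal form of $\alpha + \beta$ all lie in the set $\{\gamma_1, \dots, \gamma_{t-1}\} \cup \{\delta_1, \dots, \delta_s\}$, and every element of this set is $< \lambda$. In particular the leading exponent of $\alpha + \beta$ is $< \lambda$, so $\alpha + \beta < \omega^\lambda = \mu$.
\end{proof}
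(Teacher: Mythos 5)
Your proof is correct, but it takes a genuinely different route from the paper's. You work through the Cantor normal forms of $\alpha$ and $\beta$ and track, via the absorption identity $\omega^{\gamma}m+\omega^{\delta}n=\omega^{\delta}n$ for $\gamma<\delta$, exactly which exponents can appear in the normal form of $\alpha+\beta$, concluding they all stay below $\lambda$. The paper instead does a three-way case split on $\lambda$: for $\lambda=0$ the claim is vacuous; for $\lambda=\nu+1$ it writes $\omega^{\lambda}=\omega^{\nu}\cdot\omega$, bounds $\alpha<\omega^{\nu}k$ and $\beta<\omega^{\nu}\ell$ for finite $k,\ell$, and computes $\alpha+\beta<\omega^{\nu}(k+\ell)<\omega^{\nu}\cdot\omega$; for limit $\lambda$ it uses cofinality to pick $\xi<\lambda$ with $\alpha,\beta<\omega^{\xi}$ and then $\alpha+\beta<\omega^{\xi}\cdot 2<\omega^{\xi+1}\le\omega^{\lambda}$. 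The paper's argument is shorter and uses only ordinal multiplication and a supremum, with no appeal to CNF; yours is more structural, pinning down the exact shape of $\alpha+\beta$, and dovetails nicely with the Hessenberg-addition discussion that immediately follows the lemma in the paper (which is itself couched entirely in CNF). Both are standard and both work.

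One editorial point: the paragraph labelled ``Alternative (cleaner) argument'' is a string of abandoned false starts ending in an admission of circularity; since your CNF proof is complete and self-contained, that paragraph should simply be deleted rather than left as a half-formed remark.
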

\begin{proof}
For $\lambda=0$ we have $\alpha+\beta=0+0=0<1=\mu$. When $\lambda=\nu+1$ then there exist $k,\ell<\omega$ with $\alpha<\omega^{\nu}\cdot k$ and  $\beta<\omega^{\nu}\cdot \ell$. Hence   $\alpha+\beta< \omega^{\nu}\cdot (k + \ell)<\omega^{\nu}\cdot \omega$. Finally, if $\lambda=\sup_{\xi<\lambda} \xi$ then pick $\xi<\lambda$ with $\alpha,\beta<\omega^{\xi}$ and note $\alpha+\beta<\omega^{\xi}\cdot 2< \omega^{\xi}\cdot \omega=\omega^{\xi+1}<\omega^{\lambda}$.
\end{proof}

Additive indecomposability will be at the heart of  the construction of the  map $\pi$. Indeed the ``saturation"   of $\mu$, derived  by its indecomposability
\[\forall \alpha,\beta<\mu \; \;\exists \gamma<\mu  \;\;(\alpha+\beta<\gamma),\]
will equip the ``generic choice" of a fusion map $\pi$ with the desired properties.

There is just one problem. In trying to saturate  $\pi$  with the desired structure, 
the use of ordinal addition falls short in that it does not preserve order from the right. Indeed, $\beta<\alpha$ does not imply $\beta+\gamma<\alpha+\gamma$ (take for example $\beta=0$, $\alpha=1$, $\gamma=\omega$). For this reason,  we will occasionally need to  make use of what is known as ``natural" or ``Hessenberg" addition.

\smallskip{}

\subsection{Hessenberg addition of ordinals}

Let $\alpha$, $\beta$ be two ordinals and rewrite them as
\begin{equation}\label{EQ:CNFforSUM}
\alpha=\sum_{1\leq i \leq n} \omega^{\xi_i}k_i \quad \text{ and } \quad \beta=\sum_{1\leq i \leq n} \omega^{\xi_i}\ell_i
\end{equation}
by first expressing each of them it its Cantor normal form (\ref{EQ:CantorNormalForm});  then combining all elements of the same power $\omega^{\xi}$ to get  summands as above so that $\xi_n>\xi_{n-1}>\cdots>\xi_1$ and $k_i,\ell_i\geq 0$. By letting $k_i,\ell_i$ take the value $0$ we have expressed both $\alpha$ and $\beta$ as ``polynomials" in $\omega$, where every power $\omega^\xi$ of $\omega$ which shows up in the  above expression  of $\alpha$ shows up also in the above expression of $\beta$, and vice versa. The {\bf Hessenberg} or {\bf natural} addition $\alpha\oplus \beta$ of $\alpha$ and $\beta$ is the ordinal attained if we add the expressions (\ref{EQ:CNFforSUM})  as polynomials of $\omega$:
\[\alpha\oplus \beta := \sum_{1\leq i \leq n} \omega^{\xi_i}(k_i+\ell_i) \]
An important feature of $\oplus$ is that it is commutative, associative, and ``order preserving":
\begin{lemma}\label{L:PropertiesOfHessenberg}
For every $\alpha,\beta,\gamma$ the following properties hold:
\begin{enumerate}
\item $\alpha\oplus \beta = \beta\oplus \alpha$;
\item $(\alpha\oplus \beta)\oplus \gamma =\alpha\oplus (\beta\oplus \gamma)$;
\item $\beta<\alpha$ if and only if $\gamma\oplus \beta < \gamma \oplus \alpha$
\end{enumerate}
\end{lemma}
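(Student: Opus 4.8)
\textbf{Proof plan for Lemma \ref{L:PropertiesOfHessenberg}.}

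The plan is to verify all three properties directly from the polynomial description of Hessenberg addition, working with the shared Cantor-normal-form expansions (\ref{EQ:CNFforSUM}) throughout. Fix $\alpha,\beta,\gamma$ and write them simultaneously in the form $\alpha=\sum_{1\le i\le n}\omega^{\xi_i}a_i$, $\beta=\sum_{1\le i\le n}\omega^{\xi_i}b_i$, $\gamma=\sum_{1\le i\le n}\omega^{\xi_i}c_i$ with $\xi_n>\xi_{n-1}>\cdots>\xi_1$ and nonnegative integer coefficients (padding with zero coefficients so that the \emph{same} list of exponents $\xi_i$ works for all three ordinals at once — this is the key normalization, and I would state it explicitly as a preliminary remark).

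For (1) and (2), once everything is in this common polynomial form the statements reduce to commutativity and associativity of addition of the natural-number coefficients: $\alpha\oplus\beta=\sum_i\omega^{\xi_i}(a_i+b_i)=\sum_i\omega^{\xi_i}(b_i+a_i)=\beta\oplus\alpha$, and likewise $(\alpha\oplus\beta)\oplus\gamma=\sum_i\omega^{\xi_i}\big((a_i+b_i)+c_i\big)=\sum_i\omega^{\xi_i}\big(a_i+(b_i+c_i)\big)=\alpha\oplus(\beta\oplus\gamma)$. I would remark that one must first check that the common-exponent form of $\alpha\oplus\beta$ is again its Cantor normal form (dropping the $\xi_i$ with coefficient $0$), so that the iterated operation is being computed correctly; this is immediate since $\sum_i\omega^{\xi_i}d_i$ with $\xi_n>\cdots>\xi_1$ and $d_i\ge 0$ is in Cantor normal form after deleting the zero terms.

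For (3) I would argue as follows. Comparing two ordinals in Cantor normal form $\delta=\sum_i\omega^{\xi_i}d_i$ and $\varepsilon=\sum_i\omega^{\xi_i}e_i$ (common exponents, decreasing), one has $\delta<\varepsilon$ if and only if at the largest index $i$ where $d_i\ne e_i$ one has $d_i<e_i$ — i.e. comparison is lexicographic in the coefficient vectors read from the top exponent down. Now $\gamma\oplus\beta$ has coefficient vector $(c_i+b_i)_i$ and $\gamma\oplus\alpha$ has coefficient vector $(c_i+a_i)_i$; the largest index where these differ is exactly the largest index where $b_i\ne a_i$, and there $c_i+b_i<c_i+a_i \iff b_i<a_i$. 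Hence the lexicographic comparison of $(c_i+b_i)_i$ versus $(c_i+a_i)_i$ has the same outcome as that of $(b_i)_i$ versus $(a_i)_i$, which gives $\gamma\oplus\beta<\gamma\oplus\alpha \iff \beta<\alpha$. The main (very minor) obstacle is just being careful to set up the common-exponent normalization correctly and to record the lexicographic characterization of ordinal comparison in Cantor normal form; no deep point is involved, and the whole lemma is essentially bookkeeping about polynomials in $\omega$.
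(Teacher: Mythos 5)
The paper states this lemma without proof, treating it as a standard, classical fact about Hessenberg (natural) addition; there is no argument in the text to compare against. Your verification is correct and self-contained: the common-exponent normalization is the right preliminary step, (1) and (2) really do reduce to commutativity and associativity of addition on the natural-number coefficients, and your use of the lexicographic characterization of the ordinal order on Cantor normal forms is exactly the right tool for (3) — the largest index where $(c_i+b_i)_i$ and $(c_i+a_i)_i$ differ coincides with the largest index where $(b_i)_i$ and $(a_i)_i$ differ, and the comparison of natural numbers there is preserved under adding $c_i$. If you wanted to make the writeup fully airtight you would insert a one-line proof (or citation) of the lexicographic comparison fact, but as a plan this is complete and would compile into a correct proof.
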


It will be important that $\mu$ above is $\oplus$--additively indecomposable.

\begin{lemma}
If $\alpha,\beta<\omega^\lambda$ for some $\lambda$, then $\alpha\oplus \beta \in \omega^{\lambda}$. 
\end{lemma}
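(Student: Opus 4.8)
The plan is to reduce the statement to a short inspection of Cantor normal forms, in the spirit of the proof of Lemma \ref{L:additivelyIndecompo}. First I would dispose of the degenerate cases. If $\lambda=0$ then $\omega^{\lambda}=1$, so $\alpha=\beta=0$ and $\alpha\oplus\beta=0<1$; and if $\alpha=0$ (or $\beta=0$) the conclusion is immediate since $\alpha\oplus 0=\alpha$. So we may assume $\lambda\geq 1$ and $\alpha,\beta>0$.

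Next I would record the elementary criterion: for a nonzero ordinal $\gamma$ written in Cantor normal form $\gamma=\omega^{\eta_1}m_1+\cdots+\omega^{\eta_r}m_r$ with $\eta_1>\cdots>\eta_r$ and $m_i$ nonzero natural numbers, one has $\gamma<\omega^{\lambda}$ if and only if $\eta_1<\lambda$. (Indeed, if $\eta_1<\lambda$ then $\gamma<\omega^{\eta_1}\cdot\omega=\omega^{\eta_1+1}\leq\omega^{\lambda}$; conversely $\eta_1\geq\lambda$ forces $\gamma\geq\omega^{\eta_1}\geq\omega^{\lambda}$.) Call $\eta_1$ the \emph{leading exponent} of $\gamma$.

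Now write $\alpha$ and $\beta$ in the common-exponent form (\ref{EQ:CNFforSUM}), say $\alpha=\sum_{1\leq i\leq n}\omega^{\xi_i}k_i$ and $\beta=\sum_{1\leq i\leq n}\omega^{\xi_i}\ell_i$ with $\xi_n>\cdots>\xi_1$ and $k_i,\ell_i\geq 0$, so that $\alpha\oplus\beta=\sum_{1\leq i\leq n}\omega^{\xi_i}(k_i+\ell_i)$. The leading exponent of $\alpha$ is the largest $\xi_i$ with $k_i\neq 0$, hence is $<\lambda$ by the criterion above together with the hypothesis $\alpha<\omega^{\lambda}$; likewise the largest $\xi_i$ with $\ell_i\neq 0$ is $<\lambda$. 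Consequently every index $i$ with $k_i+\ell_i\neq 0$ satisfies $\xi_i<\lambda$, so after discarding the vanishing summands the leading exponent of $\alpha\oplus\beta$ is some $\xi_j<\lambda$, and the criterion yields $\alpha\oplus\beta<\omega^{\lambda}$, as desired.

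There is essentially no obstacle beyond careful bookkeeping: the only point requiring attention is that the coefficients $k_i,\ell_i$ may be $0$ in the common-exponent presentation, so one must drop the vanishing terms of $\alpha\oplus\beta$ before reading off its leading exponent. As an alternative, one could mimic verbatim the three-case induction on $\lambda$ (zero, successor, limit) used in Lemma \ref{L:additivelyIndecompo}, replacing ordinary $+$ with $\oplus$ and invoking the order-preservation property in Lemma \ref{L:PropertiesOfHessenberg}(3); I find the normal-form argument cleaner.
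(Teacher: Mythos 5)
Your proof is correct and takes essentially the same route as the paper, which simply cites the definition of $\oplus$ together with Lemma \ref{L:additivelyIndecompo}: both arguments come down to reading off the Cantor normal form and noting that every exponent appearing with nonzero coefficient in $\alpha\oplus\beta$ already appears in $\alpha$ or $\beta$ and is therefore $<\lambda$. You have merely spelled out, via the leading-exponent criterion, the details that the paper compresses into a one-line remark.
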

\begin{proof}
This follows directly from Lemma \ref{L:additivelyIndecompo} and the definition of $\oplus$.
\end{proof}

\smallskip{}

\subsection{Generic antichain pairs}
We fix the following  notation. Elements  $a,b,c,\ldots$ of $L$ are functions $a\colon \mu\to \mathbb{Z}$ with $a(\xi)=0$ for all but finitely many $\xi<\mu$. We view $L$ as a linear order with $a< b$ holds iff  $a(\xi)<b(\xi)$ holds in $\mathbb{Z}$ for the  largest $\xi\in\mu$ for which $a(\xi)\neq b(\xi)$ holds.

Let $T$ be the collection of all possible restrictions $s:=a\upharpoonright [\beta,\mu)$ of elements $a\in L$ to some final interval $[\beta,\mu):=\{\xi\in \mathrm{ORD}   \colon \beta \leq  \xi<  \mu\}$ of $\mu=[0,\mu)$ with $\beta<\mu$. Equivalently, 
\[T=\bigcup_{\beta< \mu} \mathbb{Z}\big[[\beta,\mu)\big],\]
is the union of all sets of the form  $\mathbb{Z}\big[[\beta,\mu)\big]$ with $\beta< \mu$, where for every  $\Xi\subseteq [0,\mu)$ we let $\mathbb{Z}[\Xi]$   be   the set of all maps $s\colon \Xi\to \mathbb{Z}$ so that  $s(\xi)=0$ for all but finitely many $\xi \in \Xi$.

 If $s\in \mathbb{Z}\big[[\beta,\mu)\big]$, then   we say that the  {\bf height} of  $s$ is  $\beta$ and write $\mathrm{ht}(s)=\beta$ . Given $r\in  \mathbb{Z}\big[[\alpha,\beta)\big]$ and $s\in \mathbb{Z}\big[[\beta,\mu)\big]$, we write $r^{\frown}s$ for the unique element of $\mathbb{Z}\big[[\alpha,\mu)\big]$ with
 $r^{\frown}s \upharpoonright [\alpha,\beta)=r$ and  $r^{\frown}s \upharpoonright [\beta,\mu)=s$.
We view $T$ as a partial order under the relation $\sqsubseteq$, where
 $s\sqsubseteq t$  holds for $s,t\in T$ if and only if $\mathrm{ht}(t)\leq  \mathrm{ht}(s)$ and there is  $r\in \mathbb{Z}\big[[\mathrm{ht}(t),\mathrm{ht}(s))\big]$ so that $t=r^{\frown}s$. In this case, we say that $s$ is an {\bf initial segment} of $t$ or that $t$ {\bf extends} $s$.  
Notice that  $L$ is a subset of $T$,  consisting of all the $\sqsubseteq$--maximal elements of $T$. 

When $\mu=\omega$ then $T$ is a tree (with no root).  More generally, for any $\mu$ of the form  $\omega^{\lambda}$ with $\lambda>0$,  $T$ can be thought of as a ``piecewise tree". Every pair  $s,t\in T$ admits a  {\bf meet} $s\wedge t$, which is the $r\in T$ of least $\mathrm{ht}(r)$ with $r\sqsubseteq s$ and $r \sqsubseteq t$. We say that $s,t\in T$ are {\bf comparable} if  $s\wedge t\in \{s,t\}$ and we write  $s\perp t$ if they are not comparable.   
A subset $J\subseteq T$ of $T$ is an {\bf antichain} if $s\perp t$ for every $s,t\in J$.  An antichain $J$  is  {\bf maximal} if for all $t\in T$ there is  $s\in J$ so that $s$ and $t$ are compatible. 
For every  $J\subseteq  T$ consider the sets 
\begin{eqnarray*}
L(J):=\{a\in L \colon s\sqsubseteq a \text{ for some } s \in J\}\subseteq L \\
T(J):=\{t\in T \colon s\sqsubseteq t \text{ for some } s \in J\} \subseteq T
\end{eqnarray*}
If $a\in L(J)$ then we say that  $a$ is {\bf covered} by $J$. If $J=\{s\}$ then we simply write $L(s)$ and  $T(s)$ in place of $L(\{s\})$ and  $T(\{s\})$ above, and we  say that $a$ is {\bf covered} by $s$.

Notice that if $s\in T$ with $\mathrm{ht}(s)=\alpha>0$, then $L(s)$ is a suborder of $L$ isomorphic to $\mathbb{Z}[\alpha]$, as it is the image of $\mathbb{Z}[\alpha]:=\mathbb{Z}\big[[0,\alpha)\big]$ under the embedding $i\colon \mathbb{Z}[\alpha]\to \mathbb{Z}[\mu]$ with $i(a)=a^{\frown}s$, for all $a\in \mathbb{Z}[\alpha]$.
 Notice, moreover, that every $\varphi\in\mathrm{Aut}(L(s))$ extends to the automorphism  $\ltimes_{s,\varphi}\in\mathrm{Aut}(L)$, where: $\ltimes_{s,\varphi}(a)=\varphi(a)$, if $a\in L(s)$; and  $\ltimes_{s,\varphi}(a)=a$, if $a\in L \,\setminus \, L(s)$. 
 
  More generally, let $J\subseteq T$ be an antichain and let $\varphi_s\in\mathrm{Aut}(L(s))$ for every $s\in J$. Then the system $(\varphi_s\colon s\in J)$ of  ``local" automorphisms   induces a ``global" automorphism $\varphi\in \mathrm{Aut}(L)$, where $\varphi(a)=\varphi_s(a)$, if is covered by some $s\in J$; and $\varphi(a)=a$, otherwise.  We  say that $\varphi$ is the automorphism of $L$ {\bf induced by the system}  $(\varphi_s\colon s\in J)$.

 \begin{definition}
 An  {\bf antichain pair} is any pair  $(P_0,P_1)$, with $P_0,P_1\subseteq T$ so that:
\begin{enumerate}
\item[(P1)]  $P_0\cup P_1$ is a maximal antichain of $T$ with $P_0\cap P_1=\emptyset$;
\item[(P2)] if $s\in P_0$ and $t\in P_1$ then $\mathrm{ht}(s)\oplus\mathrm{ht}(t)<\mathrm{ht}(s\wedge t)$.
\end{enumerate}
We denote by   $\mathcal{P}$ the collection of  all antichain pairs.
 \end{definition}

Viewed as a closed subset of the Cantor space $2^T\times 2^T$,  $\mathcal{P}$ is a Polish space.   Moreover, since any partition of $L\subseteq T$ in $P_0,P_1\subseteq L$ satisfies (P1) and (P2), we have that $\mathcal{P}\neq\emptyset$.

We will define the pertinent fusion $\pi$ in Lemma \ref{L:Ind} by means of the generic  element of  $\mathcal{P}$. Hence, in establishing the various properties of $\pi$, it will be convenient to  rely on a handy basis for the topology of $\mathcal{P}$. For any  pair $(F_0,F_1)$ of finite subsets of $T$  let
\[\mathcal{P}(F_0, F_1):=\{(P_0,P_1)\in\mathcal{P} \colon F_0\subseteq P_0 \text{ and } F_1\subseteq P_1\}\]
Let also $\mathcal{F}$ be the collection of all pairs  $(F_0,F_1)$  of finite subsets of $T$ so that moreover: $F_0\cup F_1$ is an antichain with $F_0\cap F_1=\emptyset$ and $(F_0,F_1)$  satisfies (P2), in place of $(P_0,P_1)$.   

\begin{lemma}\label{Lemma:F_implies_not_empty}
The collection of all sets of the form $\mathcal{P}(F_0,F_1)$ forms a basis for the topology on $\mathcal{P}$. Moreover, for all finite $F_0,F_1\subseteq T$ we have that  $\mathcal{P}(F_0, F_1)\neq\emptyset  \iff (F_0, F_1)\in\mathcal{F}$.
\end{lemma}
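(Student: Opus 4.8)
\textbf{Proof plan for Lemma \ref{Lemma:F_implies_not_empty}.}

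The plan is to prove the two assertions in turn. For the basis claim, I would first note that the sets $\mathcal{P}(F_0,F_1)$ are relatively clopen in $\mathcal{P}$, since membership of a finite set $F_i$ in $P_i$ is determined by finitely many coordinates of $2^T\times 2^T$. To see they form a basis, take a basic open set $\mathcal{U}$ of $2^T\times 2^T$ meeting $\mathcal{P}$, say $\mathcal{U}$ prescribes for finitely many $t\in T$ whether $t\in P_0$, whether $t\notin P_0$, whether $t\in P_1$, whether $t\notin P_1$. Fix some $(P_0,P_1)\in\mathcal{U}\cap\mathcal{P}$ and set $F_i:=\{t : \mathcal{U}\text{ prescribes } t\in P_i\}$. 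Then $(P_0,P_1)\in\mathcal{P}(F_0,F_1)$, and I claim $\mathcal{P}(F_0,F_1)\subseteq\mathcal{U}$: any $(Q_0,Q_1)\supseteq(F_0,F_1)$ that is an antichain pair satisfies the positive requirements of $\mathcal{U}$ directly, and it satisfies the negative requirement ``$t\notin Q_i$'' because $(P_0,P_1)\in\mathcal{U}$ already forced $t$ to be incomparable with some element of $F_{1-i}$ or $F_i$ lying in the antichain, so $t$ cannot belong to the antichain $Q_0\cup Q_1$ on the side $i$. This gives a basic-open neighborhood of $(P_0,P_1)$ inside $\mathcal{U}$ of the required form, establishing the basis claim.

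For the nonemptiness equivalence, the forward direction is immediate: if $(P_0,P_1)\in\mathcal{P}(F_0,F_1)$ then $F_0\subseteq P_0$, $F_1\subseteq P_1$, so $F_0\cup F_1\subseteq P_0\cup P_1$ is an antichain with $F_0\cap F_1=\emptyset$, and (P2) for $(P_0,P_1)$ restricts to (P2) for $(F_0,F_1)$; hence $(F_0,F_1)\in\mathcal{F}$. The substantive direction is the converse: given $(F_0,F_1)\in\mathcal{F}$, I must build an antichain pair $(P_0,P_1)$ with $F_i\subseteq P_i$. The construction is to start from $F_0,F_1$ and extend downward: for every $a\in L$ not covered by $F_0\cup F_1$, I need to place some initial segment of $a$ (or $a$ itself) into $P_0$ or $P_1$ so that (P1) and (P2) continue to hold. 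The natural move is to take every $\sqsubseteq$-maximal element $a\in L$ with $a$ not comparable to (equivalently, not covered by and not below) any element of $F_0\cup F_1$, and simply decide $a\in P_0$; note that since each $t\in F_0\cup F_1$ has $\mathrm{ht}(t)<\mu$ we have $L(F_0\cup F_1)\neq L$, and in fact every $a\notin L(F_0\cup F_1)$ is incomparable with each $t\in F_0\cup F_1$ because comparability in $T$ with $t$ forces $a\in L(t)$. Setting $P_0:=F_0\cup\{a\in L: a\notin L(F_0\cup F_1)\}$ and $P_1:=F_1$, I get that $P_0\cup P_1$ is a maximal antichain (every $t\in T$ is comparable with some element: if $t$ is covered by $F_0\cup F_1$ we are done, otherwise extend $t$ to some $a\in L$, and $a$ is comparable with an element of $P_0\cup P_1$ either because it is covered by $F_0\cup F_1$ or because it lies in the second part of $P_0$; in the latter case $a\sqsupseteq$ nothing new is needed since $a\in P_0$ and $t\sqsubseteq a$). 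Condition (P2) must be checked for the new pairs $(a,t)$ with $a$ in the second part of $P_0$ and $t\in F_1$: since $a$ and $t$ are incomparable, $\mathrm{ht}(a\wedge t)>\mathrm{ht}(t)$, and as $a\in L$ has $\mathrm{ht}(a)=0$...

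\emph{The main obstacle} is exactly this last point: $\mathrm{ht}(a)=0$ for $a\in L$ only if one uses the convention that maximal elements of $T$ have height $0$, in which case $\mathrm{ht}(a)\oplus\mathrm{ht}(t)=0\oplus\mathrm{ht}(t)=\mathrm{ht}(t)<\mathrm{ht}(a\wedge t)$ holds automatically; so I must first pin down that $\mathrm{ht}$ on $L\subseteq T$ is $0$ (i.e.\ $L=[0,\mu)^*$ has height $\min([0,\mu))=0$) and that (P2) for any pair involving an element of $L$ reduces to strict inequality $\mathrm{ht}(t)<\mathrm{ht}(a\wedge t)$, which is just incomparability. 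Once this bookkeeping about heights and meets is settled, the verification that $(P_0,P_1)$ satisfies (P1) and (P2) is routine, and the lemma follows. I would write out the height/meet conventions explicitly at the start of this direction to make the inequalities transparent.
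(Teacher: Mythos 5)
Your handling of the second equivalence is correct and essentially the paper's argument: the paper partitions $L\setminus L(F_0\cup F_1)$ into arbitrary pieces $Q_0,Q_1$ and sets $P_i:=F_i\cup Q_i$; you take $Q_0=L\setminus L(F_0\cup F_1)$ and $Q_1=\emptyset$. Your observation that elements of $L$ have height $0$ and hence (P2) for pairs involving an element of $L$ reduces to incomparability is exactly right and worth stating explicitly, as you noted.

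The basis claim, however, has a genuine gap. After fixing $(P_0,P_1)\in\mathcal{U}\cap\mathcal{P}$ and setting $F_i:=\{t:\mathcal{U}\text{ prescribes }t\in P_i\}$, you assert $\mathcal{P}(F_0,F_1)\subseteq\mathcal{U}$, with the justification that membership of $(P_0,P_1)$ in $\mathcal{U}$ ``forced $t$ to be incomparable with some element of $F_{1-i}$ or $F_i$''. This is false. If $\mathcal{U}$ prescribes $t\notin P_i$, the fixed pair $(P_0,P_1)$ does satisfy $t\notin P_i$, but the witness to maximality (some $r\in P_0\cup P_1$ comparable with $t$) need not lie in the finite set $F_0\cup F_1$, which consists only of the \emph{positive} prescriptions. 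An antichain pair $(Q_0,Q_1)\supseteq(F_0,F_1)$ can therefore happily contain $t$ in $Q_i$ — e.g.\ if $t$ is incomparable with all of $F_0\cup F_1$ — and thus fall outside $\mathcal{U}$. (Also: for the conclusion ``$t\notin Q_i$'' you need $t$ \emph{comparable} to, not incomparable with, something in $Q_0\cup Q_1$; comparability with an antichain element $r\neq t$ is what excludes $t$.)

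The repair, which is the paper's argument in slightly different clothing: enlarge $F_0,F_1$ to $F_0',F_1'$ by adding witnesses for each negative prescription. For each $t$ with $\mathcal{U}$ prescribing $t\notin P_i$: since $(P_0,P_1)\in\mathcal{P}$ satisfies $t\notin P_i$, either $t\in P_{1-i}$ (add $t$ to $F_{1-i}'$), or $t\notin P_0\cup P_1$, in which case by (P1) pick $r\in P_j$ with $r\neq t$ and $L(r)\cap L(t)\neq\emptyset$ (i.e.\ $r$ comparable with $t$) and add $r$ to $F_j'$. Then $(P_0,P_1)\in\mathcal{P}(F_0',F_1')$, and any $(Q_0,Q_1)\in\mathcal{P}(F_0',F_1')$ does satisfy all negative prescriptions: $t\in F_{1-i}'\subseteq Q_{1-i}$ forces $t\notin Q_i$ by disjointness, while $r\in Q_0\cup Q_1$ comparable with $t$, $r\neq t$, forces $t\notin Q_0\cup Q_1$ because $Q_0\cup Q_1$ is an antichain. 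The paper phrases this globally: the negative condition $\{s\notin P_i\}$ is a union of positive conditions $\{t\in P_0\}\cup\{t\in P_1\}$ over $t\neq s$ comparable with $s$, together with $\{s\in P_{1-i}\}$; either version closes the gap.
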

\begin{proof}
For the first part notice that if $(P_0,P_1)\in\mathcal{P}$ satisfies $s\not\in P_0$, then by (P1) we can pick some $t\in P_0\cup P_1$ with $L(s)\cap L(t)\neq\emptyset$. Hence
any  condition of the form $s\not\in P_0$ for pairs $(P_0,P_1)$ in $\mathcal{P}$ can be written as a union of positive conditions $t\in P_0$ or $t\in P_1$.

For the second part, if $(F_0, F_1)\in\mathcal{F}$, then let $(Q_0,Q_1)$ be any partition of $L \; \setminus \; L( F_0\cup F_1)$ and notice that $(P_0,P_1):=(F_0\cup Q_0, F_1\cup Q_1)$ satisfies both (P1) and (P2) above. Hence, we have that $(P_0,P_1)\in \mathcal{P}(F_0, F_1)$. The other direction is straightforward.
\end{proof}

It will be important for the definition of the fusion map $\pi$ to find  $(P_0,P_1)\in\mathcal{P}$  so that both $P_0$ and $P_1$ satisfy the following property ($\star_{\alpha}$), 
for all $\alpha<\mu$. Recall that $\mathrm{Aut}(L)_A$ denotes the pointwise stabilizer $\{g\in \mathrm{Aut}(L)\colon g(a)=a \text{ for all } a \in A \}$ of $A$ in $\mathrm{Aut}(L)$.

\begin{definition}\label{Def_Star_alpha}
Let $J\subseteq T$ be an antichain and $\alpha<\mu$.  We say that $J$ satisfies ($\star_{\alpha}$) if:
\begin{enumerate}
\item[($\star_{\alpha}$)]  for all finite  $A,B\subseteq L$ and every  $\beta< \alpha$, if   each  $a\in A$  is covered by some $s_a\in J$ with $\mathrm{ht}(s_a)\geq \alpha$, then   there exists some $g\in  \mathrm{Aut}(L)_A$ so that for every $b\in B$ we have that $g(b)$ is covered by some $t_b\in J$ with $\mathrm{ht}(t_b)\geq \beta$.
\end{enumerate}
\end{definition}

\begin{remark}  Property ($\star$) from Section \ref{S:lambda=0} is essentially  the requirement that both $\pi^{-1}(0)$ and $\pi^{-1}(1)$ therein satisfy  ($\star_{\alpha}$) for $\alpha=1$. Indeed, in the context of Section \ref{S:lambda=0}, where $\mu=1$, there is no $s\in \mathbb{Z}\big[[0,\mu)\big]$ with $\mathrm{ht}(s)\geq 1$. As a consequence, in order to check  ($\star_{1}$) in the context of  Section \ref{S:lambda=0}, it would suffice to take $A=\emptyset$ in  Definition \ref{Def_Star_alpha}.
\end{remark}

Next we find some  $(P_0,P_1)\in\mathcal{P}$ so that both $P_0$ and $P_1$ satisfy  property ($\star_{\alpha}$):

\begin{lemma}\label{L:GenericPairHasStar}
Let $\mathcal{R}$ be the collection of all pairs  $(P_0,P_1)$ in $\mathcal{P}$ so that,  for every $i\in \{0,1\}$,
$P_i$ satisfies  ($\star_{\alpha}$) for  all $\alpha$ with   $\alpha<\mu$. Then $\mathcal{R}$
 forms a comeager subset of $\mathcal{P}$.
\end{lemma}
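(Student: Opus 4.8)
The plan is to show that $\mathcal{R}$ is comeager by writing it as a countable intersection of dense open sets. Recall that by Lemma \ref{Lemma:F_implies_not_empty} the sets $\mathcal{P}(F_0,F_1)$ with $(F_0,F_1)\in\mathcal{F}$ form a basis of $\mathcal{P}$, so it suffices to understand when, given a basic open set, we can refine it to force an instance of ($\star_\alpha$). For each $i\in\{0,1\}$, each $\alpha<\mu$, each pair of finite sets $A,B\subseteq L$, and each $\beta<\alpha$, let $D_{i,\alpha,A,B,\beta}$ be the set of $(P_0,P_1)\in\mathcal{P}$ such that \emph{either} some $a\in A$ fails to be covered by an $s_a\in P_i$ with $\mathrm{ht}(s_a)\geq\alpha$ (so the hypothesis of ($\star_\alpha$) is vacuous for this $A,\alpha$), \emph{or} there is $g\in\mathrm{Aut}(L)_A$ with $g(b)$ covered for every $b\in B$ by some $t_b\in P_i$ with $\mathrm{ht}(t_b)\geq\beta$. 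Each such $D_{i,\alpha,A,B,\beta}$ is open: the witnessing data ($s_a$'s, $g$ moving $B$, and $t_b$'s) depends only on finitely many membership facts about $P_i$, since $g$ can be taken in $\mathrm{Aut}(L)_{A\cup B}$-cosets determined by its action on the finite set $B$. There are only countably many such tuples since $L$ and $T$ are countable and $\mu<\omega_1$, so $\mathcal{R}=\bigcap D_{i,\alpha,A,B,\beta}$, and it remains to prove each is dense.

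For density, fix $(F_0,F_1)\in\mathcal{F}$ and a tuple $(i,\alpha,A,B,\beta)$; I want to find $(F_0',F_1')\in\mathcal{F}$ extending $(F_0,F_1)$ with $\mathcal{P}(F_0',F_1')\subseteq D_{i,\alpha,A,B,\beta}$. If some $a\in A$ is already incompatible in $T(F_i)$-sense with any node of height $\geq\alpha$, or if we can extend $F_i$ so that no height-$\geq\alpha$ node covering $a$ can be added consistently, we land in the first disjunct; but the cleaner route is: if the hypothesis of ($\star_\alpha$) can possibly be satisfied by \emph{some} extension, then force it to be \emph{satisfied with a witness}. Concretely, assume after possibly extending $(F_0,F_1)$ that each $a\in A$ is covered by a node $s_a\in F_i$ with $\mathrm{ht}(s_a)\geq\alpha$ (if this is impossible, the first disjunct holds automatically on a whole basic open set and we are done). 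Now I need to produce the automorphism $g$. Since $\beta<\alpha\leq\mathrm{ht}(s_a)$ for each $a$, and $L(s_a)$ is order-isomorphic to $\mathrm{ht}(s_a)^*$ which has strong Hausdorff rank $\mathrm{ht}(s_a)>\beta$, there is plenty of room: I can move each $b\in B$ by an automorphism $g\in\mathrm{Aut}(L)_A$ so that $g(b)$ is ``deep", i.e.\ agrees with elements of $L(s_a)$ (for a suitable $a$, or is fixed) on a final segment $[\beta,\mu)$ that is genuinely non-trivial. Here the additive indecomposability of $\mu$ and property (P2) matter: extending $F_i$ by finitely many nodes of height $\geq\beta$ below the relevant $s_a$'s keeps the antichain-pair conditions (P1), (P2) satisfiable, because the heights involved stay below $\mu$ and the pairwise-meet condition $\mathrm{ht}(s)\oplus\mathrm{ht}(t)<\mathrm{ht}(s\wedge t)$ only constrains cross-pairs, which we can keep separated by choosing the new $F_i$-nodes to sit far below a common $s_a$. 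Then add to $F_i$ exactly the nodes $t_b$ (of height $\geq\beta$) that cover $g(b)$ for each $b\in B$; the resulting $(F_0',F_1')$ is still in $\mathcal{F}$ by Lemma \ref{Lemma:F_implies_not_empty}, and every $(P_0,P_1)\in\mathcal{P}(F_0',F_1')$ has the required $g$ and $t_b$'s, so lies in $D_{i,\alpha,A,B,\beta}$.

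The main obstacle I anticipate is the combinatorial bookkeeping in the density step: one must simultaneously (a) ensure the candidate automorphism $g$ fixes $A$ pointwise while pushing $B$ into regions covered by high-height nodes, (b) verify that the newly added nodes $t_b$ together with $F_0,F_1$ still form an antichain, are disjoint between the two sides, and obey (P2), and (c) handle the degenerate cases where $A$ cannot be covered by height-$\geq\alpha$ nodes (landing in the first disjunct) or where $B$ meets $L(s_a)$ already. The key structural facts that make (a)--(b) go through are: the local automorphism extension mechanism $\varphi_s\mapsto\ltimes_{s,\varphi}$ described before Definition \ref{Def_Star_alpha}, which lets us build $g$ side-by-side on the $L(s_a)$'s while fixing everything outside; the fact that each $L(s)$ is isomorphic to $\mathrm{ht}(s)^*$ and hence (by the theory of $\alpha^*$ from Section \ref{SS:Scattered_LOs}) has enough internal homogeneity to realize the required shifts at level $\beta$; and the $\oplus$-indecomposability of $\mu$ (Lemma \ref{L:additivelyIndecompo} and its $\oplus$-analogue) guaranteeing that finite collections of heights below $\mu$ never conspire to violate (P2) when we drop the new nodes low enough under a single $s_a$. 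I expect the write-up of (a) to be where most of the care is needed, but no single step should require more than a routine, if fiddly, argument.
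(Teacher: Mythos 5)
Your decomposition is genuinely different from the paper's. You directly set $\mathcal{R}=\bigcap_{i,\alpha,A,B,\beta}D_{i,\alpha,A,B,\beta}$, so the ``intersection equals $\mathcal{R}$'' step is trivial and all the work is in density; the paper instead builds the more local family $\mathcal{R}_\alpha(s,r,i,\beta,C)$ (conditioned on a single $s$, with a single ``room'' $r$ and $C\subseteq L(r)$), for which density is comparatively easy, and then pays for it with a non-trivial intersection argument that partitions $B$ and glues local automorphisms. Both decompositions are legitimate, and your openness claim for $D_{i,\alpha,A,B,\beta}$ is fine once one observes that, since $P_0\cup P_1$ is a \emph{maximal} antichain, the negation of ``$a$ covered by a height-$\geq\alpha$ node of $P_i$'' is itself witnessed by a positive membership fact (the unique covering node lies in $P_{1-i}$ or is a low node of $P_i$), so the first disjunct is also open in $\mathcal{P}$. (Your written justification only addresses the second disjunct; this should be made explicit.)

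The genuine gap is in the density step, specifically in the plan to ``move each $b\in B$ so that $g(b)$ agrees with elements of $L(s_a)$ on a final segment.'' Automorphisms of $L\cong\mu^*$ preserve the strong Hausdorff derivatives $H_\gamma$, hence preserve the height of meets: if $g\in\mathrm{Aut}(L)_A$ and $a\in A$, then $\mathrm{ht}(g(b)\wedge a)=\mathrm{ht}(b\wedge a)$. If $g(b)\in L(s_a)$ then $s_a\sqsubseteq g(b)\wedge a$, forcing $\mathrm{ht}(b\wedge a)\leq\mathrm{ht}(s_a)$. So whenever $b$ branches off from every $a\in A$ above the heights of the covering nodes $s_a$ — i.e.\ $\mathrm{ht}(b\wedge a)>\mathrm{ht}(s_a)$ for all $a\in A$ — it is simply impossible to push $g(b)$ into any $L(s_a)$, and your ``or is fixed'' fallback does not help because $b$ need not be covered by any height-$\geq\beta$ node of $F_i$. (Also note that when $g(b)$ \emph{does} land inside $L(s_a)$, any new node $t_b\sqsubseteq g(b)$ of height between $\beta$ and $\mathrm{ht}(s_a)$ is automatically comparable to $s_a$, so it cannot be added to the antichain $F_i$; the only consistent choice is $t_b=s_a$, which is a degenerate case rather than the generic one.) The hard case — $b$ far from all of $A$ — is exactly what the paper's bookkeeping handles: one must identify, for each such $b$, the branch node $r_b$ (the immediate extension toward $b$ of the least-height meet $\widetilde{r}_b$ with the family $S=\{s_a\}$), and move $b$ \emph{within} $L(r_b)$ using a shift $\widehat{n}$ chosen so that the new covering nodes avoid the existing $F_0\cup F_1$ and, crucially, satisfy (P2); the verification of (P2) there uses $\beta\oplus\delta<\rho$, which follows from $\oplus$-monotonicity and the inherited (P2) inequality $\alpha\oplus\delta<\rho+1$. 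None of this is routine: the ``drop the new nodes low enough under a single $s_a$'' heuristic you invoke is in fact incorrect, since the new nodes must live under $r_b\perp s_a$, not under $s_a$. Filling this in is essentially the whole content of the paper's proof, so as written the proposal does not establish the lemma.
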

\begin{proof} Fix some $\alpha$ with $0<\alpha<\mu$ and let $\mathcal{R}_{\alpha}$ be the set   of all  of all  $(P_0,P_1)\in \mathcal{P}$ so that $P_i$ satisfies  ($\star_{\alpha}$) for both $i\in \{0,1\}$. We will show that  $\mathcal{R}_{\alpha}$ is comeager in $\mathcal{P}$.

Let $(P_0,P_1)\in \mathcal{P}$ be chosen generically. Let $i\in\{0,1\}$ and assume that we are trying to confirm that   ($\star_{\alpha}$) holds for some fixed  $A,B,\beta$. We will do that by partitioning $B$ into subsets $C\subseteq B$ with the property that each such $C$ is covered by some $r\in T$, of maximum possible $\mathrm{ht}(r)$, so that $L(r)$ does not intersect $A$. By the maximality of $\mathrm{ht}(r)$, there will be enough room for the genericity of the choice of $(P_0,P_1)$ to kick in and give us a ``local" automorphism $g_r\in \mathrm{Aut}(L(r))$ which satisfies the conclusion of  ($\star_{\alpha}$) only for the $C$ piece. Then we will glue all these ``local" automorphisms to the desired global one.

Fix some $i\in\{0,1\}$,  ordinals $\beta,\rho$ with $\beta<\alpha<\rho<\mu$, some $s,r\in T$ with  $\mathrm{ht}(r)=\rho$, $\mathrm{ht}(s)\geq\alpha$, $\mathrm{ht}(r\wedge s)=\rho+1$ and any finite  $C\subseteq L(r)$. 
Consider the set $\mathcal{R}_{\alpha}(s,r,i,\beta,C)$ which consists of all $(P_0,P_1)$ in $\mathcal{P}$ which satisfy the following property:
\begin{center}
if $s\in P_i$, then there is $g\in \mathrm{Aut}(L(r))$  so that  for every $c\in C$, \\ $g(c)$ is covered by some $t\in P_i$ with $\mathrm{ht}(t)\geq\beta$.
\end{center}

\begin{figure}[ht!]
\definecolor{zzttqq}{rgb}{0.6,0.2,0}
\definecolor{sqsqsq}{rgb}{0.12549019607843137,0.12549019607843137,0.12549019607843137}
\begin{tikzpicture}[line cap=round,line join=round,x=1cm,y=1cm,scale=0.5]

\clip(-10,-10) rectangle (10,1.5);

\fill[line width=0.2pt,color=zzttqq,fill=zzttqq,fill opacity=0.1] (6,-2) -- (2,-8) -- (10,-8) -- cycle;
\draw [line width=0.2pt,color=zzttqq] (6,-2)-- (2,-8);
\draw [line width=0.2pt,color=zzttqq] (2,-8)-- (10,-8);
\draw [line width=0.2pt,color=zzttqq] (10,-8)-- (6,-2);
\draw [line width=0.4pt] (0,0)-- (6,-2);
\draw [line width=0.4pt] (0,0)-- (-6,-2);
\draw [line width=0.2pt,dash pattern=on 1pt off 1pt] (-8,-2)-- (10,-2);
\draw (-9,-2) node {$\rho$};
\draw [line width=0.2pt,dash pattern=on 1pt off 1pt] (10,-4.75)-- (-8,-4.75);
\draw (-9,-4.75) node {$\alpha$};
\draw [line width=0.2pt,dash pattern=on 1pt off 1pt] (-8,-6.5)-- (10,-6.5);
\draw (-9,-6.5) node {$\beta$};
\draw [line width=0.4pt] (-6,-2)-- (-4.003577841967387,-3.1429967732102373);
\draw [line width=0.4pt] (-4.003577841967387,-3.1429967732102373)-- (-6,-4);
\draw [line width=0.4pt,dotted] (-6,-4)-- (-4.002122756704283,-7.147303707909575);
\draw [line width=0.4pt] (-4.002122756704283,-7.147303707909575)-- (-6,-8);
\draw [rotate around={0:(4,-8)},line width=0.4pt,fill=black,fill opacity=0.45] (4,-8) ellipse (1.25cm and 0.265cm);
\draw (4,-7.2) node {$C$};
\draw [line width=0.6pt,dotted] (6,-2)-- (5.7,-8);
\draw [line width=0.6pt,dotted] (6,-2)-- (8.7,-8);
\draw [line width=1.5pt] (5.8,-6)  .. controls (6.5,-5)  and     (6.8,-7.4)     .. (7.8,-6);
\draw (6.8,-5.5) node {$P_i$};
\draw [->,double]  (4.5,-7.5)  to [bend left] (7.2,-7.7)  ;
\draw [fill=sqsqsq] (0,0) circle (2.5pt);
\draw[color=sqsqsq] (0,0.5) node {$r\wedge s$};
\draw [fill=sqsqsq] (6,-2) circle (2.5pt);
\draw[color=sqsqsq] (6,-1.5) node {$r$};
\draw [fill=sqsqsq] (-6,-4) circle (2.5pt);
\draw[color=sqsqsq] (-6,-3.5) node {$s$};
\end{tikzpicture}
\caption{The data needed for defining  $\mathcal{R}_{\alpha}(s,r,i,\beta,C)$}
\end{figure}
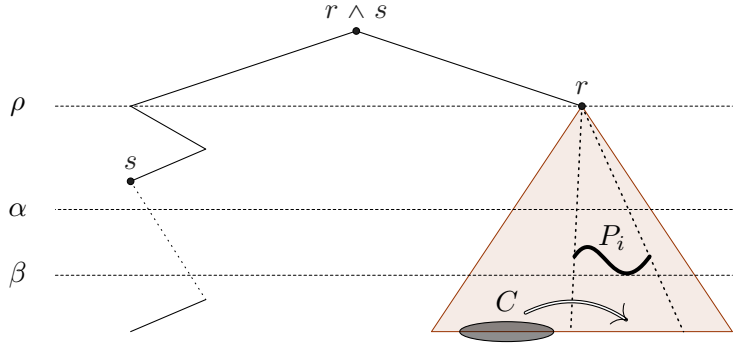

\begin{claim} 
For all $s,r,i,\beta,C$ as above, the set $\mathcal{R}_{\alpha}(s,r,i,\beta,C)$ is comeager in $\mathcal{P}$.
\end{claim}
 \begin{proof}[Proof of Claim]
 Clearly  $\mathcal{R}_{\alpha}(s,r,i,\beta,C)$ is open, so its suffices to show that it intersects any basic open set $\mathcal{P}(F_0,F_1)$, with  $(F_0,F_1)\in\mathcal{F}$; see Lemma \ref{Lemma:F_implies_not_empty}. 
 We may assume without loss of generality that $s\in F_i$, as otherwise we can find some $(P_0,P_1)\in \mathcal{P}(F_0,F_1)$ with $s\not\in P_i$, which would imply that  $(P_0,P_1)\in\mathcal{R}_{\alpha}(s,r,i,\beta,C)$. Moreover, notice that  
if $r\in F_i$, then $\mathcal{P}(F_0,F_1)\subseteq \mathcal{R}_{\alpha}(s,r,i,\beta,C)$. Hence we may also assume that $r\not\in F_i$.

Since $F_0$ and $F_1$ are finite, we may find some ordinal $\gamma$, with $\beta\leq \gamma<\rho$, so that for all $q\in F_0\cup F_1$ with $r \sqsubsetneq q$ we have that $\mathrm{ht}(q)\leq \gamma$. In particular, $q(\gamma)$ is defined for all $q$ as above 
 ---recall  that $q$ is a  map $[\xi,\mu) \to \mathbb{Z}$ for some $\xi<\mu$. Similarly, let 
 \[\delta:=\max\{\mathrm{ht}(t)\colon  t\in F_{i-1} \text{ with }  r \sqsubsetneq t \}.\]

Choose any $g\in \mathrm{Aut}(L(r))$ so that  the following two properties hold:
\begin{enumerate}
\item if $q\in F_0\cup F_1$ with $r \sqsubsetneq q$ and $c\in C$ then $\big(g(c)\big)(\gamma)\neq q(\gamma)$;
\item if $q\in F_{i-1}$ with $r \sqsubsetneq q$ and $c\in C$, then $\mathrm{ht}(g(c)\wedge q)>\beta\oplus \delta$
\end{enumerate}
To find such $g$ notice first that $\beta\oplus \delta<\rho$. Indeed, since $s\in F_i$, by property (P2) for $(F_0,F_1)$, if $t \in F_{i-1}$ with $ r \sqsubsetneq t $ we have that $\alpha \oplus \mathrm{ht}(t)<\rho+1$. It follows that $\alpha \oplus \delta<\rho+1$. On the other hand we know that $\beta<\alpha$. By property (3) of Lemma \ref{L:PropertiesOfHessenberg} we have that
\begin{equation}
\beta\oplus \delta \oplus 1\leq \alpha\oplus \delta <\alpha\oplus \delta\oplus 1 \leq \rho \oplus 1, \quad \text{ which implies that } \quad \beta\oplus \delta < \rho.
\end{equation} 
Notice now that since both $\beta\oplus \delta < \rho$ and  $\gamma<\rho$ hold, for any $n\in \mathbb{Z}$ we have a well-defined automorphism  $\widehat{n}\in \mathrm{Aut}(L(r))$ given by sending every $c\in L(r)$ to $\widehat{n}(c) \in L(r)$ with:
\[\big(\widehat{n}(c)\big)(\xi)
\begin{cases}
c(\xi)-n & \text{ if } \xi \in \{\gamma,  \beta\oplus \delta\},\\
c(\xi) & \text{ otherwise}. \end{cases}\]
Since $F_0,F_1$ and $C$ are all finite, to find $g\in\mathrm{Aut}(L(r))$ for which both properties (1), (2) above hold, we can just set  $g:=\widehat{n}$ for some  large enough $n\in\mathbb{Z}$.

We may now conclude with the proof that  $\mathcal{R}_{\alpha}(s,r,i,\beta,C)$ intersects the basic open set $\mathcal{P}(F_0,F_1)$ as follows.  Choose any $g\in\mathrm{Aut}(L(r))$ which satisfies (1), (2) above and let $(E_0,E_1)$ be the pair given by  $E_i:=F_i\cup\{\big(g(c)\big)(\beta)\colon c\in C\}$ and $E_{i-1}:=F_{i-1}$. Clearly $\mathcal{P}(E_0,E_1)\subseteq \mathcal{P}(F_0,F_1)$ and if $(P_0,P_1)\in \mathcal{P}(E_0,E_1)$ we have that $(P_0,P_1)\in\mathcal{R}_{\alpha}(s,r,i,\beta,C)$. So it suffices to show that $\mathcal{P}(E_0,E_1)\neq \emptyset$, or equivalently by Lemma \ref{Lemma:F_implies_not_empty}, that $(E_0,E_1)\in\mathcal{F}$
 
But $E_0\cup E_1$ is an antichain by (1) above, and since all elements of $E_0\setminus F_0$ have same height. Again by (1) above we have that  $E_0\cap E_1 =\emptyset$. Let now $q\in E_i\setminus F_i$ and $t\in E_{i-1}$. The fact that  $\mathrm{ht}(q) \oplus \mathrm{ht}(t)<\mathrm{ht}(q\wedge t)$ holds follows from (2) above if $r \sqsubseteq t$. It also holds from simple computations when  $r \not\sqsubseteq t$. To see the latter, break  $r \not\sqsubseteq t$ into the cases $q\wedge t= r\wedge s$ and $q\wedge t\sqsubsetneq r\wedge s$ and then use, in each case, that   $\mathrm{ht}(s) \oplus \mathrm{ht}(t)<\mathrm{ht}(s\wedge t)$. We leave the details of confirming these cases to the reader.
\end{proof}
 
Hence, in order to show that $\mathcal{R}_\alpha$ is comeager, we are left to show that:
\[\mathcal{R}_{\alpha}\supseteq \bigcap_{s,r,i,\beta,C}\mathcal{R}_{\alpha}(s,r,i,\beta,C),\]
where the intersection is taken over
 all possible parameters; see paragraph before the  claim. So fix some $(P_0,P_1)$ in the above intersection. We will show $(P_0,P_1)\in \mathcal{R}_\alpha $.

Let $i\in \{0,1\}$,  $\beta<\alpha$, and  $A,B\subseteq L$ finite, so that for every $a\in A$  is covered by some $s\in P_i$ with $\mathrm{ht}(s)\geq \alpha$.
Let $S$ be the set of all $s\in P_i$ for which there is some $a\in A$ with  $s \sqsubseteq a$. Fix any $b\in B\setminus L(S)$ and notice that the set $\{s\wedge b \colon s\in S \}$ is linearly ordered by $\sqsubseteq$.  Let $\widetilde{r}_b$ be the $\sqsubseteq$--maximal element of this set, i.e. the element of this set of least height.
Let also $r_b$  be the  immediate extension of $\widetilde{r}_b$ which covers $b$. That is, $r_b$ is the unique element of $T$ with $\widetilde{r}_b\sqsubseteq r_b\sqsubseteq b$ and $\mathrm{ht}(\widetilde{r}_b)=\mathrm{ht}(r_b)+1$. This is well defined, as $b\not\in A$ implies   $\mathrm{ht}(\widetilde{r}_b)>0$.

Set now $R:=\{r_b\colon b\in B\}$ and notice that $R$ is an antichain. Indeed, if $b,c\in B \setminus L(S)$, say with $r_b \sqsubsetneq r_c$, then let $s\in S$ so that $r_c$ is the immediate extension of $s\wedge c$ and notice that $r_b$ covers both $s$ and $b$, contradicting that $r_b$ is an extension of  the element of least height in $\{ t\wedge b  \colon t\in S\}$.  The desired automorphism will be defined as the automorphism induced by a system $(g_r\colon r\in R)$ of automorphisms $g_r\in \mathrm{Aut}(L(r))$.

To define the system, fix some $r\in R$ and  let $\widetilde{r}$  be the immediate predecessor of $r$.  Set  $B_r:=B\cap L(r)$ and let $s_r$ be any element of $S$ so that for all $b\in B_r$ we have that $b\wedge s_r=\widetilde{r}$.  
Set $\rho_r:=\mathrm{ht}(r)$. Since $(P_0,P_1) \in  \mathcal{R}_{\alpha}(s_r,r,i,\beta,B_r)$ we get some $g_r\in\mathrm{Aut}(L(r))$ so that for all $b\in B_r$, $g_r(b)$ is covered by some $t\in P_i$ with $\mathrm{ht}(t)\geq \beta$. 

Let $g$ be the automorphism of $L$ induced by the system $(g_r\colon r\in R)$. Since $r\perp s$ for every $r\in R$ and $s\in S$, $g$ pointwise fixes every element of $T(S)$. It follows that $g\in \mathrm{Aut}(L)_A$. Moreover, for every $b\in B$, $g(b)$ is covered by some $t_b\in P_i$ with $\mathrm{ht}(t_b)\geq \beta$.  Indeed, if $b\in B\setminus L(S)$,  then this follows from the definition of $g_r$. If $b\in L(S)$, then we can set $t$ to be the unique $s\in S$ which covers $b$ and observe that, in this case, $g(s)=s$ and $g(b)=b$.
\end{proof}

\subsection{The fusion map $\pi$} Fix now  some $(P_0,P_1)\in\mathcal{P}$ so that both $P_0$, $P_1$  satisfy property $(\star_{\alpha})$ for all $\alpha<\mu$. By Lemma \ref{L:GenericPairHasStar} such pair exists. Let also $G\curvearrowright X$ be a Polish $G$-space. Define the ``fusion" map  $\pi\colon X^{L}\times X^{L}\to X^{L}$ as follows:
\[\pi(x_0,x_1)=x ,  \text{ where for all } a\in L \text{ we have that } \; x(a)=
\begin{cases}
 x_0(a),  & \text{ if }  a \in L(P_0)  \\
 x_1(a),  &  \text{ if }   a \in L(P_1) 
\end{cases}\]

As in the case of Claim \ref{Aris:Ind0Claim1} it straightforward to see that the map $\pi$ is continuous open and surjective.
We claim that $\pi$  also satisfies the conclusion of Lemma \ref{L:Ind}.

Fix some $x=(x_a)_a$, $y=(y_a)_a$  in $X^{L}$ and assume that for a countable ordinal  $\nu$ we have:
\begin{enumerate}
\item  $x_a \leftrightsquigarrow^{\gamma}_G y_a$ holds  for all $a\in L$ and all $\gamma<\nu$; and 
\item the orbits of $x_a$ and of $y_a$ are dense in $X$ for all $a\in L$.
\end{enumerate}
We will show that for  every $\beta<\mu $ we have that:
\[ x  \; \leftrightsquigarrow^{\nu+\beta}_{\mathrm{Aut}(L) \mathrm{Wr} \, G}  \; \pi(x,y) \quad  \text{ and } \quad \pi(x,y)   \;  \leftrightsquigarrow^{\nu+\beta}_{\mathrm{Aut}(L) \mathrm{Wr}\, G}  \; y.\]

Set  $z:=\pi(x,y)$. Since the argument below is symmetric, we will only prove that:

\begin{claim}\label{C:Prefinal}
For every $\beta<\mu$ we have that  $x   \leftrightsquigarrow^{\nu+\beta}_{\mathrm{Aut}(L) \mathrm{Wr} \, G}   z$.
\end{claim}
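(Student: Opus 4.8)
\textbf{Plan for the proof of Claim \ref{C:Prefinal}.}
The proof proceeds by induction on $\beta<\mu$ and, since $\mu=\omega^{\lambda}$ is $\oplus$-additively indecomposable, it is natural to strengthen the induction hypothesis so that it localizes to final-segment pieces of $L$. The plan is to prove the following statement by induction on $\beta$: for every $s\in T$ with $\mathrm{ht}(s)$ large enough (in a sense to be made precise in terms of $\beta$), if $g\in\mathrm{Aut}(L)\mathrm{Wr}\,G$ agrees with the identity outside $L(s)$ and moves $x$ and $z$ to elements that still have dense orbits coordinatewise, then $gx\leftrightsquigarrow^{\nu+\beta}_{\mathrm{Aut}(L)\mathrm{Wr}\,G} gz$; the Claim is the special case $s$ maximal (or $g=1$). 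The base case $\beta=0$ is handled exactly as Claim \ref{Aris:Ind0Claim2}: given an open $U\ni z$ and an open $V\subseteq_1 \mathrm{Aut}(L)\mathrm{Wr}\,G$, one shrinks to a basic neighborhood determined by a finite set $B\subseteq L$ of coordinates, uses property $(\star_1)$ for $P_0$ and $P_1$ together with a single outer $\mathrm{Aut}(L)$-move to push the relevant coordinates of $x$ onto coordinates on which $x$ and $z$ literally agree (i.e.\ coordinates landing in $L(P_0)$), then uses density of the $G$-orbits to land inside $U$; the remaining coordinates carry $V_a=G$, and there $x_a\leftrightsquigarrow^{\gamma}_G y_a$ for all $\gamma<\nu$ by hypothesis, so Lemma \ref{Aris:LProd} gives $g^x x\leftrightsquigarrow^{\gamma}_V g^z z$ for all $\gamma<\nu$, which is exactly $\leftrightsquigarrow^{\nu}_V$ as $\nu+0=\nu$.

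For the inductive step, fix $\beta<\mu$ and suppose the localized statement holds for all $\beta'<\beta$. Given open $W\subseteq_1 \mathrm{Aut}(L)\mathrm{Wr}\,G$ and open $U\subseteq X^L$ meeting $\{x,z\}$ (say $z\in U$), shrink $U$ and $W$ to basic neighborhoods supported on a finite coordinate set $B\subseteq L$. The key move is to invoke property $(\star_{\alpha})$ — applied with a suitable $\alpha>\beta$, $\alpha<\mu$, which exists precisely because $\mu$ is additively indecomposable and the finite data forces only a bounded demand — to find $g\in\mathrm{Aut}(L)$ fixing the ``already good'' coordinates of $x$ (those covered by members of $P_0$ of large height, on which $x$ and $z$ agree) while moving the coordinates of $B$ that need work onto coordinates covered by members of $P_i$ of height $\geq\beta$. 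Combined with a $G^L$-move provided by density of orbits, this produces $g^x,g^z\in W$ (chosen from the comeager set of Lemma \ref{Aris:LSimpleSquiggleProperties}(5)) with $g^x x, g^z z\in U$, and such that $g^x x$ and $g^z z$ now differ only on coordinates lying in $L(t)$ for various $t\in P_0\cup P_1$ of height $\geq\beta$; on each such piece the shifted configurations again have dense orbits coordinatewise and agree on $B$, so the localized inductive hypothesis at the relevant $\beta'=\mathrm{ht}(t)\wedge(\text{something})<\beta$ — or rather at every $\beta'<\beta$, which suffices since $\nu+\beta$ requires only $\leftrightsquigarrow^{\nu+\beta'}$ for all $\beta'<\beta$ — yields $g^x x\leftrightsquigarrow^{\nu+\beta'}_{W} g^z z$ for every $\beta'<\beta$. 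Here one uses Lemma \ref{Aris:LProd} to glue the per-piece relations together with the $V_a=G$ coordinates (where $x_a\leftrightsquigarrow^{\gamma}_G y_a$, $\gamma<\nu$, is available), and Lemma \ref{L:PropertiesOfHessenberg}(3) to keep track of the bookkeeping of heights under $\oplus$. Finally one checks $\nu+\beta = \sup\{\nu+\beta'+1 : \beta'<\beta\}$ when $\beta$ is a limit, and the successor case is the same argument applied with $\beta-1$, so $g^x x\leftrightsquigarrow^{\nu+\beta}_W g^z z$ follows by definition of $\leftrightsquigarrow^{\cdot}$.

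The main obstacle, and the reason this is not a routine induction, is the interaction between the ordinal arithmetic and the combinatorics of antichain pairs: one must arrange that the $\mathrm{Aut}(L)$-automorphism supplied by $(\star_{\alpha})$ simultaneously (i) fixes all the coordinates of $x$ that are already ``locked'' onto $L(P_0)$, so that agreement of $x$ and $z$ is preserved there, and (ii) sends the still-bad coordinates to pieces $L(t)$ whose height is high enough to run the inductive hypothesis but whose existence is guaranteed only because $\mathrm{ht}(s)\oplus\mathrm{ht}(t)<\mathrm{ht}(s\wedge t)$ (property (P2)) leaves room; making these two demands compatible is exactly where additive indecomposability of $\mu=\omega^{\lambda}$ is used, and it is the place where a naive successor/limit induction on $\beta$ (without the height-localized strengthening) breaks down. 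A secondary technical point is ensuring that all the auxiliary translates stay in the comeager sets of dense-orbit points and of Lemma \ref{Aris:LSimpleSquiggleProperties}(5) simultaneously, which is handled by a routine Baire-category intersection argument but must be stated carefully.
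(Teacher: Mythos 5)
Your high-level diagnosis is right: the key is a strengthened, $\oplus$-indecomposability-exploiting inductive statement, driven by the genericity property $(\star_{\alpha})$ of the antichain pair. But the strengthened statement you propose cannot feed into itself, and this is a real gap.

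To unfold $x \leftrightsquigarrow^{\nu+\beta}_{\mathrm{Aut}(L)\mathrel{\mathrm{Wr}}G} z$ you must, for an \emph{arbitrary} $W\subseteq_1 \mathrm{Aut}(L)\mathrel{\mathrm{Wr}}G$ and open $U$, produce $g^x,g^z$ with $g^x x \leftrightsquigarrow^{\nu+\beta'}_{W} g^z z$ for all $\beta'<\beta$. Note the subscript is $W$, a \emph{proper} basic open neighborhood with finite support. Your proposed inductive statement concludes only $gx\leftrightsquigarrow^{\nu+\beta'}_{\mathrm{Aut}(L)\mathrel{\mathrm{Wr}}G}gz$ with the full group as subscript; by Lemma~\ref{Aris:LSimpleSquiggleProperties}(3) that is strictly weaker than $\leftrightsquigarrow^{\nu+\beta'}_{W}$, so it cannot close the induction. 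The paper resolves this by strengthening the claim so that the neighborhood itself is a parameter: Claim~\ref{C:Final} quantifies over $V\in\mathcal{V}$ and an element $h=(\varphi,(h_a)_a)$, and asserts $hx\leftrightsquigarrow^{\nu+\beta}_{V}hz$ under the crucial \emph{compatibility condition} linking $V$ and $h$, namely that every $a\in\mathrm{supp}(V)$ has $\varphi^{-1}(a)$ covered by some $s_a\in P_0$ with $\mathrm{ht}(s_a)\geq\alpha$. That condition is what you are gesturing at with ``locked'' coordinates, but in your formulation the locking is imposed on $g$ alone (``$g$ agrees with the identity outside $L(s)$'', for a single $s$), decoupled from $V$. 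This decoupling is exactly what prevents the statement from being self-feeding: after applying $(\star_\alpha)$ to produce $\psi\in\mathrm{Aut}(L)_A$ moving $\varphi^{-1}(\mathrm{supp}(W))$ onto pieces of height $\geq\beta$, the pair $(W,\,gh)$ must itself satisfy the hypotheses of the strengthened claim, which it does only because the claim is parameterized by $V$ and links $\mathrm{supp}(V)$ to $P_0$.

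Two smaller but substantive points. First, the gluing via Lemma~\ref{Aris:LProd} does not work as you describe: that lemma is for a genuine product $\prod G_n\curvearrowright\prod X_n$, whereas $\mathrm{Aut}(L)\mathrel{\mathrm{Wr}}G$ has an outer $\mathrm{Aut}(L)$ part permuting coordinates, so it does not decompose as a product over an antichain $\{t\}\subseteq P_0\cup P_1$. In the paper, Lemma~\ref{Aris:LProd} is used only for the base case $\alpha=1$, after the outer automorphism has already been fixed. Second, the invocation of Lemma~\ref{Aris:LSimpleSquiggleProperties}(5) is spurious here: the $g$ in the paper's inductive step is constructed explicitly from $(\star_\alpha)$ and density of orbits, with $g_c=1_G$ on $\mathrm{supp}(V)$ forcing $g\in V$; no comeager-choice argument is needed. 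Similarly, Lemma~\ref{L:PropertiesOfHessenberg}(3) is used inside the proof of Lemma~\ref{L:GenericPairHasStar}, not in the proof of this claim.
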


This  follows from a  more general claim which is needed in order to  run the necessary induction on $\alpha$. First recall that elements of  $\mathrm{Aut}(L) \mathrm{Wr} \, G$  are pairs $(\varphi,(g_a)_a)$ where $\varphi\in \mathrm{Aut}(L)$ and $g_a\in G$ for all $a\in L$. In particular,  $\mathrm{Aut}(L) \mathrm{Wr} \, G$ admits a basis of open neighborhoods of the identity of the form:
\[V:=\{(\varphi,(h_a)_a)\colon \varphi\in  \mathrm{Aut}(L) _A \text{ and } h_a\in V_a \text{ for all } a\in A\},\]
where $A$ ranges over all finite subsets of $L$ and $V_a$ range over all  open neighborhoods of $1$ in $G$. Let $\mathcal{V}$ denote the collection of all such neighborhoods. For each $V$ as above we set $\mathrm{supp}(V):=A$ and we say that $(V_a\colon a\in A)$ are the associated  fibers of $V$.  Claim \ref{C:Prefinal}   follows  from the next  by setting $V:=\mathrm{Aut}(L) \mathrm{Wr}\, G$,  $h:=1_{\mathrm{Aut}(L) \mathrm{Wr}\, G}$ and say $\alpha:=\beta+1$.

\begin{claim}\label{C:Final}
Let $\beta<\alpha<\mu$, let $V\in\mathcal{V}$ and $h=(\varphi,(h_a)_a)\in \mathrm{Aut}(L) \mathrm{Wr}\, G$. 
If for every $a\in\mathrm{supp}(V)$ there exists  $s_a\in P_0$ so that $\mathrm{ht}(s_a)\geq \alpha$ and $\varphi^{-1}(a)\sqsupseteq s_a$,
then  $h x   \leftrightsquigarrow^{\nu+\beta}_{V}   h z$.
\end{claim}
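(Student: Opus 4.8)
The statement is an induction on $\alpha$, and the shape of the recursion in Definition \ref{Def:Warrow2} forces us to carry the stronger hypothesis of Claim \ref{C:Final} (a generic element $h$ acting first, with the support of $V$ lying deep inside pieces of $P_0$) rather than proving Claim \ref{C:Prefinal} directly. First I would dispose of the base case $\alpha=\beta+1=1$, i.e.\ $\beta=0$, where I must show $hx\leftrightsquigarrow^{\nu}_V hz$. Here I unwind the definition of $\leftrightsquigarrow^{\nu}_V$: given open $W\subseteq_1 \mathrm{Aut}(L)\mathrm{Wr}\,G$ and an open $U$ containing (say) $hz$, I need $g^x,g^z\in V$ with $g^xhx,g^zhz\in U$ and $g^xhx\leftrightsquigarrow^{\gamma}_W g^zhz$ for all $\gamma<\nu$. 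Shrinking, I may take $U,W$ basic with finite supports; enlarging $A=\mathrm{supp}(V)$, I use property $(\star_\alpha)$ for $P_0$ (applied with the $A$ that are ``protected'' and the finitely many $L$-coordinates appearing in the supports of $U,W$ as the set $B$) to produce an automorphism $g\in\mathrm{Aut}(L)_A$ sending those coordinates into pieces of $P_0$ of height $\geq$ whatever is needed; then on the $P_0$-coordinates $a$ the values of $x$ and $z=\pi(x,y)$ literally agree ($z(a)=x_0(a)=x(a)$), so after applying $g$ the two points have identical values on $\mathrm{supp}(U)\cup\mathrm{supp}(W)$. On the remaining coordinates $V_a=G$, so by density of orbits (hypothesis (2)) and hypothesis (1) — $x_a\leftrightsquigarrow^{\gamma}_G y_a$ — the product lemma (Lemma \ref{Aris:LProd}) gives $\leftrightsquigarrow^{\gamma}_W$ coordinatewise for $\gamma<\nu$. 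Combining with Lemma \ref{Aris:LSimpleSquiggleProperties}(4),(5) to push the automorphisms around and absorb them into the neighborhood $W$, I get the base case.

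For the inductive step, assume the claim holds for all smaller values of $\alpha$ (and all $\beta<\alpha$, all $V$, all $h$). Let $\beta<\alpha<\mu$ with the height hypothesis on $h$, and let $W\in\mathcal{V}$, open $U\ni hz$ (the case $U\ni hx$ symmetric) be given. Shrink $U,W$ to basic open with finite supports, and let $B$ be the (finite) union of $\mathrm{supp}(V)$, $\mathrm{supp}(W)$, $\mathrm{supp}(U)$ together with their $\varphi$-images. Pick a height $\alpha'$ with $\beta<\alpha'<\alpha$ that will be the target height for the coordinates we move — here is where additive indecomposability of $\mu=\omega^\lambda$ is used, so that $\nu+\beta<\nu+\alpha'<\nu+\alpha$ stays below the relevant bound, and where the Hessenberg computations in Lemma \ref{L:GenericPairHasStar} guaranteeing property $(\star)$ come in. Apply $(\star_{\alpha})$ for $P_0$ to the protected set (the preimages under $\varphi$ of $\mathrm{supp}(V)$, which by hypothesis sit below elements $s_a\in P_0$ of height $\geq\alpha$) and the set $B$, obtaining $g\in\mathrm{Aut}(L)_{\text{protected}}$ with $g(b)$ covered by some $t_b\in P_0$ of height $\geq\alpha'$ for every $b\in B$. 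Then I set $g^x=g^z$ to be the lift of $g$ into $\mathrm{Aut}(L)\mathrm{Wr}\,G$, followed (via density of orbits, Lemma \ref{Aris:LSimpleSquiggleProperties}(5) again) by a further group element of $G^L$ chosen to land $g^xhx$ and $g^zhz$ inside $U$; this is possible since on the $P_0$-coordinates indexed by $\mathrm{supp}(U)$ the two points agree. Now $g^xhx$ and $g^zhz$ are of the form $h'x$ and $h'z$ with a new element $h'=g^xh$ whose relevant support coordinates now lie below elements of $P_0$ of height $\geq\alpha'$, so the inductive hypothesis (applied with $\alpha'$ in place of $\alpha$, and any $\beta'<\alpha'$ in place of $\beta$, with neighborhood $W$) yields $h'x\leftrightsquigarrow^{\nu+\beta'}_W h'z$ for all $\beta'<\alpha'$. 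Since $\beta<\alpha'$ this covers $\beta'=\beta$, and more — and by Lemma \ref{Aris:LSimpleSquiggleProperties}(2) we get all $\gamma<\nu+\beta$ as required. Stitching: $g^xhx\leftrightsquigarrow^{\gamma}_W g^zhz$ for all $\gamma<\nu+\beta$, with $g^x,g^z\in V$ and $g^xhx,g^zhz\in U$, which is exactly what $hx\leftrightsquigarrow^{\nu+\beta}_V hz$ demands.

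\textbf{The main obstacle.} The delicate point is the bookkeeping in the inductive step: one must choose the intermediate height $\alpha'$ (and, at a finer level, the ordinals $\gamma,\delta,\rho$ appearing implicitly through property $(\star)$) so that (i) $\alpha'$ is strictly between $\beta$ and $\alpha$, (ii) the coordinates that were ``protected'' for the outer application remain deep enough to invoke the inductive hypothesis, and (iii) the Hessenberg inequalities of clause (P2) in the definition of an antichain pair survive after applying the local automorphism $g$ — this is precisely what Lemma \ref{L:GenericPairHasStar} was set up to guarantee, but threading it through requires care that $\mathrm{ht}(s_a)\oplus\mathrm{ht}(t_b)<\mathrm{ht}(s_a\wedge t_b)$ continues to hold for the newly created elements. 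Additive indecomposability of $\mu=\omega^\lambda$ (Lemma \ref{L:additivelyIndecompo}) is what ultimately makes room: since $\nu+\beta<\mu+\nu$-type bounds are never threatened and intermediate sums stay below $\mu$, the induction on $\alpha<\mu$ never runs out of space. I expect that the only genuinely new work beyond the $\lambda=0$ case treated in Section \ref{S:lambda=0} is verifying that the ``generic'' local automorphisms supplied by $(\star_\alpha)$ can be composed and restricted compatibly across the finitely many relevant pieces — and that is exactly the content of Lemma \ref{L:GenericPairHasStar}, so the proof here is mostly an orchestration of the pieces already in place plus a careful induction.
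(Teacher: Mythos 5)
Your overall strategy matches the paper's: induct on $\alpha$, apply property $(\star_\alpha)$ to move the $W$-support coordinates into $P_0$-pieces, use density of orbits to land both points in $U$, and then invoke the inductive hypothesis at a lower stage (or, at the bottom, Lemma \ref{Aris:LProd}). The base case is also handled in the same spirit as the paper.

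However, there is a concrete gap in your inductive step: you propose to ``pick a height $\alpha'$ with $\beta<\alpha'<\alpha$,'' and the subsequent bookkeeping (``since $\beta<\alpha'$ this covers $\beta'=\beta$'') relies on this strict chain. But when $\alpha=\beta+1$ no such $\alpha'$ exists, so your argument breaks at every successor stage. The fix is exactly what the paper does: take $\alpha'=\beta$ itself (this is always $<\alpha$, so both $(\star_\alpha)$ and the inductive hypothesis apply), note that the conclusion of Definition \ref{Def:Warrow2} only requires $g^xhx\leftrightsquigarrow^{\gamma}_W g^zhz$ for $\gamma$ \emph{strictly} below $\nu+\beta$, and observe that such $\gamma$ are all covered by $\leftrightsquigarrow^{\nu+\beta'}_W$ with $\beta'<\beta$ together with Lemma \ref{Aris:LSimpleSquiggleProperties}(2) when $\beta\geq 1$, while the case $\beta=0$ reduces to the base-case argument via Lemma \ref{Aris:LProd} (the moved coordinates land in $P_0$ of some height $\geq 0$, so $x$ and $z$ literally agree there). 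Relatedly, your appeal to additive indecomposability of $\mu$ ``so that $\nu+\beta<\nu+\alpha'<\nu+\alpha$ stays below the relevant bound'' is a red herring: left-addition of $\nu$ is automatically order-preserving, there is no bound to stay below, and additive indecomposability (together with the Hessenberg inequalities of (P2)) is used only inside the proof of Lemma \ref{L:GenericPairHasStar}; in the proof of the present claim one invokes $(\star_\alpha)$ as a black box and no further ordinal arithmetic is required. Your concern about ``clause (P2) surviving after applying the local automorphism $g$'' is likewise already discharged in Lemma \ref{L:GenericPairHasStar} and plays no role here.
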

\begin{proof}
We prove the claim by induction on $\alpha$. For $\alpha=1$ it follows from Lemma \ref{Aris:LProd}. Assume now that it holds for all ordinals  below $\alpha$ and let $V,h$ as in the statement.   
We will show that  $h x   \leftrightsquigarrow^{\nu+\beta}_{V}   h z$ holds.

Let  $W\subseteq_1  \mathrm{Aut}(L) \mathrm{Wr}\, G$ and $U\subseteq X$ be  open with $hz\in U$ (the case $hx\in U$ is similar). After shrinking $U,W$ if necessary, we may assume that $W\in\mathcal{V}$ and that for all $c\in\mathrm{supp}(W)$ there exists an  open  $U_{c}\subseteq X$ so that $U=\{x \in X^{L}\colon x_c \in U_c \text{ for all } c\in \mathrm{supp}(W) \}$. Set 
\[A:=\{ \varphi^{-1}(c) \colon c\in \mathrm{supp}(V) \} \quad \text{ and } \quad   B:=\{ \varphi^{-1}(c) \colon c\in \mathrm{supp}(W) \}.\]

Since the map $\pi$ is induced by some $(P_0,P_1)\in\mathcal{P}$ which satisfies $(\star_{\alpha})$, we may find some $\psi\in\mathrm{Aut}(L)_A$ so that for every $b\in B$ there is $t_b\in P_0$  with $\mathrm{ht}(t_b)\geq \beta$ so that $\psi^{-1}(b)$ is covered by  $t_b$. Since
 the orbits of $x_a$ and $z_a$ are dense in $X$ for all $a\in L$,  we may choose some sequence $(g_a)_{a}$ in $G^L$ so that,  setting $g:=(\varphi \psi \varphi^{-1}, (g_a)_a)$, we have that 
\begin{equation}
ghx,ghz\in U 
\end{equation}
In fact, since $(hx)_c=(hz)_c$ for all $c\in\mathrm{supp}(V)$ and  $\varphi \psi \varphi^{-1}\in\mathrm{Aut}(L)_{\mathrm{supp}(V)}$, we may assume without loss of generality that $g_c=1_G$ for all $c\in \mathrm{supp}(V)$. It follows that $g\in V$.

We are left with checking that  $gh x  \leftrightsquigarrow^{\nu+\gamma}_{W}  gh z $ holds for all $\gamma<\beta$. But this follows by induction hypothesis. Indeed, notice that $gh$ is of the form $( \varphi \psi, (f_a)_a)$ for some choice of  $(f_a)_{a\in L} \in G^L$ and that for all $c\in \mathrm{supp}(W)$ we have chosen  $\psi$ so that there exists $t_c\in P_0$  with $\mathrm{ht}(t_c)\geq \beta$ so that $\psi^{-1}(\varphi^{-1}(c))$ is covered by  $t_c$. 
\end{proof}

The proof of Lemma \ref{L:Ind} and hence of Theorem \ref{T:mainBernoulli} is now complete.


\printbibliography

\end{document}